\let\euf\EuScript 
\newcommand{\proj}{\mathbb{P}}
\newcommand{\seq}{\subseteq}
\newcommand{\C}{\mathbb{C}}
\newcommand{\rank}{\text{rank }}
\newtheorem{thm}{Theorem}[section]
\newtheorem*{thm-nl}{Theorem}
\newtheorem*{prop-nl}{Proposition}
\newtheorem{lem}[thm]{Lemma}
\newtheorem{cor}[thm]{Corollary}
\newtheorem*{cor-nl}{Corollary}
\newtheorem{conjecture}[thm]{Conjecture}
\newtheorem*{conjecture-nl}{Conjecture}
\newtheorem*{quest-nl}{Question}
\newtheorem*{quests-nl}{Questions}
\newtheorem{prop}[thm]{Proposition}
\theoremstyle{remark}
\newtheorem*{rem}{Remark}
\newtheorem{remark}[thm]{Remark}
\title{{The Moduli of Singular Curves on K3 Surfaces}}
\author{Michael Kemeny}
\begin{document}
\maketitle
\begin{abstract}
In this article we consider moduli properties of singular curves on K3 surfaces. Let $\mathcal{B}_g$ denote the stack of primitively polarized K3 surfaces $(X,L)$ of genus $g$ and let $\mathcal{T}^n_{g,k} \to \mathcal{B}_g$ be the stack parametrizing tuples $[(f: C \to X, L)]$ with $f$ an unramified morphism which is birational onto its image, $C$ a smooth curve of genus $p(g,k)-n$ and $f_*C \in |kL|$. We show that the forgetful morphism $$\eta \; : \; \mathcal{T}^n_{g,k}  \to \mathcal{M}_{p(g,k)-n}$$ is generically finite on at least one component, for all but finitely many values of $p(g,k)-n$. We further study the Brill--Noether theory of those curves parametrized by the image of $\eta$, and find a Wahl-type obstruction for a smooth curve with an unordered marking to have a nodal model on a K3 surface in such a way that the marking is the divisor over the nodes.
\end{abstract}
\section{Introduction}
The aim of this article is to study the moduli of those singular curves which may be embedded into a K3 surface. 
Let $[C] \in \mathcal{M}_p$ be a point of the moduli space of smooth curves of genus $p$. We say $C$ admits a singular model lying on a K3 surface of genus $g$ if there exists a polarized K3 surface $(X,L)$ of genus $g$ and an integral curve $D \in |kL|$ for some $k$ such that $C$ is isomorphic to the normalization of $D$.  
Broadly speaking, we wish to consider the following question:
\begin{quests-nl}
What is the dimension of the locus of curves $[C] \in \mathcal{M}_p$ admiting a singular model lying on a K3 surface of genus $g$? Furthermore, what conditions must a curve $[C] \in \mathcal{M}_p$ satisfy in order to admit a singular model $D$ lying on a K3 surface?
\end{quests-nl}

In practice, one tends to put a condition on the singularities of the integral curve $D$ in order to approach the above question, as otherwise the deformation theory of the pair $(D,X)$ is hard to control. In \cite{flam}, the above questions are considered under the hypothesis that $D$ is nodal. We will instead work with the much weaker hypothesis that the normalization morphism $f: C \to D$ is unramified (recall that $f$ is said to be unramified if its differential never vanishes). In particular, if $D$ has ordinary singularities then $f$ is unramified, whereas $f$ has ramification if $D$ has a cusp.

If $D$ is a singular, integral curve on a K3 surface $X$, let $\mu: C:=\tilde{D} \to X$ denote the composition of the normalization $\tilde{D} \to D$ with the inclusion $D \hookrightarrow X$, and let $p$ be the arithmetic genus of $C:=\tilde{D}$. This gives a one-to-one correspondence between pairs $(D,X)$, where $D$ is integral of geometric genus $p$, and morphisms $f: C \to X$ where $C$ is a smooth curve of genus $p$ and $f$ is birational onto its image. As is by now well-known, the deformation theory of the morphism $f$ is in many ways considerably easier to work with than the deformation theory of the pair $(D,X)$. We will take this viewpoint throughout this paper and formulate the above questions in terms of stable maps, see \cite{fulpar} or \cite{arakol} for an excellent introduction to this topic.

\subsection{The number of moduli of singular curves on K3 surfaces}
Let $\mathcal{B}_g$ denote the stack of pairs $(X,L)$, where $X$ is a K3 surface over $\C$ and $L$ is an ample, primitive polarization with $(L)^2=2g-2$ for $g \geq 3$. There is a Deligne--Mumford stack $$\mathcal{W}^{n}_{g,k} \to \mathcal{B}_g$$ with fibre over a polarized K3 surface $[(X,L)] \in \mathcal{B}_g$ parametrizing all stable maps
$f: C \to X$ with $f_*C \in |kL|$, where $C$ is a connected, nodal curve of arithmetic genus $p(g,k)-n$, with $p(g,k):=k^2(g-1)+1$. Denote by $$\mathcal{T}^n_{g,k} \seq \mathcal{W}^{n}_{g,k}$$ the open subset consisting of unramified stable maps $f: C \to X$ with $C$ integral and smooth such that $f$ is birational onto its image. 

By the (reduced) deformation theory of stable maps, $\mathcal{T}^n_{g,k}$ is a smooth stack of dimension $p(g,k)-n+19$, and every component of $\mathcal{T}^n_{g,k}$ dominates $\mathcal{B}_g$, see \cite[\S 4]{kemeny-thesis}, \cite[\S 2]{huy-kem} and Section \ref{finny} of this work. For the general deformation theory of a morphism see \cite{flenner-ueber}, \cite[\S 7.4]{buchweitz-flenner}. Also see \cite{kool-thomas}, in particular Theorem 2.4 and Remark 3.1, for a different approach. The stack $\mathcal{T}^n_{g,k}$ is nonempty by a result of Chen, \cite{chen-rational}. For a different and more detailed account of Chen's theorem, see \cite{galati-knutsen}.

For $p(g,k)-n \geq 2$, there is a forgetful morphism
$$
\eta \; : \; \mathcal{T}^n_{g,k}  \to \mathcal{M}_{p(g,k)-n} $$
defined by taking $[(f: C \to X, L)] \in \mathcal{T}^n_{g,k} $ to  $ [C]$, where $\mathcal{M}_{p(g,k)-n}$ denotes the stack of smooth curves of genus $p(g,k)-n$.  A dimension count suggests that this might be dominant for $2 \leq p(g,k)-n \leq 11$ and generically finite for $p(g,k)-n \geq 11$. 
To ease the notation in the primitive case $k=1$ write $\mathcal{T}^n_{g}:=\mathcal{T}^n_{g,1} $.

The case $n=0$ has been studied in depth. It was shown in \cite{mori-mukai} and \cite[Thm.\ 7]{mukai-fano} that if $n=0$, $k=1$ then the morphism 
$$\eta \; : \; \mathcal{T}^0_{g}  \to \mathcal{M}_{g}$$ is generically finite for $g \geq 13$ or $g=11$. In the non-primitive case $k \geq 2$, a very different approach using the deformation theory of cones shows that $\eta$ is generically finite for $g \geq 7$ and $n=0$, \cite{cili-classification}. 

Our first result is an extension of the results on generic finiteness to the singular case $n >0$. In the case $k=1$, we show:
\begin{thm} \label{finiteness}
Assume $g \geq 11$, $n \geq 0$. Set $m=\left \lfloor \frac{g-11}{6} \right \rfloor$ and let $0 \leq r(g) \leq 5$ be the unique integer such that 
$$g-11 = 6m +r(g).$$
Define 
\begin{itemize}  
\item $l_g:=12$, if $r(g)=0$.
\item $l_g:=13$, if $1 \leq r(g) <5$.
\item $l_g:=15$ if $r(g)=5$.
\end{itemize}
Then there is a component $I \seq \mathcal{T}^n_{g}$ such that $${\eta}_{|_I}: I \to \mathcal{M}_{g-n}$$ is generically finite for
$g-n \geq l_g$. For the general $[f: C\to X] \in I$, $C$ is non-trigonal.
\end{thm}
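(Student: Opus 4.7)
The plan is to bootstrap from the known $n=0$ case (Mori--Mukai/Mukai, for $g \geq 13$ or $g = 11$) to the nodal setting by constructing a suitable component $I \subseteq \mathcal{T}^n_g$ via degeneration, and then by computing the differential of $\eta$ on $I$ to deduce generic finiteness. The argument naturally splits into three phases: (i) producing $I$ and verifying its basic geometric properties, (ii) identifying the kernel of $d\eta$ at a general point of $I$ with a concrete cohomology group via the deformation theory of stable maps, and (iii) establishing the vanishing of this kernel by means of a Mukai-type rigidity argument on $X$, with the piecewise formula for $l_g$ reflecting which small-genus Mukai model of K3 surfaces can be invoked.

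To construct $I$, I would start from a general polarized K3 $(X,L) \in \mathcal{B}_g$ together with a smooth $C_0 \in |L|$ of genus $g$, which by Lazarsfeld is Brill--Noether general, and in particular non-trigonal for $g \geq 5$. Invoking Chen's existence theorem \cite{chen-rational} together with the refined smoothing analysis of Galati--Knutsen, I would degenerate within $|L|$ to produce an integral $n$-nodal curve $D \in |L|$ whose normalization $f: C \to X$ is unramified with $C$ smooth of genus $g-n$, and take $I$ to be the irreducible component of $\mathcal{T}^n_g$ through this point. By the reduced deformation theory recalled in the introduction $I$ has dimension $g-n+19$ and dominates $\mathcal{B}_g$, and the non-trigonality of the general $C \in I$ follows from a direct verification on the constructed point combined with closedness of the trigonal locus in $\mathcal{M}_{g-n}$.

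For generic finiteness of $\eta|_I$, I would analyze $d\eta$ at a general point $[(f, X)] \in I$. Its kernel parametrizes first-order deformations of the pair $(f, X)$ fixing the source curve $C$, and sits in a long exact sequence built from $H^0(N_f)$ and $H^1(T_X)$, where $N_f$ is the normal sheaf of the stable map. The core of the proof is to show this kernel vanishes: a nonzero element would produce a one-parameter family of K3 surfaces $X_t$ each admitting an unramified birational map $C \to X_t$ with $n$-nodal image, and one should derive a contradiction by exhibiting a Mukai-type rigid vector bundle $E$ on $X$ whose restriction to $C$ is pinned down by a Brill--Noether locus depending on $p := g-n$. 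The piecewise formula for $l_g$ then corresponds to which Mukai construction is available for that $p$: when $r(g) = 0$ one can push down to $p = 12$ via a refined genus-$11$ construction; the generic value $l_g = 13$ matches Mukai's original threshold; and the jump $l_g = 15$ when $r(g) = 5$ reflects the absence of a compatible Mukai model in that residue class.

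The main obstacle is the nodal nature of the map: unlike the embedded case one cannot work directly with the ideal sheaf $\mathcal{I}_{C/X}$, and one must carefully track how the $n$ nodes of $f(C)$ deform and how the Mukai bundle $E$ behaves after pullback through $f$, verifying that the preimages of the nodes do not destroy the rigidity information. Once this cohomological vanishing is established, the explicit case analysis producing the piecewise bound $l_g$ should be a matter of matching Mukai's classification of low-genus K3 surfaces against the numerics of $g$ and $p = g - n$.
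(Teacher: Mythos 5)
Your phase (ii)--(iii) is where the proposal breaks down, and the gap is not cosmetic. You propose to prove generic finiteness by computing $d\eta$ at a \emph{general} point of $I$ (so over a Picard-rank-one K3) and killing its kernel ``by exhibiting a Mukai-type rigid vector bundle $E$ on $X$ whose restriction to $C$ is pinned down by a Brill--Noether locus depending on $p=g-n$.'' No such uniform construction exists: Mukai's reconstruction is genus-specific, and in this paper it is only ever invoked at genus $11$, where a curve $C$ of Clifford index $4$ has (by Donagi--Morrison, Lemma \ref{unique-special}) a \emph{unique} $A \in W^1_6(C)$ and the K3 is recovered as the quadric hull of $\phi_{A^{\dagger}}$ (Lemma \ref{muk-lem}, Corollary \ref{bij-cor}). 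For arbitrary $p \geq l_g$ there is no analogous reconstruction of the surface from $(C, A)$, and the vanishing $H^0(C,f^*T_X)=0$ that your kernel computation would need is derived in the paper (Lemma \ref{coh-van}) as a \emph{consequence} of generic finiteness, not as a route to it. Relatedly, your reading of the piecewise bound is wrong: $l_g$ does not record ``which Mukai model is available at genus $p$.'' It is the arithmetic genus of explicit reducible stable maps built by attaching rigid rational curves to a genus-$11$ curve on special Picard-rank-ten surfaces: $13 = 11+2$ from one rational tail $R_1$ meeting the curve in $3$ points (after partial normalization, Lemma \ref{onenodal-lem-a}), $15 = 11+2+2$ from two tails when $r(g)=5$, and $12$ in the $r(g)=0$ case from a chain of $2m$ rational curves lying over the $I_2$ fibre $\Gamma_1 + \widetilde{\Gamma}_1$ of the elliptic pencil on $Y_{\Omega_{11}}$.

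The paper's actual mechanism, which your proposal lacks, is: (a) a reduction (Lemma \ref{red-largen}) showing it suffices to treat the \emph{maximal} $n$, i.e.\ minimal source genus $g-n = l_g$; (b) a transfer criterion (Proposition \ref{finiteness-criterion}) asserting that if a stable map $f: B \to X$ has one component $C$ at which the moduli map is already known to have zero-dimensional fibre, all other components rational (hence rigid on a K3), and $[(f,L)]$ lies in the closure of $\mathcal{T}^n_g$ (Lemma \ref{defo-nodal-lemma}, via simultaneous resolution), then finiteness propagates to a whole component of $\mathcal{T}^n_g$ by upper semicontinuity of fibre dimension --- no differential of $\eta$ is ever computed. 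Your plan instead places the point to be analyzed on a \emph{general} K3, precisely where there are neither rigid rational curves in the relevant classes to exploit nor a Mukai reconstruction for genus $p$; ``direct verification on the constructed point'' has no content there. Even granting your long exact sequence, a nonzero kernel element of $d\eta$ is only an infinitesimal datum and does not by itself produce the one-parameter family of surfaces $X_t$ you invoke; the paper works throughout with actual fibre dimensions for exactly this reason. Finally, the non-trigonality of the general member of $I$ also needs the degeneration argument (the image of $\overline{\mathcal{H}}_{3,p} \to \overline{\mathcal{M}}_p$ is closed and the special stabilization avoids it), not just closedness of the trigonal locus in the open moduli space.
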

In the case $k \geq 2$ we show:
\begin{thm} \label{finiteness-nonprim}
Assume $k \geq 2$, $g \geq 8$. Set $m:= \left \lfloor \frac{g-5}{6} \right \rfloor$ and let $0 \leq r(g) \leq 5$ be the unique integer such that
$$ g-5=6m+r(g).$$
Define:
\begin{itemize}  
\item $l_g:=15$, if $r(g)=3,4$, $m$ odd and/or $k$ even.
\item $l_g:=16$, if $r(g)=3,4$, $m$ even and $k$ odd.
\item $l_g:=17$, if $r(g)=5$, $m$ odd and/or $k$ even.
\item $l_g:=18$, if $r(g)=5$, $m$ even and $k$ odd.
\item $l_g:=17$, if $r(g)\leq 2$, $m$ even and/or $k$ even.
\item $l_g:=18$, if $r(g) \leq 2$, $m$ odd and $k$ odd.
\end{itemize}
Then there is a component $I \seq \mathcal{T}^n_{g,k}$ such that $${\eta}_{|_I}: I \to \mathcal{M}_{p(g,k)-n}$$ is generically finite for
$p(g,k)-n \geq l_g$. For the general $[f: C \to X] \in I$, $C$ is non-trigonal.
\end{thm}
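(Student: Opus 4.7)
The plan is to adapt the Lazarsfeld--Mukai bundle technique underlying Theorem~\ref{finiteness} to the non-primitive setting. Specifically, I will construct a component $I \seq \mathcal{T}^n_{g,k}$ whose general member $[f: C \to X, L]$ admits a linear series $A = g^r_d$ on $C$ such that the associated Lazarsfeld--Mukai bundle $E$ on $X$ has Mukai vector $v(E) = (r+1, kL, s)$ with $v(E)^2 \leq -2$. This forces the moduli space $M_X(v)$ to be at most a single point, which rigidifies $X$ in terms of the pair $(C, A)$, and hence in terms of $C$ alone up to finite ambiguity, because the Brill--Noether locus $W^r_d(C)$ is finite in the relevant range.

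Concretely, I would first invoke Chen's theorem \cite{chen-rational} (refined in \cite{galati-knutsen}) to secure a component $I$ whose general element $[f : C \to X]$ has $C$ smooth of genus $p(g,k)-n$ and $f_* C \in |kL|$ an integral nodal curve with exactly $n$ nodes. For each candidate pair $(r, d)$ with $\rho(p(g,k)-n, r, d) \geq 0$, the Lazarsfeld--Mukai construction, applied to $A$ and the birational map $f$, produces a rank-$(r+1)$ vector bundle $E$ on $X$ with $c_1(E) = kL$ and $c_2(E)$ computed from $d$ together with the node contributions of $f(C)$. Optimizing $(r,d)$ subject to $\rho \geq 0$ so that $v(E)^2 \leq -2$ yields the six cases distinguished in the statement: the divisibility of $kL$ by $r+1$ (controlled by the parities of $m$ and $k$) and the residue $r(g) \pmod 6$ jointly determine which Mukai vectors are admissible, with $l_g$ emerging as the smallest genus for which $v(E)^2 \leq -2$ has a solution in the admissible range.

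The main obstacle is the stability of $E$ and the verification that $M_X(v)$ is actually a reduced point rather than merely zero-dimensional. In the non-primitive setting $c_1(E) = kL$ is divisible, so $E$ could a priori split as $E_1 \oplus E_2$ with $c_1(E_i)$ proper multiples of $L$; one must rule this out by showing that any such splitting would contradict the Brill--Noether genericity of $A$ on $C$. Once stability is established, Mukai's reconstruction theorem \cite{mukai-fano} recovers $X$ from $E$, giving the generic finiteness statement. Non-trigonality of the general $C \in I$ then follows because a $g^1_3$ on $C$ would produce an additional Lazarsfeld--Mukai bundle whose Mukai vector violates the same numerical bounds that rigidify $X$, contradicting the hypothesis $p(g,k)-n \geq l_g$.
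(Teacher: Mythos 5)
Your plan has a genuine numerical obstruction at its core, precisely in the singular range $n \geq 1$ that the theorem is about. For $[(f: C \to X, L)] \in \mathcal{T}^n_{g,k}$ with $D = f(C)$ nodal, the Lazarsfeld--Mukai construction must be run on the pushforward $f_*A$ (a rank one torsion-free sheaf on $D$, of arithmetic genus $p(g,k)$), and the kernel $E$ of $H^0(f_*A)\otimes \mathcal{O}_X \to f_*A$ is indeed locally free of rank $r+1$ with $v(E) = (r+1,\, kL,\, p(g,k)-d+r)$, hence $v(E)^2 = 2\rho(p(g,k),r,d)-2$. Writing $p = p(g,k)-n$ for the genus of $C$, one computes $\rho(p+n,r,d) = \rho(p,r,d) - rn$, so $v(E)^2 = 2\rho(p,r,d) - 2rn - 2$. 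Your two requirements are now incompatible: demanding $v(E)^2 = -2$ (the rigid case; note $v^2 < -2$ admits \emph{no} semistable sheaf at all, so the splitting you hope to rule out becomes unavoidable rather than excludable) forces $\rho(p,r,d) = rn > 0$, whence $W^r_d(C)$ is positive-dimensional of dimension $rn$ on a Brill--Noether-general normalization, destroying the finiteness of choices of $A$ on which your rigidification rests; conversely $\rho(p,r,d)=0$ gives $v(E)^2 = -2-2rn < -2$. Moreover, even granting a rigid stable $E$, uniqueness of $E$ on a \emph{fixed} $X$ does not reconstruct $X$ from $(C,A)$: that requires a hull-type theorem identifying $X$ inside projective space from data intrinsic to $(C,A)$, which is exactly the content of Lemma \ref{muk-lem} (via Saint-Donat's quadric hull result), available only under stringent lattice hypotheses and only exploited at genus $11$. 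There is no such reconstruction for arbitrary $p(g,k)-n \geq l_g$, and $\cite{mukai-fano}$ does not supply one in this generality.

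The paper's actual proof never runs a Brill--Noether argument at the general point of $I$; it proves rigidity once, at genus $11$ on the special Picard-rank-ten surfaces $Y_{\Omega_{11}}$ (Corollary \ref{bij-cor}), and then propagates it by degeneration. Lemma \ref{hjz} produces an unramified stable map $f_a: B_a \to Y_a$ of arithmetic genus $13$ or $15$ on a $\Lambda_a$-polarized K3, Lemma \ref{essential-def} supplies an integral nodal rational curve $R_a \in |D - 2F - \Gamma|$, and the proof glues $B_a$ to chains of rational curves realizing $kH \sim f_a(B_a) + (k-1)(R_a + \Gamma) + lF_0$ with $l = km' + 2(k-1)$; generic finiteness then follows from Proposition \ref{finiteness-criterion} (rigidity of rational curves on K3 surfaces plus the inherited zero-dimensionality of the sub-fibre over $B_a$), with Proposition \ref{prim-cor} excluding decompositions into sub-systems $|m_iL|$ and Lemma \ref{red-largen} reducing to maximal $n$. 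In particular, the six cases and the values $l_g \in \{15,16,17,18\}$ are simply the arithmetic genera of these comb curves: the parities of $m$ and $k$ enter only through the parity of $l$, which dictates which of the two gluing diagrams is used (even $l$ pairs the copies of $F_0$; odd $l$ requires attaching a chain at three points of $f_a(B_a) \cap F_0$, costing one extra unit of genus). Your expectation that these bounds would emerge from divisibility constraints on Mukai vectors has no basis in the statement's numerology, and the non-trigonality claim is likewise established through the degeneration (Lemma \ref{easynontriglemma} and avoidance of the image of $\overline{\mathcal{H}}_{3,p(g,k)-n}$), not by any Lazarsfeld--Mukai contradiction.
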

Setting $n=0$, we recover the (optimal) statement in the smooth, primitive, case and all cases other than $g=7$ in the nonprimitive case.\footnote{The well-known case $g=11$, $n=0$, $k=1$ does not follow from Theorem \ref{finiteness} but rather from Corollary \ref{bij-cor}.} In particular, this gives a new proof of the generic finiteness theorem for $n=0$, $k \geq 2$, $g \geq 8$ which resembles the original approach of \cite{mori-mukai}.

A key part of our proof was inspired from a close reading of Mukai's papers \cite{mukai-nonabelian}, \cite{mukai-duality}, \cite{mukai-genus11}. In particular, we use Mukai's proof that, in some special cases, the data of a closed embedding of a smooth, genus $11$ curve $C$ in a K3 surface $X$ can be described purely in terms of $C$ together with a special line bundle $A \in W^1_6(C)$. To be more precise, the K3 surface $X$ is in these special cases isomorphic to the quadric hull of $\phi_{A^{\dagger}}: C \hookrightarrow \proj^k$, where $A^{\dagger}$ is the adjoint of $A$. This isomorphism identifies the given closed embedding $C \hookrightarrow S$ with the natural embedding of $C$ into the quadric hull of $\phi_{A^{\dagger}}$ (up to automorphisms of $\proj^k$). Using results of \cite{donagi-morrison}, one can further check that the bundle $A \in W^1_6(C)$ is unique. In particular, arguments along these lines establish the main result of \cite{mori-mukai} in the case $g=11$. See Section \ref{mukai} for more details.

Our new idea is to show that Mukai's argument applies to the particular K3 surfaces $Y_{\Omega_{11}}$ introduced in Lemma \ref{lem-aaa}. These K3 surfaces, which, as far as we know, have not previously been considered, are closely related to the Picard rank two K3 surfaces from \cite[\S 3]{mukai-duality}. In contrast, however, to the surfaces studied by Mukai, $Y_{\Omega_{11}}$ has Picard rank ten and contains multiple disjoint smooth rational curves by construction. As a result, some involved lattice calculations must be carried out in Section \ref{mukai}. Using that rational curves are rigid on K3 surfaces, one then uses the surfaces $Y_{\Omega_{11}}$ to build Theorems \ref{finiteness} and \ref{finiteness-nonprim} up from the main result in \cite{mori-mukai}. This is performed in Section \ref{finny} via the deformation theory of stable maps.

Denote by 
$$\mathcal{V}^{n}_{g,k} \seq \mathcal{T}^n_{g,k}$$ the open substack parametrizing morphisms $[(f: C \to X,L)]$ with $f(C)$ nodal and set $\mathcal{V}^{n}_{g}=\mathcal{V}^{n}_{g,1}$.
The following conjecture is found in \cite{dedieu}:
\begin{conjecture}
The moduli space $\mathcal{V}^n_{g,k}$ is irreducible.
\end{conjecture}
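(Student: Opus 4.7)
The plan is to reduce the conjecture to a statement about Severi varieties on a fixed, very general K3 surface. Since $\mathcal{B}_g$ is irreducible and, by the deformation-theoretic remarks preceding Theorem \ref{finiteness}, every component of $\mathcal{T}^n_{g,k}$ dominates $\mathcal{B}_g$, the same holds for every component of the open substack $\mathcal{V}^n_{g,k} \seq \mathcal{T}^n_{g,k}$. Thus it suffices to prove that the fibre over a very general $(X,L) \in \mathcal{B}_g$ is irreducible. This fibre is an open substack of the classical Severi variety $V^n_{|kL|}(X)$ parametrizing integral curves $D \in |kL|$ with exactly $n$ nodes as their only singularities.

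The approach I would try is modeled on Harris's proof of the irreducibility of planar Severi varieties. The first ingredient is Chen's theorem, which produces a rational nodal curve $R \in |kL|$ on $X$, that is, a point of the deepest stratum $V^{p(g,k)}_{|kL|}(X)$. The central technical claim is a \emph{smoothing statement}: every irreducible component $Z \seq V^n_{|kL|}(X)$ contains in its closure a point parametrizing a further degeneration to such a rational nodal curve $R$. Using that unramified stable maps have a smooth moduli of the expected dimension (as recalled before Theorem \ref{finiteness}) and the local deformation theory which independently smooths individual nodes, one attempts to show that $Z$ meets $V^{n+1}_{|kL|}(X)$ in codimension one along a divisor whose general point lies on a larger component, and then iterate until one reaches $[R]$.

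The second ingredient is a monodromy argument around the $0$-dimensional stratum containing $[R]$. Since the $p(g,k)$ nodes of $R$ are unordered, the étale-local monodromy of the universal Severi variety over a neighbourhood of $[R]$ should act as the full symmetric group $S_{p(g,k)}$ on the set of nodes. Combined with the smoothing statement, this would imply that any two components of $V^n_{|kL|}(X)$ whose closures meet at $[R]$ are exchanged by a monodromy element and hence coincide, giving irreducibility.

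The main obstacle is the smoothing statement, which is essentially the heart of Dedieu's conjecture: it asserts that the node-number stratification of $V^{*}_{|kL|}(X)$ is connected through its deepest stratum. For $n$ close to $p(g,k)$ this is accessible via regularity results of Galati--Knutsen type, where one smooths one node at a time. For intermediate $n$, however, it is unclear how to control the existence of a path of partial smoothings joining a given component to $[R]$ on a single K3 surface. A natural refinement would be to degenerate $(X,L)$ itself to a reducible surface (e.g.\ a union of two rational surfaces meeting along an elliptic curve, in the spirit of Ciliberto--Lopez--Miranda or Ciliberto--Dedieu), on which the Severi varieties decompose combinatorially into well-understood pieces, and then transport irreducibility back to the generic K3 via specialization and the monodromy input above. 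Making such a degeneration compatible with the unramified-birational open condition defining $\mathcal{V}^n_{g,k}$ seems to me the deepest difficulty.
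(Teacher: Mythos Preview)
This statement is recorded in the paper as a \emph{conjecture} (attributed to Dedieu), not as a theorem; the paper contains no proof of it and does not claim one. So there is no ``paper's own proof'' to compare against.

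Your proposal is not a proof either, and you say so yourself. The smoothing statement you isolate --- that the closure of every component of $V^n_{|kL|}(X)$ meets the locus of nodal rational curves --- is not an auxiliary lemma on the way to the conjecture; it essentially \emph{is} the conjecture. Once you know that every component degenerates to a fixed rational nodal $[R]$ and that the local monodromy near $[R]$ acts as the full symmetric group on nodes, irreducibility is immediate; the entire content of Dedieu's conjecture is precisely that one cannot currently establish this degeneration/smoothing step. Your outline correctly identifies the architecture (Harris-style: degenerate to the deepest stratum, then use monodromy), but the central step is assumed rather than argued. The degeneration-of-the-K3 idea you float at the end is a reasonable line of attack and is indeed what Ciliberto--Dedieu and others have pursued, but as you note, it remains open.

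One further technical point: you assert that every component of the open substack $\mathcal{V}^n_{g,k}$ dominates $\mathcal{B}_g$ because every component of $\mathcal{T}^n_{g,k}$ does. That inference needs care: an irreducible component $I$ of $\mathcal{T}^n_{g,k}$ could in principle fail to meet $\mathcal{V}^n_{g,k}$ at all unless one invokes the Harris--Chen/Dedieu--Sernesi nodal-degeneration result, which itself requires a non-trigonality hypothesis on the general member of $I$ (see the discussion immediately following the conjecture in the paper). So even the reduction to a fibrewise statement already uses nontrivial input that is not available unconditionally for every component.
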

Let $I \seq \mathcal{T}^n_{g,k}$ be an irreducible component and denote the fibre of $I \to \mathcal{B}_g$ over $[(X,L)]$ by $I(X,L)$. If $(X,L)$ is general, then each component of $I(X,L)$ has dimension $p(g,k)-n$ and we have an injective morphism $I(X,L) \to |kL|$ sending $[f: C \to X]$ to the integral curve $[f(C)]$. Assume $p(g,k)-n>0$. If $I(X,L)$ contains a map $[(f: C \to X, L)] $ with $C$ non-trigonal\footnote{The realisation that the arguments need a non-trigonality assumption, erroneously omitted in \cite[Lemma 3.1]{chen-rational}, is due to Dedieu--Sernesi, \cite{ded-sern}. Also compare with \cite[Lemma 3.43]{harris-morrison}.} then a result of Harris and Chen, with an error corrected by Dedieu--Sernesi, shows that the component $I(X,L)$ must meet $ \mathcal{V}^n_{g,k}$, see \cite[Pg.\ 107ff]{harris-morrison}, \cite{harrissev}, \cite[Lemma 3.1]{chen-rational}, \cite[Thm.\ 2.8]{ded-sern}. Thus we have:
\begin{cor} \label{univ-sev-corollary-sdfw}
The restriction $${\eta}_{|_{\mathcal{V}^{n}_{g,k}}}: \mathcal{V}^{n}_{g,k} \to \mathcal{M}_{p(g,k)-n}$$ is generically finite on one component, for the same bounds on $p(g,k)-n$ as in Theorem \ref{finiteness} and \ref{finiteness-nonprim}.
\end{cor}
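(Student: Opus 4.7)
The plan is to use the component $I \seq \mathcal{T}^n_{g,k}$ produced by Theorem \ref{finiteness} or Theorem \ref{finiteness-nonprim}, and to argue that $I \cap \mathcal{V}^n_{g,k}$ is nonempty, so that its closure in $\mathcal{V}^n_{g,k}$ is an irreducible component on which the restriction of $\eta$ is generically finite. All the deformation-theoretic work has been done upstream; the task here is to transfer generic finiteness from $I$ to a component of the open substack of maps with nodal image.

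First I would fix a general $(X,L) \in \mathcal{B}_g$ and consider the fibre $I(X,L) \seq I$. Since $I \to \mathcal{B}_g$ is dominant with relative dimension $p(g,k)-n$, each component of $I(X,L)$ has dimension $p(g,k)-n>0$. Because $I$ is irreducible and, by Theorem \ref{finiteness} or \ref{finiteness-nonprim}, the general $[f:C\to X] \in I$ has $C$ non-trigonal, the non-trigonal locus is open and dense in $I$, hence meets the general fibre: we obtain a component $I_0(X,L) \seq I(X,L)$ whose generic member has $C$ non-trigonal. Applying the Chen--Harris theorem, as corrected by Dedieu--Sernesi and recalled immediately before the statement, to this component produces a point of $I_0(X,L) \cap \mathcal{V}^n_{g,k}$. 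As this holds for general $(X,L)$, the open subset $I \cap \mathcal{V}^n_{g,k} \seq I$ has nonempty general fibre over $\mathcal{B}_g$, and is therefore nonempty and dense in $I$.

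Now let $J$ denote the closure of $I \cap \mathcal{V}^n_{g,k}$ inside $\mathcal{V}^n_{g,k}$. We then have $\dim J = \dim I = p(g,k)-n+19 = \dim \mathcal{V}^n_{g,k}$, so $J$ is an irreducible component of $\mathcal{V}^n_{g,k}$. On the dense open subset $I \cap \mathcal{V}^n_{g,k} \seq J$ the map $\eta|_J$ coincides with $\eta|_I$, which is generically finite by Theorem \ref{finiteness} or \ref{finiteness-nonprim}, so $\eta|_J$ is generically finite as well.

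The argument is essentially bookkeeping once the Chen--Harris--Dedieu--Sernesi result is available, and no serious obstacle arises. The one point requiring a moment's care is the passage from non-trigonality of a general $[f]\in I$ to non-trigonality of a general member of the general fibre $I(X,L)$, but this follows at once from openness of the non-trigonal locus together with the irreducibility of $I$.
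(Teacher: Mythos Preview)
Your argument is correct and follows essentially the same route as the paper: the paragraph immediately preceding the corollary sets up exactly this reasoning, invoking the Chen--Harris result (as corrected by Dedieu--Sernesi) to show that the component $I$ from Theorem~\ref{finiteness} or~\ref{finiteness-nonprim}, whose general member is non-trigonal, meets $\mathcal{V}^n_{g,k}$. One minor simplification: since $\mathcal{V}^n_{g,k}$ is open in $\mathcal{T}^n_{g,k}$ and $I$ is an irreducible component of the latter, the nonempty intersection $I \cap \mathcal{V}^n_{g,k}$ is already an irreducible component of $\mathcal{V}^n_{g,k}$, so taking its closure is unnecessary.
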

We should also mention here that C.\ \!Ciliberto, F.\ \!Flamini, C.\ \!Galati and A.\ \!Knutsen have a very different approach using degenerations of sheaves to unions of rational scrolls, which when completed is likely to give another proof of Corollary \ref{univ-sev-corollary-sdfw} (for certain bounds on  $p(g,k)-n$). Moreover, their approach may potentially prove the (local) surjectivity of $\eta$ on one component of $\mathcal{V}^{n}_{g,k}$ for some cases within the range $p(g,k)-n \leq 11$, which is beyond the reach of our method.

\subsection{An obstruction for a marked curve to admit a nodal model on a K3 surface}
It is a natural question to study the image of $\eta$.  In the case of smooth curves $n=0$, there is a well-known conjectural characterization of the image $\eta$, due to Wahl \cite{wahl-square}. He makes the following remarkable conjecture, which would give a complete characterization of those smooth curves that lie on a K3 surface:
\begin{conjecture}[Wahl] \label{wahlconj}
Assume $C$ is a smooth curve of genus $g \geq 8$ which is Brill--Noether general. Then there exists a K3 surface $X \seq \proj^g$ such that $C$ is a hyperplane section of $X$ if and only if the Wahl map $W_C$ is nonsurjective. 
\end{conjecture}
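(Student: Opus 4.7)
The direction ``$C$ lies on a K3 $\Rightarrow W_C$ is non-surjective'' is the classical theorem of Wahl and my starting point for this half. The strategy is to use the existence of $C \seq X \seq \proj^g$ with $X$ a K3 to manufacture a nonzero linear functional on $\mathrm{coker}(W_C)$. Concretely, via the normal bundle sequence
$$0 \to T_C \to T_X|_C \to N_{C/X} \to 0$$
together with the identification $N_{C/X} \ISOM \omega_C$ (coming from adjunction and the triviality of $\omega_X$), the extension class lives in $\mathrm{Ext}^1(\omega_C, T_C) \ISOM H^0(\omega_C^{\otimes 3})^{\vee}$ via Serre duality. A direct diagram chase, using that $T_X|_C$ sits inside the Euler sequence restricted from $\proj^g$, then shows that this class annihilates the image of $W_C$ but is nonzero whenever $X$ is smooth; this produces the required non-trivial element of $\mathrm{coker}(W_C)^{\vee}$.

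For the converse, which is the difficult open half of the conjecture, the natural approach is to reverse the above and manufacture a K3 from cohomological data on $C$. Step one is to interpret a nonzero element of $\mathrm{coker}(W_C)^{\vee}$ as a first-order ribbon extension (a so-called ``K3 carpet'') $R$ supported on $C$, with $\omega_R \ISOM \sheafO_R$ and $\chi(\sheafO_R) = 2$; this association is essentially formal and well-documented in the literature. Step two is the hard one: show that $R$ smooths inside $\proj^g$ to a family $\mathcal{X} \to \Delta$ whose general fibre is a smooth K3 containing $C$ as a hyperplane section. The Brill--Noether generality hypothesis should enter here to control syzygies of $\omega_C$ (via $N_p$-type properties of the canonical embedding) and to establish Horikawa-style unobstructedness at $[R] \in \mathrm{Hilb}(\proj^g)$, while also ruling out trigonal/bielliptic pathologies that obstruct the smoothing.

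The main obstacle is exactly this smoothing step, and it is precisely what has kept the conjecture open for decades. Producing the infinitesimal ribbon is cohomological and follows formally from $W_C$ being non-surjective, but lifting it to a one-parameter family whose general fibre is smooth, reduced, and irreducible requires controlling obstructions of all orders, which are not governed by $W_C$ alone. A complementary route, potentially more tractable in specific ranges of $g$, would be to combine Mukai-style realisations of $X$ as a quadric hull of $\phi_{A^{\dagger}}: C \hookrightarrow \proj^k$ for a suitable $A \in W^1_d(C)$ --- precisely the construction exploited in the excerpt for $g = 11$ --- with the Brill--Noether theory of $C$. A uniform such construction across all $g$ would bypass the smoothing problem entirely, and in my view represents the most promising direction for genuine progress on the converse.
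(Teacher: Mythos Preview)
The statement you were asked to prove is labeled \texttt{conjecture} in the paper, not a theorem, and the paper offers no proof of it. The paper merely states Wahl's conjecture, remarks that one implication (``$C$ on a K3 $\Rightarrow W_C$ non-surjective'') is well-known and cites \cite{wahl-jac}, and then moves on to use it as motivation for the marked Wahl map results in Section~\ref{markywahl}. There is therefore no ``paper's own proof'' against which to compare your attempt.

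Your write-up is consistent with this: you correctly identify the known direction and sketch the standard extension-class argument for it, and you explicitly flag the converse as ``the difficult open half of the conjecture.'' That is an accurate assessment of the state of affairs, and your outline of the ribbon/carpet smoothing strategy and the Mukai quadric-hull alternative is a reasonable survey of approaches, not a proof. Just be aware that what you have written is a discussion of strategies toward an open problem, not a proof proposal in the usual sense; no amount of polishing the smoothing argument sketch will close the gap, because the obstruction-theoretic control you allude to in ``Step two'' is genuinely not known in general.
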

Here the Wahl map refers to the map $\bigwedge^2 H^0(C,K_C) \to H^0(C,K_C^3)$ given by 
$s \wedge t \mapsto tds-sdt$. One side of this conjecture is well-known; indeed if $C \seq X$ is a smooth curve in a K3 surface then $W_C$ is nonsurjective, \cite{wahl-jac}. Furthermore, if $C$ is general and $Pic(X) \simeq \mathbb{Z}C$, then $C$ is Brill--Noether general, \cite{lazarsfeld-bnp}. In \cite[Question 5.5]{flam}, it was asked if there exists such a Wahl-type obstruction for a smooth curve to have a nodal model lying on a K3 surface.\footnote{An obstruction was also proposed in \cite{halic}, the proof however seems flawed, see Remark \ref{halic-remarks}.} 

Let $\widetilde{\mathcal{M}}_{p(g,k)-n,2n} := \overline{\mathcal{M}}_{p(g,k)-n,2n}/ S_{2n}$ denote the stack of curves with an unordered marking (or divisor). One may slightly alter the above question and ask if there exists an obstruction for a \emph{marked} curve to have a nodal model lying on a K3 surface in such a way that the marking is the divisor over the nodes (when we forget about the ordering).
For  any positive integers $h,l$ and $[(C,T)] \in \widetilde{\mathcal{M}}_{h,2l}$, one may consider the Gaussian
$$W_{C,T}: \bigwedge^2 H^0(C,K_{C}(-T)) \to H^0(C,K_{C}^3(-2T)) $$
which we will call the \emph{marked Wahl map}, since it depends on both the curve and the marking. In Section \ref{markywahl} we use a method inspired from \cite{wahl-plane-nodal} to show the following:
\begin{thm} \label{inf-many-wahl}
Fix any integer $l \in \mathbb{Z}$. Then there exist infinitely many integers $h(l)$, such that the general marked curve $[(C,T)] \in \widetilde{\mathcal{M}}_{h(l),2l}$ has surjective marked Wahl map.
\end{thm}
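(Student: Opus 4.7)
The plan is to exhibit, for each sufficiently large integer $d$, a specific marked smooth curve $(C,T)$ of genus $h(d):=\binom{d-1}{2}-l$ with $|T|=2l$ for which $W_{C,T}$ is surjective, and then invoke upper semicontinuity of the rank of the marked Wahl map together with the irreducibility of $\widetilde{\mathcal{M}}_{h(d),2l}$ to deduce surjectivity at the generic point. As $d$ varies the integers $h(d)$ run over an infinite set, producing the required sequence $h(l)$.

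Following the template of \cite{wahl-plane-nodal}, for each $d \gg l$ I would take a general integral plane curve $D \subset \mathbb{P}^2$ of degree $d$ with exactly $l$ ordinary nodes $\nu_1,\ldots,\nu_l$ (existence by Severi-variety theory), let $\sigma\colon Y\to\mathbb{P}^2$ be the blowup at $\{\nu_i\}$ with exceptional divisors $E_i$, and set $C:=\tilde{D}\subset Y$, which is smooth of genus $h(d)$. The reduced divisor $T\subset C$ cut out by $\sum_i E_i$ has degree $2l$. Putting
\[
\mathcal{M}\;:=\;\sigma^{*}\mathcal{O}_{\mathbb{P}^2}(d-3)\otimes\mathcal{O}_Y\!\left(-2\sum_i E_i\right),
\]
a routine adjunction computation on $Y$ (using $K_Y=\sigma^{*}\mathcal{O}_{\mathbb{P}^2}(-3)\otimes\mathcal{O}_Y(\sum E_i)$ and $\mathcal{O}_Y(C)=\sigma^{*}\mathcal{O}_{\mathbb{P}^2}(d)\otimes\mathcal{O}_Y(-2\sum E_i)$) yields $\mathcal{M}|_C\cong K_C(-T)$. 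Restricting from $Y$ to $C$ and composing with the conormal quotient $\Omega_Y^{1}|_C\twoheadrightarrow K_C$ fits the marked Wahl map into the commutative square
\[
\xymatrix{
\bigwedge^{2} H^0(Y,\mathcal{M}) \ar[r]^-{\nu_{Y,\mathcal{M}}} \ar[d]_-{\rho} & H^0(Y,\Omega_Y^{1}\otimes\mathcal{M}^{\otimes 2}) \ar[d]^-{\tau} \\
\bigwedge^{2} H^0(C,K_C(-T)) \ar[r]^-{W_{C,T}} & H^0(C,K_C^{3}(-2T))
}
\]
whose top row is the surface Gaussian of the pair $(Y,\mathcal{M})$. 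A diagram chase shows that if $\rho$, $\tau$, and $\nu_{Y,\mathcal{M}}$ are all surjective, then so is $W_{C,T}$.

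Surjectivity of $\rho$ and of each stage of $\tau$ reduces, via the standard exact sequences $0\to\mathcal{F}(-C)\to\mathcal{F}\to\mathcal{F}|_C\to 0$ and the conormal sequence for $C\subset Y$, to cohomology vanishings on $Y$; after applying $\sigma_*$ these become vanishings on $\mathbb{P}^2$ for twists of $\mathcal{O}_{\mathbb{P}^2}(a)$ by ideal sheaves $I^{k}_{\{\nu_i\}}$, which hold for $d\gg l$ provided the nodes lie in sufficiently general position. The genuine obstacle is the surjectivity of the surface Gaussian $\nu_{Y,\mathcal{M}}$: this is the heart of the argument in \cite{wahl-plane-nodal}. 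The idea there is to factor $\nu_{Y,\mathcal{M}}$ through the Gaussian on $\mathbb{P}^2$ attached to $\mathcal{O}_{\mathbb{P}^2}(d-3)\otimes I^{2}_{\{\nu_i\}}$, exploit the classical surjectivity of Gaussian maps on projective space in sufficiently high degree (via the Euler sequence on $\mathbb{P}^2$), and verify that the vanishing-to-order-two conditions imposed at the general points $\nu_i$ do not obstruct surjectivity. Adapting this analysis so that the relevant image lands in the marked target $H^0(C,K_C^{3}(-2T))$ rather than in $H^0(C,K_C^{3})$ constitutes the bulk of the technical work, and is where the hypothesis ``$d$ large relative to $l$'' is essential.
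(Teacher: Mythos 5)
Your overall scaffolding (blow up the plane, identify $M_{|_D}\simeq K_D(-T)$, run the two-row Gaussian diagram, then specialize via semicontinuity) matches the paper's strategy, but your specific construction --- a plane curve with \emph{only} the $l$ nodes --- cannot be pushed through, and the failure is not repaired by taking $d \gg l$ or the nodes general. The problem is the right-hand vertical map $\tau$. Its second stage sits in the conormal sequence $0 \to \mathcal{O}_D(-D)(M^2) \to \Omega_{S}(M^2)_{|_D} \to \Omega_D(M^2) \to 0$, whose obstruction group is $H^1(D,\mathcal{O}_D((d-6)H-2E_X))$ with $E_X=\sum_i E_i$. By Serre duality on $D$ (using $K_D=((d-3)H-E_X)_{|_D}$) this is dual to $H^0(D,(3H+E_X)_{|_D})$, and since $H^0(S,3H+E_X-D)=0$ the restriction map injects $H^0(S,3H+E_X)\simeq H^0(\proj^2,\mathcal{O}(3))$ into it; so this $H^1$ has dimension at least $10$, \emph{independently of $d$}. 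This is not an artifact of the method: for $l=0$ your construction specializes to smooth plane curves, whose Wahl map is classically nonsurjective (with corank coming precisely from the cubics on $\proj^2$), so the curves you propose to specialize to do not in general have surjective marked Wahl map at all.

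The paper's proof (Proposition \ref{mark-surj-main} applied with $n=l$, $m=10$) circumvents exactly this: it imposes, in addition to the $l$ nodes, $m\geq 10$ \emph{ordinary triple points} $y_j$ in general position, marks only the divisor over the nodes, and takes $M=\mathcal{O}_S((d-3)H-2E_X-2E_Y)$. The adjunction computation then gives $K_D=((d-3)H-E_X-2E_Y)_{|_D}$, so the residual bundle in the conormal step becomes $(3H+E_X-E_Y)_{|_D}$, and the needed vanishing reduces (Lemma \ref{gauss-lem2}) to $H^0(\proj^2,\mathcal{O}(3)\otimes I_Y)=0$, which holds because ten general points lie on no plane cubic; the coefficient $-2E_Y$ in $M$ also keeps $M-K_S=(d-6)H-3E_X-3E_Y$ expressible as a sum of three very ample bundles for the Kawamata--Viehweg criterion (Proposition \ref{kv-van}). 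A secondary, fixable omission in your write-up: semicontinuity of the rank of $W_{C,T}$ across $\widetilde{\mathcal{M}}_{h,2l}$ requires $h^0(C,K_C(-T))$ and $h^0(C,K_C^3(-2T))$ to be locally constant near your special point; the paper secures this by proving $h^0(D,\mathcal{O}_D(T))=1$ at the constructed curve and noting $\deg K_D^3(-2T)>2h-2$. You would need the analogous statements in any corrected construction.
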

On the other hand we show:
\begin{thm} \label{marked-wahl-k3}
Assume $g-n \geq 13$ for $k=1$ or $g \geq 8$ for $k >1$, and let $ n \leq \frac{p(g,k)-2}{5}$. Then there is an irreducible component $I^0 \seq \mathcal{V}^n_{g,k}$ such that for a general $[(f: C \to X,L)] \in I^0$ the marked Wahl map $W_{C,T}$ is nonsurjective, where $T \seq C$ is the divisor over the nodes of $f(C)$.
\end{thm}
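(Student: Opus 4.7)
The plan is to build a nontrivial Serre-dual obstruction to surjectivity of $W_{C,T}$. First, define $I^0 \seq \mathcal{V}^n_{g,k}$ as the component obtained by restricting the component $I$ of Theorem \ref{finiteness} (or Theorem \ref{finiteness-nonprim}) to the nodal locus; this restriction is nonempty by Corollary \ref{univ-sev-corollary-sdfw} together with the non-trigonality of the general member of $I$. For a general $[(f: C \to X, L)] \in I^0$ the morphism $f$ is then an unramified normalization of an integral nodal curve $D = f(C) \in |kL|$ with exactly $n$ nodes, and $T \seq C$ is the reduced divisor of $2n$ preimages of these nodes. A Riemann--Roch check shows that the arithmetic hypothesis $n \leq (p(g,k)-2)/5$ is exactly what forces $h^1(K_C(-T)) = 0$ and $h^0(K_C^3(-2T)) > 0$, so that non-surjectivity of $W_{C,T}$ is a meaningful claim.

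By Serre duality, non-surjectivity of $W_{C,T}$ is equivalent to producing a nonzero class $\nu \in H^1(C, K_C^{-2}(2T))$ pairing to zero with every element of $\operatorname{im}(W_{C,T})$. The candidate $\nu$ is built from the K3 structure on $X$: the normal sheaf sequence
\[ 0 \to T_C \to f^{*}T_X \to N_f \to 0 \]
satisfies $N_f \cong K_C$ (since $K_X \cong \mathcal{O}_X$), and its extension class is the classical Wahl obstruction $\nu_0 \in \operatorname{Ext}^1(K_C, T_C) = H^1(K_C^{-2})$. Inspired by the analysis of \cite{wahl-plane-nodal}, I would define $\nu \in H^1(K_C^{-2}(2T))$ by a refinement of $\nu_0$: at each $p \in T$ one uses the length-two local jet data of $f^{*}T_X$, combined with the trivialization of $K_X$, to modify $\nu_0$ across the twist by $\mathcal{O}_C(2T)$.

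The vanishing of the Serre pairing $\langle \nu, W_{C,T}(s \wedge t)\rangle = 0$ for $s, t \in H^0(K_C(-T))$ then follows by a local \v{C}ech computation adapted from Wahl's smooth-curve identity. Away from $T$ the argument is identical to the smooth case, while at the points of $T$ the hypothesis that $s$ and $t$ vanish on $T$ is exactly what allows the cocycle representing $t\,ds - s\,dt$ to lift across the twist by $\mathcal{O}_C(2T)$, keeping the Wahl-type identity intact. This step should be essentially a formal verification once the correct $\nu$ is in place.

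The main obstacle is showing that $\nu$ is actually nonzero. The naive image of $\nu_0$ under $H^1(K_C^{-2}) \to H^1(K_C^{-2}(2T))$ could a priori be killed by the coboundary of a length-$4n$ skyscraper supported on $T$ --- which is why the naive pushforward does not suffice and the refined construction above is needed. Non-vanishing of $\nu$ will be extracted from the deformation theory of $f$: if $\nu$ were zero, one could splice a first-order deformation of $f$ in $X$ that partially smooths some node of $D$, producing tangent directions to $I^0$ beyond its expected dimension and contradicting the generic finiteness of $\eta|_{I^0}$ established in Theorem \ref{finiteness} (or \ref{finiteness-nonprim}). Making this deformation-theoretic rigidity argument precise, and matching it explicitly with the cocycle defining $\nu$, is where the technical bulk of the proof will lie.
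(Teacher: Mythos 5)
Your framing of $I^0$ and your instinct that the generic finiteness of $\eta|_I$ must supply the nontriviality are both correct, and they match the paper's proof, which indeed invokes exactly that input (via Lemma \ref{coh-van}: $H^0(C, f^*T_X)=0$ for general members of the component $I$ of Theorems \ref{finiteness}/\ref{finiteness-nonprim}). But the core of your plan has a genuine gap: the class $\nu$ is never constructed. You correctly observe that the naive image of $\nu_0 \in H^1(K_C^{-2})$ under $H^1(K_C^{-2}) \to H^1(K_C^{-2}(2T))$ may die, but the proposed fix --- modifying $\nu_0$ by ``length-two local jet data'' at the nodes --- is not a definition, and the subsequent pairing computation and nonvanishing argument both depend on it. The paper avoids this entirely by changing the geometric setting: it blows up $X$ at the nodes, $\pi: Y \to X$, so that $C$ embeds in $Y$ with $N_{C/Y} = M|_C = K_C(-T)$ where $M = \mathcal{O}_Y(C)$, and then compares the surface Gaussian $\Phi_M$ on $Y$ with $W_{C,T}$ via a commutative square whose left vertical map is surjective since $H^1(Y,\mathcal{O}_Y)=0$. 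In other words, the ``refined'' object you are groping for is already present canonically as the twisted conormal sequence $0 \to M|_C \to \Omega_Y|_C(M^2) \to K_C^3(-2T) \to 0$ of $C \seq Y$; no ad hoc modification at the nodes is needed, and your extension class of $0 \to T_C \to f^*T_X \to N_f \to 0$ on $C$ is the wrong sequence --- it forgets the marking, which is precisely why its pushforward is inadequate.

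There is also a concrete numerical error that signals a misreading of the mechanism: you claim the hypothesis $n \leq \frac{p(g,k)-2}{5}$ forces $h^1(K_C(-T))=0$. In fact $h^1(K_C(-T)) = h^0(\mathcal{O}_C(T)) \geq 1$ always, since $T$ is effective and nonzero, and the paper's proof \emph{uses} this nonvanishing: assuming $W_{C,T}$ surjective, the long exact sequence of the twisted conormal sequence together with $H^1(M|_C) \neq 0$ forces $h^0(C, T_Y|_C(2E-K_C)) \neq 0$. The actual role of the bound $n \leq \frac{p(g,k)-2}{5}$ is to make $K_C(-2T)$ effective, which gives the inclusion $H^0(C, T_Y|_C(2E-K_C)) \seq H^0(C, f^*T_X)$; the latter vanishes by Lemma \ref{coh-van}, producing the contradiction. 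So while your Serre-dual reformulation could in principle be carried out (the annihilating classes are essentially the images of $H^0(\mathcal{O}_C(T))$ cupped with the extension class of $C \seq Y$, not a jet-modified $\nu_0$), as written the proposal is missing its central object, misidentifies what the arithmetic hypothesis buys, and replaces the clean cohomological contradiction with an unproven deformation-splicing sketch.
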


\subsection{Brill--Noether theory for nodal curves on K3 surfaces}
In the last section we study the Brill--Noether theory of nodal curves on K3 surfaces. There are two related questions: for $[(f: C \to X,L)] \in \mathcal{V}_g^n$ general, one may firstly ask if the smooth curve $C$ is Brill--Noether general and secondly if the nodal curve $f(C)$ is Brill--Noether general. For the first question we show in Section \ref{BNP-nodal}:
\begin{prop} \label{BNP-theorem}
Assume $g-n \geq 8$. Then there exists a component $\mathcal{J} \seq \mathcal{V}^n_g$ such that for $[(f:C \to X,L)] \in \mathcal{J}$ general, $C$ is Brill--Noether--Petri general.
\end{prop}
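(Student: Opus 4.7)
My plan is to adapt Lazarsfeld's proof of Brill--Noether--Petri generality for smooth hyperplane sections of general polarized K3 surfaces \cite{lazarsfeld-bnp} to the present nodal setting, using the unramified map $f \colon C \to X$ in place of a closed embedding. Since the Petri-general locus is open in $\mathcal{M}_{g-n}$, it suffices to exhibit a single $[(f: C \to X, L)] \in \mathcal{V}^n_g$ whose normalization $C$ is Petri general; any irreducible component $\mathcal{J} \seq \mathcal{V}^n_g$ containing this point will then satisfy the conclusion at its generic member by semicontinuity. For $\mathcal{J}$ I would take a component dominating $\mathcal{B}_g$, as furnished by Chen's existence theorem and used elsewhere in the paper, so that at the chosen general point we may further assume $\operatorname{Pic}(X) = \mathbb{Z}L$.

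The core of the argument is a Lazarsfeld--Mukai type construction. Suppose for contradiction that a base-point-free $A \in W^r_d(C)$ witnesses the failure of Petri. To the pair $(f, A)$ one attaches a rank-two coherent sheaf $E = E_{f, A}$ on $X$, realised (up to duality) as the kernel of the evaluation map $H^0(C, A) \otimes \sheafO_X \twoheadrightarrow f_{\ast} A$. Because $f$ is unramified and $f(C)$ is nodal, the two distinct preimages on $C$ of each node of $f(C)$ both contribute to the local structure of $f_{\ast}A$, and one must work locally at each such node to verify that the reflexive hull of $E$ is a genuine vector bundle on $X$ and to pin down its Chern classes in terms of $L$, $d = \deg A$, $r = h^0(A) - 1$, and the number $n$ of nodes. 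The failure of Petri for $A$ then translates, via the standard diagram chase of \cite{lazarsfeld-bnp}, into the simplicity of $E$.

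Granted that $E$ is simple and $\operatorname{Pic}(X) = \mathbb{Z}L$, Mukai's deformation-theoretic bound for simple sheaves on a K3 surface forces the Mukai vector $v(E)$ to satisfy an inequality which, once the Chern classes have been computed with the correct nodal correction, is equivalent to the Brill--Noether bound $\rho(g - n, r, d) \geq 0$ for smooth curves of genus $g-n$. This contradicts the failure of Petri and yields the proposition. The threshold $g - n \geq 8$ should enter precisely so that this numerical inequality is sharp enough to rule out all offending $A$, mirroring Lazarsfeld's original range for smooth curves.

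The principal obstacle is the local analysis at the nodes of $f(C)$: verifying that $E_{f, A}$ is genuinely locally free (rather than merely reflexive) and accurately computing the correction to $c_2(E)$ contributed by the $n$ nodes. This local calculation, which has no counterpart in Lazarsfeld's classical smooth-embedding setting, is the essential new ingredient; once accomplished, the remainder of his argument should transfer with only cosmetic changes.
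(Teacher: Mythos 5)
Your reduction to exhibiting a single point with Petri-general normalization (plus openness of the Petri condition) matches the paper, but the mechanism you propose for producing that point cannot work, and the failure is hidden precisely in the ``nodal correction'' you defer. The kernel $E$ of $H^0(C,A)\otimes\mathcal{O}_X \twoheadrightarrow f_*A$ is indeed locally free (this is exactly the construction the paper uses in Section 5.2 for rational nodal curves, citing Gomez), but its invariants are governed by $f_*A$ viewed as a rank-one torsion-free sheaf on the \emph{image} curve $D=f(C)$ of arithmetic genus $g$: since $\chi(f_*A)=\chi(A)=d-(g-n)+1$, one computes $c_2(E)=d+n$, not $d$. Mukai's bound $v(E)^2\geq -2$ for a simple sheaf then yields $\rho(g,r,d+n)\geq 0$, and since $\rho(g,r,d+n)=\rho(g-n,r,d)+n$, this only gives $\rho(g-n,r,d)\geq -n$ --- weaker by exactly $n$ than what you need. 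In other words, the Lazarsfeld--Mukai construction on $X$ controls the Brill--Noether theory of the nodal curve $D$ (which is what Theorem \ref{bn-rat-thm} addresses), not that of its normalization $C$, and no local analysis at the nodes can close this gap: the statement your argument would prove --- Petri generality of the normalization for \emph{every} nodal curve on \emph{every} Picard-rank-one K3 --- is false. The paper flags this immediately after the proposition by citing \cite[Thm.\ 0.1]{ciliberto-knutsen-gonal}: the conclusion fails even for all members of $\mathcal{J}$ over a fixed general $(X,L)$, so the result can only hold generically on a component, and cannot be obtained by a pointwise contradiction argument of the kind you sketch.

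The paper's actual proof sidesteps nodal Lazarsfeld--Mukai theory entirely, by degeneration. It takes the special Picard-rank-three K3 surface $S_{p,h}$ of Lemma \ref{onenodal-lem-a} with $p=g$, $h=g-n$, whose polarization decomposes as $M+R_1+\epsilon R_2$ with $(M)^2=2h-2$ and $R_1,R_2$ rational curves. A general \emph{smooth} $D\in |M|$ is Brill--Noether--Petri general by Lazarsfeld's original smooth-curve argument (Corollary \ref{BNP-cor}), the required simplicity of the bundles $F_{D,A}$ coming from a lemma showing $M$ admits no decomposition $A_1+A_2$ with both summands moving --- so higher Picard rank is harmless. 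One then forms the stable map whose domain is $D$ with the rational curves attached as tails (partially normalizing at all but one node on each $D\cap R_i$), shows via the machinery of Section \ref{finny} that it lies in the closure of a component $\mathcal{J}\seq\mathcal{V}^n_g$, and observes that its stabilization is $D$ itself; openness of the Petri condition in $\mathcal{M}_{g-n}$ then gives the conclusion at the general point of $\mathcal{J}$. Note also that the hypothesis $g-n\geq 8$ is not a Brill--Noether numerical threshold as you guessed: it is the constraint $h\geq 8$ needed for the lattice construction of $P_{g,h}$ and the very ampleness of $M$ (via Lemma \ref{little-lem}).
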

The above result should not be expected to hold for all $[(f:C \to X,L)] \in \mathcal{J}$ (or even for all $[(f:C \to X,L)] \in \mathcal{J}$ with the general polarized K3 surface $(X,L)$ kept fixed), see \cite[Thm.\ 0.1]{ciliberto-knutsen-gonal}.

For the second question we again have a positive answer. For an integral nodal curve $D$, we denote by $\bar{J}^d(D)$ the compactified Jacobian of degree $d$, rank one, torsion-free sheaves on $D$. In Section \ref{rat} we use moduli spaces of sheaves as in \cite{ogrady} to show:
\begin{thm} \label{bn-rat-thm}
Let $X$ be a projective K3 surface with $Pic(X) \simeq \mathbb{Z} L$ and $(L \cdot L)=2g-2$. Suppose $D \in |L|$ is a rational, nodal curve. Then
$$\overline{W}^r_d(D) := \{ \text{$A \in \bar{J}^d(D)$ with $h^0(A) \geq r+1$} \}$$ is either empty or is equidimensional of the expected dimension $\rho(g,r,d)$.
\end{thm}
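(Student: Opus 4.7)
\emph{Proof proposal.} The plan is to realise $\overline{W}^r_d(D)$ as a fiber of the Brill--Noether locus inside the Beauville--Mukai Lagrangian fibration on $X$, and to apply O'Grady's dimension theorem for Brill--Noether loci in moduli spaces of sheaves on a K3 surface. Set the Mukai vector $v := (0, L, d+1-g)$ and let $\mathcal{M} := M_X(v)$ denote the moduli space of Gieseker-semistable sheaves with Mukai vector $v$. Since $\mathrm{Pic}(X) \simeq \mathbb{Z}L$ and $L$ is primitive and ample, every curve in $|L|$ is integral, so every semistable sheaf in $\mathcal{M}$ is automatically stable and is a rank-one torsion-free sheaf of degree $d$ on a unique integral curve $D_F \in |L|$. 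By Mukai's theorem, $\mathcal{M}$ is a smooth, irreducible holomorphic symplectic variety of dimension $\langle v, v\rangle + 2 = 2g$, and the support morphism $\pi \colon \mathcal{M} \to |L| \simeq \mathbb{P}^g$ is a proper Lagrangian fibration whose fiber over $[D']$ is the compactified Jacobian $\overline{J}^d(D')$. Setting $\mathcal{M}^r := \{[F] \in \mathcal{M} : h^0(F) \geq r+1\}$, we have $\overline{W}^r_d(D) = \pi^{-1}([D]) \cap \mathcal{M}^r$.

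The main input is O'Grady's theorem: whenever nonempty, each irreducible component of $\mathcal{M}^r$ has dimension exactly $g + \rho(g, r, d) = 2g - (r+1)(g-d+r)$. The upper bound uses the Lazarsfeld--Mukai construction --- to $[F] \in \mathcal{M}^r$ one attaches the rank-$(r+1)$ locally free sheaf $E_F$ on $X$ fitting in the Lazarsfeld sequence $0 \to E_F^\vee \to H^0(F) \otimes \mathcal{O}_X \to F \to 0$, and careful Ext-computations for $E_F$ bound the tangent space to $\mathcal{M}^r$ by $g + \rho$. The lower bound (together with a Cohen--Macaulay property) comes from Fitting ideals: locally on $\mathcal{M}$, the condition $h^0(F) \geq r + 1$ is a determinantal rank condition on a two-term complex of vector bundles representing $R\pi_*\mathcal{F}$ for a local universal sheaf $\mathcal{F}$, which by Eagon--Northcott has expected codimension $(r+1)(g-d+r)$.

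To transfer this bound to the fiber over $[D]$, we exploit the rational nodal hypothesis. The compactified Jacobian admits a stratification
$$\overline{J}^d(D) \;=\; \bigsqcup_{S} \mathrm{Pic}^{d-|S|}(\tilde{D}_S),$$
indexed by subsets $S$ of the $g$ nodes of $D$, where $\tilde{D}_S$ is the partial normalization of $D$ at the nodes in $S$ --- again a rational nodal curve of arithmetic genus $g - |S|$ --- so that $\mathrm{Pic}^{d-|S|}(\tilde{D}_S) \simeq (\mathbb{C}^*)^{g-|S|}$. This induces a stratification $\overline{W}^r_d(D) = \bigsqcup_{S} W^r_{d-|S|}(\tilde{D}_S)$, in which each boundary stratum ($|S| \geq 1$) has expected dimension $\rho(g - |S|, r, d - |S|) = \rho(g, r, d) - |S| < \rho$; only the open stratum $W^r_d(D) \subseteq \mathrm{Pic}^d(D) \simeq (\mathbb{C}^*)^g$ can a priori realise components of full dimension $\rho$. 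The main technical hurdle is to upgrade the global bound $\dim \mathcal{M}^r \leq g + \rho$ to a stratum-by-stratum upper bound on $\overline{W}^r_d(D)$: one must check that the Fitting ideal cutting out the Brill--Noether condition restricts to each stratum of $\overline{J}^d(D)$ in the expected codimension, which is achieved by combining O'Grady's global codimension estimate with a local determinantal analysis on $(\mathbb{C}^*)^{g-|S|}$, using that the gluing between adjacent strata (unsmoothing a single node) is controlled by the local Picard structure at that node.
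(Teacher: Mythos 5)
Your proposal has the right general flavour---moduli of sheaves on the K3 plus a dimension bound coming from hyperk\"ahler geometry---but both of its load-bearing steps are unsupported, and the second one is exactly the hard part of the theorem. First, the ``O'Grady dimension theorem'' you invoke, that every component of the Brill--Noether locus $\mathcal{M}^r$ in the rank-zero moduli space $M_X(0,L,d+1-g)$ has dimension \emph{exactly} $g+\rho$, is not a theorem of O'Grady or anyone else in this generality. The determinantal/Fitting-ideal structure of $\mathcal{M}^r$ gives only the lower bound $\geq g+\rho$ (the relative analogue of Theorem \ref{bhosle-bn} in the paper, due to Bhosle--Parameswaran); the upper bound is precisely what must be proven, and note that your claimed exact-dimension statement already contains the fiberwise theorem: if a component of $\mathcal{M}^r$ were contained in the single fibre $\bar{J}^d(D)$, it would force $\dim \overline{W}^r_d(D)=g+\rho>\rho$, so ruling this out is circular. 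Second, your ``main technical hurdle'' paragraph---transferring a total-space bound to the fibre over $[D]$ via a ``local determinantal analysis'' on the torus strata---is a placeholder, not an argument: nothing about the strata being $(\C^*)^{g-|S|}$ prevents excess-dimensional Brill--Noether loci (the analogous statement is false fibrewise for special non-rational curves in $|L|$, whose strata also include torus factors). Your proposal never uses rationality of $D$ where it is essential, namely in the upper bound.

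The paper's actual mechanism is different and shows where rationality enters. It works with the \emph{positive-rank} Lazarsfeld--Mukai bundles $F^*=(\mathrm{Ker}\,\lambda)^*$ with Mukai vector $v=(r+1,L,g-d+r)$, proves they are stable (using $Pic(X)\simeq\mathbb{Z}L$), and considers the locus $M_C\seq M_v$ of bundles arising from sheaves on the fixed rational curve. Since every point of a rational curve is rationally equivalent to the Beauville--Voisin class $c_X$, one gets $c_2(F^*)\sim d\,c_X$ for all such bundles; O'Grady's results are then used in their true form---the Chow-theoretic filtration $\widetilde{M}_v\to X^{[\rho]}$ with $q^*\alpha = k\,p^*\beta$---so that Mumford's theorem on zero-cycles forces the Mukai symplectic form to vanish on $M_C$. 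Hence $M_C$ is isotropic and $\dim M_C\leq \tfrac{1}{2}\dim M_v=\rho(g,r,d)$, which is a \emph{fibrewise} bound, valid directly for sheaves on $C$, with no base-to-fibre transfer needed. This handles globally generated sheaves with $h^0=r+1$; the remaining loci are treated by induction on $\rho$ using the globally-generated-part map $A\mapsto A'$ together with explicit $\mathrm{Ext}^1$-bounds at nodes (Lemmas \ref{techlem}, \ref{techlem2}) and the $r=0$ case via effective Cartier divisors (Lemma \ref{zero-case})---a decomposition different from, and more robust than, your local-freeness stratification, which would in any case still require the open-stratum bound you never establish. If you want to salvage your route, the missing ingredient is exactly this isotropy/constant-cycle input (cf.\ the paper's closing remark that the theorem holds for any integral nodal constant cycle curve).
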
 
As one may smoothen the nodes of a rational nodal curve $D$ on a K3 surface to produce a curve with an arbitrary number of nodes, the above result immediately gives the following corollary:
\begin{cor}
For any $n \geq 0$, there is a component $\mathcal{J} \seq \mathcal{V}^n_g$ such that if $[(f:C \to X,L)] \in \mathcal{J}$ is general and $D=f(C)$ then
$$\overline{W}^r_d(D) := \{ \text{$A \in \bar{J}^d(D)$ with $h^0(A) \geq r+1$} \}$$ is either empty or is equidimensional of the expected dimension $\rho(g,r,d)$. 
\end{cor}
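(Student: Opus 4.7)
The strategy is a specialisation argument. Pick $(X,L) \in \mathcal{B}_g$ general so that $\mathrm{Pic}(X) \simeq \mathbb{Z}L$; by Chen's theorem \cite{chen-rational}, $|L|$ contains an integral rational nodal curve $D_0$, which necessarily has exactly $g$ nodes. Using the classical result that on such a K3 surface the locus of $n$-nodal curves in $|L|$ is non-empty of the expected dimension $n$ near $D_0$, one may smoothen any chosen subset of $g-n$ of the nodes of $D_0$ inside $|L|$, producing a one-parameter family $\mathcal{D} \to S$ of integral curves on the fixed K3 $X$, with $D_0$ as special fibre and general fibre $D_t$ having exactly $n$ nodes. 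The associated normalisations $[(f_t : C_t \to X, L)]$ lie in $\mathcal{V}^n_g$, and their closure defines the required component $\mathcal{J}$.

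Now form the relative compactified Jacobian $\bar{\pi}: \bar{\mathcal{J}}^d_{\mathcal{D}/S} \to S$, which is proper since $\mathcal{D}/S$ is a family of integral curves, together with the closed relative Brill--Noether subscheme $\overline{\mathcal{W}}^r_d \seq \bar{\mathcal{J}}^d_{\mathcal{D}/S}$ cut out by $h^0 \geq r+1$. By the standard determinantal description of Brill--Noether loci, $\overline{\mathcal{W}}^r_d$ has codimension at most $(r+1)(g-d+r)$ in the total space, so every irreducible component $Z$ satisfies $\dim Z \geq \rho(g,r,d) + \dim S$. Suppose $Z$ dominates $S$. Properness of $\bar{\pi}$ gives $Z \cap \bar{\pi}^{-1}(0) \neq \emptyset$, and since this intersection lies in $\overline{W}^r_d(D_0)$, Theorem \ref{bn-rat-thm} yields $\dim(Z \cap \bar{\pi}^{-1}(0)) \leq \rho(g,r,d)$. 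The fibre dimension theorem then forces $\dim Z = \rho(g,r,d) + \dim S$, and consequently $\dim Z_t = \rho(g,r,d)$ for general $t$. Since any component of $\overline{W}^r_d(D_t)$ sits inside some such $Z_t$ and itself has dimension at least $\rho(g,r,d)$ by the fibrewise lower bound, equidimensionality of $\overline{W}^r_d(D_t)$ of dimension $\rho(g,r,d)$ follows. If instead $\overline{W}^r_d(D_0) = \emptyset$, no component of $\overline{\mathcal{W}}^r_d$ can dominate $S$, and hence $\overline{W}^r_d(D_t) = \emptyset$ for general $t$.

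The main obstacle lies in the first step, namely ensuring that the partial node-smoothing of $D_0$ can be realised inside the fixed linear system $|L|$ on the fixed K3 $X$, rather than only inside a family of pairs $(X_t,L_t)$. This amounts to the statement that the equisingular Severi stratification of $|L|$ has the expected codimension near $D_0$, a classical fact for general polarised K3 surfaces. Once this input is granted, the remainder of the argument is a formal consequence of Theorem \ref{bn-rat-thm} and the fibre dimension theorem.
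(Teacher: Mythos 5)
Your proposal is correct and follows essentially the same route as the paper, which states the corollary as an immediate consequence of Theorem \ref{bn-rat-thm}, obtained by smoothening $g-n$ of the nodes of a rational nodal curve in $|L|$ (the classical fact, which you rightly flag, that nodes of a nodal curve on a K3 surface can be smoothened independently inside the fixed linear system) and then applying semicontinuity --- precisely the relative compactified Jacobian / fibre-dimension argument you spell out. The only point left implicit in your write-up is the passage from ``general $t \in S$'' to ``general point of the component $\mathcal{J}$,'' which follows by running the same open-property argument (determinantal lower bound everywhere plus upper semicontinuity of fibre dimension for the proper relative Brill--Noether locus) over the irreducible component $\mathcal{J}$ itself.
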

In particular, if $\rho(g,r,d) <0$, $\overline{W}^r_d(D) = \emptyset$ for $D$ as in the above corollary; indeed this is well-known and follows from the arguments of \cite[\S 3.2]{gomez}, \cite[Cor.\ 1.4]{lazarsfeld-bnp}. On the other hand, if $\rho(g,r,d)  \geq 0$, then $\overline{W}^r_d(D) \neq \emptyset$ by deforming $D$ to a smooth curve on $X$ and semicontinuity.

We may summarize the above results as stating that there are no Brill--Noether obstructions for a curve to have a nodal model lying on a K3 surface. It would be interesting to find \emph{non-abelian}, rank two, Brill--Noether obstructions for a curve to have a nodal model lying on a K3 surface, in analogy with the smooth case, \cite{mukai-nonabelian}, \cite{sernesi-brill}.

\medskip {\bf Acknowledgments:} I am extremely thankful to my thesis advisor, Professor Daniel Huybrechts, for introducing me to this topic and for patiently reading multiple versions of this article. I would also like to thank the participants of the ``Workshop on Severi Varieties and Hyperk\"ahler Manifolds", November 4-8, 2013 in Rome, where some of these results were announced, for useful discussions. In particular, the author benefitted greatly from discussions with C.\ \!Ciliberto, F.\ \!Flamini, C.\ \!Galati, A.\ \!Knutsen and E.\ \!Sernesi. I am further thankful to A.\ \!Knutsen for helpful discussions and for pointing out an error in an earlier draft of $\S 4$, and would like to thank my colleagues U.\ \!Greiner and S.\ \!Schreieder for useful discussions related to $\S 2$, $\S 3$. Last but not least, I most heartily thank the referee for a very thorough reading and for taking the time to compile a long list of misprints. In particular, I am thankful to the referee for correcting an error in the bound of Proposition \ref{mark-surj-main}. This work was funded via a PhD Scholarship from the Bonn International Graduate School in Mathematics (BIGS) and by SFB/TR 45.

\section{Mukai's theory for curves on K3 surfaces} \label{mukai}
In this section we will recall a construction of Mukai to construct loci $Z \seq \mathcal{V}^0_g$ such that  for $x \in Z$ the fibre of  
$\eta : \mathcal{V}^0_g \to \mathcal{M}_{g}$ over $\eta(x)$ is zero-dimensional at $x$. The main point is that, in some special cases, the data of a closed embedding of a smooth, genus $11$ curve $C$ in a K3 surface $X$ can be described purely in terms of $C$ together with a special line bundle $A \in W^1_6(C)$. This will be our basic tool for studying the generic finiteness of the morphism $\eta: \mathcal{T}^n_{g,k} \to \mathcal{M}_{p(g,k)-n}$. The main result in this section is Corollary \ref{bij-cor}.

Let $g\geq 5$ be an integer, let $1 \leq d_1,d_2, \ldots, d_8 < \left \lfloor \frac{g+1}{2} \right \rfloor$ be integers, and consider first the rank ten lattice $\Omega_g$ with ordered basis $\{L, E, \Gamma_1, \ldots, \Gamma_8 \}$ and with intersection form given by:
\begin{itemize}
\item $(L \cdot L)=2g-2$
\item $(L \cdot E)=  \left \lfloor \frac{g+1}{2} \right \rfloor$
\item $(E \cdot E)=0$
\item $(\Gamma_i)^2=-2$ for $1 \leq i \leq 8$
\item $(E \cdot \Gamma_i)=0$ for $1 \leq i \leq 8$
\item $(L \cdot \Gamma_i)=d_i$ for $1 \leq i \leq 8$
\item $(\Gamma_i \cdot \Gamma_j)=0$ for $i \neq j$, $1 \leq i,j \leq 8$
\end{itemize}
It is easily seen that the above lattice has signature $(1,9)$ and is even.
\begin{lem} \label{lem-aaa}
 Let $g \geq 6$ be an integer and choose $1 \leq d_1, \ldots, d_8 < \left \lfloor \frac{g+1}{2} \right \rfloor$. There exists a K3 surface $Y_{\Omega_g}$ with $Pic(Y_{\Omega_g}) \simeq \Omega_g$. Furthermore, for any such K3 we may choose the ordered basis $\{L,E, \Gamma_1, \ldots, \Gamma_8 \}$ of $\Omega_g$ 
 in such a way that $L-E$ is big and nef and with $\Gamma_i$ and $E$ representable by smooth, integral curves for $1 \leq i \leq 8$. Further there is a smooth rational curve $\widetilde{\Gamma}_i \in |E-\Gamma_i|$.
\end{lem}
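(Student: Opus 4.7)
The plan is to construct $Y_{\Omega_g}$ by combining a lattice embedding with the surjectivity of the period map, and then to invoke Weyl-group reflections on the Picard lattice to bring the given basis into a position where the desired effectivity conditions all hold.

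For existence, note that $\Omega_g$ is an even lattice of signature $(1,9)$; by Nikulin's theorem on primitive embeddings of even lattices into unimodular lattices, $\Omega_g$ embeds primitively into the K3 lattice $\Lambda_{K3}$ of signature $(3,19)$, the orthogonal complement of rank $12$ being large enough to accommodate the discriminant group of $\Omega_g$. Surjectivity of the period map then furnishes a K3 surface $Y$ with Picard lattice containing $\Omega_g$, and a very general choice of period inside the $\Omega_g$-polarized period domain makes this inclusion an equality, yielding $Y_{\Omega_g}$.

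To adjust the basis I would first compute $(L-E)^2 = (2g-2) - 2\lfloor (g+1)/2 \rfloor$, which equals $g-2$ for $g$ even and $g-3$ for $g$ odd, hence is positive for $g \geq 6$. So $L-E$ or its negative lies in the positive cone of $\Omega_g \otimes \mathbb{R}$. The Weyl group $W(\Omega_g)$ generated by reflections in $(-2)$-classes acts simply transitively on the chambers of the positive cone cut out by the $(-2)$-walls, so after applying a suitable Weyl element and relabelling I may assume $L-E$ lies in the closure of the K\"ahler cone of $Y_{\Omega_g}$. Then $L-E$ is big and nef. Moreover the pairings $(L-E)\cdot \Gamma_i = d_i$, $(L-E)\cdot E = \lfloor (g+1)/2 \rfloor$ and $(L-E)\cdot (E-\Gamma_i) = \lfloor (g+1)/2 \rfloor - d_i$ are all strictly positive under the standing hypothesis $d_i < \lfloor (g+1)/2 \rfloor$; combined with $\Gamma_i^2 = -2$, $E^2 = 0$, $(E-\Gamma_i)^2 = -2$ and Riemann--Roch, this forces each of $\Gamma_i$, $E$ and $\widetilde{\Gamma}_i := E-\Gamma_i$ to be effective, the negatives being ruled out by their negative pairing with the nef class $L-E$.

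The main obstacle, which I expect to be the most delicate step, is upgrading effectivity to representability by a \emph{smooth, integral} curve. For the $(-2)$-classes $\Gamma_i$ and $\widetilde{\Gamma}_i$ this amounts to ruling out decompositions into sums of effective $(-2)$-curves; I would enumerate short vectors in $\Omega_g$ with small positive pairing against $L-E$ and verify that in the chosen Weyl chamber no such proper summand can arise, so that each of $\Gamma_i$ and $\widetilde{\Gamma}_i$ must itself be an irreducible smooth rational curve. For $E$, primitivity together with $E^2 = 0$ and nefness places us in the classical setting of an elliptic fibration on a K3 surface, where $|E|$ is a base-point-free pencil whose general member is a smooth, integral elliptic curve. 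Together these observations deliver the geometric description of the basis claimed in the lemma.
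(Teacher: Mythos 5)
Your overall architecture coincides with the paper's: Nikulin's theorem plus the Torelli/period-map argument for existence, Picard--Lefschetz reflections and a sign change to put $L-E$ in the nef cone, and effectivity of $E$, $\Gamma_i$, $E-\Gamma_i$ via Riemann--Roch and pairing against the big and nef class $L-E$. All of that is sound. But there is a genuine gap at the next step: you invoke the nefness of $E$ as an input (``primitivity together with $E^2=0$ and nefness places us in the classical setting of an elliptic fibration'') without proving it, and it is not a consequence of the chamber normalization for $L-E$ --- a priori an effective $(-2)$-class $R$ could satisfy $(E \cdot R)<0$. This is precisely the step the paper establishes by computation: writing $R=xL+yE+\sum_{i=1}^8 z_i\Gamma_i$, one gets $x<0$ from $(E\cdot R)<0$, and then
$$(R-xL)^2=-2+x^2\Bigl(2g-2-2\Bigl\lfloor \frac{g+1}{2} \Bigr\rfloor\Bigr)-2x\,(L-E\cdot R)>0$$
for $x<0$ and $g\geq 6$, contradicting $(R-xL)^2=-2\sum_i z_i^2\leq 0$. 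Note that this is where the hypothesis $g\geq 6$ actually enters; your proposal never uses it. The gap also propagates: the paper's integrality arguments for $\Gamma_i$ and $\widetilde{\Gamma}_i$ need $E$ nef (it forces $x\geq 0$ for a putative component, and powers the ``$R+E$ is big and nef'' contradiction), so nefness of $E$ must come first.

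The integrality of $\Gamma_i$ and $\widetilde{\Gamma}_i$ is moreover left as an unexecuted plan, and the plan as stated would not close by chamber combinatorics alone. If $\Gamma_1$ is non-integral, its unique effective divisor has an integral component $R$ with $(R)^2=-2$ and $(R\cdot\Gamma_1)<0$; solving the lattice constraints (as the paper does) leaves the residual possibilities $R\sim \Gamma_1+yE$, and in the case of $\widetilde{\Gamma}_1$ the possibilities $R\sim \widetilde{\Gamma}_1+(y-1)E$ with $y\geq 1$, where the bound on $y$ uses $(R\cdot L)\geq 0$ together with the standing hypothesis $d_i<\lfloor(g+1)/2\rfloor$. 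These are $(-2)$-classes with strictly positive pairing against $L-E$, so nothing in your ``enumerate short vectors with small pairing against $L-E$'' step rules them out numerically. They are excluded by a geometric, not lattice-theoretic, argument: such a class has a one-dimensional space of sections, and its unique effective divisor visibly decomposes as $\Gamma_1$ (resp.\ $\widetilde{\Gamma}_1$) plus fibres of $|E|$, contradicting the integrality of the component $R$. Without this case analysis --- and without first knowing $E$ is nef and that $|aE|$ consists of sums of elliptic fibres --- the reduction from ``effective'' to ``smooth and integral'' does not go through as you describe it.
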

\begin{proof}
By the global Torelli theorem and from a result of Nikulin, the fact that this lattice has signature $(1,9)$ and is even implies that there exists a K3 surface $Y_{\Omega_g}$ with $Pic(Y_{\Omega_g}) \simeq \Omega_g$, \cite[Cor.\ 1.9, Cor.\ 2.9]{morrison-large} or \cite[Cor.\ 14.3.1]{huy-lec-k3}. By performing Picard--Lefschetz reflections and a sign change, we may assume that $L-E$ is big and nef, since $(L-E \cdot L-E)>0$, \cite[Prop.\ VIII.3.9]{barth} and \cite[Cor.\ 8.2.11]{huy-lec-k3}. Next, $(E \cdot E)^2=0$ and $(L-E \cdot E) =\left \lfloor \frac{g+1}{2} \right \rfloor>0$ which implies that $E$ is effective, \cite[Prop.\ VIII.3.6(i)]{barth}. We now want to show that the general element of $|E|$ is smooth and irreducible. By \cite[Prop.\ 2.6]{donat} and the fact that $E$ belongs to a basis of $Pic(Y_{\Omega_g})$ it is enough to show that $|E|$ is base-point free. It is enough to show that $E$ is nef, \cite[Lemma 2.3]{knut} or \cite[Prop.\ 2.3.10]{huy-lec-k3}. 

So it suffices to show there is no effective divisor $R$ with $(R)^2=-2$ and $(E \cdot R)<0$, \cite[Prop.\ VIII.3.6]{barth}. Suppose for a contradiction that such an $R$ exists. Write $R=xL+y E+ \sum_{i=1}^{8} z_{i} \Gamma_i$ for integers $x,y,z_{i}$.  As $(E \cdot R) = x \left \lfloor \frac{g+1}{2} \right \rfloor <0$ we have $x <0$. Then $(R-x L)^2=-2 \sum_{i=1}^8 z^2_{i}  \leq 0$. However, 
\begin{align*}
(R-x L)^2 &= -2+x^2(2g-2)-2x(L \cdot R) \\
&=-2+x^2(2g-2-2\left \lfloor \frac{g+1}{2} \right \rfloor)-2x(L-E \cdot R) \\
& >0
\end{align*}
for $x <0$ and $g \geq 6$.
Thus $|E|$ is an elliptic pencil. 

Next $\Gamma_1$ is effective, since $(\Gamma_1 \cdot L-E) >0$. We claim $\Gamma_1$ is integral. Otherwise, there would be an integral component $R$ of $\Gamma_1$ with $(R \cdot \Gamma_1) <0$, since $(\Gamma_1)^2=-2$. Further, $(R)^2=-2$, since $R$ is not nef. Write $R=xL+y E+\sum_{i=1}^{8} z_{i} \Gamma_i$. We have $(R \cdot E)=x \left \lfloor \frac{g+1}{2} \right \rfloor \geq 0$ so $x \geq 0$. Assume $x \neq 0$. Then we have $(R \cdot R+E) >0$ and $(R+E)^2>0$ so $R+E$ is big and nef, which contradicts that $(R+E \cdot \Gamma_1)=(R \cdot \Gamma_1)<0$. So $x=0$. But then $(R)^2=-2$ gives $\sum_{i=1}^8 z^2_{i}=1$, and $(R \cdot \Gamma_1)=-2 z_{1} <0$ so $z_{1}=1$ and $z_{i}=0$, $i>1$. Lastly, $(R \cdot L)=d_1+y \left \lfloor \frac{g+1}{2} \right \rfloor \geq 0$ so we must have $y \geq 0$ (as  $d_1< \left \lfloor \frac{g+1}{2} \right \rfloor$). Since $R$ is a smooth and irreducible rational curve, we must then have $y=0$ and $R=\Gamma_1$ (as the only effective divisor in $|R|$ is integral). Thus $\Gamma_1$ is integral. Likewise, $\Gamma_2, \ldots, \Gamma_8$ are integral.

Next, $\widetilde{\Gamma}_1$ is effective, since $(\widetilde{\Gamma}_1)^2=-2$ and $(\widetilde{\Gamma}_1 \cdot L-E) >0$. 
Let $R$ be an integral component of $\widetilde{\Gamma}_1$ such that $(R \cdot \widetilde{\Gamma}_1) <0$, $(R)^2=-2$. 
Writing $R=xL+y E+\sum_{i=1}^{8} z_{i} \Gamma_i$, we see as above $x=0$ and we must have $z_1=-1$ and $z_{i}=0$, $i>1$. Since $( R \cdot L)=-d_1+y \left \lfloor \frac{g+1}{2} \right \rfloor \geq 0$ we must have $y \geq 1$ and then $R \sim \widetilde{\Gamma}_1+(y-1)E$. Since $R$ is integral, this forces $R= \widetilde{\Gamma}_1$.
\end{proof}
\begin{lem} \label{I2}
Let $Y_{\Omega_g}$ and $\{L,E, \Gamma_1, \ldots, \Gamma_8 \}$ be as in the previous lemma, and let $Y_{\Omega_g} \to \proj^1$ be the fibration induced by $E$. If $Y_{\Omega_g}$ is a general $\Omega_g$-polarized K3 surface, then at least six of the reducible singular fibres $\Gamma_i+\widetilde{\Gamma_i}$ for $1 \leq i \leq 8$ are of the type $I_2$ (as opposed to the type $III$). We choose indices such that $\Gamma_i+\widetilde{\Gamma_i}$
is an $I_2$ fibre for $i \geq 3$.
\end{lem}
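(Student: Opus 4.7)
The plan is a general position argument on the ten-dimensional moduli $\mathcal{B}(\Omega_g)$ of $\Omega_g$-polarized K3 surfaces produced in Lemma \ref{lem-aaa}. For each $i \in \{1,\ldots,8\}$, let $Z_i \seq \mathcal{B}(\Omega_g)$ denote the locus of those $Y_{\Omega_g}$ for which the reducible fibre $\Gamma_i + \widetilde{\Gamma}_i$ is of Kodaira type $III$, i.e.\ the two components meet tangentially at a single point rather than transversally at two points (noting $(\Gamma_i \cdot \widetilde{\Gamma}_i) = 2$ from the lattice data). Transversality of intersection is open in flat families of smooth curves on a K3 surface, and a specialization $I_2 \rightsquigarrow III$ requires the two intersection points to collide; so each $Z_i$ is closed in $\mathcal{B}(\Omega_g)$. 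It therefore suffices to show that $Z_i$ is a proper subvariety for at least six of the indices $i$, as then the general $Y_{\Omega_g}$ lies outside the corresponding union.

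To prove properness of $Z_i$, I would exhibit a single explicit $\Omega_g$-polarized K3 surface on which the fibre $\Gamma_i + \widetilde{\Gamma}_i$ is of type $I_2$. One natural route is to construct an elliptic K3 surface $\pi \colon Y \to \mathbb{P}^1$ as a Weierstrass model whose discriminant has simple nodal zeros at the relevant points of $\mathbb{P}^1$, and then, via the Shioda-Tate formula together with surjectivity of the period map, arrange that $\Omega_g$ embeds primitively in $Pic(Y)$ with the prescribed intersection data. A more direct alternative is purely Hodge-theoretic: near a point of $Z_i$, the condition that the two nodes of an $I_2$ fibre collide cuts out a nontrivial analytic subset of the $\Omega_g$ period domain (since it is controlled by an extra rational relation among periods beyond those imposed by $\Omega_g$), so $Z_i$ is nowhere dense.

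In either case, the general $\Omega_g$-polarized K3 surface avoids $\bigcup_i Z_i$ and hence carries at least six (indeed all eight) reducible fibres of type $I_2$, and one chooses indices so that $\Gamma_i + \widetilde{\Gamma}_i$ is of type $I_2$ for $i \geq 3$. The main obstacle is controlling the Picard lattice in the explicit Weierstrass construction so that the intersection numbers with $L$ match those prescribed in $\Omega_g$; this amounts to a lattice-embedding calculation combined with an appeal to Torelli, and can in any case be sidestepped by the period-theoretic alternative mentioned above.
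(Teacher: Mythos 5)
Your reduction to showing that each type-$III$ locus $Z_i$ is a proper closed subset is reasonable, but both of your routes to properness have genuine gaps. The ``period-theoretic alternative'' fails outright: the degeneration $I_2 \rightsquigarrow III$ is invisible to the period point. In both configurations the fibre consists of two $(-2)$-curves $\Gamma_i, \widetilde{\Gamma}_i$ with $(\Gamma_i \cdot \widetilde{\Gamma}_i)=2$, so no new algebraic class, and hence no ``extra rational relation among periods,'' appears when the two intersection points collide; $Z_i$ is not a Noether--Lefschetz-type locus, and nothing in Hodge theory prevents $Z_i$ from being an entire component of the moduli space. Ruling that out requires an existence statement --- exhibiting one $\Omega_g$-polarized surface with the $I_2$ configuration --- which is exactly the content you have deferred.

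Your Weierstrass route then runs into the obstacle around which the paper's proof is organized: on the general $\Omega_g$-polarized surface the fibration induced by $E$ has multisection index $(L \cdot E)=\left\lfloor \frac{g+1}{2} \right\rfloor>1$ (every $D \in \Omega_g$ has $(D \cdot E)$ divisible by $(L \cdot E)$), so it admits \emph{no} section. Any Weierstrass model carries a section $\sigma$ with $(\sigma \cdot E)=1$, and then either $\sigma \in \Omega_g \otimes \mathbb{Q}$, violating primitivity of $\Omega_g \hookrightarrow \text{Pic}(Y)$, or $\text{rank}\,\text{Pic}(Y) \geq 11$; so you would have to build a rank-eleven lattice containing $\Omega_g$ primitively and verify effectivity, irreducibility and the fibre types there --- the ``lattice-embedding calculation'' you set aside is the whole proof. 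The paper circumvents this by passing to the Jacobian fibration $J(Y_{\Omega_g})$, which has isomorphic singular fibres and is polarized by the rank-ten lattice $\widetilde{\Omega_g} \simeq \mathfrak{h} \oplus (-2)^8$, i.e.\ an elliptic K3 with section and trivial Mordell--Weil group; there Shimada's classification supplies a member with ten $A_1$ fibres, of which at least six are $I_2$ by the Euler-number count $24 = \sum e(\text{fibres})$ --- which is also why only six, not ``all eight'' as you assert, can be guaranteed. The transfer back is itself nontrivial: one needs the Jacobian map from the local $\Omega_g$-moduli to $M_{\widetilde{\Omega_g}}$ to have zero-dimensional fibres (countability of the Tate--\v{S}afarevi\v{c} group), so that by Sard's theorem its image contains a Euclidean open set meeting the dense open $I_2$ locus, together with the bookkeeping for the (at most two) components of the moduli space interchanged by complex conjugation, which your argument also omits.
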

\begin{proof}
As $( xL+y E+\sum_{i=1}^{8} z_{i} \Gamma_i \cdot E)=x(L \cdot E)$, the elliptic fibration $Y_{\Omega_g} \to \proj^1$ induced by $E$ has multisection index $(L \cdot E)$, in the sense of \cite[\S 2]{keum}.
Consider the Jacobian fibration $J(Y_{\Omega_g}) \to \proj^1$, which has isomorphic singular fibres to $Y_{\Omega_g}$ (see \cite[Ch.\ 5]{cossec-dolgachev}, \cite[Ch.\ 1.5]{fried} and \cite[\S 11.4]{huy-lec-k3} for the basic theory of Jacobian fibrations). Let $\widetilde{\Omega_g}$ be the lattice generated by $\Omega_g$ and $$F:=E /(L \cdot E).$$ Then $J(Y_{\Omega_g})$ is an element of the global moduli space $M_{\widetilde{\Omega_g}}$ of $\widetilde{\Omega_g}$-polarized K3 surfaces, \cite[Lemma 2.1]{keum} and \cite[Def.\ p.1602]{dolgachev}. A result of Mukai states that one may describe $J(Y_{\Omega_g})$ as a moduli space $M_H(v)$ of $H$-semistable sheaves on $Y_{\Omega_g}$ for a generic polarization $H$ and Mukai vector $(0,E, (L \cdot E))$; see \cite[\S 11.4.2]{huy-lec-k3}, \cite[p.\ 2081]{keum}, \cite{mukai-moduli} (note that for $H$ generic, $M_H(v)$ is irreducible, \cite[Cor.\ 10.3.5]{huy-lec-k3}). This construction can be done in families by \cite[Thm.\ 4.3.7]{huybrechts-sheaves}, so there is a holomorphic map 
\begin{align*}
\phi \; : \; U &\to M_{\widetilde{\Omega_g}} \\
(Y'_{\Omega_g}, \Omega_g \hookrightarrow \text{Pic}(Y'_{\Omega_g})) & \mapsto (J(Y'_{\Omega_g}), \widetilde{\Omega_g} \hookrightarrow \text{Pic}(J(Y'_{\Omega_g})))
\end{align*} 
defined in a Euclidean open subset $U$ about $(Y_{\Omega_g}, \Omega_g \hookrightarrow \text{Pic}(Y_{\Omega_g}))$ in the local moduli space (or period domain) of marked $\Omega_g$-polarised K3 surfaces. The Tate--\v{S}afarevi\v{c} group of  $J(Y_{\Omega_g})$ is countable, \cite[Rem.\ 11.5.12]{huy-lec-k3} (cf.\ \cite[\S 1.5.3]{fried}, \cite[Cor.\ 2.2]{grothendieck-brauer}, \cite[Prop.\ 11.5.6]{huy-lec-k3} and \cite[Thm.\ 0.1]{huybrechts-brauer}). In particular, this implies that the fibres of $\phi$ are zero-dimensional by \cite[Cor.\ 11.5.5]{huy-lec-k3}. By Sard's theorem, $\phi(U)$ contains a Euclidean open set. Thus it suffices to show that there is a dense open set $V \seq M_{\widetilde{\Omega_g}}$ with the property that for any $(X, \widetilde{\Omega_g} \hookrightarrow \text{Pic}(X)) \in V$, the fibration $X \to \proj^1$ induced by $F$ has at least six $I_2$ fibres. 

Let $T$ be the trivial lattice of $J(Y_{\Omega_g}) \to \proj^1$, i.e.\ the lattice generated by $F$, any section $\sigma$ of  $J(Y_{\Omega_g}) \to \proj^1$ and the components of the reducible fibres which do not meet $\sigma$. We have
$$T \simeq \mathfrak{h} \oplus (-2)^8,$$
where $\mathfrak{h}$ denotes the hyperbolic lattice.
By choosing the basis $\{L-gF, F, \Gamma_1-d_1 F, \ldots, \Gamma_8-d_8 F  \}$, we see $\widetilde{\Omega_g}$ is isometric to $T$, or equivalently, the Mordell--Weil group of $J(Y_{\Omega_g})$ is trivial, \cite[Thm.\ 6.3]{schuett}. Suppose $(X, \widetilde{\Omega_g} \hookrightarrow \text{Pic}(X)) \in  M_{\widetilde{\Omega_g}}$ has the property that $X \to \proj^1$ induced by $F$ has at least six $I_2$ fibres. Then the same holds in a dense open set in each component of $M_{\widetilde{\Omega_g}}$ containing $(X, \widetilde{\Omega_g} \hookrightarrow \text{Pic}(X))$ as the condition that a fibre be nodal is Zariski open (i.e.\ for a flat, proper algebraic family of integral curves, the locus of non-nodal curves is a Zariski closed subset of the base). Furthermore, the same clearly holds for the complex conjugate $X^c \to \proj^1$. There are at most two components of $M_{\widetilde{\Omega_g}}$, which locally on the period domain are interchanged by complex conjugation, so this would complete the proof.

 Thus it suffices to find such an elliptic K3 surface $X \to \proj^1$. From \cite[Thm.\ 2.12]{shimada-arxiv} (published in compressed form as \cite{shimada-mich}), there exists an elliptic K3 surface $X \to \proj^1$ with section and torsion-free Mordell--Weil group such that $X$ has 10 singular fibres of type $A_1$, each of which are either of type $I_2$ or $III$. Using that the Euler number of $III$ is $3$, we find that $X$ can have at most $4$ fibres of type $III$, and thus has at least $6$ fibres of type $I_2$, which have Euler number $2$ (as the sum of the Euler numbers of the singular fibres must be $24$, the Euler number of $X$). There is an obvious primitive embedding of $T$ into the trivial lattice of $X$ such that $6$ of the generators correspond to components of the $I_2$ fibres avoiding the section. As the Mordell--Weil group of $X \to \proj^1$ is torsion free, we have a primitive embedding $T \hookrightarrow \text{Pic}(X)$. This completes the proof. 
\end{proof}

\begin{lem} \label{little-lem}
Let $Y_{\Omega_g}$ and $\{L,E, \Gamma_1, \ldots, \Gamma_8 \}$ be as in the previous lemma. If we assume $g > 7$, then $L-E$ is very ample (and hence $L$ is also very ample).
\end{lem}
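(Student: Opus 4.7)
The plan is to apply Saint-Donat's very ampleness criterion to $M := L - E$, which is big and nef by Lemma \ref{lem-aaa}. A direct computation gives $M^2 = (2g-2) - 2\lfloor (g+1)/2\rfloor \geq 6$ for $g > 7$, so since $M^2 \geq 4$ it is enough to check: (a) there is no $(-2)$-curve $R$ with $M \cdot R = 0$; (b) there is no primitive effective class $F$ with $F^2 = 0$ and $M \cdot F \leq 2$; (c) $M$ is not linearly equivalent to $2B$ for a smooth curve $B$ of genus $2$. Condition (c) is automatic since $L - E$ is primitive in the basis $\{L, E, \Gamma_1, \ldots, \Gamma_8\}$ of $\Omega_g$, so it is not $2$-divisible in $\mathrm{Pic}(Y_{\Omega_g})$.

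For (a) and (b), the key observation is the following lattice identity. For any class $D = xL + yE + \sum_{i} z_i \Gamma_i$ with $x \neq 0$, solving the relation
$$D^2 = x^2(2g-2) + 2xya + 2x\sum_{i} z_i d_i - 2\sum_{i} z_i^2,$$
where $a := \lfloor (g+1)/2 \rfloor$, for $ya$ and substituting into $M \cdot D = x(2g-2-a) + ya + \sum_{i} z_i d_i$ yields
$$M \cdot D = \frac{x}{2} M^2 + \frac{D^2 + 2\sum_{i} z_i^2}{2x}.$$
Since $E$ is nef, any effective integral curve $D$ distinct from $E$ satisfies $D \cdot E = xa \geq 0$, hence $x \geq 0$. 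For (b): if $x=0$ then $F^2 = -2\sum z_i^2 = 0$ forces all $z_i = 0$, so $F$ is a positive multiple of $E$; primitivity gives $F = E$ and $M \cdot F = a \geq 4$. If $x \geq 1$, the identity gives $M \cdot F \geq \tfrac{x}{2} M^2 \geq 3$. For (a): if $x=0$ then $\sum z_i^2 = 1$, so $R = yE \pm \Gamma_j$ for some $j$, and $M \cdot R = ya \pm d_j = 0$ would force the non-integer value $y = \mp d_j/a$ since $0 < d_j < a$; if $x \geq 1$, setting $M \cdot R = 0$ in the identity would force $\tfrac{x^2}{2} M^2 \leq 1$, contradicting $M^2 \geq 6$.

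This verifies (a)--(c), so $M = L - E$ is very ample. Then $L = (L - E) + E$ is the sum of the very ample divisor $L - E$ and the base-point-free elliptic pencil $E$ from Lemma \ref{lem-aaa}, hence $L$ is itself very ample. The main obstacle is organising the case $x = 0$ for each obstruction separately, but in both cases the strict inequalities $0 < d_i < a$ preclude the only configurations allowed by the lattice.
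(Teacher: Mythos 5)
Your proof is correct and takes essentially the same approach as the paper: both verify the Saint-Donat/Knutsen very ampleness criterion for $L-E$ (with the $2$-divisibility case excluded by primitivity) by writing a hypothetical obstructing class $D = xL + yE + \sum_i z_i\Gamma_i$ in the lattice basis and ruling it out using the intersection form together with $1 \leq d_i < \left\lfloor \frac{g+1}{2}\right\rfloor$ --- your unified identity is a repackaging of the paper's computations of $(R-x_1L)^2$ and $(F-x_2L)^2$. The only cosmetic difference is the last step, where the paper re-applies the criterion to $L$ directly while you invoke the standard fact that the sum of a very ample and a base-point-free divisor is very ample; both are valid.
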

\begin{proof}
Suppose the big and nef line bundle $L-E$ is not very ample. Then there exists either a smooth rational curve $R \seq Y_{\Omega_g}$ with $(L-E \cdot R) = 0$ or a smooth elliptic curve $F \seq Y_{\Omega_g}$ with $0 < (L-E \cdot F) \leq 2$ (or both exist), by \cite[Thm.\ 1.1]{knut} (set $k=1$ in Knutsen's theorem and note that $L-E$ is primitive). 

Assume firstly that $R$ as above exists; we may write $R=x_1L+y_1 E+ \sum_{i=1}^{8} z_{1,i} \Gamma_i$ for integers $x_1,y_1,z_{1,i}$. We have
\begin{align*}
-2 \sum_{i=1}^8 z^2_{1,i} &=(R-x_1L)^2 \\
&=-2+x_1^2(2g-2)-2x_1(L \cdot R) \\
&=-2+x_1^2(2g-2-2\left \lfloor \frac{g+1}{2} \right \rfloor)-2x_1(L-E \cdot R).
\end{align*}
By the above equality, using that $g \geq 8$ and $(L-E \cdot R)=0$, we find that $x_1=0$ and there exists some $j$ such that $z_{1,j}=\pm 1$, $z_{1,i}=0$ for $i \neq j$. Then $0=( L-E \cdot R)=\pm d_j+y_1 \left \lfloor \frac{g+1}{2} \right \rfloor$ which is impossible
for $1 \leq d_j < \left \lfloor \frac{g+1}{2} \right \rfloor$. 

So now suppose there is some smooth elliptic curve $F \seq Y_{\Omega_g}$ with $0 < (L-E \cdot F) \leq 2$. We may write $F=x_2 L+y_2 E+ \sum_{i=1}^{8} z_{2,i} \Gamma_i$ for integers $x_2,y_2,z_{2,i}$. We have $(F \cdot E)=x_2 (L \cdot E)$ and hence $x_2 >0$ (as $F \notin |E|$).  We calculate
\begin{align*}
-2 \sum_{i=1}^8 z^2_{2,i}  &= (F-x_2L)^2 \\
&= x_2^2(2g-2)-2x_2(L \cdot F) \\
&= x_2(x_2((2g-2)-2 \left \lfloor \frac{g+1}{2} \right \rfloor)-2 (L-E \cdot F)) 
\end{align*}
which is impossible for $g>7$, $0 \leq (L-E \cdot F) \leq 2$. Thus $L-E$ is very ample. Using Knutsen's criterion again, and the fact that $E$ is nef, we see that $L$ is likewise very ample.
\end{proof}

For the rest of the section we will assume $g$ is odd. The following technical lemma will be needed later in this section.
\begin{lem} \label{little-lem2}
Assume $g \geq 11$  is odd and let $Y_{\Omega_g}$ and $\{L,E, \Gamma_1, \ldots, \Gamma_8 \}$  be as in Lemma \ref{little-lem}. Then $L-2E$ is not effective. Further $(L-E)^2 \geq 8$ and there exists no effective divisor $F$ with $(F)^2=0$ and $(F \cdot L-E) \leq 3$.
\end{lem}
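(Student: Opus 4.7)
The proof splits naturally into the three claims. The bound on $(L-E)^2$ is immediate: $(L-E)^2 = L^2 - 2(L \cdot E) + E^2 = (2g-2) - (g+1) = g - 3$, which is at least $8$ whenever $g \geq 11$.

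For the non-effectivity of $L - 2E$, first compute $(L - 2E)^2 = -4$ and $(L - 2E) \cdot E = (g+1)/2 > 0$. Suppose for contradiction that $D \in |L - 2E|$ exists and decompose $D = \sum a_i C_i$ into irreducible components with $a_i \geq 1$. Writing each $C_i = x_i L + y_i E + \sum_j z_{i,j}\Gamma_j$ in the basis of $\Omega_g$, nefness of $E$ (established in the proof of Lemma \ref{lem-aaa}) gives $(C_i \cdot E) = x_i(g+1)/2 \geq 0$, so $x_i \geq 0$. Since $\sum a_i x_i = 1$, exactly one component --- call it $C_1$, with $a_1 = 1$ and $x_1 = 1$ --- satisfies $(C_1 \cdot E) > 0$, while the residual $D' := D - C_1$ is effective and supported in fibres of the elliptic fibration $|E|$. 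Hence $(D')^2 \leq 0$ by the negative semidefiniteness of the intersection form on fibre components, and $L \cdot D' = (L-E) \cdot D' \geq 0$ using $E \cdot D' = 0$ and the nefness of $L - E$. Expanding
\[
C_1^2 = (L - 2E - D')^2 = -4 - 2(L \cdot D') + (D')^2 \leq -4,
\]
contradicting the bound $C_1^2 \geq -2$ for any irreducible curve on the K3 surface $Y_{\Omega_g}$.

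For the third claim, suppose $F$ is an effective divisor on $Y_{\Omega_g}$ with $F^2 = 0$ and $(F \cdot L - E) \leq 3$. Writing $F = xL + yE + \sum z_j \Gamma_j$, nefness of $E$ gives $(F \cdot E) = x(g+1)/2 \geq 0$, so $x \geq 0$. If $x = 0$, then $F^2 = -2 \sum z_j^2 = 0$ forces all $z_j = 0$, and effectivity of $F = yE$ gives $y \geq 1$, yielding $(F \cdot L - E) \geq (g+1)/2 > 3$, a contradiction. If $x \geq 1$, I decompose $F$ relative to $L - E$: set $F' := F - x(L-E) = (x+y)E + \sum z_j \Gamma_j$, which has $(F')^2 = -2\sum z_j^2 \leq 0$. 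Expanding $0 = F^2 = x^2 (L-E)^2 + 2x (L-E) \cdot F' + (F')^2$ and substituting $(L-E) \cdot F' = (F \cdot L - E) - x(L-E)^2$ gives
\[
x^2 (g-3) = 2x (F \cdot L - E) + (F')^2 \leq 6x,
\]
so $g - 3 \leq 6$, which contradicts $g \geq 11$.

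The main obstacle in both nontrivial parts is choosing the right decomposition: splitting $L - 2E$ along its ``horizontal'' component and its piece contained in fibres of $|E|$ in part (1), and splitting $F$ orthogonally to the big-and-nef class $L - E$ in part (3). Once the decomposition is set up, the contradictions follow mechanically from nefness of $E$ and $L - E$ together with the negative semidefiniteness of the intersection form on fibre components.
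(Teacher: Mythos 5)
Your proof is correct, but the middle claim is handled by a genuinely different route than the paper's. For the non-effectivity of $L-2E$, the paper first \emph{classifies} all integral curves of the form $aE+\sum_j b_j\Gamma_j$, showing via lattice computations (intersecting with the $\Gamma_j$, using $(\widetilde{D})^2\geq -2$ and the integrality statements of Lemma \ref{lem-aaa}) that each such curve is $E$, $\Gamma_j$ or $\widetilde{\Gamma}_j$; it then rewrites the distinguished component as $D_1=L-(2+m)E-\sum_j(n_{1,j}\Gamma_j+n_{2,j}\widetilde{\Gamma}_j)$ and computes $(D_1)^2\leq -4$ explicitly. You bypass the classification entirely: after isolating the unique component $C_1$ with $x_1=1$ (this step is identical to the paper's), you observe that the residual $D'$ has $(D'\cdot E)=0$, hence is supported on fibres of the elliptic fibration, and invoke Zariski's lemma for $(D')^2\leq 0$ together with nefness of $L-E$ for $(L\cdot D')\geq 0$, reaching the same bound $C_1^2\leq -4$. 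Your argument is shorter and more robust --- it uses only that $E$ is nef primitive with $E^2=0$ and $L-E$ is nef, not the specific rank-ten lattice --- whereas the paper's classification yields finer information (the exact list of fibre components) in the same spirit as its other lattice arguments. For the third claim your orthogonal decomposition $F=x(L-E)+F'$ is algebraically the same identity the paper reuses from the proof of Lemma \ref{little-lem}, namely $-2\sum z_j^2=x\bigl(x(g-3)-2(F\cdot L-E)\bigr)$; a small merit of your write-up is that you treat the case $x=0$ (forcing $F=yE$ and $(F\cdot L-E)\geq\frac{g+1}{2}>3$) explicitly, which the paper leaves implicit in its back-reference. The $(L-E)^2=g-3\geq 8$ computation coincides with the paper's.
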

\begin{proof}
Suppose $L-2E$ is an effective divisor and let $D_1, \ldots D_k$ be its irreducible components. Write $D_i=x_iL+y_iE+\sum_{j=1}^8 z_{i,j} \Gamma_j$ for integers $x_i,y_i,z_{i,j}$. Then $0 \leq (D_i \cdot E)=x_i (L \cdot E)$ so that $x_i \geq 0$ and $\sum_i x_i=1$. Thus we may assume $x_1=1$ and $x_i=0$ for all $i \geq 2$. Now let $\widetilde{D}$ be any irreducible curve of the form $a E+\sum_{j=1}^8 b_j \Gamma_j$ for integers $a,b_j$ and suppose $\widetilde{D} \neq \Gamma_j$, $\forall 1 \leq j \leq 8$. Then $0 \leq (\widetilde{D} \cdot \Gamma_j)=-2b_j$ so $b_j \leq 0$ for all $j$. Since $(\widetilde{D})^2=-2 \sum_{j=1}^8 b^2_j \geq -2$ by \cite[Prop.\ VIII 3.6]{barth}, there is at most one $b_j$ such that $b_j \neq 0$, and in this case $b_j=-1$. Suppose firstly that all $b_j=0$. Then $\widetilde{D} \sim E$, since $\widetilde{D}$ is integral, and all effective divisors in $|aE|$ are a sum of $a$ divisors in $|E|$, \cite[Prop.\ 2.6(ii)]{donat}. Next suppose $b_j=-1$. Then $\widetilde{D}=aE-\Gamma_j$ and $a \geq 1$ since $a$ is effective. Thus $\widetilde{D}=\widetilde{\Gamma_j}+(a-1)E$ and $(\widetilde{D})^2=-2$, which implies $a=1$ since $\widetilde{D}$ is a smooth and irreducible rational curve (as the unique effective divisor in the linear system $|\widetilde{D}|$ is integral). 

Thus if $i \geq 2$, $D_i$ is either $E$, $\Gamma_j$ or $\widetilde{\Gamma}_j$, for some $j$. Since $\sum_i D_i=L-2E$, we see  $D_1=L-(2+m')E-\sum_{j=1}^8 n'_{1,j} \Gamma_j-\sum_{j=1}^8 n'_{2,j} \widetilde{\Gamma}_j$ for nonnegative integers $m'$ and $n'_{1,j}$, $n'_{2,j}$, $1 \leq j \leq 8$. Since $E=\Gamma_j+\widetilde{\Gamma}_j$, we may rewrite $D_1$ in the form $D_1=L-(2+m)E-\sum_{j=1}^8 (n_{1,j} \Gamma_j+n_{2,j} \widetilde{\Gamma}_j)$, where $m$, $n_{1,j}$, $n_{2,j}$ are nonnegative integers and if $n_{1,j_1} \neq 0$ for some $j_1$ then $n_{2,j_1}=0$, and likewise if $n_{2,j_2} \neq 0$ then $n_{1,j_2} = 0$.
But then one computes
\begin{align*}
(D_1)^2&=(L-2E-(mE+\sum_{j=1}^8 n_{1,j} \Gamma_j+n_{2,j} \widetilde{\Gamma}_j))^2 \\
&=-4-2\sum_{j=1}^8 (n_{1,j}^2+ n_{2,j}^2) -2(L \cdot mE+\sum_{j=1}^8 n_{1,j} \Gamma_j+n_{2,j} \widetilde{\Gamma}_j) \\
& \leq -4
\end{align*}
which is a contradiction (since $(D)^2 \geq -2$ for any integral curve $D$). This proves that $L-2E$ is non-effective.

We have $(L-E)^2=g-3 \geq 8$ for $g \geq 11$. For $g \geq 11$ and $a'>0$ one has $(g-3)a'-6>0$. From the proof of Lemma \ref{little-lem}, this implies there is no effective $F$ with $(F)^2=0$ and $(F \cdot L-E) \leq 3$. 
\end{proof}

For any smooth curve $C$ and $M \in Pic(C)$ with $\deg(M)=d$ and $h^0(M)=r+1$, let $\nu(M):=d-2r$. The \emph{Clifford index} of $C$ is defined by $$\nu(C):=\text{min}\{\nu(M) \; | \; M \in Pic(C) \text{ with $\deg(M) \leq g-1$, $h^0(M) \geq 2$}  \}.$$ Clifford's Theorem states that $\nu(C) \geq 0$ and $\nu(C)=0$ if and only if $C$ is hyperelliptic.

\begin{lem} \label{lem-xyzw}
Let $D \in \Omega_g$ be an effective divisor with $(D)^2 \geq 0$, and assume $L-D$ is effective and $(L-D)^2 >0$. Then $D=cE$ for some integer $c \geq 0$.
\end{lem}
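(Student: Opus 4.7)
The plan is to write $D = xL + yE + \sum_{i=1}^{8} z_i \Gamma_i$ for integers $x, y, z_i$, and then squeeze $x$ down to $0$ by pairing with the nef class $E$, after which the signature condition $(D)^2 \geq 0$ will force the $\Gamma_i$-coefficients to vanish.

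First I would pair $D$ and $L - D$ with $E$. Recall from Lemma \ref{lem-aaa} that $|E|$ is an elliptic pencil, so $E$ is nef. Since $(D \cdot E) = x \lfloor (g+1)/2 \rfloor$ and both $D$ and $L - D$ are effective, the inequalities $(D \cdot E) \geq 0$ and $((L-D) \cdot E) \geq 0$ give $0 \leq x \leq 1$.

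The case $x = 1$ is then eliminated by a direct self-intersection computation: using $(E \cdot E) = 0$ and $(E \cdot \Gamma_i) = 0$, one finds
\begin{equation*}
(L - D)^2 = \bigl(-yE - \textstyle\sum_{i=1}^{8} z_i \Gamma_i\bigr)^2 = -2 \sum_{i=1}^{8} z_i^2 \leq 0,
\end{equation*}
which directly contradicts the hypothesis $(L-D)^2 > 0$. Hence $x = 0$.

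With $x = 0$, the same type of calculation yields $(D)^2 = -2 \sum_{i=1}^{8} z_i^2$, and the hypothesis $(D)^2 \geq 0$ forces $z_i = 0$ for every $i$. This leaves $D = yE$, and since $D$ is effective and $E$ has strictly positive intersection with any ample class, we must have $y \geq 0$, so $D = cE$ with $c := y \geq 0$, as claimed. There is no real obstacle here; the main point is simply that the sublattice spanned by $E$ and the $\Gamma_i$ is negative semi-definite (with radical generated by $E$), so the only classes of non-negative self-intersection in this sublattice are the non-negative multiples of $E$, and the pairing with $E$ rules out any contribution from $L$.
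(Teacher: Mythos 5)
Your proof is correct and follows essentially the same route as the paper's: the same decomposition $D=xL+yE+\sum_i z_i\Gamma_i$, the same pairing of $D$ and $L-D$ against the nef class $E$ to pin down $x\in\{0,1\}$, and the same self-intersection computation $-2\sum_i z_i^2$ to rule out $x=1$ and then kill the $z_i$. Your closing remark that $D=yE$ effective forces $y\geq 0$ fills in a step the paper leaves implicit, but otherwise the two arguments coincide.
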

\begin{proof}
Write $D=xL+yE+\sum_{i=1}^8 z_i \Gamma_i$ for integers $x,y,z_i$.  One has $0 \geq (D-L \cdot E)=(x-1)(L \cdot E)$ so that $x \leq 1$. On the other hand $0 \leq (D \cdot E)=x(L \cdot E)$ so that $x \geq 0$. Thus $x=0$ or $x=1$. Suppose firstly that $x=1$. Then $0 < (D-L)^2=-2\sum_{i=1}^8 z_i^2$ which is a contradiction. Hence $x=0$. Then $0 \leq (D)^2=-2(\sum_{i=1}^8 z_i^2)$ and so $z_i=0$ for all $i$, as required.
\end{proof}

\begin{lem} \label{gon-omega}
Let $g \geq 11$ be odd, let $Y_{\Omega_g}$ and $\{L,E, \Gamma_1, \ldots, \Gamma_8 \}$ be as in Lemma \ref{little-lem2} and let $C \in |L|$ be a smooth curve. Then $\nu(C)=\frac{g+1}{2}-2$.
\end{lem}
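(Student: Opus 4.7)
The strategy is to bound $\nu(C)$ both from above and below. For the upper bound, I would restrict the elliptic pencil $|E|$ to $C$. Consider the short exact sequence
\[
0 \to \sheafO_{Y_{\Omega_g}}(E-L) \to \sheafO_{Y_{\Omega_g}}(E) \to \sheafO_E|_C \to 0.
\]
Since $L$ is very ample (Lemma \ref{little-lem}), $E-L$ cannot be effective, so $h^0(E-L)=0$. Thus restriction gives an injection $H^0(Y_{\Omega_g}, E) \hookrightarrow H^0(C, E|_C)$, so $h^0(E|_C) \geq 2$, yielding a $g^1_{(g+1)/2}$ on $C$ of degree $(L \cdot E) = (g+1)/2$. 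Since $(g+1)/2 \leq g-1$, this shows $\nu(C) \leq \frac{g+1}{2} - 2$.

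For the lower bound, suppose for contradiction that $\nu(C) < \frac{g+1}{2} - 2$; note that $\frac{g+1}{2}-2 < \lfloor (g-1)/2 \rfloor$ since $g$ is odd. By the classical theorem of Green--Lazarsfeld (together with its refinements by Donagi--Morrison/Ciliberto--Pareschi/Knutsen), the Clifford index is then computed by the restriction of a line bundle from the K3 surface. Precisely, there exists $D \in \mathrm{Pic}(Y_{\Omega_g})$ such that $h^0(D) \geq 2$, $h^0(L-D) \geq 2$, $D^2 \geq 0$, $(L-D)^2 \geq 0$, and
\[
\nu(C) = D \cdot L - D^2 - 2 h^0(D) + 2.
\]

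The next step is a purely lattice-theoretic analysis of the possible $D$. Provided $(L-D)^2 > 0$, Lemma \ref{lem-xyzw} forces $D = cE$ for some integer $c \geq 1$ (since $h^0(D)\geq 2$). Then $D^2=0$, $D\cdot L = c(g+1)/2$ and $h^0(cE) = c+1$ by standard K3 facts, giving $\nu(D|_C) = c\bigl(\tfrac{g+1}{2}-2\bigr)$. Moreover the condition $(L-cE)^2 = 2g-2 - c(g+1) \geq 0$ is violated already for $c=2$ when $g\geq 11$, so only $c=1$ is allowed, yielding $\nu(C) = \tfrac{g+1}{2}-2$, contradicting our assumption.

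The remaining obstacle, and the main delicate point, is the borderline case $(L-D)^2 = 0$. Here $L-D$ is (up to multiplicity) an effective divisor of square zero. This is where Lemma \ref{little-lem2} enters: the statement that no effective divisor $F$ satisfies $F^2=0$ and $F\cdot(L-E)\leq 3$ rules out the existence of such a decomposition $L-D=mF$ for which $D$ could still produce a Clifford-index-computing restriction beating $\frac{g+1}{2}-2$ (the bound $(L-E \cdot F)\geq 4$ forces $(L \cdot D)$ to be too large for $\nu(D|_C)$ to be smaller than the upper bound from step 1). Combining these two cases gives $\nu(C) \geq \frac{g+1}{2}-2$, completing the proof.
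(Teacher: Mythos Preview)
Your overall strategy matches the paper's: bound $\nu(C)$ from above using $E|_C$, and from below by invoking a result that the Clifford index is computed by a divisor restricted from the surface, then use the lattice Lemma~\ref{lem-xyzw} to pin down that divisor. However, there are two genuine gaps in your lower-bound argument.

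First, the formula you quote, $\nu(C) = D\cdot L - D^2 - 2h^0(D) + 2$, is not the one that comes out of Knutsen's result. The paper cites \cite[Lem.~8.3]{knut} directly: under the standing hypothesis $\nu(C) < \lfloor (g-1)/2 \rfloor$, there is a \emph{smooth irreducible} curve $D \subset Y_{\Omega_g}$ with $0 \le D^2 < \nu(C)+2$, $2D^2 < (D\cdot L)$, and $\nu(C) = (D\cdot L) - D^2 - 2$. The $h^0(D)$ in your expression should not be there, and the fact that $D$ may be taken smooth and irreducible is essential.

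Second, and more importantly, your ``borderline case'' $(L-D)^2 = 0$ is not actually dealt with. You gesture toward the clause of Lemma~\ref{little-lem2} asserting that no effective $F$ has $F^2 = 0$ and $(F\cdot (L-E)) \le 3$, but you do not explain how this bounds $\nu(D|_C)$ from below, and it is not clear that it does: that clause is used later in the paper for Saint-Donat's quadric-hull theorem (Lemma~\ref{muk-lem}), not here. The paper sidesteps this case entirely. Using Knutsen's inequalities and the already-established upper bound $\nu(C) \le \tfrac{g+1}{2}-2$, one computes
\[
(L-D)^2 = 2g - 6 - 2\nu(C) - D^2 > 2g - 8 - 3\nu(C) \ge 0
\]
for $g \ge 7$, so $(L-D)^2 > 0$ strictly and Lemma~\ref{lem-xyzw} applies directly. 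Then $D = cE$, and since $D$ is smooth and irreducible while $|cE|$ has no integral member for $c \ge 2$, one gets $c = 1$ without needing your numerical check on $(L-cE)^2$.
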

\begin{proof}
Consider the line bundle $A:=\mathcal{O}_C(E)$. We have $h^0(A)=2$, since $h^1(L-E)=0$ by Kodaira's vanishing theorem. As $\deg(A)=\frac{g+1}{2}$, we see 
$\nu(C) \leq \frac{g+1}{2}-2$. Suppose for a contradiction that $\nu(C) < \frac{g+1}{2}-2$. From \cite[Lem.\ 8.3]{knut}, there is a smooth and irreducible curve $D \seq Y_{\Omega_g}$ with $0 \leq (D)^2 < \nu(C)+2$, $2(D)^2 < (D \cdot L)$ and $\nu(C)= (D \cdot L)-(D)^2-2$ (the sharp inequalities here are due to the fact that $L$ is primitive). We have $$(D-L \cdot D)=(D)^2-(D \cdot L)=-2-\nu(C) \leq -2$$ and $$(D-L)^2=-6-2\nu(C)-(D)^2+2g > -8-3\nu(C)+2g \geq 0,$$ as $\nu(C) \leq \frac{g+1}{2}-2$ and $g \geq 7$. Hence $L-D$ is effective and $D=cE$ for some $c \geq 0$ by Lemma \ref{lem-xyzw}. As $D$ is represented by a smooth curve, we must have $D=E$. But then $$\nu(C)=(E \cdot L)-(E)^2-2=\frac{g+1}{2}-2,$$ giving a contradiction.
\end{proof}

\begin{lem} \label{gon-omega-2}
Assume $g \geq 11$ is odd and let $Y_{\Omega_g}$ be a K3 surface with $Pic(Y_{\Omega_g}) \simeq \Omega_g$ as in Lemma \ref{little-lem2}. Let $M \in \Omega_{g}$ be an effective line bundle on $Y_{\Omega_g}$ satisfying 
\begin{itemize}
\item $0 \leq (M)^2 < \frac{g+1}{2}$
\item $2 (M)^2 < (M \cdot L)$
\item $\frac{g+1}{2}= (M \cdot L)-(M)^2.$
\end{itemize}
Then $M=E$.
\end{lem}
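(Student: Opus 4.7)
The plan is to reduce Lemma \ref{gon-omega-2} to an application of Lemma \ref{lem-xyzw}, which already classifies effective divisors $D$ with $(D)^2 \geq 0$ and $L-D$ effective with $(L-D)^2 > 0$. So the main task is to verify the hypotheses of Lemma \ref{lem-xyzw} for $D = M$.

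The key numerical input is to compute $(L-M)^2$ using the three conditions. Expanding and substituting $(M \cdot L) = (M)^2 + \frac{g+1}{2}$, I find
\[
(L-M)^2 = (L)^2 - 2(L \cdot M) + (M)^2 = (2g-2) - 2\bigl((M)^2 + \tfrac{g+1}{2}\bigr) + (M)^2 = g-3-(M)^2.
\]
Since $g$ is odd and $(M)^2 < \tfrac{g+1}{2}$ forces $(M)^2 \leq \tfrac{g-1}{2}$, this yields $(L-M)^2 \geq \tfrac{g-5}{2} \geq 3$ for $g \geq 11$. In particular $(L-M)^2 > 0$.

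Next I would show $L-M$ is effective. By Riemann--Roch on the K3 surface, $h^0(L-M) + h^0(M-L) \geq \chi(L-M) = 2 + \tfrac{(L-M)^2}{2} > 0$, so either $L-M$ or $M-L$ is effective. But $L$ is nef (indeed big and nef by Lemma \ref{little-lem}), so if $M-L$ were effective then $(M-L) \cdot L \geq 0$, whereas a direct computation gives $(M-L) \cdot L = (M \cdot L) - (2g-2) = (M)^2 + \tfrac{g+1}{2} - (2g-2) \leq \tfrac{g-1}{2} + \tfrac{g+1}{2} - (2g-2) = 2 - g < 0$ for $g \geq 11$, a contradiction. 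Hence $L-M$ is effective.

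At this point Lemma \ref{lem-xyzw} applies (with $D = M$) and gives $M = cE$ for some integer $c \geq 0$. Substituting into the third hypothesis yields $\tfrac{g+1}{2} = (cE \cdot L) - (cE)^2 = c \cdot \tfrac{g+1}{2}$, so $c=1$ and $M = E$. There is no serious obstacle here; the only thing to be careful about is the chain of inequalities showing $(L-M)^2 > 0$ and $(M-L) \cdot L < 0$, both of which hinge on the parity of $g$ (to convert $(M)^2 < \tfrac{g+1}{2}$ into $(M)^2 \leq \tfrac{g-1}{2}$) and on $g \geq 11$.
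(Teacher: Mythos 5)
Your proof is correct and takes essentially the same route as the paper: both reduce to Lemma \ref{lem-xyzw} by computing $(L-M)^2 = g-3-(M)^2 > 0$ and establishing effectivity of $L-M$ via the Riemann--Roch dichotomy, then use the third condition to force $c=1$. The only cosmetic differences are that the paper rules out effectivity of $M-L$ via $(M-L)\cdot M = -\frac{g+1}{2} < 0$ where you pair with the nef class $L$ instead (your variant is if anything cleaner under the lemma's bare hypotheses), and the paper obtains $(L-M)^2>0$ directly from $(M)^2 < \frac{g+1}{2} \leq g-3$ for $g \geq 7$, without needing the parity refinement $(M)^2 \leq \frac{g-1}{2}$ that you invoke.
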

\begin{proof}
The above inequalities give $(M-L \cdot M) < 0$ and $$(M-L)^2=(M)^2+2g-2-2((M)^2+\frac{g+1}{2}) > 0$$ as $(M)^2 < \frac{g+1}{2} \leq g-3$ for $g \geq 7$. Thus $M=cE$ for $c >0$ by Lemma \ref{lem-xyzw}. The equation $\frac{g+1}{2}= (M \cdot L)-(M)^2$ gives $c=1$.
\end{proof}

\begin{lem} \label{unique-special}
Let $g \geq 11$ be odd and let $Y_{\Omega_g}$ and $\{L,E, \Gamma_1, \ldots, \Gamma_8 \}$ be as in Lemma \ref{little-lem2} and let $C \in |L|$ be a smooth curve. Suppose $A \in Pic(C)$ has $h^0(A)=2$ and $deg(A)= \frac{g+1}{2}$. Then $A \simeq E_{|_C}$.
\end{lem}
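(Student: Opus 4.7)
The plan is to extend $A$ to a line bundle $M$ on $Y:=Y_{\Omega_g}$ via the Lazarsfeld--Mukai construction, verify the numerical hypotheses of Lemma~\ref{gon-omega-2}, and thereby conclude $M=E$ and $A\simeq E_{|_C}$.

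I would begin by forming the rank $2$ Lazarsfeld--Mukai bundle $F_{C,A}$ on $Y$ defined by the exact sequence
\[
0 \to F_{C,A}^{\vee} \to H^0(C,A)\otimes \sheafO_Y \to \iota_*A \to 0,
\]
where $\iota\colon C\hookrightarrow Y$. Using $h^0(A)=2$ and Riemann--Roch on $C$ (which gives $h^1(A)=(g+1)/2$), standard computations yield $c_1(F_{C,A})=L$, $c_2(F_{C,A})=(g+1)/2$ and $h^0(F_{C,A})=h^0(A)+h^1(A)=(g+5)/2$. Since $A$ computes the Clifford index $\nu(C)=(g+1)/2-2$ by Lemma~\ref{gon-omega}, a Green--Lazarsfeld type argument shows that $F_{C,A}$ fails to be $\mu_L$-stable and fits into an extension
\[
0 \to M \to F_{C,A} \to N\otimes I_Z \to 0,
\]
with $M,N\in \text{Pic}(Y)$ effective, $M+N\sim L$, $(M\cdot N)+\ell(Z)=(g+1)/2$, $h^0(M)\geq 2$, and $|M|$ cutting out (up to a base divisor in $Z\cap C$) the pencil $|A|$ on $C$.

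The next step is to verify the hypotheses of Lemma~\ref{gon-omega-2} for $M$: $0\leq (M)^2 < (g+1)/2$, $2(M)^2 < (M\cdot L)$, and $(M\cdot L)-(M)^2=(g+1)/2$. The equation follows from the Clifford-index identity $\nu(M_{|_C})=(M\cdot L)-(M)^2-2$ (valid when $h^1(M)=0$) combined with the sandwich $\nu(C)\leq \nu(M_{|_C})\leq \nu(A)=\nu(C)$. The inequalities reduce, via the equation, to $(M)^2 < (g+1)/2$, which is guaranteed by the effectivity of $L-M$ together with Lemma~\ref{little-lem2}'s bounds $(L-E)^2\geq 8$ and the absence of effective divisors $F$ with $(F)^2=0$ and $(F\cdot L-E)\leq 3$. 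Lemma~\ref{gon-omega-2} then forces $M=E$, and $(M\cdot N)=(E\cdot L-E)=(g+1)/2$ matching $c_2$ forces $\ell(Z)=0$. Restricting the extension to $C$ and comparing degrees and $h^0$ yields $A\simeq E_{|_C}$.

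The main obstacle will be controlling $M$ against the rank ten Picard lattice $\Omega_g$: a priori many classes built from $E$, $L-E$, and the $\Gamma_i$ could satisfy the partial constraints, and Lemma~\ref{little-lem2}'s sharp exclusions are precisely what is required to rule them out in favour of $M=E$. A subsidiary challenge is confirming $\ell(Z)=0$ so that $M_{|_C}$ equals $A$ exactly rather than a twist by an effective base divisor; this follows from the numerical matching once $M=E$ is known.
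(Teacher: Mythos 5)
Your proposal is correct and takes essentially the same route as the paper: where the paper invokes Donagi--Morrison (Thm.\ 4.2 and the proof of Lemma 4.6) as a black box to produce an effective divisor $D$ with $\dim|D| \geq 1$, $(D \cdot L)-(D)^2 = \frac{g+1}{2}$ and a reduced $Z_0 \in |A|$ with $Z_0 \seq D \cap C$, you simply in-line the Lazarsfeld--Mukai bundle argument by which that theorem is proved (note only that you should record why $A$ is base-point free --- a base point would produce a pencil of degree below $\frac{g+1}{2}$, contradicting Lemma \ref{gon-omega} --- and that $(M)^2 \geq 0$ requires passing to the moving part, which is what the cited Donagi--Morrison statement supplies). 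In both arguments the crux is identical: Lemma \ref{gon-omega-2} forces the surface divisor (the paper's $D$, your $M$) into $|E|$, and the degree count against $c_2 = \frac{g+1}{2}$ (the paper's $Z_0 = D \cap C$, your $\ell(Z)=0$) upgrades the containment to the isomorphism $A \simeq E_{|_C}$.
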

\begin{proof}
Let  $A \in Pic(C)$ with $h^0(A)=2$ and $deg(A)= \frac{g+1}{2}$. In particular, $\nu(A)=\nu(C)$. Then by \cite[Thm.\ 4.2]{donagi-morrison}, there is some effective divisor $D \seq Y_{\Omega_g} $ such that that $(D \cdot L) \leq g-1$, $L-D$ is effective, $\dim |D| \geq 1$, $D|_C$ achieves the Clifford index on $C$ and such that there exists a reduced divisor $Z_0 \in |A|$ of length $\frac{g+1}{2}$ with $Z_0 \seq D \cap C$. Further from the proof of \cite[Lemma 4.6]{donagi-morrison} and the paragraph proceeding it, we see $\frac{g+1}{2}= (D \cdot L)-(D)^2,$ using that $\nu(D|_C)= \frac{g+1}{2}-2$ by hypothesis. It then follows that all the conditions of Lemma \ref{gon-omega-2} are satisfied, so that $D \in |E|$. As $(E \cdot C)=\frac{g+1}{2}$ and $Z_0$ is reduced of length $\frac{g+1}{2}$,  we have $Z_0 = D \cap C$. Thus $Z_0 \in |E_{|_C}|$ which forces $A \simeq E_{|_C}$.
\end{proof}

The following lemma may be extracted from work of Mukai, cf.\ \cite[\S3]{mukai-duality}, \cite[Lem.\ 2]{mukai-genus11} (although it never appears in this precise form). Despite the simple proof, this lemma is actually rather fundamental, since it gives an example of  a K3 surface $S$ and a divisor $D \seq S$ such that the K3 surface $S$ can be reconstructed merely from the curve $D$ together with a special divisor $A \in Pic(D)$.
\begin{lem}[Mukai] \label{muk-lem}
Let $g \geq 11$ be odd and let $S$ be a K3 surface with $L, M \in Pic(S)$ such that $L^2=2g-2$, $M^2=0$ and $(L \cdot M)=\frac{g+1}{2}$. Let $D \in |L|$ be smooth and set $A:=M_{|_D}$. Further, let $A^{\dagger}=(L-M)_{|_D}$ be the adjoint of $A$ and set $k:=h^0(A^{\dagger})-1$. Assume $L-M$ is very ample and that there is no integral curve $F \seq S$ with $(F)^2=0$ and $(F \cdot L-M)=3$. Assume further that $M$ is represented by an integral curve and that $L-2M$ is not effective. Then $A^{\dagger}$ is very ample, and $S$ is the quadric hull of the embedding $\phi_{A^{\dagger}}: D \hookrightarrow \proj^k$ induced by $A^{\dagger}$. In particular, the linear system $|L|$ contains only finitely many curves $D'$ such that we have an isomorphism $\phi :  D' \simeq D$ with $\phi^*A \simeq M_{|_{D'}}$.
\end{lem}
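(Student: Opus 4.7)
The overall strategy is to identify $\phi_{A^\dagger}\colon D\hookrightarrow \mathbb{P}^k$ with the restriction to $D$ of the embedding $\phi_{L-M}\colon S\hookrightarrow \mathbb{P}^k$, then to show that the space of quadrics in $\mathbb{P}^k$ vanishing on $D$ coincides with the space vanishing on $S$, and finally to invoke Saint-Donat's theorem on projective models of K3 surfaces to conclude that $S$ is cut out by those quadrics. Very ampleness of $A^\dagger$ is immediate from that of $L-M$ by restriction. To match ambient spaces, I use
\[ 0 \to \sheafO_S(-M) \to \sheafO_S(L-M) \to \sheafO_D(A^\dagger) \to 0. \]
Since $M$ is represented by an integral curve with $(M)^2=0$ and $(L\cdot M)>0$, the linear system $|M|$ is an elliptic pencil, so $h^0(M)=2$, $h^1(M)=0$, and Serre duality gives $h^i(S,-M)=0$ for $i=0,1$. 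The long exact sequence yields $H^0(S,L-M)\simeq H^0(D,A^\dagger)$, so both embeddings land in the same $\mathbb{P}^k$ and $D$ sits inside $S\subseteq \mathbb{P}^k$.

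For the comparison of quadrics, square the sequence:
\[ 0 \to \sheafO_S(L-2M) \to \sheafO_S(2(L-M)) \to \sheafO_D(2A^\dagger) \to 0. \]
By hypothesis $L-2M$ is not effective, so $h^0(L-2M)=0$. Riemann--Roch gives $\chi(L-2M)=2+(L-2M)^2/2=0$ (from $(L-2M)^2=-4$), and since $(2M-L)\cdot(L-M) = (7-g)/2 < 0$ for $g\geq 11$, the class $2M-L$ has negative intersection with the ample $L-M$ and is non-effective, so $h^2(L-2M)=h^0(2M-L)=0$ and therefore $h^1(L-2M)=0$. The restriction $H^0(S,2(L-M)) \to H^0(D,2A^\dagger)$ is then an isomorphism, so the quadrics in $\mathbb{P}^k$ vanishing on $D$ are exactly those vanishing on $S$.

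With $(L-M)^2 = g-3 \geq 8$ for $g\geq 11$, the very ampleness of $L-M$, and the assumed non-existence of an integral $F$ with $F^2=0$ and $(F\cdot L-M)=3$, Saint-Donat's theorem implies that $\phi_{L-M}(S)$ is the scheme-theoretic intersection of the quadrics containing it. Combined with the previous step, $S$ equals the quadric hull of $\phi_{A^\dagger}(D)$. For the finiteness claim, let $D' \in |L|$ be a smooth curve with $\phi\colon D'\simeq D$ satisfying $\phi^*A \simeq M|_{D'}$. By adjunction $K_{D'}\simeq L|_{D'}$ and $K_D\simeq L|_D$, so $\phi^*(L|_D)=L|_{D'}$ and hence $\phi^*(A^\dagger) = (L-M)|_{D'}$. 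The embedding $\phi_{(L-M)|_{D'}}\colon D'\hookrightarrow\mathbb{P}^k$ is thus projectively equivalent, via some $\alpha \in \mathrm{PGL}(k+1)$, to $\phi_{A^\dagger}\colon D\hookrightarrow\mathbb{P}^k$. Running the quadric-hull argument in parallel for $D'$ shows that $\alpha$ carries $S$ to itself, so $\alpha\in \mathrm{Aut}(S,L-M)$. Since $L-M$ is ample on $S$, this group is finite, and the finitely many $\alpha$ yield only finitely many possible $D'$.

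The most delicate step I expect is invoking Saint-Donat's theorem correctly: its various formulations carry slightly different exclusion clauses (ruling out hyperelliptic, trigonal, or elliptic-cubic configurations), and one must verify that the very ampleness of $L-M$ together with the assumed non-existence of $F$ suffices to avoid all such degenerate configurations at once. The cohomological vanishings themselves, especially $h^1(L-2M)=0$, are routine once the intersection computation $(2M-L)\cdot(L-M)<0$ is in hand.
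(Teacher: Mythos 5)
Your proposal is correct, and its skeleton matches the paper's proof exactly: identify $\phi_{A^{\dagger}}$ with the restriction of $\phi_{L-M}$ via the sequence $0 \to \sheafO_S(-M) \to \sheafO_S(L-M) \to \sheafO_D(A^{\dagger}) \to 0$ (using $h^1(M)=0$ from the integral curve in $|M|$), show the quadric hulls of $D$ and $S$ agree, apply Saint-Donat with $(L-M)^2=g-3\geq 8$, very ampleness excluding the hyperelliptic case and the no-$F$ hypothesis excluding the trigonal one, and deduce finiteness from a projective automorphism $\alpha$ carrying $D'$ to $D$ and hence $S$ to itself, with finiteness of the polarized automorphism group of a K3. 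The one step you handle differently is the comparison of quadrics: the paper argues divisor-theoretically --- a quadric $Q \supseteq D$ with $Q \not\supseteq S$ would cut out $D+T=Q\cap S \in |2L-2M|$, forcing $T \in |L-2M|$ and contradicting non-effectivity --- whereas you square the restriction sequence and prove $h^0(L-2M)=h^1(L-2M)=0$ via Riemann--Roch ($(L-2M)^2=-4$, so $\chi=0$) together with $(2M-L \cdot L-M)=\frac{7-g}{2}<0$ killing $h^2$. Both routes hinge on the same hypothesis that $L-2M$ is non-effective; your cohomological version costs the extra (correct) verification of $h^1$-vanishing but buys the stronger conclusion that restriction $H^0(S,2(L-M)) \to H^0(D,2A^{\dagger})$ is an isomorphism, of which only the injectivity (i.e.\ $h^0(L-2M)=0$) is actually needed for the containment of $S$ in every quadric through $D$. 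Your closing caution about the exclusion clauses in Saint-Donat's theorem is exactly the point the paper flags parenthetically, and your hypotheses do cover all the degenerate configurations.
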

 \begin{proof}
 As there exists an integral curve in $|M|$, we know $h^1(M)=0$. From the exact sequence
 $$ 0 \to M^* \to L \otimes M^* \to  A^{\dagger} \to 0 $$
 we see that restriction induces an isomorphism $H^0(S,L\otimes M^*) \simeq H^0(D,A^{\dagger})$, which implies that $A^{\dagger}$ is very ample (as $L \otimes M^*$ is) and that $\phi_{A^{\dagger}}: D \hookrightarrow \proj^k$ is the composition of $D \hookrightarrow S$ and $\phi_{L-M}: S \hookrightarrow \proj^k$. Now let $Q \seq \proj^k$ be a quadric containing $\phi_{A^{\dagger}}(D)$. Then we claim that $Q$ contains $S$. Indeed, otherwise there is some effective divisor $T \seq S$ such that $D+T=Q \cap S$ so that $D+T \in |2L-2M|$. But then $T \in |L-2M|$, contradicting that $L-2M$ is not effective by assumption. Thus the quadric hull of $\phi_{A^{\dagger}}(D)$ coincides with the quadric hull of $S \hookrightarrow \proj^k$. But the quadric hull of $S$ is simply $S$ from \cite[Thm.\ 7.2]{donat}. The assumption $g \geq 11$ is needed to ensure $(L-M)^2 \geq 8$ which is required in Saint-Donat's theorem (note that since $L-M$ is very ample by assumption it is not hyperelliptic in the sense of \cite[\S 4.1, Thm.\ 5.2]{donat}). Finally, by the argument above, we have that, for any curve $D' \in |L|$ such that there exists an isomorphism $\phi :  D' \simeq D$ with $\phi^*A \simeq M_{|_{D'}}$, there is an automorphism $f: S \to S$ induced by an automorphism of $\proj^k$ such that $f(D')=D$. Since $S$ is a K3 surface, we deduce that there are only finitely many such $D'$.
 \end{proof}
 
 Putting all the pieces together, we have the following proposition:
\begin{prop} \label{good-dim-prop}
Let $g \geq 11$ be odd. Let $T$ be a smooth and irreducible scheme with base point $0 \in T$. Let $\mathcal{Z} \to T$ be a flat family of K3 surfaces together with an embedding $\phi: \mathcal{Z} \hookrightarrow T \times \proj^g$. Assume $\phi_0: \mathcal{Z}_0 \to \proj^{g}$ is the embedding $ Y_{\Omega_g} \to \proj^g$ induced by $|L|$, where $ Y_{\Omega_g}$ is as in Lemma \ref{little-lem2}. Let $H \seq \proj^g$ be a hyperplane and $C:= H \cap Y_{\Omega_g}$ a smooth curve. Assume that for all $t \in T$, we have a smoothly varying family of hyperplanes $H_t$ with $H_0=H$ and $\mathcal{Z}_t \cap H_t \simeq C$. Then for $t \in T$ general there is an isomorphism $ \psi_t: \mathcal{Z}_t \simeq Y_{\Omega_g}$ and an automorphism $f_t \in Aut(\proj^g)$ such that $f_t \circ \phi_t = \phi_0 \circ \psi_t$.  Furthermore, $f_t$ sends $H_t$ to $H$.
\end{prop}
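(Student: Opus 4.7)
The plan is to use Mukai's Lemma \ref{muk-lem} to reconstruct $(\mathcal{Z}_t, L_t)$, for $t$ general, from the intrinsic data of the curve $C$ equipped with the distinguished line bundle $A := E|_C \in W^1_{(g+1)/2}(C)$, which by Lemma \ref{unique-special} is the unique $g^1_{(g+1)/2}$ on $C$. Since $\mathrm{Aut}(C)$ is finite (as $g(C) \geq 2$), after replacing $T$ by an étale cover if necessary we may pick a holomorphic family of isomorphisms $\alpha_t \colon C_t \to C$ with $\alpha_0 = \mathrm{id}_C$, and set $A_t := \alpha_t^* A$, the unique $g^1_{(g+1)/2}$ on $C_t$.

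\textbf{Producing $M_t$.} At $t=0$, Lemma \ref{unique-special} gives $M_0 = E \in \mathrm{Pic}(Y_{\Omega_g})$ with $M_0^2 = 0$, $(M_0 \cdot L) = (g+1)/2$, and $M_0|_C = A$. For $t$ general, Donagi--Morrison's Theorem 4.2 applied to $(\mathcal{Z}_t, L_t, C_t, A_t)$ produces an effective divisor $M_t \seq \mathcal{Z}_t$ with $L_t - M_t$ effective, $\dim |M_t| \geq 1$, $(L_t \cdot M_t) - M_t^2 = (g+1)/2$, and $A_t$ appearing inside $M_t|_{C_t}$. To upgrade this to $M_t^2 = 0$ and $M_t|_{C_t} = A_t$, one argues by specialization: the Picard lattice specializes injectively under $\mathcal{Z}_t \leadsto Y_{\Omega_g}$, and on $Y_{\Omega_g}$ the unique class satisfying the Donagi--Morrison conclusions is $E$ (by Lemma \ref{gon-omega-2}), so $M_t$ must deform $E$.

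\textbf{Applying Mukai's Lemma.} The hypotheses of Lemma \ref{muk-lem} on $(\mathcal{Z}_t, L_t, M_t)$ — namely $L_t - M_t$ very ample, no integral $F$ with $F^2 = 0$ and $(F \cdot L_t - M_t) = 3$, $M_t$ integral, and $L_t - 2 M_t$ not effective — hold at $t = 0$ by Lemmas \ref{lem-aaa}, \ref{little-lem}, and \ref{little-lem2}; they are open under small polarized deformations, and hence persist for $t$ general. Mukai's lemma then realizes $\mathcal{Z}_t$ as the quadric hull in $\proj^k$ of $\phi_{(L_t - M_t)|_{C_t}}(C_t)$, and $Y_{\Omega_g}$ as the quadric hull of $\phi_{(L-E)|_C}(C)$. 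Under $\alpha_t$ one has $\alpha_t^*((L-E)|_C) = K_{C_t} - A_t = (L_t - M_t)|_{C_t}$ (using $M_t|_{C_t} = A_t$ and adjunction), so the two embeddings into $\proj^k$ agree up to $\mathrm{Aut}(\proj^k)$, and so do their quadric hulls. This yields an isomorphism $\psi_t \colon \mathcal{Z}_t \to Y_{\Omega_g}$ whose restriction to $C_t$ is $\alpha_t$.

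\textbf{Recovering the $\proj^g$ embedding.} By construction $\psi_t^*(L-E) = L_t - M_t$, and since the restriction $\mathrm{Pic}(\mathcal{Z}_t) \to \mathrm{Pic}(C_t)$ is injective (Kodaira vanishing gives $H^i(\mathcal{Z}_t, \mathcal{O}(-C_t)) = 0$ for $i = 0, 1$) and $\psi_t^* E|_{C_t} = \alpha_t^* A = A_t = M_t|_{C_t}$, we deduce $\psi_t^* E = M_t$; hence $\psi_t^* L = L_t$, so $\psi_t$ is an isomorphism of polarized K3 surfaces. The two complete $L_t$-embeddings $\phi_t$ and $\phi_0 \circ \psi_t$ of $\mathcal{Z}_t$ into $\proj^g$ then differ by an automorphism $f_t \in \mathrm{Aut}(\proj^g)$, i.e.\ $\phi_0 \circ \psi_t = f_t \circ \phi_t$. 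Finally, $(f_t \circ \phi_t)(C_t) = (\phi_0 \circ \psi_t)(C_t) = \phi_0(C) = \phi_0(Y_{\Omega_g}) \cap H$, while this also equals $\phi_0(Y_{\Omega_g}) \cap f_t(H_t)$; the non-degeneracy of $\phi_0(Y_{\Omega_g})$ in $\proj^g$ forces $f_t(H_t) = H$. The delicate point throughout is the second step: ensuring $M_t^2 = 0$ for general $t$ ultimately requires leveraging the constancy of the hyperplane section $C_t \simeq C$ (and the rigidity of the minimal pencil $A$ on $C$) to force the Noether--Lefschetz condition that $E$ extends algebraically across the entire family.
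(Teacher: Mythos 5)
Your overall strategy is the paper's: specialize the Picard lattice of the nearby fibre into $\Omega_g$, use the Donagi--Morrison machinery (the paper routes this through Knutsen's Lemma 8.3 and Lemma \ref{gon-omega}, but this is the same mechanism encapsulated in Lemma \ref{unique-special}) together with Lemma \ref{gon-omega-2} to pin the class $M_t$ down to $E$, transfer the hypotheses of Lemma \ref{muk-lem} to $\mathcal{Z}_t$, and identify $\mathcal{Z}_t$ with $Y_{\Omega_g}$ via the quadric-hull reconstruction. Two points deserve correction. First, your justification of the ``Recovering the $\proj^g$ embedding'' step is wrong: the vanishing $H^0(\mathcal{Z}_t,\mathcal{O}(-C_t))=H^1(\mathcal{Z}_t,\mathcal{O}(-C_t))=0$ does \emph{not} imply injectivity of the restriction $\mathrm{Pic}(\mathcal{Z}_t) \to \mathrm{Pic}(C_t)$ --- Grothendieck--Lefschetz for Picard groups fails on surfaces, and a priori a class $N$ with $(N \cdot C_t)=0$, $(N)^2 \leq -4$, could restrict trivially. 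Fortunately the step is unnecessary: since you arranged $\psi_t|_{C_t}=\alpha_t$, you have $\psi_t^{-1}(C)=C_t$ as divisors, hence $\psi_t^*L = \psi_t^*\mathcal{O}_{Y_{\Omega_g}}(C) = \mathcal{O}_{\mathcal{Z}_t}(C_t) = L_t$ directly, with no need to first establish $\psi_t^*E = M_t$.

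Second, your claim that the hypotheses of Lemma \ref{muk-lem} are ``open under small polarized deformations'' is too quick as stated: non-effectivity of $L_t - 2M_t$ and non-existence of an effective $F$ with $(F)^2=0$, $(F \cdot L_t - M_t)=3$ are not open conditions in any naive sense. They persist for the reason the paper gives: effective classes on $\mathcal{Z}_t$ specialize, under the primitive embedding $j\colon \mathrm{Pic}(\mathcal{Z}_t) \hookrightarrow \mathrm{Pic}(Y_{\Omega_g})$ with $j(L_t)=L$, to effective classes on $Y_{\Omega_g}$, so any violation on $\mathcal{Z}_t$ would contradict Lemma \ref{little-lem2} on the central fibre (and very ampleness of $L_t-M_t$ is secured the same way via Knutsen's lattice-theoretic criterion). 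Your closing remark about ``forcing $E$ to extend algebraically'' has the logic slightly inverted: one does not need a Noether--Lefschetz input --- the identity $j(M_t)=E$, forced by Lemma \ref{gon-omega-2} applied to the intersection-number data (which $j$ preserves), is itself the deduction that $E$ survives in $\mathrm{Pic}(\mathcal{Z}_t)$. With these repairs your argument coincides with the paper's proof.
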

\begin{proof}
For $t \in T$ general we have a primitive embedding $j: Pic(\mathcal{Z}_t) \hookrightarrow Pic(Y_{\Omega_g})$ with $j(\mathcal{O}_{\mathcal{Z}_t}(1))=L$. By hypothesis, each fibre $\mathcal{Z}_t$ contains a curve isomorphic to $C$ 
which has Clifford index $\frac{g+1}{2}-2$ by Lemma \ref{gon-omega}. By \cite[Lem.\ 8.3]{knut} there is some smooth and irreducible divisor $M' \in Pic(\mathcal{Z}_t)$ satisfying
$0 \leq (M')^2 < \frac{g+1}{2}$, $2 (M')^2 < (M' \cdot \mathcal{O}_{\mathcal{Z}_t}(1))$ and $\frac{g+1}{2}= (M' \cdot \mathcal{O}_{\mathcal{Z}_t}(1))-(M')^2$. As in the proof of Lemma \ref{gon-omega}, these conditions ensure that $\mathcal{O}_{\mathcal{Z}_t}(1)-M'$ is effective. Moreover, $M:=j(M')$ satisfies the conditions of Lemma \ref{gon-omega-2}, so that $M=E$. Thus $(M')^2=0$ and hence $M'$ is a smooth elliptic curve. By lemmas \ref{little-lem} and \ref{little-lem2}, $L-E$ is very ample,  $(L-E)^2 \geq 8$, $L-2E$ is not effective and there exists no effective $F$ with $(F)^2=0$ and $(F \cdot L-E)=3$; thus the same holds for $\mathcal{O}_{\mathcal{Z}_t}(1)-M' \in Pic(\mathcal{Z}_t)$ for $t$ close to $0$. Thus by Lemma \ref{muk-lem} and since $(\mathcal{O}_{\mathcal{Z}_t}(1)-M')_{|_C} \simeq K_C(E^*_{|_C})$  by Lemma \ref{unique-special} we have $ \mathcal{Z}_t \simeq Y_{\Omega_g}$. Furthermore, the embedding  $C \hookrightarrow Z_t$ is identified under this isomorphism with the natural embedding of $C$ into the quadric hull of the embedding $\phi_{K_C(E^*_{|_C})}: C \hookrightarrow \proj^k$. In particular, the embedding is independent of $t$ (up to the action of projective transformations on $\proj^g$). Thus for the general $t$ there is some $f_t \in Aut(\proj^g)=Aut(|L|)$ with $f_t(Z_t)=Y_{\Omega_g}\seq \proj^g$ such that $f_t$ sends $H_t$ to $H$.
\end{proof}
As a direct consequence we have the following corollary.
\begin{cor} \label{bij-cor}
Let $g \geq 11$ be odd and let $Y_{\Omega_g}$ and $\{L,E, \Gamma_1, \ldots, \Gamma_8 \}$ be as in Lemma \ref{little-lem2} and let $C \in |L|$ be a smooth curve. Then the fibre of the morphism $\eta: \mathcal{T}_g^0 \to \mathcal{M}_{g}$ over $[C]$ is zero-dimensional at $[(i:C \hookrightarrow Y_{\Omega_g}, L)]$.
\end{cor}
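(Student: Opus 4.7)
The plan is to reduce this corollary to Proposition \ref{good-dim-prop} by interpreting a hypothetical positive-dimensional deformation of the given stable map as precisely the input family required by that proposition. Suppose for contradiction that the fibre $\eta^{-1}([C])$ has positive dimension at $[(i:C\hookrightarrow Y_{\Omega_g},L)]$. Then there exist a smooth irreducible pointed scheme $(T,0)$ and a nonconstant morphism $T\to \eta^{-1}([C])$ sending $0$ to our chosen point. Pulling back the universal family, I obtain a flat family of polarized K3 surfaces $(\mathcal{Z}\to T,\mathcal{L})$ together with a family of unramified morphisms $f:C\times T\to \mathcal{Z}$ over $T$ whose fibres $f_t$ are birational onto curves in $|\mathcal{L}_t|$, and such that $(f_0,\mathcal{L}_0)=(i,L)$.

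Next I would upgrade this to the geometric setup appearing in Proposition \ref{good-dim-prop}. By Lemma \ref{little-lem}, $L$ is very ample, and since very ampleness is an open condition in flat families, after shrinking $T$ about $0$ every $\mathcal{L}_t$ is very ample. This produces a closed embedding $\phi:\mathcal{Z}\hookrightarrow T\times \proj^g$ whose restriction over $0$ is the $|L|$-embedding of $Y_{\Omega_g}$. Because each $f_t$ is unramified and birational onto its image in $|\mathcal{L}_t|$ with $\mathcal{L}_t$ very ample, $f_t$ is automatically a closed embedding, and so $\phi_t\circ f_t(C)=\mathcal{Z}_t\cap H_t$ for a hyperplane $H_t\subseteq \proj^g$ varying smoothly in $t$, with $H_0=H$ the hyperplane cutting out $i(C)$.

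At this point Proposition \ref{good-dim-prop} applies verbatim and yields, for general $t\in T$, an isomorphism $\psi_t:\mathcal{Z}_t\xrightarrow{\sim} Y_{\Omega_g}$ together with a projective automorphism $\alpha_t\in\mathrm{Aut}(\proj^g)$ satisfying $\alpha_t\circ \phi_t=\phi_0\circ\psi_t$ and $\alpha_t(H_t)=H$. In particular $\psi_t$ carries $f_t(C)=\mathcal{Z}_t\cap H_t$ isomorphically onto $i(C)=Y_{\Omega_g}\cap H$, so the pair consisting of $\psi_t$ and the induced isomorphism on source curves defines an isomorphism in $\mathcal{T}_g^0$ between $(f_t:C\to \mathcal{Z}_t,\mathcal{L}_t)$ and $(i:C\hookrightarrow Y_{\Omega_g},L)$. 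Hence the morphism $T\to \eta^{-1}([C])$ is constant on a dense open subset of $T$, and therefore, by irreducibility of $T$, constant — contradicting the choice of $T$.

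The main obstacle is contained in Proposition \ref{good-dim-prop}, whose proof carries all the weight of the Mukai-style reconstruction via the adjoint line bundle and the lattice work of Lemmas \ref{little-lem}--\ref{unique-special}; the corollary itself is only a formal stack-theoretic translation. The sole minor technical point is confirming that identification of the embedded curves up to an automorphism of $\proj^g$ coincides with isomorphism of stable maps in $\mathcal{T}_g^0$, but this is immediate from the definition of the moduli functor, which records $(f,X,L)$ only up to compatible isomorphisms of source and target.
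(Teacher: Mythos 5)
Your proposal is correct and matches the paper's route exactly: the paper gives no separate proof, presenting the corollary as ``a direct consequence'' of Proposition \ref{good-dim-prop}, and your write-up simply makes that reduction explicit (positive-dimensional fibre $\Rightarrow$ family $(\mathcal{Z},\phi,H_t)$ as in the proposition $\Rightarrow$ the stable map is constant in $t$ up to isomorphism, a contradiction). One small repair: unramifiedness plus birationality onto the image does \emph{not} by itself make $f_t$ a closed embedding (such a map can have a nodal image); what forces it here is that $n=0$, so the genus $g$ of the smooth source equals the arithmetic genus of $f_{t*}C\in|\mathcal{L}_t|$, whence $f_t(C)$ is smooth and $f_t$ is an isomorphism onto its image.
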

\begin{remark}
In fact, $\eta : \mathcal{T}_g^0 \to \mathcal{M}_{g}$ is birational onto its image for $g \geq 11$, $g \neq 12$, \cite{ciliberto-lopez-miranda}. Also see \cite[\S 10]{mukai-nonabelian}, \cite{sernesi-brill} for an approach in the odd genus case which more closely resembles the above.
\end{remark}

\section{Generic finiteness of the morphism $\eta$} \label{finny}
In this section we will investigate the generic finiteness of the morphism of stacks
$$\eta \; : \; \mathcal{T}^n_{g,k}  \to \mathcal{M}_{p(g,k)-n}$$ defined by sending $[(f: B \to X, L)]$ to $[B]$. 
Then $\eta$ can be extended to a morphism of stacks
$$\mathcal{W}^{n}_{g,k} \to \overline{\mathcal{M}}_{p(g,k)-n},$$ by sending a pair $[(f: B \to X, L)]$ to $[\hat{B}]$, where $\hat{B}$ denotes the stabilization of the nodal curve $B$; this works in families from the proof of \cite[Prop.\ 2.1]{knudsen}(see also \cite[\S 1.3]{fulpar}). By abuse of notation we will continue to denote this extension by $\eta$.

We start by recalling the basic deformation theory of stable maps. For the construction of $\mathcal{W}^{n}_{g,k}$ as a Deligne--Mumford stack and its elementary deformation theory, we recommend \cite[\S 10]{arakol}. The following is \cite[Prop.\ 4.1]{kemeny-thesis}. Also see \cite[\S 2]{huy-kem} and \cite[Thm.\ 2.4, Rem.\ 3.1]{kool-thomas}:
\begin{prop}
Each component of $\mathcal{W}^{n}_{g,k}$ has dimension at least $p(g,k)-n+19$.
\end{prop}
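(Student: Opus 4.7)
My plan is to produce a perfect obstruction theory on $\mathcal{W}^n_{g,k}$ whose virtual dimension equals $p(g,k)-n+19$, and then to invoke the standard fact that any irreducible component of a Deligne--Mumford stack equipped with such a theory has dimension at least the virtual dimension. The first step is to recall the general deformation-obstruction theory of a stable map $f:B\to X$ with $(X,L)$ fixed, as developed by Flenner--Ueber and Buchweitz--Flenner. The tangent and obstruction spaces are $\mathrm{Ext}^{i}$-groups of the relative cotangent complex $L_{B/X}$ with values in $\mathcal{O}_B$. A Riemann--Roch calculation, using that $X$ is a smooth surface with $K_X=0$, that $f_*[B]\in|kL|$ has self-intersection $k^2(2g-2)$, and that $B$ has arithmetic genus $p(g,k)-n$, yields
$$\chi(T^1_{(f,B)/(X,L)})-\chi(T^2_{(f,B)/(X,L)})=p(g,k)-n-1,$$
the usual Gromov--Witten virtual dimension for maps to a surface with trivial canonical class.

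Next I would allow $(X,L)$ to move in $\mathcal{B}_g$. Since $\mathcal{B}_g$ is smooth of dimension $19$, adding back these deformations gives an a priori expected dimension $p(g,k)-n+18$ for $\mathcal{W}^n_{g,k}$. The extra dimension producing the claimed bound $p(g,k)-n+19$ comes from the reduced obstruction theory specific to K3 targets: the holomorphic symplectic form $\sigma\in H^0(X,\Omega^2_X)$ defines a semi-regularity-type homomorphism from the obstruction space to $H^2(\mathcal{O}_X)\cong\C$ whose image is the full line, so the effective obstruction space has codimension one. This cuts the virtual dimension up by one, and is precisely the content of \cite[Thm.\ 2.4, Rem.\ 3.1]{kool-thomas} (equivalently, the reduced class constructions in the references cited above).

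To conclude, I would invoke the standard principle that at any closed point of a Deligne--Mumford stack carrying a perfect obstruction theory, the dimension of each irreducible component through that point is at least $\dim T^{1}-\dim T^{2}$, where $T^{2}$ is now the reduced obstruction space. Applied uniformly across $\mathcal{W}^n_{g,k}$ this yields the desired bound. The main obstacle is the reduction step: one must verify that the semi-regularity pairing with $\sigma$ really does cut down one full direction of obstructions and that this reduction is compatible with varying $(X,L)$ in $\mathcal{B}_g$ simultaneously with deforming the stable map. This compatibility rests on the triviality of $K_X$ and is the technical heart of the argument; the Riemann--Roch count and the final dimension estimate are then essentially formal.
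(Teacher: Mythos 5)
Your proposal is correct and takes essentially the same route as the paper: the paper does not spell out a proof but cites \cite[Prop.\ 4.1]{kemeny-thesis} and \cite[Thm.\ 2.4, Rem.\ 3.1]{kool-thomas}, whose content is precisely your computation (virtual dimension $p(g,k)-n-1$ over a fixed K3 target, plus $19$ for $\mathcal{B}_g$, plus one more from the reduced/semi-regularity quotient of the obstruction space cut out by the holomorphic symplectic form). The concluding lower-bound principle for components of a Deligne--Mumford stack with a perfect (reduced) obstruction theory is likewise the standard final step in those references.
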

The criterion below has been used several times in the literature, see \cite{bogomolov}, \cite{huy-kem}.
\begin{prop} \label{ishi-little}
Let $[(f: B \to X, L)] \in \mathcal{W}^{n}_{g,k}$ represent an unramified stable map such that $h^0(N_f) \leq p(B)$, where $p(B)=p(g,k)-n$ denotes the arithmetic genus of $B$. Then for every irreducible component $J \seq \mathcal{W}^{n}_{g,k}$ containing $[(f: B \to X, L)]$ the projection $\pi: J \to \mathcal{B}_g$ is dominant.
\end{prop}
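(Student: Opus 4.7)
The plan is to argue by a pure dimension count. The proposition just recalled gives the lower bound $\dim J \geq p(B) + 19$, while the hypothesis $h^0(N_f) \leq p(B)$ will control the dimension of the fiber of $\pi$ through $[f]$; combining these via the fiber dimension theorem will force $\overline{\pi(J)}$ to fill up all of $\mathcal{B}_g$.

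First, since $f$ is unramified, the normal sheaf $N_f$ fits in the short exact sequence
$$0 \to T_B \to f^*T_X \to N_f \to 0$$
and is a locally free sheaf on $B$. By the deformation theory of morphisms into a fixed target (see \cite{flenner-ueber}, \cite[\S 7.4]{buchweitz-flenner}, and compare \cite[Rem.\ 3.1]{kool-thomas} or \cite[\S 2]{huy-kem}), the first-order deformations of $f \colon B \to X$ with the polarized K3 surface $(X,L)$ held fixed are classified by $H^0(B, N_f)$. Consequently the local dimension of the fiber $\pi^{-1}([(X,L)])$ at the point $[f]$ is bounded above by $h^0(N_f)$.

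Next I would apply the fiber dimension theorem to the restriction $\pi \colon J \to \overline{\pi(J)}$, which is dominant by the very definition of the image. For the distinguished point $[f] \in J$ this yields
$$\dim_{[f]} \pi^{-1}([(X,L)]) \;\geq\; \dim J - \dim \overline{\pi(J)}.$$
Combining this with the upper bound on the fiber dimension, the lower bound on $\dim J$, and the hypothesis $h^0(N_f) \leq p(B)$, one finds
$$\dim \overline{\pi(J)} \;\geq\; \dim J - h^0(N_f) \;\geq\; (p(B)+19) - p(B) \;=\; 19 \;=\; \dim \mathcal{B}_g.$$
Since $\mathcal{B}_g$ is irreducible, this forces $\overline{\pi(J)} = \mathcal{B}_g$, so $\pi \colon J \to \mathcal{B}_g$ is dominant.

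The only nontrivial ingredient is the identification of the tangent space to the fiber of $\pi$ at $[f]$ with $H^0(N_f)$, which is where the unramifiedness hypothesis is genuinely used (to ensure $N_f$ is locally free and that the standard normal-sheaf deformation theory applies). The rest is the elementary fiber dimension inequality for a dominant morphism of irreducible Deligne--Mumford stacks.
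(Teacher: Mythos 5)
Your proposal is correct and takes essentially the same route as the paper's proof: the paper likewise identifies the fibre $\pi^{-1}([(X,L)])$ with the space of stable maps into the fixed surface $X$, bounds its local dimension at $[f]$ by $h^0(N_f) \leq p(B)$ (citing \cite[\S 3.4.2]{sernesi-def}), and combines this with the lower bound $\dim J \geq p(B)+19$ via the fibre dimension inequality to conclude $\dim \pi(J) = 19 = \dim \mathcal{B}_g$. There is nothing to add beyond noting, as the paper does in passing, that the argument also forces the equality $h^0(N_f) = p(B)$.
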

\begin{proof}
 The fibre $\pi^{-1}([(X,L)])$ may be identified with the space of stable maps into the \emph{fixed} surface $X$, and thus each component of $\pi^{-1}([(X,L)])$ containing $[(f: B \to X, L)]$ has dimension at most $h^0(N_f) \leq p(B)$, \cite[\S 3.4.2]{sernesi-def}. Thus $\dim \pi(J)=\dim \mathcal{B}_g$, so $\pi: J \to \mathcal{B}_g$ is dominant (note that this also forces the equality $h^0(N_f) = p(B)$).
\end{proof}

The following is a generalization of \cite[Prop.\ 2.3]{huy-kem}. \footnote{There is a minor mistake in the proof of \cite[Prop.\ 2.3]{huy-kem}; the claimed isomorphism $\Omega_{D_{|_{D_n}}} \simeq \mathcal{O}(-1)$ should be replaced with $det(\Omega_{D_{|_{D_n}}}) \simeq \mathcal{O}(-1)$, as the sheaf $\Omega_{D_{|_{D_n}}}$ is not torsion free.}
\begin{lem} \label{stablem}
Let $f: B \to X$ be an unramified morphism from a connected nodal curve to a K3 surface, and let $N_f$ denote the normal bundle of $f$. Assume that the irreducible components $Z_1, \ldots, Z_s$ of $B$ are smooth. Assume further that we may label the components such that $\bigcup_{i=1}^j Z_i$ is connected for all $j \leq s$. Then $h^0(N_f) \leq p(B)$, where $p(B)$ denotes the arithmetic genus of $B$.
\end{lem}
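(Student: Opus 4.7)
The plan is to prove the lemma by induction on the number of components $s$. In the base case $s = 1$, the curve $B = Z_1$ is smooth of genus $p$; from the exact sequence $0 \to T_B \to f^* T_X \to N_f \to 0$ and triviality of $K_X$, the line bundle $N_f$ has degree $2p - 2$, so Riemann--Roch combined with Serre duality (which bounds $h^1(N_f) = h^0(\omega_B \otimes N_f^{-1})$ by $1$, since the latter is a degree-zero line bundle on a connected curve) gives $h^0(N_f) \leq p$, as required.

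For the inductive step I would decompose $B = B' \cup Z$, where $B' := \bigcup_{i < s} Z_i$ (connected by the labelling assumption) and $Z := Z_s$. Let $\Delta := B' \cap Z$ denote the reduced divisor of $k \geq 1$ nodes, and set $f' := f|_{B'}$, $g := f|_Z$. A direct computation of $\chi(\mathcal{O}_B)$ yields $p(B) = p(B') + p(Z) + k - 1$. The technical core of the argument is the identification
$$N_f|_{B'} \cong N_{f'}(\Delta), \qquad N_f|_Z \cong N_g(\Delta),$$
which I would establish by a local analysis at each node of $\Delta$, using that $f$ is unramified; the degree balance $\deg N_f = (2p(B') - 2 + k) + (2p(Z) - 2 + k) = 2p(B) - 2$ is a useful consistency check.

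Granting these identifications, I would tensor the standard sequence $0 \to \mathcal{O}_{B'}(-\Delta) \to \mathcal{O}_B \to \mathcal{O}_Z \to 0$ with the rank-one locally free sheaf $N_f$ to obtain
$$0 \to N_{f'} \to N_f \to N_g(\Delta) \to 0.$$
Passing to global sections and invoking the inductive hypothesis $h^0(N_{f'}) \leq p(B')$, it remains only to bound $h^0(N_g(\Delta))$. Since $k \geq 1$, the line bundle $N_g(\Delta)$ on the smooth curve $Z$ has degree $2p(Z) - 2 + k > 2p(Z) - 2$, so $h^1$ vanishes and Riemann--Roch delivers $h^0(N_g(\Delta)) = p(Z) + k - 1$. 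Adding the two contributions gives $h^0(N_f) \leq p(B') + p(Z) + k - 1 = p(B)$, completing the induction.

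The main obstacle is the local identification $N_f|_Z \cong N_g(\Delta)$ at the nodes of $\Delta$: this is exactly where the unramified hypothesis is essential, and it is the point the footnote flags as having been mishandled in the original Huybrechts--Kemeny proof (where the torsion of $\Omega_D$ at the nodes was ignored). Once this local calculation is secured, the remainder of the argument is a formal combination of Riemann--Roch and Serre duality on smooth components together with the Mayer--Vietoris--type sequence above.
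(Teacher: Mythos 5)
Your proposal is correct and follows essentially the same route as the paper's proof: induction on the number of components, the exact sequence obtained by twisting $0 \to \mathcal{O}_{B'}(-\Delta) \to \mathcal{O}_B \to \mathcal{O}_Z \to 0$ by the line bundle $N_f$, the identifications $N_f|_{B'} \cong N_{f'}(\Delta)$ and $N_f|_Z \cong N_g(\Delta) \cong \omega_Z(\Delta)$, and Riemann--Roch with $h^1(\omega_Z(\Delta))=0$ on the smooth component. The only cosmetic differences are that the paper settles the base case via the exact isomorphism $N_f \simeq \omega_B$ (rather than your degree count plus the Serre-duality bound $h^1 \leq 1$) and imports the key identification $N_{f_A}(Y) = N_f|_A$ from \cite[\S 2]{ghs-rat} instead of carrying out the local analysis at the nodes that you defer, so the technical core you flag is covered at the same level of detail in both arguments.
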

\begin{proof}
We will prove this by induction on $s$. If $B$ is irreducible, then by assumption $B$ is smooth, so we have a short exact sequence of vector bundles
$$0 \to T_B \to f^* T_X \to N_f \to 0 $$
and taking determinants gives $N_f \simeq \omega_B$. Thus $h^0(N_f)=h^0(\omega_B)=p(B)$. Now let $T:=\overline{B \setminus Z_s}=\bigcup_{i=1}^{s-1} Z_i$; this is connected by assumption. Let $\{p_1, \ldots, p_r\} = Z_s \cap T$. We have a short exact sequence
$$0 \to {N_f}_{|_T}(-p_1 -\ldots -p_r) \to N_f \to {N_f}_{|_{Z_s}} \to 0 .$$ If $A \seq B$ is a connected union of components, and $f_A:=f_{|_A}$, $Y:=A \cap (\overline{B \setminus A})$, then $N_{f_A}(Y)={N_f}_{|_A}$, from \cite[\S 2]{ghs-rat}. Thus
$$ h^0(N_f) \leq h^0(N_{f_T})+h^0(\omega_{Z_s}(p_1+\ldots+p_r)).$$ 
By induction, $h^0(N_{f_T}) \leq p(T)$, and further $h^1(\omega_{Z_s}(p_1+\ldots+p_r))=h^0(\mathcal{O}_{Z_s}(-p_1 \ldots -p_r))=0$, so Riemann--Roch gives $h^0(\omega_{Z_s}(p_1+\ldots+p_r))=p(Z_s)+r-1$. Thus the claim follows from
 $p(B)=p(T)+p(Z_s)+r-1$.
\end{proof}
\begin{remark} \label{slightgen-nodal}
It follows from the proof that the above result may be generalized as follows. Suppose $f: B \to X$ be an unramified morphism from a connected nodal curve to a K3 surface, and $B= \bigcup_{i=1}^s Z_i$ where $Z_1$ is connected, but not necessarily irreducible or smooth, and with $Z_2, \ldots, Z_s$ smooth (and with $s>1$). Assume $\bigcup_{i=1}^j Z_i$ is connected for all $j \leq s$, and $h^0(N_{f_1}) \leq p(Z_1)$, where $f_1:=f_{|_{Z_1}}$. Then $h^0(N_f) \leq p(B)$.
\end{remark}

We now need the following result on simultaneous normalization of families of singular curves, which in this generality is usually attributed to Raynaud, generalising the results of Teissier, \cite{teissier}. For modern treatments and further generalisations, see \cite{chiang-lipman},  \cite[Thm.\ 12]{kollar-simultaneous}.
\begin{prop} [Teissier, Raynaud] \label{sim-res}
Let $B$ be a normal, integral scheme of finite-type over $\C$, and let 
$$ f: \mathcal{C}_1 \to B$$ be a projective, flat family of relative dimension one with reduced fibres. Then there exists a simultaneous resolution of $f$, i.e.\ a flat family $ f': \mathcal{C}_2 \to B$ of relative dimension one with normal fibres, together with a finite map $h: \mathcal{C}_2 \to \mathcal{C}_1$ such that $f \circ h=f'$ and the restriction morphism $$h_b:  \mathcal{C}_{2,b} \to \mathcal{C}_{1,b} $$ is the normalization map for each fibre over $b \in B$.
\end{prop}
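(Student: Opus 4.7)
The natural candidate for $\mathcal{C}_2$ is the normalization of the total space $\mathcal{C}_1$, and the strategy is to verify that this choice has the desired properties. The verification splits into three tasks: reducedness of the total space, compatibility of normalization with restriction to fibres, and flatness of the projection over $B$. The last is the substantive point.

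First I would observe that the hypotheses already force $\mathcal{C}_1$ to be reduced: since $f$ is flat with generically reduced fibres over the reduced base $B$, a standard local criterion (the associated points of $\mathcal{C}_1$ map to generic points of $B$ and have reduced fibres there) gives reducedness of $\mathcal{C}_1$. Since we are working with Nagata rings over $\C$, the normalization $\nu : \mathcal{C}_2 := \widetilde{\mathcal{C}}_1 \to \mathcal{C}_1$ exists as a finite birational morphism, and $\mathcal{C}_2$ is normal of dimension $\dim B + 1$. Second, for each $b \in B$, I would show that the restriction $h_b : \mathcal{C}_{2,b} \to \mathcal{C}_{1,b}$ is the normalization map of the fibre. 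This is a local question at singular points of $\mathcal{C}_{1,b}$, and it follows by combining the reducedness of the fibres of $f$ with the normality of $B$: one checks via the characterization of normalization as integral closure in the total ring of fractions that $\nu^{-1}(\mathcal{C}_{1,b})$ has no embedded components and that its structure sheaf realises the integral closure fibre-by-fibre along each branch.

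The main obstacle is step three: establishing flatness of $f' := f \circ \nu : \mathcal{C}_2 \to B$. This is precisely the content of the Teissier--Raynaud equinormalizability criterion for curves: for a flat family of reduced curves over a normal base, the simultaneous normalization is flat if and only if the $\delta$-invariant function
\[
b \; \mapsto \; \delta(b) := \dim_{\C} \bigl(\nu_{*}\mathcal{O}_{\mathcal{C}_2}/\mathcal{O}_{\mathcal{C}_1}\bigr)_{b}
\]
is locally constant on $B$. To verify this in our setting, I would combine upper semicontinuity of $\delta$ with the constancy of the fibrewise arithmetic genus (from flatness of $f$) via the additive decomposition $p_a(\mathcal{C}_{1,b}) = p_a(\mathcal{C}_{2,b}) + \delta(b)$, together with semicontinuity of the geometric genera of the components of $\mathcal{C}_{2,b}$; the normality of $B$ is essential here, as it is precisely the hypothesis that forces the semicontinuous quantities to balance rather than produce jumping phenomena. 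As a complementary route (more in the spirit of Raynaud), one can reduce flatness to the case $\dim B = 1$ by testing against all smooth curves through a given point, and then appeal to miracle flatness: over a smooth curve $B$, the normal surface $\mathcal{C}_2$ is Cohen--Macaulay with equidimensional fibres over the regular base, so $f'$ is automatically flat. Either route, once flatness is established, combines with step two to produce the required simultaneous normalization.
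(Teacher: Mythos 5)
You should first note that the paper does not prove this proposition at all: it is quoted from the literature (Teissier, Raynaud; modern treatments in the cited works of Chiang-Hsieh--Lipman and Koll\'ar), so your attempt must be measured against those theorems. Your plan correctly identifies the right circle of ideas---normalize the total space and control flatness of $\mathcal{C}_2 \to B$ through the $\delta$-invariant---but Step 3 cannot be carried out, and in fact no argument can close it, because the statement taken in isolation is missing a hypothesis: the Teissier--Raynaud/Chiang-Hsieh--Lipman theorem says that a flat family of reduced curves over a normal base is equinormalizable \emph{if and only if} $b \mapsto \delta(b)$ is locally constant, and this constancy does not follow from the stated hypotheses. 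A pencil of plane cubics over $B = \mathbb{A}^1$ degenerating from smooth to nodal members is projective, flat, with reduced fibres over a normal integral base, yet admits no simultaneous normalization: since $h$ is finite, $f'$ would be projective and flat, forcing $\chi(\mathcal{O}_{\mathcal{C}_{2,b}})$ to be constant, whereas the normalized fibres have genus $1$ generically and genus $0$ at the special point. Your purported derivation of $\delta$-constancy fails at two identifiable points: the identity $p_a(\mathcal{C}_{1,b}) = p_a(\mathcal{C}_{2,b}) + \delta(b)$ presupposes that $\mathcal{C}_{2,b}$ is the normalization of $\mathcal{C}_{1,b}$, which is exactly the conclusion being sought and is false in general for the fibre of the normalized total space; and the one true semicontinuity available ($\delta$ is upper semicontinuous) has nothing balancing it from below. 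Normality of $B$ enters the genuine theorem as a hypothesis for the implication ``$\delta$ constant $\Rightarrow$ equinormalizable'' (it fails over non-normal bases), not as a mechanism preventing $\delta$ from jumping.

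Your Step 2 is also false as stated for the candidate $\mathcal{C}_2 = \widetilde{\mathcal{C}}_1$: in the family $\{y^2 = x^2 - t\} \to \mathbb{A}^1_t$ the total space is smooth, hence equal to its own normalization, but the fibre over $t = 0$ is nodal; so the restriction of the total-space normalization to fibres realizes the fibrewise normalization precisely in the equinormalizable case---again circular. (Your miracle-flatness route does correctly yield flatness of $\widetilde{\mathcal{C}}_1 \to B$ over a one-dimensional regular base, but the same example shows the fibres need not be normal, so the conclusion still fails.) The proposition should be read with the implicit equigenericity hypothesis under which the paper actually invokes it: in Lemma \ref{defo-nodal-lemma} every fibre of the family is nodal with exactly $m$ nodes, so $\delta \equiv m$ is constant and the genuine criterion applies. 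A correct writeup would either add ``$b \mapsto \delta(b)$ locally constant'' to the hypotheses and quote (or reprove) the criterion of the cited references, or restrict to such equisingular nodal families from the outset.
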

\begin{lem} \label{defo-nodal-lemma}
Let $f: B \to X$ be an unramified morphism from an integral, nodal curve $B$ to a K3 surface, with $[(f: B \to X,L)] \in \mathcal{W}^{n}_{g,k}$ and assume that $f$ is birational onto its image. Then $[(f: B \to X,L)]$ lies in the closure of $\mathcal{T}^n_{g,k}$.
\begin{proof}
Suppose there was a component of $J$ of $\mathcal{W}^{n}_{g,k}$ containing $[(f: B \to X,L)]$ such that if  $[(f': B' \to X',L')]$ is general, then $B'$ is nodal with at least $m >0$ nodes. Replace $J$ with the dense open subset parametrizing unramified maps, which are birational onto the image and such that the base $B'$ is integral with exactly $m$ nodes. Composing $f'$ with the normalization $\widetilde{B} \to B'$ gives an unramified stable map $h: \widetilde{B} \to X'$; thus we have a map $G: J(\C) \to \mathcal{T}^{n+m}_{g,k}(\C)$ between the \emph{sets} of closed points of the respective stacks. The fact that $f$ is birational onto its image implies that for a general $y \in Im(G)$, $G^{-1}(y)$ is a finite set. Indeed, if $y$ corresponds to the stable map $h: \widetilde{B} \to X'$, then any element of $G^{-1}(y)$ corresponds to an unramified stable map $h': D \to X'$, birational onto its image, where $D$ is integral and has exactly $m$ nodes and the normalization of $D$ is $ \widetilde{B}$. Further, 
if $\mu: \widetilde{B} \to D$ is the normalization morphism, then $h' \circ \mu=h$, by definition of $G$. As $h'$ is birational onto its image, the set 
$$ Z := \{ z \in \widetilde{B} \; | \; \text{There exists $y \neq z \in \widetilde{B}$ with $h(z)=h(y)$} \}$$
is finite. Since $h'$ is obtained from $h$ by glueing $m$ pairs of points in $Z$, there are only finitely many possibilities for $h'$.

We claim that, at least after a finite base change, $G$ is locally induced by a morphism of stacks. After replacing $J$ with an etale cover, we may assume it comes with a universal family. Let $J' \to J$ be the normalization of $J$. Pulling back the universal family on $J$ gives a family of stable maps over $J'$. In particular, we have a flat family $\mathcal{B} \to J'$ of nodal curves with exactly $m$ nodes specializing to $B'$.  Applying Proposition \ref{sim-res}, we may simultaneously resolve the $m$ nodes of the fibres of $\mathcal{B}$ to produce a family $\widetilde{\mathcal{B}} \to J'$ of \emph{smooth} curves, together with a morphism $\widetilde{\mathcal{B}} \to \mathcal{B}$ restricting to the normalization over each point in $J'$. By composing with the universal family of stable maps $\mathcal{B} \to \mathcal{X}$, where $\mathcal{X}$ is a family of K3 surfaces, we produce a family of stable maps $\widetilde{\mathcal{B}} \to \mathcal{X}$ over $J'$. By the universal property of $\mathcal{T}^{n+m}_{g,k}$, this produces a morphism of Deligne--Mumford stacks $J' \to \mathcal{T}^{n+m}_{g,k}$ which coincides with the composition $G'$ of $G$ with $J' \to J$ on the level of closed points. As this morphism is generically finite onto its image, the dimension of $\mathcal{T}^{n+m}_{g,k}$ is at least $\dim J'=\dim J \geq p(B)+19$. But $\mathcal{T}^{n+m}_{g,k}$ is smooth of dimension $p(B)-m+19$, so this is a contradiction.
\end{proof}
\end{lem}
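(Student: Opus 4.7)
The plan is to proceed by contradiction. Suppose $[(f: B \to X,L)]$ does not lie in $\overline{\mathcal{T}^n_{g,k}}$. Since $\mathcal{T}^n_{g,k} \seq \mathcal{W}^n_{g,k}$ is open, there must then exist an irreducible component $J \seq \mathcal{W}^n_{g,k}$ through $[(f : B \to X, L)]$ along which no nearby stable map has smooth source. After shrinking $J$ to a dense open subset I may assume that for the general $[(f' : B' \to X', L')] \in J$, $f'$ is unramified and birational onto its image, and $B'$ is an integral nodal curve with exactly $m$ nodes, for some fixed $m \geq 1$.

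I would then exploit the normalization to map $J$ to a moduli space of the same relative type but with smooth source. Composing $f'$ with the normalization $\nu : \widetilde{B'} \to B'$ produces an unramified stable map $h : \widetilde{B'} \to X'$ whose source is a smooth integral curve of arithmetic genus $p(g,k)-n-m$, giving a set-theoretic assignment $G : J(\C) \to \mathcal{T}^{n+m}_{g,k}(\C)$. The key claim is that $G$ has finite fibers over a general point of its image: recovering $f'$ from $h$ amounts to choosing $m$ unordered pairs of points of $\widetilde{B'}$ to glue, and birationality of $f'$ onto its image forces every such pair to lie in the finite set $\{z \in \widetilde{B'} \mid \exists y \neq z \text{ with } h(y)=h(z)\}$, leaving only finitely many possibilities.

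Next I want to promote $G$ to a bona fide morphism of Deligne--Mumford stacks so that the finiteness of fibers on closed points has dimension-theoretic content. After passing to an \'etale cover I may assume $J$ carries a universal family of unramified stable maps with nodal source $\mathcal{B} \to J$, and then pull back to the normalization $J' \to J$. The resulting family $\mathcal{B}_{J'} \to J'$ is a flat, proper family of integral $m$-nodal curves over a normal base, so the Teissier--Raynaud simultaneous resolution theorem (Proposition \ref{sim-res}) produces a flat family $\widetilde{\mathcal{B}} \to J'$ of smooth curves together with a finite map $\widetilde{\mathcal{B}} \to \mathcal{B}_{J'}$ that restricts to normalization on each fiber. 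Composing with the pullback of the universal K3 family yields a family of unramified stable maps classified by a morphism of stacks $J' \to \mathcal{T}^{n+m}_{g,k}$ agreeing with $G$ on closed points, up to the finite morphism $J' \to J$.

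The contradiction then comes from a dimension count. On the one hand, the general lower bound recalled above gives $\dim J' = \dim J \geq p(g,k) - n + 19$, while $\mathcal{T}^{n+m}_{g,k}$ is smooth of dimension $p(g,k)-n-m+19$. Since the constructed morphism $J' \to \mathcal{T}^{n+m}_{g,k}$ has finite fibers on a dense open (by the finite-fiber claim for $G$), its image must have dimension $\dim J'$, which forces $p(g,k)-n+19 \leq p(g,k)-n-m+19$ and contradicts $m \geq 1$. The main obstacle I expect is the middle step: ensuring that the normalization operation is not merely a set-theoretic map but genuinely underlies a morphism of stacks, which is exactly where the Teissier--Raynaud simultaneous resolution, and the care in passing to an \'etale cover and a normalization, becomes essential.
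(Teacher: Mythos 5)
Your proposal is correct and follows essentially the same route as the paper's own proof: the set-theoretic normalization map $G: J(\C) \to \mathcal{T}^{n+m}_{g,k}(\C)$, the finiteness of its fibres via the finite set of points identified by $h$, the promotion of $G$ to a stack morphism using an \'etale cover, the normalization $J' \to J$, and the Teissier--Raynaud simultaneous resolution (Proposition \ref{sim-res}), and finally the dimension count $\dim J' \geq p(g,k)-n+19 > p(g,k)-n-m+19 = \dim \mathcal{T}^{n+m}_{g,k}$ yielding the contradiction. No gaps beyond those already present in the paper's argument.
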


\begin{prop} \label{prim-cor}
Let $[(f: B \to X, L)] \in \mathcal{W}^{n}_{g,k}$ represent an unramified stable map such that  $h^0(N_f) \leq p(B)$, where $p(B)=p(g,k)-n$. Assume furthermore that there is no decomposition $$B= \bigcup_{i =1}^t B_i$$ for $t >1$ with each $B_i$ a connected union of irreducible components of $B$ such that $B_i$ and $B_j$ meet transversally (if at all) for all $i \neq j$ and such that for all $1 \leq i \leq t$, $f_*(B_i) \in |m_iL|$ for a positive integer $m_i >0$ (this is automatic if $k=1$). Lastly, assume that there is some component $B_j$ such that $f_{|_{B_j}}$ is birational onto its image, and if $B_i \neq B_j$ is any component, $f(B_i)$ and $f(B_j)$ intersect properly. Then $[(f: B \to X, L)]$ lies in the closure of $\mathcal{T}^{n}_{g,k}$.
\end{prop}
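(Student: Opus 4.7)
The plan is to pick an irreducible component $J \seq \mathcal{W}^n_{g,k}$ containing $[(f: B \to X, L)]$ and to show that its general point $[(f': B' \to X', L')]$ has $B'$ irreducible and $f'$ birational onto its image. Since $f$ is unramified and unramifiedness is an open condition, $f'$ is unramified as well; then $[f']$ either lies in $\mathcal{T}^n_{g,k}$ (if $B'$ is smooth) or in $\overline{\mathcal{T}^n_{g,k}}$ by Lemma \ref{defo-nodal-lemma} (if $B'$ is integral nodal). In either case $J \seq \overline{\mathcal{T}^n_{g,k}}$, whence $[f] \in \overline{\mathcal{T}^n_{g,k}}$.

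By the first hypothesis and Proposition \ref{ishi-little}, $\pi: J \to \mathcal{B}_g$ is dominant; choose $[f']$ general enough that $\mathrm{Pic}(X') \simeq \mathbb{Z}L'$. To prove $B'$ is irreducible, suppose instead that it has irreducible components $B'_1, \ldots, B'_s$ with $s \geq 2$. Since $\mathrm{Pic}(X') \simeq \mathbb{Z}L'$, each pushforward class $f'_*[B'_i]$ equals $m'_i L'$ for some integer $m'_i \geq 1$ (positivity follows from unramifiedness, which forces $f'(B'_i)$ to be a curve). Passing to a $1$-parameter family in $J$ connecting $[f']$ to $[f]$ (after normalization of $J$ if necessary), each $B'_i$ specializes to a connected subcurve $C_i \seq B$ with $f_*[C_i] = m'_i L$. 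Because $B$ is nodal, the $C_i$'s meet only at nodes of $B$, hence transversally. This produces a decomposition of $B$ of the type prohibited by the second hypothesis, a contradiction.

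To show $f'$ is birational onto its image, suppose instead that $d := \deg(f': B' \to f'(B')) \geq 2$. Then $[f'(B')] = (k/d)L'$ in $\mathrm{Pic}(X')$, and in the $1$-parameter family the flat limit $Y_0$ of the image curves $f_t(B_t)$ is a $1$-cycle on $X$ of class $(k/d)L$ with support contained in $\bigcup_i f(B_i)$. Grouping irreducible components of $B$ by image, let $\{Z_\ell\}$ be the distinct curves among the $f(B_i)$, and write $Y_0 = \sum_\ell M_\ell [Z_\ell]$ with $M_\ell \geq 1$. By compatibility of proper pushforward with flat specialization of cycles, the identity $f_{t,*}[B_t] = d[f_t(B_t)]$ specializes at $t=0$ to $\sum_\ell \Delta_\ell [Z_\ell] = d \sum_\ell M_\ell [Z_\ell]$ in the group of $1$-cycles on $X$, where $\Delta_\ell := \sum_{i : f(B_i)=Z_\ell} \deg(f|_{B_i})$. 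The third hypothesis forces $Z_{\ell_j} := f(B_j)$ to be disjoint-in-codimension-one from every other $Z_\ell$ and to be covered only by the component $B_j$, so $\Delta_{\ell_j} = \deg(f|_{B_j}) = 1$. Matching coefficients of $[Z_{\ell_j}]$ in the free abelian group of $1$-cycles yields $1 = dM_{\ell_j}$, contradicting $d \geq 2$ and $M_{\ell_j} \geq 1$. Hence $d=1$.

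The main technical step is this cycle-level specialization, which must be justified by applying pushforward-specialization compatibility in a $1$-parameter family of finite morphisms onto their images (typically after base change to the normalization of $J$). The essential use of the third hypothesis is that it singles out a distinguished component $B_j$ whose image $f(B_j)$ is covered by $B_j$ alone and with degree one; this is what forces the coefficient equation to read $1 = dM_{\ell_j}$ rather than the much weaker condition $d \mid k$ that a pure class-in-$\mathrm{Pic}$ argument would yield.
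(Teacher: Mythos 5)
Your proposal is correct and follows essentially the same route as the paper: first use Proposition \ref{ishi-little} to deform to a map with $\mathrm{Pic}(X')\simeq\mathbb{Z}L'$, rule out reducible limits by specializing components and invoking the no-decomposition hypothesis, then rule out $\deg \geq 2$ via specialization of the pushforward cycle, and conclude with Lemma \ref{defo-nodal-lemma}. Your coefficient-matching identity $1 = dM_{\ell_j}$ at $f(B_j)$ is exactly the paper's observation that the relatively effective divisor $\bar{\mathcal{B}} = \tilde{g}_*(\mathcal{B})$ is generically reduced on an open set meeting $f(B_j)$, stated in cycle-theoretic language.
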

\begin{proof}
 We need to show that we may deform $[(f: B \to X, L)]$ to an unramified stable map $[(f': B' \to X', L')]$ with $B'$ irreducible and smooth. We will firstly show that $[(f: B \to X, L)]$ deforms to a stable map with irreducible base. 
 
 From Proposition \ref{ishi-little}, $\pi: \mathcal{W}^{n}_{g,k} \to \mathcal{B}_g$ is dominant near $[(f: B \to X, L)]$. Thus we may deform $[(f: B \to X, L)]$ to an unramified stable map $[(f': B' \to X', L')]$, where $Pic(X') \simeq \mathbb{Z}L'$. For any irreducible component $Z \seq B'$, $f'_*(Z) \in |a_iL'|$ for some integer $a_i >0$. Given any one-parameter family $\gamma(t)=[(f_t: B_t \to X_t, L_t)]$ of stable maps with $\gamma(0)=[(f: B \to X, L)]$, then, by the Zariski connectedness theorem, after performing a finite base change about $0$ if necessary the irreducible components of $B_t$ for generic $t$ deform to a connected union of irreducible components of $B$ as $t \to 0$. Thus the condition on $B$ ensures that $[(f: B \to X, L)] $ deforms to an unramified stable map of the form $[(f': B' \to X', L')]$ with $B'$ \emph{integral} and nodal. 
 
 We will next show that $[(f: B \to X, L)] $ deforms to an unramified stable map of the form $[(f': B' \to X', L')]$ with $B'$ integral and such that $f'$ is birational onto its image. By Lemma \ref{defo-nodal-lemma}, this will complete the proof. Let $S$ be a smooth, irreducible, one-dimensional scheme with base point $0$, and suppose we have a diagram
 \[
\xymatrix{
\mathcal{B} \ar[r]^{\tilde{g}}  \ar[rd]_{\pi_1} & \mathcal{X} \ar[d]^{\pi_2} \\
&S
}
\] 
with $\tilde{g}$ proper, $\pi_1$, $\pi_2$ flat and with $\tilde{g}_s: \mathcal{B}_s \to \mathcal{X}_s$ an unramified stable map to a K3 surface for all $s$, with $\tilde{g}_0=f$ and such that $\mathcal{B}_s$ is integral for $s \neq 0$. There is an an $S$-flat line bundle $\mathcal{L}$ on $\mathcal{X}$, with $\mathcal{L}_0=kL$ and that the cycle $\tilde{g}_{*}(\mathcal{B}) \sim \mathcal{L}$ is a relatively effective (Cartier) divisor. So the cycle $\tilde{g}_{*}(\mathcal{B})$ may be considered as an $S$-relatively effective divisor $\bar{\mathcal{B}} \seq \mathcal{X}$. By the assumptions on $f=\tilde{g}_0$, the irreducible surface $\bar{\mathcal{B}}$ is reduced on a dense open subset meeting $f(D_j)$, which forces $\tilde{g}_s$ to be birational for $s$ near $0$ (as if $\deg(\tilde{g}_s)=d$, $\bar{\mathcal{B}}_s=d \tilde{g}_s(\mathcal{B}_s)$).

\end{proof}

We next aim to reduce the study of generic finiteness of $\eta$ to that of $\eta$ for $m >>n$. 
\begin{lem} \label{red-largen}
Let $n \leq m$ with $p(g,k)-m>0$. Assume that there is a component $I_m \seq \mathcal{T}^m_{g,k}$ such that 
$${\eta}_{|_{I_m}}: I_m \to \mathcal{M}_{p(g,k)-m}$$ is generically finite and assume that for the general $[f: B \to X] \in I_m$, $B$ is non-trigonal. Then there exists a component $I_n \seq \mathcal{T}^n_{g,k}$
such that  $${\eta}_{|_{I_n}}: I_n \to \mathcal{M}_{p(g,k)-n}$$ is generically finite. For the general $[f': C \to Y] \in I_n$, $C$ is non-trigonal.
\end{lem}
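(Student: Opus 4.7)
The plan is to construct $I_n$ from $I_m$ by partially smoothing nodes of the image curves, and then to verify generic finiteness by a degeneration argument. Throughout I write $p := p(g,k)$.

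First I would take a general $[f_0 : B_0 \to X_0] \in I_m$. Since $B_0$ is non-trigonal by hypothesis, the theorem of Harris, Chen and Dedieu--Sernesi (\cite[Lem.\ 3.1]{chen-rational}, \cite[Thm.\ 2.8]{ded-sern}), applied fiberwise over $\mathcal{B}_g$, allows me to deform inside $I_m$ so that $D_0 := f_0(B_0) \in |kL_0|$ has exactly $m$ ordinary nodes as its only singularities. Next, by the classical smoothing theory of nodal curves on K3 surfaces (Tannenbaum--Chen), the Severi variety $V^n_{|kL_0|}(X_0) \subseteq |kL_0|$ is nonempty of the expected dimension $p-n$ and contains $D_0$ in its closure. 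Smoothing $m-n$ of the $m$ nodes of $D_0$ yields a one-parameter family $D_t \in |kL_0|$ with $D_t$ having exactly $n$ nodes for $t \neq 0$. Normalizing, I obtain $[\tilde{f}_t : \tilde{D}_t \to X_0] \in \mathcal{T}^n_{g,k}$; let $I_n$ be the component of $\mathcal{T}^n_{g,k}$ containing it.

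A dimension count will confirm that $I_n$ is a full-dimensional component: the above construction assembles into a rational correspondence from $I_m \times \mathcal{S}$ to $I_n$, where $\mathcal{S}$ is the $(m-n)$-dimensional space parametrizing choices of nodes and smoothing directions, giving total dimension
$$\dim I_m + (m-n) = (p-m+19)+(m-n) = p-n+19 = \dim \mathcal{T}^n_{g,k}.$$

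The main task is to verify generic finiteness of $\eta|_{I_n}$, and this will be the hardest step. I would argue by contradiction: suppose there is a curve $C$ with positive-dimensional fibre, giving a family $\{[\tilde{g}_s : C \to Y_s]\}_s \subseteq I_n$. Since $I_n \to \mathcal{B}_g$ is dominant (apply Proposition \ref{ishi-little} together with Lemma \ref{stablem} to a suitable smooth member of $I_n$), the assignment $s \mapsto (Y_s, L_s)$ must vary nontrivially. For each $s$ I would then degenerate $\tilde{g}_s(C) \in V^n_{|kL_s|}(Y_s)$ further inside the Severi variety to an $m$-nodal curve $D_s \in V^m_{|kL_s|}(Y_s)$, producing $[\varphi_s : B_s \to Y_s] \in I_m$ by full normalization. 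The point is that $[B_s]$ is determined up to finite ambiguity by $C$ together with the chosen degeneration, so $\{[B_s]\}$ remains in a finite subset of $\mathcal{M}_{p-m}$ as $s$ varies; this produces a positive-dimensional family in $I_m$ collapsing to a finite set in $\mathcal{M}_{p-m}$, contradicting generic finiteness of $\eta|_{I_m}$. The delicate part is precisely the claim that $[B_s]$ is controlled by $C$ plus a finite amount of combinatorial data, and rigorous justification will require careful analysis of the stable limits of the family $\tilde{D}_t$ in $\overline{\mathcal{M}}_{p-n}$ and of the boundary strata parametrizing partial normalizations.

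Finally, for the non-trigonality statement, I would argue by lower semicontinuity of gonality: if the general $\tilde{D}_t \in \eta(I_n)$ were trigonal, then via the admissible covers theory of Harris--Mumford the stable limit $B^*$ of $\tilde{D}_t$ as $t \to 0$ (a partial normalization of $D_0$ at the $m-n$ smoothed nodes) would carry a degree-3 admissible cover, which pulls back under the normalization $B_0 \to B^*$ to show $B_0$ is trigonal or hyperelliptic, contradicting the non-trigonality of $B_0$.
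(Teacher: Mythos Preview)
Your overall strategy of ``go from $I_m$ to $I_n$ by changing the number of nodes'' is correct, but the direction you take is the wrong one, and this creates a real gap in the generic finiteness step.

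You smooth $m-n$ nodes to land in the interior of $\mathcal{T}^n_{g,k}$, and then, to derive a contradiction from a hypothetical positive-dimensional fibre, you try to \emph{re-degenerate} each $\tilde{g}_s(C)$ back down to an $m$-nodal curve $D_s$. The problem is that this re-degeneration is not a finite choice: for each $s$, the $m$-nodal locus in the closure of the relevant component of $V^n_{|kL_s|}$ has codimension $m-n$, i.e.\ it is a continuous family. There is no mechanism forcing the normalizations $[B_s]$ to lie in a finite subset of $\mathcal{M}_{p-m}$; they will generically move with the degeneration parameter. Your own remark that ``rigorous justification will require careful analysis of the stable limits'' is pointing at exactly the step that fails, and I do not see how to rescue it along these lines.

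The paper avoids this entirely by going the \emph{other} direction: never smooth. Starting from a general $[f:B\to X]\in I_m$ with $f(B)$ nodal (which holds by \cite[Thm.\ B]{ded-sern}), choose $m-n$ of the $m$ nodes and let $B'$ be the integral nodal curve of arithmetic genus $p-n$ obtained by gluing $B$ at those pairs; then $f$ factors as $f = \tilde{g}\circ\mu$ with $\mu:B\to B'$ the normalization and $\tilde{g}:B'\to X$ unramified and birational onto its image. Now $[(\tilde{g},L)]$ is a boundary point of $\mathcal{T}^n_{g,k}$ inside $\mathcal{W}^n_{g,k}$ (by Lemma~\ref{defo-nodal-lemma}), and the fibre of $\eta:\mathcal{W}^n_{g,k}\to\overline{\mathcal{M}}_{p-n}$ over $[B']$ is zero-dimensional at $[(\tilde{g},L)]$ for a trivial reason: any one-parameter family through $\tilde{g}$ with fixed stabilization $B'$ yields, upon precomposition with $\mu$, a one-parameter family through $f$ with fixed $[B]$, contradicting generic finiteness of $\eta|_{I_m}$. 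Semicontinuity of fibre dimension then gives generic finiteness on the component $I_n$ whose closure contains $[(\tilde{g},L)]$. No re-degeneration is needed, because the link between the two levels is the fixed normalization map $\mu$.

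Your non-trigonality argument via admissible covers is fine in spirit, but again the paper's version is shorter: since the normalization of $B'$ is the non-trigonal curve $B$, the stable curve $[B']$ lies outside the (closed) image of $\overline{\mathcal{H}}_{3,p-n}\to\overline{\mathcal{M}}_{p-n}$, hence so does the general point of $\eta(I_n)$.
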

\begin{proof}
Let $[(f: B \to X,L)] \in I_m$ be a general point. By \cite[Thm.\ B]{ded-sern}, $f(B)$ is a nodal curve (with precisely $m$ nodes). Thus there is an integral, nodal curve $B'$ with $m-n$ nodes with normalization $\mu: B \to B'$ such that $f$ factors through a morphism $\tilde{g}: B' \to X$. The fibre of $\eta$ over the stable curve $[B'] \in \overline {\mathcal{M}}_{p(g,k)-n}$ is zero-dimensional near $[(\tilde{g},L)]$ as otherwise we could compose with $\mu$ to produce a one dimensional family near $[(f: B \to X,L)]$ in the fibre of $\eta$ over $[B]$. Since $[(\tilde{g},L)]$ lies in the closure of $\mathcal{T}^n_{g,k}$ by Lemma \ref{defo-nodal-lemma}, we see that there exists a component $I_n \seq \mathcal{T}^n_{g,k}$
such that  $${\eta}_{|_{I_n}}: I_n \to \mathcal{M}_{p(g,k)-n}$$ is generically finite. As the normalization of $B'$ is non-trigonal by assumption, $B'$ must lie outside the image of 
$$\overline{\mathcal{H}}_{3, p(g,k)-n} \to \overline{\mathcal{M}}_{p(g,k)-n}, $$
in the notation of \cite[Thm.\ 3.150]{harris-morrison}. Thus, for the general $[f': C \to Y] \in I_n$, $C$ is non-trigonal.
\end{proof}

The following proposition gives a criterion for generic finiteness of the morphism
$$\eta: \mathcal{T}^n_{g,k} \to \mathcal{M}_{p(g,k)-n}$$ on one component $I$. The idea is to assume we have an unramified map $f_0: C_0 \to X$ representing a point in $\mathcal{W}^{n'}_{g'}$ such that finiteness of the moduli map holds near the point representing $f_0$. If we then build a new morphism $f: C_0 \cup \proj^1 \to X$ by finding a rational curve $f(\proj^1)$ in $X$, and if we further assume $C_0 \cup \proj^1$ is a stable curve (i.e.\ $\proj^1$ intersects $C_0$ in at least three points), then by rigidity of rational curves in $X$, one sees easily that finiteness of $\eta$ holds near the point representing $f$ in $\mathcal{W}^n_{g,k}$, where $n,k$ are such that $f$ represents a point in $\mathcal{W}^n_{g,k}$.
\begin{prop} \label{finiteness-criterion}
Assume there exists a polarized K3 surface $(X,L)$ and an unramified stable map $f: B \to X$ with
$f_*(B) \in |kL|$. Assume:
\begin{enumerate}
\item $[(f: B \to X, L)]$ lies in the closure of $\mathcal{T}^{n}_{g,k}$.
\item There exists an integral, nodal component $C \seq B$ of arithmetic genus $p' \geq 2$ such that $f_{|_C}$ is an unramified morphism $j: C \to X$, birational onto its image. Let $k'$ be an integer such that there is a big and nef line bundle $L'$ on $X$ with $j_*(C) \in |k'L'|$, and let $g'=\frac{1}{2} (L')^2+1$.
\item The fibre of the morphism $\eta: \mathcal{W}^{n'}_{g',k'} \to \overline{\mathcal{M}}_{p'}$ over $[C]$ is zero-dimensional near $[(j: C \to X, L')]$, where $n'=p(j(C))-p'$.
\item If $\tilde{C}$ is the normalization of $C$, then $\tilde{C}$ is non-trigonal.
\item If $D \seq B$ is a component, $D \neq C$, then $D$ has geometric genus zero.
\item The stabilization morphism $B \to \hat{B}$ is an isomorphism in an open subset $U \seq B$ such that $C \seq U$.
\end{enumerate} 
Then there exists a component $I \seq \mathcal{T}^n_{g,k}$ such that ${\eta}_{|_I}$ is generically finite and $[(f: B \to X, L)]$ lies in the closure of $I \seq \mathcal{W}^{n}_{g,k}$. For the general $[f': B'\to X'] \in I$, $B'$ is non-trigonal.
\end{prop}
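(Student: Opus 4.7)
The plan is to choose an irreducible component $I \seq \mathcal{T}^n_{g,k}$ whose closure contains $(f,L)$ (immediate from condition (1)) and to reduce the generic finiteness of $\eta_{|_I}$ to showing that the fibre of the extended morphism $\bar\eta : \overline{I} \to \overline{\mathcal{M}}_{p(g,k)-n}$ over $[\hat B]$ is zero-dimensional at $(f,L)$; upper semicontinuity of fibre dimension then transfers this to the dense open $I \seq \overline{I}$. The guiding idea, previewed in the paragraph before the statement, is that any hypothetical positive-dimensional deformation fixing the source stabilization must either vary $j : C \to X$, contradicting condition (3), or vary only the rational tails, contradicting the rigidity of rational curves on a K3 surface.

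I would argue the zero-dimensionality by contradiction: suppose $\{(f_t: B_t \to X_t, L_t)\}_{t \in \Delta} \seq \overline{I}$ is a family through $(f,L)$ with $\hat B_t \simeq \hat B$. Condition (6), together with the stability of $C$ (integral of arithmetic genus $p' \geq 2$), identifies a subcurve $C_t \seq B_t$ with $C_t \simeq C$; restricting $f_t$ yields an unramified, birational-onto-image family $j_t : C \to X_t$. The first key technical point is to extend $L'$ to a family $L'_t \in \mathrm{Pic}(X_t)$ with $(j_t)_*(C) \in |k' L'_t|$: the cohomology class of $(j_t)_*(C)$ is algebraic and locally constant in $t$, and divisibility by $k'$ holds at $t = 0$, so it persists; the Lefschetz $(1,1)$-theorem then realises $L'_t$ as a line bundle, with bigness and nefness being open conditions. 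Thus $(j_t, L'_t) \in \mathcal{W}^{n'}_{g',k'}$ and $\bar\eta([j_t, L'_t]) = [C]$ is constant, so condition (3) forces $(X_t, j_t, L'_t)$ to be essentially constant; after a finite base change, $X_t \equiv X$, $L'_t \equiv L'$, and $j_t \equiv j$.

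With the main component fixed, I would rule out variation in the remaining components by a direct normal-bundle argument. For $D \neq C$ a component of $B$ with counterpart $D_t \seq B_t$, condition (5) gives $\tilde D_t \simeq \mathbb{P}^1$, and the induced unramified morphism $\tilde h_t : \mathbb{P}^1 \to X$ satisfies $\deg N_{\tilde h_t} = \deg \tilde h_t^{*} K_X^{-1} - \deg T_{\mathbb{P}^1} = -2$, hence $h^0(N_{\tilde h_t}) = 0$ and $\tilde h_t$ is rigid. The gluings forming $D_t$ from $\tilde D_t$ and the attachments to $C$ are then determined by the fibres of $\tilde h_t$ together with the fixed combinatorics of $\hat B$, so $f_t|_{D_t}$ is essentially constant. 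Piecing this together with $j_t \equiv j$, the whole family is essentially constant — the desired contradiction. I expect the most delicate step to be the polarization extension above, although it is ultimately routine.

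For the non-trigonality claim I would argue as in the proof of Lemma \ref{red-largen}: if the general $[f': B' \to X'] \in I$ were trigonal, then $[\hat B] \in \overline{\mathcal{H}}_{3, p(g,k)-n}$ (in the notation of \cite[Thm.\ 3.150]{harris-morrison}), and the associated admissible triple cover of a semistable model of $\hat B$ would restrict on $\tilde C$ — the only component of the normalization of positive genus by condition (5) — to a finite non-constant map $\tilde C \to \mathbb{P}^1$ of degree $d \leq 3$, yielding $W^1_3(\tilde C) \neq \emptyset$ and contradicting condition (4).
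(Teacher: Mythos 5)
Your proposal is correct and follows essentially the same route as the paper's proof: you reduce to showing the fibre of the extended $\eta$ over $[\hat{B}]$ is zero-dimensional at $[(f,L)]$, use assumption (6) to split off a constant subfamily $C_t \simeq C$, invoke assumption (3) to freeze $(X,j)$, kill the remaining variation by rigidity of rational curves on a K3 surface (your computation $N_h \simeq \mathcal{O}_{\proj^1}(-2)$ for unramified $h$ is exactly the rigidity the paper cites), and deduce non-trigonality from $[\hat{B}]$ lying outside the image of $\overline{\mathcal{H}}_{3,p(g,k)-n}$, just as in the paper and in Lemma \ref{red-largen}. The extra details you supply --- extending $L'$ in the family via local constancy of the cycle class and Lefschetz $(1,1)$, and unpacking the admissible-cover restriction to $\tilde{C}$ --- only make explicit steps the paper leaves implicit.
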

\begin{proof}
We have a morphism $\eta: \mathcal{W}^{n}_{g,k} \to \overline{\mathcal{M}}_{p(g,k)-n}$. By assumption $1$, it suffices to show $\eta^{-1}([\hat{B}])$ is zero-dimensional near $[(f: B \to X, L)]$. Let $S$ be a smooth, irreducible, one-dimensional scheme with base point $0$, and suppose we have a commutative diagram
 \[
\xymatrix{
\mathcal{B} \ar[r]^{\tilde{g}}  \ar[rd]_{\pi_1} & \mathcal{X} \ar[d]^{\pi_2} \\
&S
}
\] 
with $\tilde{g}$ proper, $\pi_1$, $\pi_2$ flat and with $\tilde{g}_s: \mathcal{B}_s \to \mathcal{X}_s$ an unramified stable map to a K3 surface for all $s$, with $\tilde{g}_0=f$. Further assume $\hat{\mathcal{B}_s} \simeq \hat{B}$. For any $s \in S$, we have open subsets $U_s \seq \mathcal{B}_s$, $V_s \seq \hat{B}$ with the stabilization map inducing isomorphisms $U_s \simeq V_s$ and such that $\hat{B} \setminus V_s$ has zero-dimensional support. By assumption $6$, $C \seq V_0$, and thus for $s$ near $0$, $C \seq V_s \simeq U_s$. Thus, after performing a finite base change about $0 \in S$, there exists an irreducible component $\mathcal{C} \seq \mathcal{B}$, such that $\mathcal{C}_s \simeq C$, for all $s$ near $0$. We have a commutative diagram
 \[
\xymatrix{
\mathcal{C} \ar[r]^{h}  \ar[rd]_{{\pi_1}_{|_{\mathcal{C}}}} & \mathcal{X} \ar[d]^{{\pi_2}} \\
&S
}
\]
where ${\pi_1}_{|_{\mathcal{C}}}$ is flat and $h:=\tilde{g}_{|_{\mathcal{C}}}$. Since $h_0=j$, assumption $3$ gives $\mathcal{X}_s \simeq X$ and $h_s=j$ for all $s$. In particular, $\tilde{g}_s$ is a one-dimensional family of unramified morphisms into a \emph{fixed} K3 surface. From assumption $5$, the fact that rational curves on a K3 surface are rigid and since $\tilde{g}_s$ is unramified, we conclude that $\mathcal{B}_s \simeq B$ and $\tilde{g}_s: B \to X$ is independent of $s$. Thus  $\eta^{-1}(\hat{B})$ is zero-dimensional near $[(f: B \to X, L)]$. Hence there exists a component $I \seq \mathcal{T}^n_{g,k}$ such that ${\eta}_{|_I}$ is generically finite and $[(f: B \to X, L)]$ lies in the closure of $I \seq \mathcal{W}^{n}_{g,k}$. Since $\tilde{C}$ is non-trigonal, the stabilization $\hat{B}$ must lie outside the image of 
$$\overline{\mathcal{H}}_{3, p(g,k)-n} \to \overline{\mathcal{M}}_{p(g,k)-n}.$$ Thus, for the general $[f': B'\to X'] \in I$, $B'$ is non-trigonal.
\end{proof}

We will apply the above criterion to prove generic finiteness of $\eta$ on one component, for various bounds on $p(g,k)-n$. We first consider the case $k=1$. To begin, we will need an easy lemma. Let $p >h \geq 8$ be integers, and let $l,m$ be nonnegative integers with
$$p-h= \left \lfloor \frac{h+1}{2} \right \rfloor l+m $$
and $0 \leq m < \left \lfloor \frac{h+1}{2} \right \rfloor $. Define:
\begin{align*}
s_1:= \begin{cases}
   p-h-1, & \text{if $m=0$ or $m=\left \lfloor \frac{h+1}{2} \right \rfloor -1$}\\
    p-h+1, & \text{otherwise}.
  \end{cases}
\end{align*} 
Let $P_{p,h}$ be the rank three lattice generated by elements $\{M, R_1, R_2 \}$ and with intersection form given with respect to this ordered basis by:
\[ \left( \begin{array}{ccc}
2h-2 & s_1 & 3 \\
s_1 & -2 & 0 \\
3 & 0 & -2 \end{array} \right).\]
The lattice $P_{p,h}$ is even of signature $(1,2)$.
\begin{lem} \label{onenodal-lem-a}
Let $p>h \geq 8$. There exists a K3 surface $S_{p,h}$ with $Pic(S_{p,h}) \simeq P_{p,h}$ as above such that the classes $M, R_1, R_2 $ are each represented by integral curves and with $M$ very ample. If $h $ is odd and at least $11$, then for $D \in |M|$ general the fibre of $\eta: \mathcal{T}_h^0 \to \mathcal{M}_{h}$ is zero-dimensional at $[(i:D \hookrightarrow S_{p,h}, M)]$ and $D$ is non-trigonal. Furthermore, any divisor of the form $-xM+yR_1+zR_2 $ for integers $x,y,z$ with $x>0$ is not effective.
\end{lem}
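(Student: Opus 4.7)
The plan is in three parts: existence together with integrality of $M, R_1, R_2$; very ampleness of $M$ and the non-effectivity statement; and finally the zero-dimensionality of the fibre of $\eta$ together with non-trigonality of $D$.

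For the first part, since $P_{p,h}$ is even of signature $(1,2)$, the Torelli theorem together with Nikulin's primitive-embedding results (as in Lemma~\ref{lem-aaa}) gives a K3 surface $S_{p,h}$ with $\mathrm{Pic}(S_{p,h}) \simeq P_{p,h}$. After Picard--Lefschetz reflections and sign changes, I arrange $M$ to be big and nef and $R_1, R_2$ to be effective (using $(R_i)^2 = -2$ and $(R_i \cdot M) > 0$). Integrality of $R_1, R_2$ then follows the $\Gamma_i$-argument in Lemma~\ref{lem-aaa}: any integral component $R' = aM + bR_1 + cR_2$ of $R_i$ with $(R' \cdot R_i) < 0$ has $(R')^2 = -2$; nefness of $M$ together with the intersection data forces $a = 0$, and then the remaining constraints force $R' = R_i$ after a short case analysis.

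For the second part, I would invoke Knutsen's criterion \cite[Thm.\ 1.1]{knut} to rule out a smooth rational curve $R$ with $(M \cdot R) = 0$ or a smooth elliptic curve $F$ with $0 < (M \cdot F) \leq 2$. Expanding in the basis $\{M, R_1, R_2\}$ and running the same computation as in Lemma~\ref{little-lem}, both options reduce to incompatible Diophantine constraints given the specific intersection numbers $s_1$ and $3$, using $h \geq 8$. The non-effectivity of $-xM + yR_1 + zR_2$ with $x > 0$ follows by pairing with the nef class $M$ to obtain $-x(2h-2) + y s_1 + 3z \geq 0$, decomposing the putative effective divisor into irreducible components, and combining with the $(-2)$-bound on component self-intersections to reach a contradiction along the lines of the last paragraph of Lemma~\ref{little-lem2}.

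The crucial third part proceeds by realizing $S_{p,h}$ as the generic member of a family of $P_{p,h}$-polarized K3 surfaces that specializes to the K3 surface $Y_{\Omega_h}$ of Lemma~\ref{lem-aaa}. To this end, write $p - h = l \lfloor (h+1)/2 \rfloor + m$ and construct a primitive lattice embedding $P_{p,h} \hookrightarrow \Omega_h$ via $M \mapsto L$, $R_2 \mapsto \Gamma_j$ where $d_j = 3$ (admissible since $h \geq 11$ implies $3 < \lfloor (h+1)/2 \rfloor$), and $R_1 \mapsto \Gamma_i + lE$ where $d_i \in \{m - 1, m + 1\}$ is chosen to lie in the admissible range $[1, \lfloor (h+1)/2 \rfloor - 1]$. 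The case split in the definition of $s_1$ at $m = 0$ and $m = \lfloor (h+1)/2 \rfloor - 1$ corresponds precisely to the boundary values where only one of $m \pm 1$ is admissible, and a direct check confirms that the intersection form on $P_{p,h}$ is preserved. By Corollary~\ref{bij-cor} the fibre of $\eta : \mathcal{T}_h^0 \to \mathcal{M}_h$ is zero-dimensional at the embedding of the specialized curve in $Y_{\Omega_h}$, and upper semi-continuity of fibre dimension under the morphism $\eta$ then propagates this to zero-dimensionality at the generic $[(i : D \hookrightarrow S_{p,h}, M)]$. Non-trigonality of the general $D$ is read off from a Clifford-index computation as in Lemma~\ref{gon-omega}, where the non-effectivity statement just established ensures the absence of low-degree pencils. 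The main obstacle is the lattice-embedding construction: matching $s_1$ against admissible values of $d_i$ is a delicate integer problem at the extremal values of $m$, which is precisely why the definition of $s_1$ splits into the two stated cases.
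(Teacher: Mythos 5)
Your overall architecture is the same as the paper's: abstract existence via Torelli--Nikulin, very ampleness via Knutsen's criterion, and, for the crucial fibre statement, a primitive embedding $P_{p,h} \hookrightarrow \Omega_h$ realizing $S_{p,h}$ as a deformation of $Y_{\Omega_h}$, followed by Corollary~\ref{bij-cor} plus semicontinuity and Lemma~\ref{gon-omega}. But your embedding recipe has a genuine error at $m=0$. Write $e:=\lfloor (h+1)/2 \rfloor$, so $p-h = le+m$. With your assignment $R_1 \mapsto lE+\Gamma_i$ one gets $(L \cdot (lE+\Gamma_i)) = le + d_i = (p-h) - m + d_i$, so hitting $s_1$ forces $d_i = m+1$ when $s_1 = p-h+1$ and $d_i = m-1$ when $s_1 = p-h-1$. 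At the boundary $m = e-1$ this works ($d_i = e-2 \geq 2$ is admissible for $h \geq 8$). At $m=0$, however, $s_1 = p-h-1$ would require $d_i = -1$, which is inadmissible, and the only admissible member of your set $\{m-1,m+1\}$, namely $d_i=1$, yields $(L \cdot j(R_1)) = p-h+1 \neq s_1$. So your ``direct check confirms that the intersection form is preserved'' fails in exactly one of the two boundary cases you yourself flag as delicate. The paper repairs this by dropping the coefficient of $E$: for $m=0$ it sets $R_1 \mapsto (l-1)E + \Gamma_1$ with $d_1 = e-1$, giving $(l-1)e + (e-1) = le-1 = s_1$; note $l \geq 1$ precisely because $p>h$ and $m=0$, and $(j(R_1))^2 = -2$ is unaffected since $(E \cdot \Gamma_1)=0$. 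Without this adjustment your proof of the third part breaks for all $p,h$ with $e \mid p-h$.

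Aside from that, your handling of the first two parts diverges from the paper in a legitimate and in fact self-contained way, though one step needs to be said more carefully. The paper proves the non-effectivity of $-xM+yR_1+zR_2$ ($x>0$) by degenerating to $Y_{\Omega_h}$ and pairing with the elliptic class $E$ --- necessarily so in that order of argument, since $E$ does not lie in $j(P_{p,h})$ --- and then deduces integrality of $R_1$ on $S_{p,h}$ \emph{from} the non-effectivity statement; integrality of $M,R_2$ and very ampleness it imports from $Y_{\Omega_h}$ by openness. Your direct rank-three computation does work: any integral component $D' = aM+bR_1+cR_2$ with $a \leq -1$ satisfies, using nefness of $M$, $(D')^2 \leq -a^2(2h-2)-2b^2-2c^2 < -2$, contradicting $(D')^2 \geq -2$; hence every component of an effective class has $a \geq 0$, which gives non-effectivity directly on $S_{p,h}$ and then integrality of $R_1,R_2$ by the paper's two-line case analysis. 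But note that your phrase ``nefness of $M$ together with the intersection data forces $a=0$'' is exactly this component estimate and should be carried out, since nefness alone (i.e., $(M \cdot D') \geq 0$) does not immediately force $a=0$. Your direct Clifford-index argument for non-trigonality likewise goes through (trigonality would produce $N$ with $(N \cdot M)-N^2=3$ and $N^2 \in \{0,2\}$, whence $(M-N)^2 > 0$ yet $(M-N)^2 = -2(b^2+c^2) \leq 0$ once the non-effectivity statement pins $a \in \{0,1\}$), where the paper simply specializes to $Y_{\Omega_h}$ and quotes Lemma~\ref{gon-omega}. One final bookkeeping point: since your parts 1--2 construct $S_{p,h}$ abstractly while part 3 needs the generic member of the family through $Y_{\Omega_h}$, you should observe that your lattice arguments apply to \emph{any} K3 with Picard lattice $P_{p,h}$ and $M$ big and nef, so in particular to that generic member, on which $M$ is automatically ample because $L$ is very ample on the central fibre.
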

\begin{proof}
Consider the K3 surface $Y_{\Omega_h}$ from Lemma \ref{lem-aaa}. We choose $d_1:=(L \cdot \Gamma_1)$ to be $m+1$ if $0 < m <  \left \lfloor \frac{h+1}{2} \right \rfloor  -1$ and $d_1=\left \lfloor \frac{h+1}{2} \right \rfloor  -1$ if $m=0$ and $d_1=\left \lfloor \frac{h+1}{2} \right \rfloor  -2$ if $m=\left \lfloor \frac{h+1}{2} \right \rfloor -1$. Further set $d_2=3$ and let all other $d_i$ be arbitrary integers in the range $1 \leq d_i < \left \lfloor \frac{h+1}{2} \right \rfloor$. We define a primitive embedding
$j: P_{p,h} \hookrightarrow \Omega_h$ as follows. If $m=0$ (so that $l \geq 1$ as $p>h$), we define the embedding via
$M \mapsto  L$, $R_1 \mapsto (l-1)E+\Gamma_1$, $R_2 \mapsto \Gamma_2$. If $m \neq 0$ we define the embedding via
$M \mapsto  L$, $R_1 \mapsto lE+\Gamma_1$, $R_2 \mapsto \Gamma_2$.
Let $M_{P_{p,h}}$ be the moduli space of ample, $P_{p,h}$-polarized K3 surfaces, \cite[Def.\ p.1602]{dolgachev}. The moduli space $M_{P_{p,h}}$ has dimension $17=19-2$, \cite[Prop.\ 2.1]{dolgachev}.  Let $M_1$ be a component containing the ample, $P_{p,h}$-polarized K3 surface $[Y_{\Omega_h}]$. Then the general point of $M_1$ represents a projective K3 surface $S_{p,h}$ with $Pic(S_{p,h}) \simeq P_{p,h}$, \cite[Cor.\ 1.9, Cor.\ 2.9]{morrison-large}. Further,  $M, R_2 $ are each represented by integral curves and $M$ is very ample by Lemmas \ref{lem-aaa} and \ref{little-lem}. If $h $ is odd and at least $11$, the statement about the fibres of $\eta$ follows from Corollary \ref{bij-cor} by semicontinuity, and the non-trigonality follows from Lemma \ref{gon-omega}. 

We now claim that any divisor of the form $-xM+yR_1+zR_2$ for integers $x,y,z$ with $x>0$ is not effective. By degenerating $S_{p,h}$ to $Y_{\Omega_h}$ as above, it suffices to show that $-xL +y j(R_1)+z\Gamma_2 \in Pic(Y_{\Omega_h})$ is not effective. But this is clear, since the rank ten lattice $Pic(Y_{\Omega_h})$ contains the class of a smooth, integral, elliptic curve $E$ with $( E \cdot -xL +y j(R_1)+z\Gamma_2)=-x (E \cdot L) <0$.

The $-2$ class $R_1$ is effective since $(R_1)^2=-2$, $(R_1 \cdot M) >0$. It remains to show that $R_1$ is integral. Let $D$ be any integral component of $R_1$ with $(D)^2=-2$, $(D \cdot R_1)<0$. Writing $D= xM+yR_1+zR_2$ we see that $x=0$ (as $R_1-D$ is effective). Thus $(D \cdot R_1) <0$ implies $y>0$ and $-2=(D)^2=-2(y^2+z^2)$ forces $z=0$; thus $D=R_1$ is integral.
\end{proof}
We can now prove Theorem \ref{finiteness}:
\begin{proof} [Proof of Theorem \ref{finiteness}]
For $g \geq 11$ and $n \geq 0$, we let $0 \leq r(g) \leq 5$ be the unique integer such that 
$g-11 = \left \lfloor \frac{g-11}{6} \right \rfloor 6 +r(g)$.
Set $l_g:=12$ if $r(g)=0$, $l_g:=13$ if $1 \leq r(g) <5$ and $l_g:=15$ if $r(g)=5$.
We want to show that there is a component $I \seq \mathcal{T}^n_{g}$ such that $${\eta}_{|_I}: I \to \mathcal{M}_{g-n}$$ is generically finite for
$g-n \geq l_g$. Furthermore, we will prove that for the general $[\tilde{f}: C' \to X'] \in I$, $C'$ is non-trigonal.

From Lemma \ref{red-largen} it suffices to prove the result for the maximal value of $n$. Assume $g-n = l_g$ if $r(g) \neq 0$ and $g-n = 15$ if $r(g)=0$. 
Set $p=g$, $h=11$ and consider the lattice $P_{g,11}$ and K3 surface $S_{g,11}$ from Lemma \ref{onenodal-lem-a}. Set $\epsilon=1$ if $r(g)=0$ or $r(g)=5$ and $\epsilon=0$ otherwise. Then $(M+R_1+\epsilon R_2)^2=2g-2$ and $M+R_1+\epsilon R_2$ is ample. Let $D \in |M|$ be general and consider the curve $D_1=D \cup R_1 \cup \epsilon R_2$ where all intersections are transversal. Choose any subset of $s_1-3$ distinct points of $D \cap R_1$ and let $f: B \to D_1$ be the partial normalization at the chosen points. Then $B$ has arithmetic genus $l_g$ if $r(g) \neq 0$ and genus $15$ if $r(g)=0$. Further, $f$ satisfies the conditions of Proposition \ref{finiteness-criterion}. Then there is a component $I \seq \mathcal{T}^n_{g}$ such that $${\eta}_{|_I}: I \to \mathcal{M}_{g-n}$$ is generically finite for
$g-n \geq l_g$ if $r(g) \neq 0$ and for $g-n \geq 15$ if $r(g)=0$. For the general $[f: C \to X] \in I$, $C$ is non-trigonal.

We now wish to improve the bound in the case $r(g)=0$.
Recall from Section \ref{mukai} the lattice $\Omega_{11}$ with ordered basis $\{L, E, \Gamma_1, \ldots, \Gamma_8 \}$. Thus the general $C \in |L|$ is a smooth, genus $11$ curve and $(L \cdot E)=6$. Note that $\widetilde{\Gamma}_i \sim E-\Gamma_i$ is a class satisfying $(\widetilde{\Gamma}_i)^2=-2$, $(\widetilde{\Gamma}_i \cdot E)=0$, $(\widetilde{\Gamma}_i \cdot \Gamma_i)=2$. From Lemma \ref{lem-aaa}, $\widetilde{\Gamma}_i$ is represented by an integral class. Further $\widetilde{\Gamma}_i+\Gamma_i$ is an $I_2$ singular fibre of $|E|$ for any $i \geq 3$. We will denote by $x_i$ and $y_i$ the two nodes of $\widetilde{\Gamma}_i+\Gamma_i$.

Set $m:=\left \lfloor \frac{g-11}{6} \right \rfloor$ and assume $r(g)=0$. Consider the primitive, ample line bundle $H:=L+\left \lfloor \frac{g-11}{6} \right \rfloor E$, which satisfies $(H)^2=2g-2$. Let $C \in |L|$ be a general smooth curve which meets $\Gamma_1$ transversally. Let $B$ be the abstract curve which is the union of $C$ with $2m$ copies of $\proj^1$ as in the diagram below.
\\
$$
\setlength{\unitlength}{1cm}
\begin{picture}(8,5)
\put(1,0){\line(0,1){4}}
\put(0.5,1.5){\line(1,0){2}}
\put(2,1){\line(0,1){2}}
\put(1.5,2.5){\line(1,0){2}}
\put(3.7,2.5){$\ldots$}
\put(4.5,2){\line(0,1){2}}
\put(0.5,3.5){\line(1,0){5}}

\put(0,0.4){\line(1,0){7}}

\put(2,0){\mbox{$C$}}
\put(0.45,0.9){\tiny\mbox{$R_{1,1}$}}
\put(0.75,1.3){\tiny\mbox{$p$}}
\put(0.75,3.3){\tiny\mbox{$q$}}
\put(1.1,1.6){\tiny\mbox{$R_{1,2}$}}
\put(1.45,2.2){\tiny\mbox{$R_{2,1}$}}
\put(2.3,2.7){\tiny\mbox{$R_{2,2}$}}
\put(3.9,3.1){\tiny\mbox{$R_{m,1}$}}
\put(4.7,3.7){\tiny\mbox{$R_{m,2}$}}
\end{picture}
$$
\\
Thus $B=C \cup R_{1,1} \cup R_{1,2} \ldots \cup R_{m,1} \cup R_{m,2}$, where each $R_{i,j}$ is smooth and rational, all intersections are transversal and described as follows for $m \geq 2$: $R_{i,j} \cap C= \emptyset$ unless $(i,j)=(1,1)$, $R_{1,1}$ intersects $C$ in one point, and $R_{i,j}$ intersects $R_{k,l}$ in at most one point, with intersections occuring if and only if, after swapping $R_{i,j}$ and $R_{k,l}$ if necessary, we have ($i=k$ and $l=j+1$), ($k=i+1$ and $l \neq j$) or ($(i,j)=(1,1)$ and $(j,k)=(m,2)$). The arithmetic genus of $B$ is $12$. For $m=1$, $B=C \cup R_{1,1} \cup R_{1,2}$ where $C$ intersects $R_{1,1}$ transverally in one point and $R_{1,1} \cap C = \emptyset$ and $R_{1,1}$ intsersects $R_{1,2}$ transversally in two points.

There is then a unique unramified morphism $f: B \to Y_{\Omega_{11}}$ such that $f_{|_C}$ is a closed immersion, $f_{|_{R_{i,1}}}$ is a closed immersion with image $\Gamma_1$ and $f_{|_{R_{i,2}}}$ is a closed immersion with image $\widetilde{\Gamma_1}$, for $1 \leq i \leq m$ and where $(f(p),f(q))=(x_1,y_1)$. Thus $f_*{B} \in |H|$. Note that the stabilization $\hat{B}$ has two components, namely it is the union of $C$ with a rational curve with one node. Thus the claim holds in the $r(g)=0$ case by Proposition \ref{finiteness-criterion}, together with Lemma \ref{stablem} and Corollary \ref{bij-cor}. 
\end{proof}
We now turn to the nonprimitive case.
\begin{lem} \label{easynontriglemma}
Let $\Delta$ be an smooth, one-dimensional algebraic variety over $\C$, with $0 \in \Delta$ a closed point. Let $\mathcal{C} \to \Delta$ be a flat family of nodal curves,
with general fibre integral and such that $$ \mathcal{C}_0 = B \cup \Gamma_1 \cup \ldots \cup \Gamma_k,$$
with $B$ smooth and non-trigonal, and $\Gamma_i$ smooth curves for $1 \ldots i \leq k$, with all intersections transversal. Then if $\tilde{\mathcal{C}}_t$ is the normalization of $\mathcal{C}_t$, $t \in \Delta$ general, then $\tilde{\mathcal{C}}_t$ is non-trigonal.
\end{lem}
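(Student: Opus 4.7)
The plan is to argue by contradiction: suppose $\tilde{\mathcal{C}}_t$ is trigonal for $t$ in a dense open subset of $\Delta$, and deduce that $B$ admits a $g^1_d$ with $d \leq 3$ by degenerating the trigonal covers as $t \to 0$.

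First I would identify the stable limit of the normalisations. Each node of $\mathcal{C}_0$ is either \emph{smoothed} in the family (locally modelled by $xy = t^m$) or \emph{persistent} (locally $xy = 0$, so $\mathcal{C}_t$ carries a node specialising to it). After a finite base change $\Delta' \to \Delta$ and resolution of the total space, semistable reduction yields a family $\overline{\mathcal{C}} \to \Delta'$ with $\overline{\mathcal{C}}_t \simeq \tilde{\mathcal{C}}_t$ for $t \neq 0$, whose central fibre $\mathcal{S}_0$ is obtained from $\mathcal{C}_0$ by separating each persistent node into two distinct points (together with chains of $\mathbb{P}^1$'s possibly inserted at smoothed nodes to attain semistability). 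Crucially $B$ survives as an irreducible component of $\mathcal{S}_0$, and since non-trigonality forces $g(B) \geq 5$, $B$ is not contracted in the stable model.

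Next, assuming trigonality of the general fibre, I would pick a family of degree-$3$ covers $\phi_t \colon \tilde{\mathcal{C}}_t \to \mathbb{P}^1$ over $\Delta^*$. This defines a morphism to the Kontsevich moduli space $\overline{\mathcal{M}}_g(\mathbb{P}^1, 3)$, where $g$ is the geometric genus of $\tilde{\mathcal{C}}_t$. By properness, after a further finite base change the map extends to $\Delta'$, giving a stable map $\phi_0 \colon \mathcal{S}_0' \to T_0$ at the central point, where $T_0$ is a tree of $\mathbb{P}^1$'s and the stable model of $\mathcal{S}_0'$ coincides with $\mathcal{S}_0$ (equivalently, one can work directly with the moduli of Harris--Mumford admissible covers). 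Since $B$ is a non-contractible component of $\mathcal{S}_0$, it persists in $\mathcal{S}_0'$, and the restriction $\phi_0|_B \colon B \to T_0'$ is a finite morphism of degree at most $3$ onto a $\mathbb{P}^1$-component $T_0' \subseteq T_0$, non-constant because $B$ has positive genus. This exhibits a $g^1_d$ on $B$ with $d \leq 3$, contradicting non-trigonality.

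The main obstacle is the first step: ensuring that $B$ really does appear as an irreducible component of $\mathcal{S}_0$. This demands the local analysis at each node of $\mathcal{C}_0$ during semistable reduction, carefully tracking which nodes are smoothed and which persist, and verifying that no contraction in the passage to the stable model can eliminate $B$.
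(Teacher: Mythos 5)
Your skeleton is correct and ultimately rests on the same two pillars as the paper's proof (the limit of the normalizations $\tilde{\mathcal{C}}_t$ is a nodal curve containing $B$ as a component; limits of trigonal curves are governed by admissible covers), but the mechanics differ at both stages, and in each case the paper is lighter. For the first stage, the paper performs no base change and no semistable reduction: it simply takes the normalization $\mu:\tilde{\mathcal{C}} \to \mathcal{C}$ of the total surface, notes that $\mu$ is finite, birational and an isomorphism away from the singular points of the fibres, that $\tilde{\mathcal{C}} \to \Delta$ is flat (\cite[Prop.\ 4.3.9]{liu}), and that the central fibre is a \emph{connected} nodal curve whose components are exactly $B$ and the $\Gamma_i$ (connectedness via Zariski, \cite[Thm.\ 8.3.16, Cor.\ 8.3.6]{liu}) --- so your local dichotomy $xy=t^m$ versus $xy=0$, the inserted chains of rational curves, and the finite base change are all avoided. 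For the second stage, the paper never extends a family of covers: it invokes that the image of $\overline{\mathcal{H}}_{3,p} \to \overline{\mathcal{M}}_{p}$ is closed (\cite[Thm.\ 3.150]{harris-morrison}) and that a stable curve containing the smooth non-trigonal component $B$ lies outside it; since the moduli map of $\tilde{\mathcal{C}} \to \Delta$ sends $0$ outside this closed set, the general fibre lands outside as well. Your degeneration-of-covers argument in effect re-proves, by hand, exactly the property of $\overline{\mathcal{H}}_{3,p}$ that the citation supplies; this is legitimate but costs you the extra work you flag at the end (which, incidentally, is unproblematic: $B$ has genus at least $5$, and stabilization never contracts positive-genus components).

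There is, however, one genuinely flawed step as written: in the Kontsevich-space version you claim $\phi_0|_B$ is ``non-constant because $B$ has positive genus.'' This is false for stable maps: a limit in $\overline{\mathcal{M}}_g(\proj^1,3)$ may contract components of \emph{arbitrary} genus (stability only requires contracted components to be stable as pointed curves), so a priori the limit could contract $B$ and concentrate all three units of degree on rational tails, and your contradiction evaporates; moreover in $\overline{\mathcal{M}}_g(\proj^1,3)$ the target is a fixed $\proj^1$ and never bubbles into the tree $T_0$ you describe. Your parenthetical ``equivalently, one can work directly with \ldots admissible covers'' is therefore not an equivalent reformulation but the repair: Harris--Mumford admissible covers are finite on every component, which is what forces $\phi_0|_B$ to have degree $1 \leq d \leq 3$ and hence yields the $g^1_d$ on $B$. (Note also that degree $1$ forces $B$ rational and degree $2$ forces $B$ hyperelliptic; both such curves carry $g^1_3$'s, so the contradiction goes through provided ``non-trigonal'' is read, as in the paper's applications, as ``carries no $g^1_3$.'') With the admissible-covers route made primary rather than parenthetical, your proof closes; as literally written via stable maps, it does not.
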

\begin{proof}
We go through the first steps of the usual stable reduction procedure, \cite[\S X.4]{arbarello-II}. Let $\mu : \tilde{\mathcal{C}} \to \mathcal{C}$ be the normalization of the integral surface $\mathcal{C}$. Then $\mu$ is finite and birational, and further is an isomorphism outside the preimage of singular points in the fibres of $\mathcal{C}$. Further, $\tilde{\mathcal{C}} \to \Delta$ is a flat-family of curves by \cite[Prop.\ 4.3.9]{liu}. As $\tilde{\mathcal{C}}$ has isolated singularities, $\tilde{\mathcal{C}}_t$ must be smooth for $t \in \Delta$ general, and must be the normalization of $\mathcal{C}_t$, as $\mu$ is finite and birational. Since $\mu_0: \tilde{\mathcal{C}}_0 \to \mathcal{C}_0$ is finite, and an isomorphism outside singular points of $\mathcal{C}_0$, the components of $\tilde{\mathcal{C}}_0$ are isomorphic to $B$, $\Gamma_i$. Further, $\mu_0$ factors through the total normalization of $\mathcal{C}_0$ (which equals the disjoint union of the components). This forces $\tilde{\mathcal{C}}_0$ to itself be nodal. Further $\tilde{\mathcal{C}}_0$ is connected, by taking a desingularization of the surface $\tilde{\mathcal{C}}$ and applying \cite[Thm.\ 8.3.16]{liu} followed by \cite[Cor.\ 8.3.6]{liu}. Since $B$ is smooth and non-trigonal, the stabilization of $\tilde{\mathcal{C}}_0$ lies outside the image of $\overline{\mathcal{H}}_{3,p}$, where $p$ is the arithmetic genus of $\tilde{\mathcal{C}}_0$. It follows that $\tilde{\mathcal{C}}_t$ is non-trigonal.
\end{proof}

\begin{lem} \label{hjz}
Let $Z_a$ be a general K3 surface with Picard lattice $\Lambda_a$ generated by elements $D,F,\Gamma$ giving the intersection matrix
\[ \left( \begin{array}{ccc}
2a-2 & 6 & 1 \\
6 & 0 & 0 \\
1 & 0 & -2 \end{array} \right)\]
Assume that $14 \leq a \leq 19$.
Then we may pick the basis such that $D$, $F$, $\Gamma$ are all effective and represented by integral, smooth curves with $D$ ample. Further, there is an unramified stable map $f_a: B_a \to Z_a$, birational onto its image, with $B_a$ an integral, nodal curve of arithmetic genus $13$ for $14 \leq a \leq 15$ and $15$ for $16 \leq a \leq 19$, such that $f_{a*}(B_a) \in |D|$ and $f_a$ satisfies the conditions of Proposition \ref{finiteness-criterion}. Further, there is an integral, rational nodal curve $F_0 \in |F|$ which meets $f_a(B_a)$ transversally, and $\Gamma$ meets $f_a(B_a)$ transversally in one point.
\end{lem}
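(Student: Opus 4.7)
The proof splits into four stages, following the pattern established by Lemma \ref{onenodal-lem-a} and the proof of Theorem \ref{finiteness}.

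For the first stage, I verify the existence and basic geometry of $Z_a$. The lattice $\Lambda_a$ is rank three, even, and has determinant $72$; since $(D)^2 > 0$ while the sublattice spanned by $F$ and $\Gamma$ is negative semidefinite, its signature is $(1,2)$. By Nikulin's existence theorem and the global Torelli theorem (applied as in Lemma \ref{lem-aaa}), there is a K3 surface $Z_a$ with $\mathrm{Pic}(Z_a) \simeq \Lambda_a$. Picard--Lefschetz reflections and a sign change place $D$ in the ample cone. The same lattice bookkeeping used in Lemma \ref{lem-aaa} shows that $F$ is base-point-free and defines an elliptic fibration; that $\Gamma$ is integral, smooth and rational follows from $(\Gamma)^2 = -2$ and $(\Gamma \cdot D) = 1$, which force any integral component to coincide with $\Gamma$. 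Very ampleness of $D$ is then a routine check via Knutsen's criterion.

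For the second stage, I construct $f_a$ by degenerating $Z_a$ to a K3 of higher Picard rank. For each $a \in \{14, \ldots, 19\}$ I define a primitive embedding $j: \Lambda_a \hookrightarrow \Omega_{11}$ with $j(F) = E$ and $j(D) = L + m_a E + \sum c_i \Gamma_i$, where $m_a$ and the $c_i$ are integers chosen (together with the $d_i$ defining $\Omega_{11}$) so that the identities $(D)^2 = 2a-2$, $(D \cdot F) = 6$ and $(D \cdot \Gamma) = 1$ all hold under the embedding. On the $\Omega_{11}$-polarized K3 surface $Y_{\Omega_{11}}$, I assemble a connected reducible nodal curve $B_0 \in |j(D)|$ made of a smooth genus-$11$ curve $C_0 \in |L|$ and a controlled chain of rational curves drawn from $\Gamma_i$, $\widetilde{\Gamma}_i$ and fibres of $|E|$, exactly as in the proof of Theorem \ref{finiteness}. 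The chain is arranged so that $p_a(B_0) = 13$ when $a \in \{14,15\}$ and $p_a(B_0) = 15$ when $a \in \{16, \ldots, 19\}$. By Lemma \ref{defo-nodal-lemma} (and Proposition \ref{prim-cor}), the unramified stable map $f_0: B_0 \to Y_{\Omega_{11}}$ with $f_{0*}(B_0) \in |j(D)|$ deforms through the moduli of $\Lambda_a$-polarized K3 surfaces to an unramified stable map $f_a: B_a \to Z_a$ with $B_a$ integral nodal of the same arithmetic genus and $f_{a*}(B_a) \in |D|$.

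For the third stage, I verify the conditions of Proposition \ref{finiteness-criterion} for $f_a$. Conditions (1), (2), (5), (6) are immediate, since $B_a$ is itself integral nodal. The crucial condition (3) is the zero-dimensionality of the fibre of $\eta$ at $[(f_a, D)]$; this I obtain by first applying Proposition \ref{finiteness-criterion} to $f_0$ on $Y_{\Omega_{11}}$, with $C = C_0$ and Corollary \ref{bij-cor} supplying the base case, yielding zero-dimensionality at $[(f_0, j(D))]$, which by upper semicontinuity of fibre dimension for $\eta: \mathcal{W}^{n'}_{a,1} \to \overline{\mathcal{M}}_{p_a(B_a)}$ transfers to the nearby $[(f_a, D)]$. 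Non-trigonality of $\widetilde{B_a}$ (condition (4)) follows from Lemma \ref{easynontriglemma}, since in the degeneration $\widetilde{B_a}$ specializes to a nodal curve containing $C_0$, which is non-trigonal by Lemma \ref{gon-omega}.

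For the fourth stage, the rational nodal $F_0 \in |F|$ is obtained as a generic singular fibre of the elliptic fibration $Z_a \to \proj^1$ defined by $|F|$ (of which there are $24$, generically all of type $I_1$), chosen to avoid the nodes of $f_a(B_a)$; transversality with $f_a(B_a)$ is then a Zariski open condition. Since $(\Gamma \cdot D) = 1$ with $\Gamma$ smooth, it meets $f_a(B_a)$ in a single transverse point for generic configurations. The main obstacle I anticipate is the case-by-case combinatorial design of the chain $B_0$ on $Y_{\Omega_{11}}$ producing both the correct class $j(D)$ and the required arithmetic genus $13$ or $15$; as in the $r(g)$ split of the proof of Theorem \ref{finiteness}, this will require tailored configurations for different $a$.
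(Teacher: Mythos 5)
Your overall architecture is the paper's: embed $\Lambda_a$ primitively into $\Omega_{11}$, build a reducible nodal curve on $Y_{\Omega_{11}}$ from a smooth $C \in |L|$ plus rational curves, feed Corollary \ref{bij-cor} into Proposition \ref{finiteness-criterion}, and deform horizontally to $Z_a$; your stage-three transfer of zero-dimensionality by openness of finite fibres is also exactly what the paper means by ``follows from the proof of Proposition \ref{finiteness-criterion}''. But two of the steps you pass over quickly are genuine gaps. First, integrality of $B_a$: neither of your citations applies. Lemma \ref{defo-nodal-lemma} requires the source curve to be integral already, and Proposition \ref{prim-cor} achieves integrality precisely by deforming to a K3 with $Pic \simeq \mathbb{Z}L'$, i.e.\ it leaves the $\Lambda_a$-polarized locus, whereas here the target must remain a surface with $Pic \simeq \Lambda_a$. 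What is actually needed, and what the paper does, is a horizontal deformation over $M_{\Lambda_a}$ (using $h^0(N_{\bar f_a}) \leq p(\bar B_a)$ from Lemma \ref{stablem} together with \cite[Rem.\ 3.1]{kool-thomas}) followed by the lattice check that $\Lambda_a$ contains no $(-2)$-class $R$ with $(R \cdot F)=(R \cdot \Gamma)=0$: writing $R = xD+yF+z\Gamma$, $(R\cdot F)=6x$ forces $x=0$, then $(R\cdot \Gamma)=-2z$ forces $z=0$, and $R=yF$ has $(R)^2=0 \neq -2$. Hence the rational components cannot survive on $Z_a$ and $B_a$ is forced to be integral; without this computation your stage two does not close. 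Incidentally, the combinatorial design you defer to the end is much simpler than a Theorem \ref{finiteness}-style chain: the paper sets $d_8=1$, takes $H = L+\Gamma_1+\epsilon\Gamma_2$ with $3 \leq d_1,d_2 \leq 5$ adjusted so that $(H)^2=2a-2$ (no $E$-summands at all), and keeps exactly three nodes on each of $C \cap \Gamma_1$ and $C \cap \Gamma_2$, giving genus $13$ for $a \in \{14,15\}$ ($\epsilon=0$) and $15$ for $a \in \{16,\ldots,19\}$ ($\epsilon=1$).

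Second, your construction of $F_0$ fails as stated. The assertion that the fibration induced by $|F|$ has $24$ singular fibres ``generically all of type $I_1$'' is unproven and in fact literally false: $(\Gamma \cdot F)=0$, so $\Gamma$ lies in a fibre, which is therefore reducible. More seriously, ruling out that the integral singular fibres are all cuspidal is exactly the kind of statement that required the heavy machinery of Lemma \ref{I2} (Jacobian fibrations, the Tate--\v{S}afarevi\v{c} group, Shimada's existence results); it cannot simply be asserted for the general $Z_a$. The paper sidesteps this by deforming the $I_2$ fibre $\Gamma_7+\widetilde{\Gamma}_7$ of $Y_{\Omega_{11}}$, guaranteed by Lemma \ref{I2}, as a genus-zero stable map: on $Z_a$ its image is integral by the same lattice computation as above, and being rational of arithmetic genus one it is an integral nodal member of $|F|$. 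Relatedly, ``choosing $F_0$ to avoid the nodes of $f_a(B_a)$'' by a Zariski-open argument does not parse on a fixed surface, since $F_0$ is one of finitely many singular fibres and cannot be moved in the pencil; genericity must be taken in the moduli space $M_{\Lambda_a}$, and since the statement concerns the \emph{general} $Z_a$ you also need the complex-conjugation remark closing the paper's proof to cover both components of $M_{\Lambda_a}$, which you omit. (Your observation that $\Gamma$ meets $f_a(B_a)$ in a single transverse point because $(\Gamma \cdot D)=1$ is fine.)
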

\begin{proof}
 Let $M_{\Lambda_a}$ be the moduli space of pseudo-ample, $\Lambda_a$-polarized K3 surfaces, \cite{dolgachev}. This has at most two components, which locally on the period domain are interchanged via complex conjugation. Consider the lattice $\Omega_{11}$ with ordered basis $\{L, E, \Gamma_1, \ldots, \Gamma_8 \}$ and set $d_8=1$. For $d_1, d_2 \geq 3$, let $H$ be the primitive, ample line bundle 
 $H=L+\Gamma_1 + \epsilon \Gamma_2$, where $\epsilon=0$ for $14 \leq a \leq 15$ and $\epsilon=1$ for $16 \leq a \leq 19$. Choose $3\leq d_1, d_2 \leq 5$ such that $H^2=2a-2$; it is easily checked that all six possibilities can be achieved. There is a primitive lattice embedding $$\Lambda_a \hookrightarrow \Omega_{11}$$ defined by $D \mapsto H$, $\Gamma \mapsto \Gamma_8$, $F \mapsto E$. 
 
 Let $Y_{\Omega_{11}}$ be any K3 surface with $Pic(Y_{\Omega_{11}}) \simeq \Omega_{11}$, and choose the basis $\{L, E, \Gamma_1, \ldots, \Gamma_8 \}$ as in Lemma \ref{lem-aaa}. Consider the curve $C \cup \Gamma_1 \cup \epsilon \Gamma_2 \in |H|$, where $C \in |L|$ is a general smooth curve. By partially normalizing at all nodes other than three on $C \cup \Gamma_1$ and three on $C \cup \Gamma_2$ (for $\epsilon \neq 0$), we construct an unramified stable map $\bar{f}_a: \bar{B}_a \to Y_{\Omega_{11}}$ with $\bar{f}_{a*} (\bar{B}_a) \sim H$ which is birational onto its image and satisfying the conditions of Proposition \ref{finiteness-criterion}. Note that $\bar{B}_a$ has arithmetic genus $13$ for $14 \leq a \leq 15$ and $15$ for $16 \leq a \leq 19$. After deforming $\bar{f_a}$ to an unramified stable map $f_a: B_a \to Z_a$, we find $B_a$ must become integral, since it is easily checked that $Z_a$ contains no smooth rational curves $R$ with $(R \cdot F)=(R \cdot \Gamma)=0$. Further, the normalization $\tilde{B}_a$ is non-trigonal by Lemma \ref{easynontriglemma}.
 
 Thus the claim on $f_a$ follows from the proof of Proposition \ref{finiteness-criterion}. 
Note that the $I_2$ fibre $\Gamma_7 + \widetilde{\Gamma}_7$ must deform to an integral, nodal, rational curve on $Z_a$, since $Z_a$ contains no smooth rational curves which avoid $F$ and $\Gamma$. If $\Omega_{11} \hookrightarrow Pic(Y_{\Omega_{11}})$ is the embedding as in Lemma \ref{lem-aaa}, and if $Y^c_{\Omega_{11}}$ is the conjugate K3 surface, then we obviously have an embedding $\Omega_{11} \hookrightarrow Pic(Y^c_{\Omega_{11}})$ satisfying the conclusions of Lemma \ref{lem-aaa}. Thus the claim holds for the general pseudo-ample, $\Lambda_a$-polarized K3 surface $Z_a$.
\end{proof}
\begin{remark} \label{slightgen-nodal2}
In the notation of the above proof, we have $h^0(N_{\bar{f}_a}) \leq p(\bar{B}_a)$ from Lemma \ref{stablem}. It thus follows that 
$h^0(N_{f_a}) \leq p(B_a)$ by semicontinuity.
\end{remark}

\begin{lem} \label{aghzt}
Let $1 \leq d \leq 5$ be an integer and consider the rank five lattice $K_d$ with basis $\{ A,B,\Gamma_1,\Gamma_2, \Gamma_3 \}$ giving the intersection matrix
\[ \left( \begin{array}{ccccc}
-2 & 6 & 3 & 2 & d \\
6 & 0 & 0 & 0 & 0 \\
3 & 0 & -2 & 0 & 0\\
2 & 0 & 0 & -2 & 0 \\
d & 0 & 0 & 0 & -2
 \end{array} \right). \]
 Then $K_d$ is an even lattice of signature $(1,4)$. There exists a K3 surface $Y_{K_d}$ with
 $Pic(Y_{K_d}) \simeq K_d$, and such that the classes $\{ A,B,\Gamma_1,\Gamma_2, \Gamma_3 \}$ are all represented by nodal, reduced curves such that the nodal curve $A$ meets $\Gamma_1, \Gamma_2$ and $\Gamma_3$ transversally. Further, $A+B$ is big and nef.
\end{lem}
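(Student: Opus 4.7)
My plan is to follow the template of Lemma \ref{lem-aaa} closely. First, for the lattice-theoretic claims: evenness is immediate from the diagonal $(-2, 0, -2, -2, -2)$. For the signature, I observe that $\langle \Gamma_1, \Gamma_2, \Gamma_3\rangle \cong \langle -2 \rangle^{\oplus 3}$ is negative definite, and its orthogonal complement in $K_d \otimes \mathbb{Q}$ is spanned by $B$ and $A' := A + \tfrac{3}{2}\Gamma_1 + \Gamma_2 + \tfrac{d}{2}\Gamma_3$, with $2\times 2$ Gram matrix of determinant $-36$, hence signature $(1,1)$; thus $K_d$ has signature $(1,4)$. Nikulin's embedding theorem combined with the surjectivity of the period map (\cite[Cor.\ 1.9, Cor.\ 2.9]{morrison-large} or \cite[Cor.\ 14.3.1]{huy-lec-k3}) then produces a projective K3 surface $Y_{K_d}$ with $Pic(Y_{K_d}) \simeq K_d$.

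Next, since $(A+B)^2 = 10 > 0$, Picard--Lefschetz reflections and possibly a sign change let me choose the marking so that $A+B$ is big and nef, using \cite[Prop.\ VIII.3.9]{barth} and \cite[Cor.\ 8.2.11]{huy-lec-k3}. Each basis element $D \in \{A, B, \Gamma_1, \Gamma_2, \Gamma_3\}$ satisfies $D \cdot (A+B) > 0$ and $D^2 \geq -2$, so Riemann--Roch gives effectivity (\cite[Prop.\ VIII.3.6(i)]{barth}). To show $B$ is represented by a smooth elliptic curve I verify $B$ is nef: an obstruction would be an effective $R = xA + yB + \sum_i z_i \Gamma_i$ with $R^2 = -2$ and $R \cdot B = 6x < 0$. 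Rewriting $R^2 = -2$ for $x<0$ gives the identity $x^2 + |x|(6y + 3z_1 + 2z_2 + dz_3) + \sum_i z_i^2 = 1$, while $R \cdot (A+B) > 0$ forces $6y + 3z_1 + 2z_2 + dz_3 > 4|x|$, producing a contradiction. Integrality of each $\Gamma_i$ then follows by the same type of case analysis on an integral component $R$ of $\Gamma_i$ with $R \cdot \Gamma_i < 0$, which forces $R = \Gamma_i$; hence each $\Gamma_i$ is represented by a smooth rational curve.

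The main step is representing $A$ by a nodal, reduced curve meeting each $\Gamma_i$ transversally. Since $A^2 = -2$ and $p_a(A) = 0$, any effective representative of $A$ is either an integral smooth rational curve or a reduced union of smooth rational curves meeting in a tree of simple nodes; in either case the representative is nodal and reduced. Transversality with each $\Gamma_i$ is a proper closed condition on the moduli of $K_d$-polarized K3 surfaces, so it holds on a dense Zariski open subset. The main obstacle, as in Lemma \ref{lem-aaa}, will be the combinatorial bookkeeping of effective $(-2)$-classes in $K_d$ required to pin down the structure of the representative of $A$ and to verify transversality for the general $Y_{K_d}$.
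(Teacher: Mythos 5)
Your lattice-theoretic preliminaries are sound: the signature computation via the orthogonal complement spanned by $B$ and $A' = A + \tfrac{3}{2}\Gamma_1 + \Gamma_2 + \tfrac{d}{2}\Gamma_3$ (Gram determinant $-36$) is correct, existence of $Y_{K_d}$ follows as you say, and your nefness check for $B$ goes through with one small repair: big-and-nefness of $A+B$ only gives $R \cdot (A+B) \geq 0$, not $>0$, so you get $6y+3z_1+2z_2+dz_3 \geq 4|x|$; your identity $x^2 + |x|(6y+3z_1+2z_2+dz_3)+\sum_i z_i^2 = 1$ then still yields a contradiction, since the left-hand side is at least $5x^2 \geq 5$. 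These steps roughly parallel computations the paper carries out, but note it places the integrality statements in the separate Lemma \ref{aghzt-2}, not in the proof of the present lemma.

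The main step, however, has a genuine gap. First, your structural claim that any effective representative of $A$ is automatically an integral smooth rational curve or a reduced tree of smooth rational curves with simple nodes is false as a general principle: an effective divisor of square $-2$ can be non-reduced (e.g.\ $2C_1+C_2$ with $C_1^2=C_2^2=-2$ and $C_1\cdot C_2=2$) and can contain components meeting tangentially (e.g.\ $C_1+C_2+C_3$ with $C_1\cdot C_2=2$ concentrated at a tangency and $C_3$ disjoint); ruling these out in $K_d$ is precisely the case analysis you defer. Second, and fatally, your transversality argument only shows the transversal locus is Zariski \emph{open} in the moduli of $K_d$-polarized K3 surfaces -- you never show it is \emph{nonempty}, and no amount of bookkeeping of effective $(-2)$-classes can do so, because transversality versus tangency is not a lattice-theoretic invariant: the number $A \cdot \Gamma_1 = 3$ does not distinguish three transversal points from a tangency plus a point. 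This is exactly where the paper's proof contains an ingredient your plan lacks: it constructs an explicit Kummer surface $\widetilde{Z}$ of a product $Y_1 \times Y_2$ of elliptic curves and writes down explicit representatives $\widetilde{A}, \widetilde{B}, \widetilde{\Gamma}_i$ as unions of the sixteen exceptional curves $E_{i,j}$, the strict transforms $T_i, S_j$, and a fibre $F$ (for instance $\widetilde{A} = S_1+E_{1,1}+T_1+E_{1,2}+S_2+E_{1,3}+S_3$); it verifies by intersecting against $T_1$, $T_3$, $E_{4,1}$, $E_{2,4}$, $E_{2,3}$ that the resulting embedding $K_d \hookrightarrow Pic(\widetilde{Z})$ is primitive, reads off nodality, reducedness, transversality with $\widetilde{A}$, and big-and-nefness of $\widetilde{A}+\widetilde{B}$ directly from the configuration (every component $R$ of $\widetilde{A}+\widetilde{B}$ has $(R \cdot \widetilde{A}+\widetilde{B}) \geq 0$), and only then passes to the general $K_d$-polarized K3 by degeneration to $\widetilde{Z}$, the relevant conditions being open in the family. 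Without such an explicit point of the moduli space, or some substitute geometric construction, your proposal cannot close.
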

\begin{proof}
Let $Y_1, Y_2$ be smooth elliptic curves and consider the Kummer surface $\widetilde{Z}$ associated to $Y_1 \times Y_2$. Let $P_1, P_2, P_3, P_4$ be the four $2$-torsion points of $Y_1$ and let $Q_1, Q_2, Q_3, Q_4$ be the $2$-torsion points of $Y_2$. Let $E_{i,j} \seq \widetilde{Z}$ denote the exceptional divisor over $P_i \times Q_j$, let $T_i \seq \widetilde{Z}$ denote the strict transform of $(P_i \times Y_2) / \pm$ and let
$S_j$ denote the strict transform of $(Y_1 \times Q_j) / \pm$. We also denote by $F$ a smooth elliptic curve of the form $x \times Y_2$, where $x \in Y_1$ is a non-torsion point. It may help the reader to consult the diagram on \cite[p.\ 344]{mori-mukai}, to see the configuration of these curves. We set 
\begin{align*}
\widetilde{A} &:= S_1+E_{1,1}+T_1+E_{1,2}+S_2+E_{1,3}+S_3 \\ 
\widetilde{B} &:=F+S_4+E_{2,4}+T_2+E_{2,1}+E_{2,2}+E_{2,3} \\ 
\widetilde{\Gamma}_1 &:=T_3+E_{3,1}+E_{3,2}+E_{3,3}\\
\widetilde{\Gamma}_2&:=T_2+E_{2,1}+E_{2,2} \\ 
\widetilde{\Gamma}_3 &= \left\{ \begin{array}{ll}
         T_4+\sum_{i=1}^d E_{4,i}, & \mbox{if $1 \leq d \leq 3$}\\
         T_4+\sum_{i=1}^{d-3} E_{4,i}+E_{4,4}+S_4+E_{2,4}+T_2+E_{2,1}+E_{2,2}+E_{2,3}, & \mbox{if $4 \leq d \leq 5$}.\end{array} \right.
\end{align*}
Then $\widetilde{A}$, $\widetilde{B}$, $\widetilde{\Gamma}_1, \widetilde{\Gamma}_2, \widetilde{\Gamma}_3$ generate $K_d $ (to simplify the computations, use that a tree of $-2$ curves has self-intersection $-2$). To see that this gives a primitive embedding of $K_d $ in $Pic(\widetilde{Z})$ we compute the intersections with elements of $Pic(\widetilde{Z})$; for $J \in Pic(\widetilde{Z})$, define $(J \cdot K_d)$ to be the quintuple
$((J \cdot \widetilde{A}), (J \cdot \widetilde{B}), (J \cdot \widetilde{\Gamma}_1), (J \cdot \widetilde{\Gamma}_2),(J \cdot \widetilde{\Gamma}_3))$. Then one computes
$(T_1 \cdot K_d)=(1,0,0,0,0)$, $(T_3 \cdot K_d)=(0,0,1,0,0)$, $(E_{4,1} \cdot K_d)=(1,0,0,0,-1)$,
$(E_{2,4} \cdot K_d)=(0,0,0,1,0)$, $(E_{2,3} \cdot K_d)=(1,-1,0,1, c)$, where $c$ is either $0$ or $-1$, depending on $d$. Thus $K_d$ is primitively embedded in $Pic(\widetilde{Z})$. Further, all intersections of  $\widetilde{B}$, $\widetilde{\Gamma}_1, \widetilde{\Gamma}_2, \widetilde{\Gamma}_3$ with $\widetilde{A}$ are transversal . Note that for any (rational) component $R \seq \widetilde{A}+\widetilde{B}$, $(R \cdot \widetilde{A}+\widetilde{B}) \geq 0$. Thus $\widetilde{A}+\widetilde{B}$ is big and nef. Hence the claim holds by degenerating to $\widetilde{Z}$.
\end{proof}

\begin{lem} \label{aghzt-2}
Let $1 \leq d \leq 5$ be an integer and consider the K3 surface $Y_{K_d}$ from Lemma \ref{aghzt}. Then the classes $\{ A,B,\Gamma_1,\Gamma_2, \Gamma_3 \}$ are all represented by integral curves. 
\end{lem}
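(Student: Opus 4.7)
The plan is to argue class by class, adapting the integrality arguments of Lemma \ref{lem-aaa} and using the assumption that $A+B$ is big and nef as the principal external input. Since the effectivity of each of $A, B, \Gamma_1, \Gamma_2, \Gamma_3$ is already recorded in Lemma \ref{aghzt} (via deformation from the Kummer degeneration $\widetilde{Z}$), the task reduces to ruling out nontrivial decompositions.

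First I would handle the class $B$ of square zero. Since $B$ is effective, it suffices to prove that $B$ is nef; once this is known the standard criterion (\cite[Lemma 2.3]{knut}) makes $|B|$ base-point free, and \cite[Prop.\ 2.6]{donat} then produces an integral smooth representative, namely a general elliptic fibre. To establish nef-ness, suppose for contradiction that some integral $R$ with $R^{2}=-2$ satisfies $(R \cdot B)<0$. Writing $R=aA+bB+\sum c_i\Gamma_i$ in the given basis, the relation $(R\cdot B)=6a<0$ forces $a\le -1$, while bigness and nefness of $A+B$ supply $(R\cdot A+B)\ge 0$. Combining these with $R^{2}=-2$, one sees that the resulting Diophantine inequalities on $(a,b,c_1,c_2,c_3)$ have no integer solutions (the quadratic term $-2a^{2}$ coming from $A^{2}=-2$ dominates).

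Next, for each rigid class $\Gamma_i$ ($i=1,2,3$) I would assume for contradiction the existence of a proper integral component $R \neq \Gamma_i$ of the unique effective divisor in $|\Gamma_i|$, necessarily with $R^{2}=-2$ and $(R\cdot\Gamma_i)<0$. Writing $R=aA+bB+\sum c_j\Gamma_j$: the nef-ness of $B$ (just proved) gives $6a=(R\cdot B)\ge 0$, so $a\ge 0$; then $(R\cdot A+B)\ge 0$ together with $R^{2}=-2$ forces $a$ to be small (in fact $a=0$ on closer inspection, since any positive $a$ makes $R-aA$ into a large-self-intersection effective class dominating the $-2$ coming from the $\Gamma_j$ part). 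With $a=0$, the relation $R^{2}=-2$ forces $\sum c_j^{2}=1$, and $(R\cdot\Gamma_i)<0$ pins down $c_i=1$, $c_j=0$ for $j\ne i$. Finally, $(R\cdot A+B)\ge 0$ translates into an inequality of the form $6b+(\text{fixed intersection number}) \ge 0$, which together with $R$ being integral (so $b$ cannot be made arbitrarily large by adding copies of $|B|$) gives $b=0$ and thus $R=\Gamma_i$, a contradiction. The same template applied with $A$ in place of $\Gamma_i$ handles integrality of $A$, using additionally that $(A-R)$ is effective in order to bound $a$ from \emph{above} by pairing $A-R$ with both $B$ and $A+B$.

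The main obstacle will be that the intersection $(A\cdot\Gamma_3)=d$ is a parameter ranging over $\{1,\dots,5\}$, so the coefficient bookkeeping in the $\Gamma_3$ and $A$ cases must be carried out in a way that is uniform in $d$. In particular, when analyzing a hypothetical irreducible component of $A$, the bound on $a$ coming from $(R \cdot A+B) \geq 0$ and the resulting possible residual $c_3$-contributions depend on $d$, so the case analysis must be performed explicitly for each $d\in\{1,\dots,5\}$ (mirroring the way Lemma \ref{aghzt} itself split into subcases according to $d\le 3$ versus $d\ge 4$). Beyond this, the arguments are routine and parallel to those already carried out in Lemmas \ref{lem-aaa} and \ref{little-lem2}.
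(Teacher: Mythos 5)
Your overall architecture (nefness of $B$ first, then the rigid classes $\Gamma_i$, then $A$) matches the paper's proof, and your step for $B$, though left unverified, is in fact correct: writing $R=aA+bB+\sum c_i\Gamma_i$ and setting $S=3c_1+2c_2+dc_3$, the constraint $(R\cdot A+B)=4a+6b+S\geq 0$ gives $6b+S\geq -4a$, whence for $a\leq -1$ one gets $R^2=-2a^2+2a(6b+S)-2\sum c_i^2\leq -10a^2\leq -10$, contradicting $R^2=-2$. (The paper argues via effectivity of $R-xA$ and an orthogonal decomposition, but both routes rest on nefness of $A+B$.)

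The genuine gap is in your $\Gamma_i$ step, at the point where you claim that $(R\cdot B)\geq 0$, $(R\cdot A+B)\geq 0$, $R^2=-2$ and $(R\cdot\Gamma_i)<0$ force $a=0$, justified by the assertion that any positive $a$ makes $R-aA$ an effective class of large self-intersection. This mechanism fails, and the numerical constraints alone do \emph{not} force $a=0$: the class $R=A+3\Gamma_1$ satisfies $R^2=-2$, $(R\cdot B)=6\geq 0$, $(R\cdot A+B)=13\geq 0$ and $(R\cdot \Gamma_1)=-3<0$, yet has $a=1$; moreover here $R-A=3\Gamma_1$ has self-intersection $-18$, and there is no reason for $R-aA$ to be effective in general. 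What excludes such candidates is geometric rather than Diophantine: one must exploit that $R$ is an \emph{integral} curve. The paper does this by observing that for $a\geq 1$ the divisor $R+B$ is nef (test it against $R$ itself, where $(R+B\cdot R)=-2+6a>0$, and against any other integral curve, where both summands pair nonnegatively) and big, contradicting $(R+B\cdot\Gamma_i)=(R\cdot\Gamma_i)<0$. An even cheaper substitute, once $B$ is nef: decompose the effective divisor in $|\Gamma_i|$ into integral components; each pairs nonnegatively with $B$ while $(\Gamma_i\cdot B)=0$, so every component has $A$-coefficient zero. Without an input of this kind your case analysis for the $\Gamma_i$ cannot be closed, however the $d$-bookkeeping is organized. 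Your sketch for $A$ is closer to the paper (effectivity of $A-R$ paired with $B$ correctly pins the $A$-coefficient of the distinguished component to $1$ and of all others to $0$), but the remaining $a=1$ branch still requires the quadratic identity $(R_1\cdot A)=-2+z_1^2+w_1^2+u_1^2$ and, in the subcase $k=5$, the effectivity of $B-\Gamma_j$, neither of which your "same template" remark supplies.
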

\begin{proof}
We will firstly show that $B$ is nef, and hence base point free. Indeed, there would otherwise exist
an effective divisor $R=xA+yB+z\Gamma_1+w\Gamma_2+u\Gamma_3$ for integers $x,y,z,w,u$, with $(R)^2=-2$ and $(R \cdot B) < 0$, i.e.\ $x<0$. Thus $R-xA$ is effective and $(R-xA \cdot A)=(R-xA \cdot A+B) \geq 0$,
since $A+B$ is nef. But then
\begin{align*}
-2=(R)^2 &= ((R-xA)+xA)^2 \\
&=-2(z^2+w^2+u^2+x^2)+2x(R-xA \cdot A)
\end{align*}
So we must have $z=w=u=0$, $x=-1$ and $(R+A \cdot A)=0$. But then $R=-A+yB$ and $(R+A \cdot A)=0$ gives $y=0$. But this contradicts that $A$ is effective.

We next show that each $\Gamma_i$ is integral. Let $R$ be any irreducible component of $\Gamma_i$ with $(R \cdot \Gamma_i) <0$, $(R)^2=-2$ (such a component exists). Write $R=xA+yB+z\Gamma_1+w\Gamma_2+u\Gamma_3$. We have $x \geq 0$ as $(R \cdot B) \geq 0$. Assume $x \neq 0$. Then
$(R \cdot R+B)>0$, $(R +B)^2 >0$, so that $R+B$ is big and nef, contradicting that $(R+B \cdot \Gamma_i)=(R \cdot \Gamma_i) <0$. Thus $x=0$ and $R=yB+z\Gamma_1+w\Gamma_2+u \Gamma_3$. Since $(R)^2=-2$, $(R \cdot \Gamma_i) <0$, we have $R=yB+\Gamma_i$. Since $(R \cdot A+B)=(R \cdot A) \geq 0$, we must have $y \geq 0$ (for $i=3$, we need here that $d <6$). Since the only effective divisor in $|R|$ is integral (and equal to $R$), we must have $y=0$. Thus $\Gamma_i=R$ is integral. 

To show that $A$ is integral, let $R_1, \ldots, R_s$ be the components of the effective $-2$ curve $A$, and write $R_i=x_iA+y_iB+z_i\Gamma_1+w_i\Gamma_2+u_i\Gamma_3$ for integers $x_i,y_i,z_i,w_i,u_i$. Intersecting with $B$ shows we have $x_i \geq 0$ for all $i$. Thus there is precisely one component, say $R_1$ with $x_1 \neq 0$ and further $x_1=1$. Now, choose a component $R_i$ with $(R_i)^2=-2$, $(R_i \cdot A)<0$. Firstly assume $i \neq 1$, so that $x_i=0$. Intersecting with the integral curves $\Gamma_j$, $1 \leq j \leq 3$ (and noting $R_i \neq \Gamma_j$ as $(R_i \cdot A) <0$), we have $z_i, w_i, u_i \leq 0$. From $(R_i)^2=-2$, we see $R_i=y_iB-\Gamma_j$ for some $1 \leq j \leq 3$. Intersecting with $A$ gives $6y_i-k <0$ for some $1 \leq k \leq 5$, and thus $y_i \leq 0$ which is a contradiction to the effectivity of $R_i$.

In the second case, assume $(R_1)^2=-2$, $(R_1 \cdot A)<0$, with $R_1=A+y_1B+z_1\Gamma_1+w_1\Gamma_2+u_1\Gamma_3$. We compute
\begin{align*}
-2 &= (R_1)^2\\
&= ((R_1-A)+A)^2 \\
&=-2(y_1^2+z_1^2+w_1^2+u_1^2+1)+2((R_1 \cdot A)+2) \\
&= -2(y_1^2+z_1^2+w_1^2+u_1^2)+2((R_1 \cdot A)+1).
\end{align*} 
Thus we have either $(R_1 \cdot A)=-2$ and $R_1=A+y_1B$ or $(R \cdot A)=-1$, $R_1=A+y_1B \pm \Gamma_j$ for some $1\leq j \leq 3$. In the first case, $(A+y_1B \cdot A)=-2$ implies $y_1=0$ so $A=R_1$ is integral. In the second case, $(A+y_1B \pm \Gamma_j \cdot A)=-1$ implies $-1=-2+6y_1 \pm k$, for $1 \leq k=(A \cdot \Gamma_j) \leq 5$. The only possibilities are $y_1=0$, $k=1$, $R_1=A+\Gamma_j$, contradicting that all effective divisors in $|R_1|$ are integral, or $y_1=1$, $k=5$, $R_1=A+B-\Gamma_j$. Since $(B-\Gamma_i)^2=-2$, $(B-\Gamma_j \cdot A+B)>0$, we have that $B-\Gamma_j$ is effective, so once again this contradicts that all effective divisors in $|R_1|$ are integral.
\end{proof}

\begin{lem} \label{essential-def}
Let $M_{\Lambda_a}$ denote the moduli space of pseudo-ample $\Lambda_a$-polarized K3 surfaces, with the lattice $\Lambda_a$ as in Lemma \ref{hjz}, for $14 \leq a \leq 19$. Then there is a nonempty open subset $U \seq M_{\Lambda_a}$ such that for $[Y_a] \in U$  we may pick the basis $\{ D, F, \Gamma \}$ such that 
there is an integral, nodal, rational curve $R_a \in |D-2F-\Gamma|$ such that $R_a$ meets $\Gamma$ transversally in three points.
\end{lem}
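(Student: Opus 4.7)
The plan is to realize $R_a$ by smoothing an explicit reducible configuration on a codimension-positive specialization of $Y_a$. I first record the relevant intersection numbers:
\begin{align*}
(D - 2F - \Gamma)^2 &= 2a - 30, & (D - 2F - \Gamma) \cdot D &= 2a - 15, \\
(D - 2F - \Gamma) \cdot F &= 6, & (D - 2F - \Gamma) \cdot \Gamma &= 3.
\end{align*}
All four are strictly positive for $14 \leq a \leq 19$, and the self-intersection $2(a-14) - 2$ matches the arithmetic genus of an integral rational curve with exactly $a - 14$ nodes, while the value $3$ is consistent with three transverse intersections with $\Gamma$.

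The strategy is to specialize $[Y_a]$ to the K3 surface $Y_{\Omega_{11}}$ of Lemma \ref{lem-aaa} via the primitive lattice embedding $\Lambda_a \hookrightarrow \Omega_{11}$ used in the proof of Lemma \ref{hjz}, which sends $D \mapsto L + \Gamma_1 + \epsilon \Gamma_2$, $F \mapsto E$, $\Gamma \mapsto \Gamma_8$ (with $d_8 = 1$). Using the linear equivalences $E \sim \Gamma_i + \widetilde{\Gamma}_i$ for those indices $i \geq 3$ giving $I_2$ fibres (Lemma \ref{I2}), I would rewrite
$$D - 2F - \Gamma \;\sim\; L + \Gamma_1 + \epsilon\Gamma_2 - (\Gamma_{i_1} + \widetilde{\Gamma}_{i_1}) - (\Gamma_{i_2} + \widetilde{\Gamma}_{i_2}) - \Gamma_8,$$
which is effective on $Y_{\Omega_{11}}$ (Riemann--Roch gives $\chi = a - 13 \geq 1$, and the negative class has negative pairing with $L$, ruling out the $h^2$ contribution). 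I would then select an explicit reducible representative $R^0_a$ which is a chain of smooth rational $(-2)$-curves of arithmetic genus $a - 14$, using the freedom in the choice of residual curve in $|L + \Gamma_1 + \epsilon \Gamma_2|$ to arrange that the three intersections of $R^0_a$ with $\Gamma_8$ are simple and occur at three distinct points.

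To conclude, I would deform $Y_{\Omega_{11}}$ back to the generic $[Y_a] \in M_{\Lambda_a}$ along a flat family of K3 surfaces, and invoke standard smoothing arguments for reducible nodal curves on K3 surfaces (in the spirit of Lemmas \ref{defo-nodal-lemma}--\ref{prim-cor} and Chen's theorem) to smooth $R^0_a$ to an integral nodal rational curve $R_a \in |D - 2F - \Gamma|$ on the general $Y_a$. Integrality of the deformation is forced by the fact that $\mathrm{Pic}(Y_a) = \Lambda_a$ generically, so there are no extraneous $(-2)$-classes available to permit the deformation to remain reducible. The transversality of $R_a$ with $\Gamma$ is an open condition in the family, and therefore persists on a nonempty open subset $U \subseteq M_{\Lambda_a}$.

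The main technical obstacle is the smoothing step: one must show that $R^0_a$ admits a flat deformation whose generic fibre is an integral rational curve with exactly $a - 14$ nodes, neither splitting into multiple components nor acquiring extra singularities. This amounts to a deformation-theoretic check on the obstruction space of $R^0_a$ in $Y_{\Omega_{11}}$ coupled with a lattice analysis in $\Lambda_a$ ruling out decompositions of $D - 2F - \Gamma$ that would correspond to reducible limits; the latter is feasible because any rational subcomponent on the general $Y_a$ must have class $xD + yF + z\Gamma$ satisfying the $-2$ equation, and inspection of the resulting diophantine constraints produces no viable decomposition.
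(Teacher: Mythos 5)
Your overall template --- specialize $Y_a$ inside $M_{\Lambda_a}$ to a lattice-polarized surface carrying an explicit reducible configuration, pass to a genus-zero object by partial normalization, deform back to the general $Y_a$, and rule out reducible limits by lattice arithmetic --- is the same as the paper's, but your choice of specialization creates a genuine gap. Under the embedding $\Lambda_a \hookrightarrow \Omega_{11}$ of Lemma \ref{hjz}, the class $D-2F-\Gamma$ pulls back to $L+\Gamma_1+\epsilon\Gamma_2-2E-\Gamma_8$, which has $L$-coefficient $1$; but every rational curve you have explicit access to on $Y_{\Omega_{11}}$ (the $\Gamma_i$ and $\widetilde{\Gamma}_i$ of Lemma \ref{lem-aaa}, and the singular fibres of the elliptic pencil $|E|$) has class in the span of $E,\Gamma_1,\ldots,\Gamma_8$, i.e.\ $L$-coefficient $0$, while the explicit curves whose classes involve $L$ (members of $|L|$, $|L-E|$) have positive genus. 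So the proposed ``chain of smooth rational $(-2)$-curves'' representing this class does not exist among the available curves --- indeed Lemma \ref{little-lem2} ($L-2E$ not effective) is exactly the warning that the $L$-part cannot shed $2E$ here --- and producing an integral rational representative with $L$-coefficient $1$ on this Picard-rank-ten surface would require a new existence theorem; Chen's theorem does not apply, since it concerns $|kL|$ on a general primitively polarized K3 of Picard rank one. This is precisely why the paper changes basis to $\{X,Y,Z\}=\{D-2F-\Gamma, F, \Gamma\}$ and introduces a \emph{second, different} specialization: the Kummer surface $Y_{K_d}$ of Lemmas \ref{aghzt} and \ref{aghzt-2}, on which $X \mapsto A+\epsilon_1\Gamma_3+\epsilon_2\Gamma_2$ is an explicit sum of reduced nodal curves built from trees of $(-2)$-curves meeting $\Gamma_1$ (the image of $\Gamma$) transversally; partially normalizing gives an unramified genus-zero stable map, which is then deformed horizontally to the general $Y_a$ via \cite[\S 2, Remark 3.1]{kool-thomas}.

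Three further points. First, your claim that all four intersection numbers are strictly positive for $14 \leq a \leq 19$ is false: $(D-2F-\Gamma)^2 = 2a-30$ equals $-2$ for $a=14$ and $0$ for $a=15$ (your effectivity argument via $\chi = a-13 \geq 1$ and $h^2=0$ survives, but the slip matters for which representatives can exist); relatedly, a ``chain'' of rational curves has arithmetic genus $0$, not $a-14$ --- the dual graph must have first Betti number $a-14$. Second, ``smoothing $R^0_a$'' is the wrong operation: smoothing nodes raises geometric genus, whereas you must keep geometric genus zero, and Lemmas \ref{defo-nodal-lemma}--\ref{prim-cor} do not supply what you need (they concern stable maps of genus $\geq 2$ birational onto their images); since rational curves are rigid on a fixed K3, the horizontal deformation must come from the reduced deformation theory of genus-zero stable maps, which is the tool the paper actually invokes. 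Third, openness of transversality with $\Gamma$ only helps once transversality holds somewhere in the family; the paper arranges it explicitly on the Kummer surface, whereas your appeal to ``freedom in the choice of residual curve in $|L+\Gamma_1+\epsilon\Gamma_2|$'' does not parse, as $R^0_a$ lies in $|D-2F-\Gamma|$, not in $|D|$. Your closing integrality argument (no viable $(-2)$-decompositions of $D-2F-\Gamma$ in $\Lambda_a$ with the required intersection properties) is essentially the paper's and is fine.
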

\begin{proof}
Set $y:=a-14$, so that $0 \leq y \leq 5$ by assumption.
If we change the basis of $\Lambda_a$ to $\{ D-2F-\Gamma, F, \Gamma \}=\{X,Y,Z \}$, we see $\Lambda_a$ is isometric to the lattice 
$\bar{\Lambda}_a$ with intersection matrix
\[
\left( \begin{array}{ccc}
2y-2 & 6 & 3 \\
6 & 0 & 0 \\
3 & 0 & -2 \end{array} \right).\] Let $K_d$ be the lattice from Lemma \ref{aghzt}.
For appropriate choices of $d$ there is a primitive lattice embedding
$\bar{\Lambda}_a \hookrightarrow K_d$, given by
\begin{align*}
X &\mapsto  A+\epsilon_1\Gamma_3+\epsilon_2 \Gamma_2 \\
Y &\mapsto B \\
Z &\mapsto \Gamma_1
\end{align*}
where if $a =14$ we set $\epsilon_1=\epsilon_2=0$ and $d$ arbitrary, if $15 \leq a < 19$ we set $d=y+1$, $\epsilon_1=1$, $\epsilon_2=0$ and if $a=19$ we set $d=5$ and $\epsilon_1=\epsilon_2=1$. By Lemmas \ref{aghzt}, \ref{aghzt-2}, we have that $A,\Gamma_2, \Gamma_3 $ are represented by smooth rational curves intersecting transversally on $Y_{K_d}$. In all cases, the divisor $D=X+2Y+Z$ is big and nef, since if $R \in \{ \Gamma_1, \epsilon_1 \Gamma_3, \epsilon_2 \Gamma_2, A \}$, $(R \cdot D) \geq 0$. By partially normalising nodes we may produce an unramified, stable map $f: \tilde{C} \to Y_{K_d}$, where $\tilde{C}$ is a genus zero union of smooth rational curves and $f(\tilde{C})=A+\epsilon_1 \Gamma_3+\epsilon_2 \Gamma_2$. By \cite[\S 2, Remark 3.1]{kool-thomas} we may deform $f$ horizontally to a stable map with target a small deformation of $Y_{K_d}$ in the moduli space of $\Lambda_a$-polarised K3 surfaces.
After deforming $Y_{K_d}$ to a K3 surface $Y_a$ with $Pic(Y_a) \simeq \Lambda_a$, we can deform $A+\epsilon_1 \Gamma_3+\epsilon_2 \Gamma_2$ to a nodal rational curve $\bar{R}_a$ which meets $\Gamma$ transversally in three points. Furthermore, $\bar{R}_a$ is integral, since $\Lambda_a$ contains no $-2$ curves which have zero intersection with $F, \Gamma$. 
\end{proof}

We now prove Theorem \ref{finiteness-nonprim}:
\begin{proof} [Proof of Theorem \ref{finiteness-nonprim}]
Assume $k \geq 2$, $g \geq 8$. Set $m:= \left \lfloor \frac{g-5}{6} \right \rfloor$ and let $0 \leq r(g) \leq 5$ be the unique integer such that
$ g-5=6m+r(g)$. We let $l_g:=15$ if $r(g)=3,4$, $m$ odd and/or $k$ even, $l_g:=16$ if $r(g)=3,4$, $m$ even and $k$ odd, $l_g:=17$ if $r(g)=5$, $m$ odd and/or $k$ even, $l_g:=18$ if $r(g)=5$, $m$ even and $k$ odd, $l_g:=17$, if $r(g)\leq 2$, $m$ even and/or $k$ even and finally
$l_g:=18$ if $r(g) \leq 2$, $m$ odd and $k$ odd. We will prove that there is a component $I \seq \mathcal{T}^n_{g,k}$ such that $${\eta}_{|_I}: I \to \mathcal{M}_{p(g,k)-n}$$ is generically finite for
$p(g,k)-n \geq l_g$. Further, we show that for the general $[\tilde{f}: C' \to X'] \in I$, $C'$ is non-trigonal.

Consider the $\Lambda_a$-polarized K3 surface $Y_a$ from Lemma \ref{essential-def} and let $\{ D, F, \Gamma \}$ be as in the lemma. Set $m:= \left \lfloor \frac{g-5}{6} \right \rfloor \geq 0$ and $$m'= \begin{cases} m-1 &\mbox{if } r(g) \geq 3 \\ 
m-2 & \mbox{if } r(g) \leq 2. \end{cases}. 
$$ Consider the primitive, ample line bundle $H=D+m'F $. We choose
$$a= \begin{cases} 11+r(g) &\mbox{if } r(g) \geq 3 \\ 
17+r(g) & \mbox{if } r(g) \leq 2. \end{cases}. 
$$
Then $(H)^2=2g-2$ for $g \geq 8$. Let $f_a: B_a \to Y_a$ (resp.\ $R_a$) be the unramified stable map (resp.\ rational curve) from  lemmas \ref{hjz} (resp.\ \ref{essential-def}). Set $l=k m'+2(k-1)$ which is nonnegative for $k \geq 2$, $g \geq 8$. We have an effective decomposition
$$kH \sim f_a(B_a)+(k-1) R_a+(k-1)\Gamma +lF_0 ,$$
where $F_0 \in |F|$ is an integral, nodal rational curve as in lemma \ref{hjz}.  We will prove the result by constructing an unramified stable map $f: B \to Y_a$ with $f_* (B)= f_a(B_a)+(k-1)R_a+(k-1)\Gamma +lF_0$ satisfying the conditions of Proposition \ref{finiteness-criterion}.

Assume firstly $m'$ is even and/or $k$ is even, so $l$ is even, and set $s=l/ 2$. Let $\proj^1 \to F_0$ be the normalization morphism and let $p,q$ be the points over the node. Let $x$ be the point of intersection of $f_a(B_a)$ and $\Gamma$ and let $y,z$ be distinct points in $\Gamma \cap R_a$. We may pick the points to ensure $y \neq x, z \neq x$. Define $B$ as the union of $B_a$ with $l+2(k-1)$ copies of $\proj^1$ and with transversal intersections as in the following diagram:
$$
\setlength{\unitlength}{1cm}
\begin{picture}(14,5)
\put(1,0){\line(0,1){4}}
\put(0.5,1.5){\line(1,0){2}}
\put(2,1){\line(0,1){2}}
\put(1.5,2.5){\line(1,0){2}}
\put(3.7,2.5){$\ldots$}
\put(4.5,2){\line(0,1){2}}
\put(0.5,3.5){\line(1,0){5}}

\put(8,0){\line(0,1){4}}
\qbezier(8.5,2)(4,1)(10.5,1.5)
\put(10,1){\line(0,1){2}}
\put(9.5,2.5){\line(1,0){2}}
\put(11.7,2.5){$\ldots$}
\put(12.5,2){\line(0,1){2}}
\put(12.2,3.5){\line(1,0){2}}

\put(0,0.4){\line(1,0){14}}

\put(5,-0.1){\mbox{$B_a$}}
\put(0.45,0.9){\tiny\mbox{$F_{1,1}$}}
\put(0.75,1.3){\tiny\mbox{$p$}}
\put(0.75,3.3){\tiny\mbox{$q$}}
\put(1.1,1.6){\tiny\mbox{$F_{1,2}$}}
\put(1.75,1.6){\tiny\mbox{$q$}}
\put(1.45,2.2){\tiny\mbox{$F_{2,1}$}}
\put(2.3,2.7){\tiny\mbox{$F_{2,2}$}}
\put(3.9,3){\tiny\mbox{$F_{s,1}$}}
\put(4.3,3.3){\tiny\mbox{$p$}}
\put(4.7,3.7){\tiny\mbox{$F_{s,2}$}}

\put(7.8,0.2){\tiny\mbox{$x$}}
\put(7.6,0.7){\tiny\mbox{$\Gamma_1$}}
\put(8.6,1.2){\tiny\mbox{$R_{a,1}$}}
\put(9.6,2.3){\tiny\mbox{$\Gamma_2$}}
\put(10.6,2.3){\tiny\mbox{$R_{a,2}$}}
\put(11.8,3.1){\tiny\mbox{$\Gamma_{k-1}$}}
\put(13.2,3.6){\tiny\mbox{$R_{a,k-1}$}}
\put(7.8,1.22){\tiny\mbox{$y$}}
\put(7.8,1.7){\tiny\mbox{$z$}}
\end{picture}
$$
Then there is a unique unramified morphism $f: B \to Y_a$ with $f_* (B)= f_a(B_a)+(k-1)R_a+(k-1)\Gamma +lF_0$ which restricts to the normalization $\proj^1 \to R_a$ on all components marked $R_{a,i}$, restricts to $f_a$ on $B_a$, sends all components marked $F_{i,j}$ to $F_0$, all components marked $\Gamma_i$ to $\Gamma$ and which takes points marked $x$ (resp.\ $y,z,p,q$) to $x$ (resp.\ $y,z,p,q$). 

We now claim that if $B_0 \seq B$ is a connected union of components containing $R_{a,k-1}$ with $f_*(B_0) \in |nH|$ then $n=k$ and $B_0 = B$. If $c_1D+c_2F+c_3\Gamma$ is a divisor linearly equivalent to $nH$, then intersecting with $F$ shows $c_1=n$. Now $R_{a} \in |D-2F-\Gamma|$, whereas $H =D+(m-1)F$ for $m \geq 0$. Thus $f_*(B_0) \in |nH|$ shows that the connected curve $B_0$ cannot coincide with the component $R_{a,k-1}$, and thus must also contain $\Gamma_{k-1}$. Repeating this argument, one sees readily that $B_0$ must contain $\sum_{i=1}^{k-1} (\Gamma_i+R_{a,i})+B_a$. We then get $n=k$ as required, which forces $B_0=B$. 

Using Remarks \ref{slightgen-nodal} and \ref{slightgen-nodal2}, one sees $h^0(N_{f}) \leq p(B)$. For any component $C \neq B_a \seq B$, $f(C)$ meets $f(B_a)$ properly. Thus it follows from Proposition \ref{prim-cor} that the conditions of Proposition \ref{finiteness-criterion} are met. Note that the arithmetic genus of $B$ is $l_g$. 

Now assume $m'$ is odd and $k$ is odd. Let $a,b,c \in f_a(B_a) \cap F_0$ be distinct points. Let $B$ be as in the diagram below.
$$
\setlength{\unitlength}{0.9cm}
\begin{picture}(14,5)
\qbezier(0,2)(2.3,-4)(1.5,4)
\qbezier(-1.5,-0.5)(-0.8,4)(0,2)
\put(1.5,1.5){\line(1,0){2}}
\put(3,1){\line(0,1){2}}
\put(2.5,2.5){\line(1,0){2}}
\put(4.7,2.5){$\ldots$}
\put(5.5,2){\line(0,1){2}}

\put(8,0){\line(0,1){4}}
\qbezier(8.5,2)(4,1)(10.5,1.5)
\put(10,1){\line(0,1){2}}
\put(9.5,2.5){\line(1,0){2}}
\put(11.7,2.5){$\ldots$}
\put(12.5,2){\line(0,1){2}}
\put(12.2,3.5){\line(1,0){2}}

\put(-2,0.4){\line(1,0){17}}

\put(6,-0.1){\mbox{$B_a$}}
\put(0.45,0.9){\tiny\mbox{$F_{1}$}}
\put(1.75,1.3){\tiny\mbox{$p$}}
\put(2.1,1.6){\tiny\mbox{$F_{2}$}}
\put(2.75,1.6){\tiny\mbox{$q$}}
\put(2.45,2.2){\tiny\mbox{$F_{3}$}}
\put(3.3,2.7){\tiny\mbox{$F_{4}$}}
\put(4.9,3){\tiny\mbox{$F_{l}$}}

\put(-1.2,0.2){\tiny\mbox{$a$}}
\put(0.4,0.2){\tiny\mbox{$b$}}
\put(1.45,0.2){\tiny\mbox{$c$}}

\put(7.8,0.2){\tiny\mbox{$x$}}
\put(7.6,0.7){\tiny\mbox{$\Gamma_1$}}
\put(8.6,1.2){\tiny\mbox{$R_{a,1}$}}
\put(9.6,2.3){\tiny\mbox{$\Gamma_2$}}
\put(10.6,2.3){\tiny\mbox{$R_{a,2}$}}
\put(11.8,3.1){\tiny\mbox{$\Gamma_{k-1}$}}
\put(13.2,3.6){\tiny\mbox{$R_{a,k-1}$}}
\put(7.8,1.22){\tiny\mbox{$y$}}
\put(7.8,1.7){\tiny\mbox{$z$}}
\end{picture}
$$
\\
Then as before there is an unramified morphism $f: B \to Y_a$ with  $f_* (B)= f_a(B_a)+(k-1)R_a+(k-1)\Gamma +lF_0$ satisfying the conditions of Proposition \ref{finiteness-criterion}, and $B$ has arithmetic genus $l_g$. 
\end{proof}

The following lemma will be needed for Theorem \ref{marked-wahl-k3}.
\begin{lem} \label{coh-van}
Assume $p(g,k)$ and $n$ are such that there is a component $I \seq \mathcal{T}^n_{g,k}$ such that the morphism
${\eta}_{|_I}: I \to \mathcal{M}_{p(g,k)-n}$ is generically finite. Then for the general $[(f: B \to X,L)] \in I$, we have
$$H^0(B, f^*(T_X))=0.$$
\end{lem}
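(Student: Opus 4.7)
The plan is to combine the smoothness of the stack $\mathcal{T}^n_{g,k}$ with the long exact sequence attached to
\[
0 \to T_B \to f^*T_X \to N_f \to 0
\]
and the deformation-theoretic meaning of the differential $d\eta$.

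First, for $\eta$ to map to $\mathcal{M}_{p(g,k)-n}$ we need $p(g,k)-n \geq 2$, so for the general $[(f:B\to X,L)] \in I$ the curve $B$ has genus $\geq 2$ and hence $H^0(B,T_B)=0$. The long exact sequence therefore yields an injection $H^0(f^*T_X) \hookrightarrow H^0(N_f)$ whose image is precisely the kernel of the coboundary $\delta : H^0(N_f) \to H^1(T_B)$.

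Next, recall that $H^0(N_f)$ is the tangent space at $[(f:B\to X, L)]$ to the fiber of the forgetful morphism $\pi: \mathcal{T}^n_{g,k} \to \mathcal{B}_g$ over $[(X,L)]$, i.e.\ to the stack of unramified stable maps into the \emph{fixed} $(X,L)$ (this is the space estimated in Proposition \ref{ishi-little}). Consequently there is a natural inclusion $H^0(N_f) = \ker(d\pi) \hookrightarrow T_{[(f,X,L)]}\mathcal{T}^n_{g,k}$. Restricting $d\eta$ to this subspace gives the map that sends a first-order deformation of $f$ with $X$ fixed to the induced first-order deformation of the abstract curve $B$; by the standard deformation theory of morphisms this restriction agrees with $\delta$. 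Combining with the previous step,
\[
H^0(f^*T_X) \;\subseteq\; \ker\bigl(d\eta: T_{[(f,X,L)]} I \to T_{[B]}\mathcal{M}_{p(g,k)-n}\bigr).
\]

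Finally, since $\mathcal{T}^n_{g,k}$ (and hence $I$) and $\mathcal{M}_{p(g,k)-n}$ are smooth Deligne--Mumford stacks and $\eta_{|_I}$ is generically finite onto its image, generic smoothness forces $d\eta$ to be injective at a general point of $I$. Hence $\ker(d\eta)=0$ at such a point, and therefore $H^0(B, f^*T_X) = 0$ for the general $[(f:B\to X, L)] \in I$.

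The only point requiring care is the identification of $d\eta|_{H^0(N_f)}$ with the coboundary $\delta$. This follows by unwinding the moduli interpretation: a first-order deformation of $f$ with $X$ fixed corresponds to a section $s \in H^0(N_f)$, and $\delta(s) \in H^1(T_B)$ is exactly the Kodaira--Spencer class of the induced deformation of $B$ as an abstract curve, which is what $d\eta$ records.
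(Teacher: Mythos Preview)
Your proof is correct and follows essentially the same approach as the paper: both use the exact sequence $0 \to T_B \to f^*T_X \to N_f \to 0$ and identify the coboundary $H^0(N_f) \to H^1(T_B)$ with the differential of the forgetful map to $\mathcal{M}_{p(g,k)-n}$ (restricted to deformations with $X$ fixed), then conclude via generic finiteness. The only cosmetic difference is that the paper phrases everything in terms of the restricted map $r_{n,k}: T^n_{g,k}(X,L) \to \mathcal{M}_{p(g,k)-n}$ on the fibre over $(X,L)$, whereas you embed $H^0(N_f)$ into the full tangent space of $I$ and invoke injectivity of $d\eta$ directly; these are equivalent since $r_{n,k}$ is just the composition of the fibre inclusion with $\eta$.
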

\begin{proof}
Let $T^n_{g,k}(X,L)$ denote the fibre of $\mathcal{T}^n_{g,k} \to \mathcal{B}_g$ over $[(X,L)]$. 
The finiteness of ${\eta}_{|_I}$ at $[(f: B \to X,L)] $ obviously implies that the morphism
\begin{align*}
r_{n,k} \;: \;T^n_{g,k}(X,L) & \to \mathcal{M}_{p(g,k)-n} \\
[f: B \to X] & \mapsto [B]
\end{align*}
is finite near $[f]$. The claim $H^0(B, f^*(T_X))=0$ then follows from the exact sequence of sheaves on $B$
$$ 0 \to T_{B} \to f^*(T_X) \to N_f \to 0$$
and the fact that the coboundary morphism $H^0(B, N_f) \to H^1(B, T_{B})$ corresponds to the differential of $r_{n,k}$.
\end{proof}
\begin{rem}
For $g \geq 51$, $n \leq \frac{g-50}{2}$ then one has $H^0(B, f^*(T_X))=0$ for the general $[(f: B \to X,L)] \in I$ in \emph{every} component of $\mathcal{V}^n_g \seq \mathcal{T}^n_{g}$, by \cite{halic}. Similar bounds for the case $k \geq 2$ are also stated. In this paper, generic finiteness results for $\mathcal{V}^{n}_{g,k} \to \mathcal{M}_{p(g,k)-n}$ were also claimed, the proof however seems flawed, see Remark \ref{halic-remarks}.
\end{rem}

\section{The marked Wahl map} \label{markywahl}
 Recall the following definition from \cite{wahl-curve}: let $V$ be any smooth projective variety, and let $R$ be a line bundle on $V$. Then there is a linear map, called the \emph{Gaussian}:
\begin{align*}
\Phi_R \; : \; \bigwedge^2 H^0(V,R) & \to H^0(V, \Omega_V (R^2)) \\
 s \wedge t & \mapsto sdt-tds.
\end{align*} 
Here $\Omega_V (R^2)$ denotes $\Omega_V \otimes R^2$. In the case $R = \omega_V$, this map is called the \emph{Wahl map}. For $V=C$ a smooth curve, and $T \seq C$ a marking, we call $\Phi_{\omega_C(-T)}$ the \emph{marked Wahl map}.  In this section we will use an approach inspired by \cite{wahl-plane-nodal} to study Gaussians in the case where $V$ is a general curve and $R$ is a twist of the canonical bundle.

We begin with the following lemma, which is a special case of \cite[Lem.\ 3.3.1]{greuel}:
\begin{lem} \label{jjji}
Let $x_1, \ldots, x_n, y_1, \ldots, y_m$ be distinct, generic points of $\proj^2$ and let $d$ be a positive integer satisfying
$$3n+6m < \frac{d^2+6d-1}{4}-\left \lfloor \frac{d}{2} \right \rfloor .$$
Then there exists an integral curve $C \seq \proj^2$ of degree $d$ with nodes at $x_i$, ordinary singular points of multiplicity $3$ at $y_j$ for $1 \leq i \leq n$, $1 \leq j \leq m$ and no other singularities.
\end{lem}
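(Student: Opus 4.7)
The plan is to realize the lemma as the assertion that a certain linear subsystem of $|\mathcal{O}_{\proj^2}(d)|$ cut out by the singularity conditions is nonempty and that its general member is integral with precisely the prescribed singularities. First, I would introduce the fat points subscheme $Z \seq \proj^2$ with ideal sheaf
\[
\mathcal{I}_Z \;=\; \bigcap_{i=1}^n \mathcal{I}_{x_i}^{2} \;\cap\; \bigcap_{j=1}^m \mathcal{I}_{y_j}^{3},
\]
so $\operatorname{length}(Z) = 3n+6m$, since $\mathcal{O}_{p,\proj^2}/\mathfrak{m}_p^k$ has length $\binom{k+1}{2}$. A degree $d$ curve $C$ has multiplicity at least $2$ at each $x_i$ and at least $3$ at each $y_j$ exactly when its defining polynomial lies in $H^0(\proj^2,\mathcal{I}_Z(d))$, so the curves satisfying the prescribed multiplicity conditions are parametrized by $\Sigma := |\mathcal{I}_Z(d)|$.

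For $(x_i,y_j)$ in sufficiently general position, I would invoke the Alexander--Hirschowitz theorem, combined with a Horace differential degeneration to handle the mixed multiplicities, to obtain $h^1(\proj^2,\mathcal{I}_Z(d))=0$, so that $Z$ imposes independent conditions on $|\mathcal{O}_{\proj^2}(d)|$. A direct computation then gives $\dim \Sigma \geq \binom{d+2}{2} - 1 - (3n+6m) \geq \frac{d^2-1}{4}$ under the hypothesis; in particular $\Sigma$ is nonempty and of positive dimension.

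Next, I would check that the general $C \in \Sigma$ has precisely the prescribed singularities and no others. Since the base locus of $\Sigma$ equals $Z$ for a generic point configuration, Bertini applied to $\Sigma$ guarantees smoothness of a generic $C$ away from the $x_i, y_j$. At the points of $Z$, the loci in $\Sigma$ where $C$ acquires a worse singularity than prescribed (a cusp rather than a node at some $x_i$, or a triple point with repeated tangent directions rather than an ordinary one at some $y_j$) are strictly smaller subvarieties, because these are additional jet conditions on $C$; hence they are avoided by the generic $C$.

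Finally, I would show that the general $C \in \Sigma$ is integral. A splitting $C = C_1+C_2$ with $\deg C_i = d_i$ and $d_1 \leq d_2$ would force each prescribed singular point either to be a singularity of $C_1$ or $C_2$ alone, or to arise from a transversal intersection $C_1 \cap C_2$; Bezout bounds $\#(C_1 \cap C_2) \leq d_1(d-d_1) \leq \lfloor d/2 \rfloor \lceil d/2 \rceil$, and a careful accounting, combining this with the cost in $d_i$ of housing a multiplicity-$2$ or multiplicity-$3$ singularity within a single component, shows that the precise numerical bound $3n+6m < \frac{d^2+6d-1}{4}-\lfloor d/2 \rfloor$ is exactly what is needed to obstruct all such decompositions. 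This last irreducibility step will be the main obstacle; the tight form of the bound reflects the optimal Bezout accounting, and is essentially the content of \cite[Lem.\ 3.3.1]{greuel}, whose statement I would invoke to avoid reproving the combinatorial estimate.
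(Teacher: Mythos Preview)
The paper gives no proof at all: it simply states that the lemma is a special case of \cite[Lem.\ 3.3.1]{greuel}. So your proposal is not so much a different route as an attempt to sketch what the cited reference actually does, and in the end you invoke the very same citation for the hardest step. In that sense you and the paper agree.

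That said, two points in your sketch deserve correction. First, the appeal to Alexander--Hirschowitz is misplaced: that theorem concerns double points only, and you have triple points as well. More importantly, you do not need $h^1(\mathcal{I}_Z(d))=0$ for the nonemptiness step; the inequality $\dim|\mathcal{I}_Z(d)| \geq \binom{d+2}{2}-1-(3n+6m)$ is automatic, since each linear condition drops the dimension by at most one. Second, you describe \cite[Lem.\ 3.3.1]{greuel} as covering only the irreducibility, but in fact it delivers the full package---existence of an \emph{integral} curve with \emph{exactly} the prescribed ordinary singularities---under the stated numerical bound. The delicate parts are precisely the ones you flag (no extra singularities, no non-ordinary behaviour at the assigned points, irreducibility), and the cited lemma handles all of them together via a careful Bertini-and-B\'ezout argument of the type you outline.
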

Let $C \seq \proj^2$  be an integral curve of degree $d$ with nodes at $x_i$, ordinary singular points of multiplicity three at $y_j$ for $1 \leq i \leq n$, $1 \leq j \leq m$ and no other singularities, as in the lemma above. Let $\pi: S \to \proj^2$ be the blow-up at $x_1, \ldots, x_n, y_1, \ldots, y_m$, let $E_X$ be the sum of the exceptional divisors over $x_i$ for $1 \leq i \leq n$ and let $E_Y$ be the sum of the exceptional divisors over $y_j$ for $1 \leq j \leq m$. Denote by $D$ the strict transform of $C$, and let $T \seq D$ be the marking $E_X \cap D$. Note that $D$ is smooth, since all singularities are ordinary. Set $$M=\mathcal{O}_S((d-3)H-2E_X-2E_Y),$$ where $H$ denotes the pull-back of the hyperplane of $\proj^2$. Note that 
$$K_D \sim (D+K_S)_{|_D} \sim (dH-2E_X-3E_Y)_{|_D}+(E_X+E_Y-3H)_{|_D},$$ and this gives
$$ M_{|_D}\simeq K_D(-T).$$  We therefore have the following commutative diagram
\begin{align} \label{gauss-diag}
\xymatrix{
\bigwedge^2 H^0(S, M) \ar[r]^{\; \; \; \; \Phi_{M} \; \; \; \;}  \ar[d] & H^0(S, \Omega_{S}(M^2)) \ar[d]^g \\
\bigwedge^2 H^0(D, K_{D}(-T)) \ar[r]^{\; \; \; \; \; W_{D,T} \; \; \; \; \; \; }  &  H^0(D, K^3_{D}(-2T)).
}
\end{align}
where $\Phi_{M}$ is the Gaussian, \cite[\S 1]{wahl-curve} and $ W_{D,T} $ is the marked Wahl map of $(D,T)$. Here $g$ denotes the composition of the natural maps
$H^0(S, \Omega_{S}(M^2)) \to H^0(D,{\Omega_{S}(M^2)}_{|_D})$ and $H^0(D,{\Omega_{S}(M^2)}_{|_D}) \to H^0(D,\Omega_{D}(M^2))$. We aim to show that $W_{D,T}$ is surjective. We will firstly show that $g$ is surjective. The main tool we will need is the Hirschowitz criterion, \cite{hirschowitz}: 
\begin{thm}[Hirschowitz] \label{hirsch}
Let $p_1, \ldots, p_t$ be generic points in the plane, and assume $m_1, \ldots, m_t, d$ are nonnegative integers satisfying
$$ \sum_{i=1}^t \frac{m_i(m_i+1)}{2} < \left \lfloor \frac{(d+3)^2}{4} \right \rfloor .$$
Then $H^1(\proj^2, \mathcal{O}_{\proj^2}(d) \otimes I_{p_1}^{m_1} \otimes \ldots \otimes I_{p_t}^{m_t})=0$.
\end{thm}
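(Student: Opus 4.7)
The plan is to prove Theorem~\ref{hirsch} by induction on the degree $d$, using Hirschowitz's \emph{m\'ethode d'Horace}: one specializes a subset of the fat points onto a fixed line and then cascades the vanishing from degree $d$ down to degree $d-1$ via the Castelnuovo restriction sequence.

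Fix a line $L \seq \proj^2$, and write $Z = \sum_i m_i p_i$ for the fat-point scheme defined by $I_{p_1}^{m_1} \otimes \cdots \otimes I_{p_t}^{m_t}$. Let $Z^0$ denote a flat specialization of $Z$ in which a chosen subset of the fat points is moved onto $L$ (possibly with only partial contact, via the half-Horace variant). Writing $\mathrm{Res}_L Z^0$ for the residual scheme $(I_{Z^0} : I_L)$, one has the short exact sequence
$$0 \to I_{\mathrm{Res}_L Z^0}(d-1) \to I_{Z^0}(d) \to I_{Z^0 \cap L,\, L}(d) \to 0.$$
Taking cohomology reduces the target vanishing $H^1(\proj^2, I_{Z^0}(d)) = 0$ to two conditions: $H^1(L, I_{Z^0 \cap L, L}(d)) = 0$ and $H^1(\proj^2, I_{\mathrm{Res}_L Z^0}(d-1)) = 0$. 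The first is the vanishing on $\proj^1$ of a line bundle of degree $d - \deg(Z^0 \cap L)$, so it holds exactly when $\deg(Z^0 \cap L) \leq d+1$; the second is handled by the inductive hypothesis in degree $d-1$. Since $h^1$ is upper semicontinuous in flat families, vanishing for the specialization $Z^0$ forces vanishing for the generic configuration $Z$.

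The inductive step requires arranging the specialization so that (i) the trace $Z^0 \cap L$ has length at most $d+1$, and (ii) the residual scheme $\mathrm{Res}_L Z^0$ still satisfies the Hirschowitz inequality in degree $d-1$, namely $\sum_i m^{\mathrm{res}}_i(m^{\mathrm{res}}_i+1)/2 < \lfloor (d+2)^2/4 \rfloor$. Placing a fat point of multiplicity $m$ fully on $L$ decreases the planar sum $\sum m_i(m_i+1)/2$ by exactly $m$ and contributes $m$ to the trace. The arithmetic identity $\lfloor (d+3)^2/4 \rfloor - \lfloor (d+2)^2/4 \rfloor = \lfloor (d+3)/2 \rfloor$ shows the numerical room matches up between consecutive degrees, with the half-Horace degeneration needed precisely when a naive full specialization would overshoot the gap.

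The main obstacle will be the combinatorial bookkeeping required to execute this induction uniformly: one has to verify that the Hirschowitz inequality is preserved after each residuation regardless of the shape of $Z$, handle the parity of $d$ separately (since $\lfloor (d+3)^2/4 \rfloor$ has different descriptions in the two cases), and dispose of a handful of low-degree base cases by hand where the inductive mechanism is not yet available. Hirschowitz's original paper~\cite{hirschowitz} carries out this case analysis in full.
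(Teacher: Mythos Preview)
The paper does not prove Theorem~\ref{hirsch}; it is stated without proof and attributed to Hirschowitz via the citation \cite{hirschowitz}, then used as a black box in the subsequent lemmas. There is therefore no proof in the paper to compare your proposal against.

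That said, your sketch is an accurate outline of Hirschowitz's original argument: the m\'ethode d'Horace via the Castelnuovo restriction sequence, induction on $d$, and the arithmetic matching of $\lfloor (d+3)^2/4\rfloor - \lfloor (d+2)^2/4\rfloor$ with the trace length on the line. You correctly flag that the real content is the combinatorial case analysis ensuring the inequality is preserved under residuation, and that the half-Horace (differential) trick is needed when a full specialization overshoots. For the purposes of this paper your proposal is more than sufficient, since all that is actually required here is the citation.
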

\begin{lem} \label{gauss-lem1}
Assume $d \geq 8$, $3n+m < \left \lfloor \frac{(d-5)^2}{4} \right \rfloor$. Then $H^1(S,\Omega_S((d-6)H-2E_X-E_Y))=0.$
\end{lem}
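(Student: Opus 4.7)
The plan is to reduce everything to cohomology of ideal sheaves on $\proj^2$ and apply Theorem \ref{hirsch}.

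\textbf{Step 1.} The pullback morphism $d\pi : \pi^*\Omega_{\proj^2} \to \Omega_S$ is injective (since $\pi^*\Omega_{\proj^2}$ is torsion-free) with cokernel $Q$ supported on $E_X \cup E_Y$. A local computation in blow-up coordinates, compatible with the Chern class identity $K_S = \pi^* K_{\proj^2} + E_X + E_Y$, identifies $Q \cong \bigoplus_i \mathcal{O}_{E_i}(-2)$, where $E_i$ ranges over the irreducible components of $E_X \cup E_Y$. Setting $L := (d-6)H - 2E_X - E_Y$, the intersection numbers $L \cdot E_i = 2$ for $E_i \subseteq E_X$ and $L \cdot E_i = 1$ for $E_i \subseteq E_Y$ give $Q \otimes L \cong \bigoplus_{E_X} \mathcal{O}_{\proj^1} \oplus \bigoplus_{E_Y} \mathcal{O}_{\proj^1}(-1)$, so $H^1(Q \otimes L) = 0$. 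Taking the long exact sequence of $0 \to \pi^*\Omega_{\proj^2} \otimes L \to \Omega_S \otimes L \to Q \otimes L \to 0$ reduces the claim to $H^1(S, \pi^*\Omega_{\proj^2} \otimes L) = 0$.

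\textbf{Step 2.} Each $E_i$ is a $(-1)$-curve on which $\mathcal{O}_S(-2E_X - E_Y)$ restricts to a line bundle of non-negative degree, so $R^j\pi_* \mathcal{O}_S(-2E_X - E_Y) = 0$ for $j \geq 1$ while $\pi_*\mathcal{O}_S(-2E_X - E_Y) = I := I_X^2 I_Y$, the fat-point ideal vanishing to order two at the $x_i$'s and order one at the $y_j$'s. The projection formula and Leray spectral sequence then give
$$H^1(S, \pi^*\Omega_{\proj^2} \otimes L) = H^1(\proj^2, \Omega_{\proj^2}(d-6) \otimes I).$$
Tensoring the Euler sequence $0 \to \Omega_{\proj^2}(d-6) \to \mathcal{O}_{\proj^2}(d-7)^{\oplus 3} \to \mathcal{O}_{\proj^2}(d-6) \to 0$ with $I$ (exact because all three sheaves are locally free), the required vanishing is implied by (a) $H^1(\mathcal{O}_{\proj^2}(d-7) \otimes I) = 0$, and (b) surjectivity of the multiplication map $H^0(\mathcal{O}_{\proj^2}(1)) \otimes H^0(I(d-7)) \to H^0(I(d-6))$.

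\textbf{Step 3.} Part (a) is Theorem \ref{hirsch} in degree $d-7$ with multiplicities $m_i = 2$ at the $x_i$'s and $m_j = 1$ at the $y_j$'s, so $\sum \binom{m_i+1}{2} = 3n+m$, and the required bound $3n + m < \lfloor (d-4)^2/4 \rfloor$ is weaker than our hypothesis. For (b) I invoke Castelnuovo--Mumford regularity via Mumford's lemma: it suffices to know $I$ is $(d-7)$-regular, i.e., $H^1(I(d-8)) = 0$ and $H^2(I(d-9)) = 0$. The first is Theorem \ref{hirsch} in degree $d-8$, demanding $3n + m < \lfloor (d-5)^2/4 \rfloor$ --- precisely the hypothesis and the sharpest use of the bound. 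The second is automatic: since $\mathcal{O}_{\proj^2}/I$ is a skyscraper sheaf, $H^2(I(d-9)) = H^2(\mathcal{O}_{\proj^2}(d-9))$, which vanishes for $d \geq 8$ by Serre duality. The main technical point is the explicit identification of the cokernel $Q$ in Step 1; however only $H^1(Q \otimes L) = 0$ is actually used, so a coarser positivity estimate on $Q$ would also suffice.
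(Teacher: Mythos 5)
Your proof is correct, and your Steps 1 and 2 coincide with the paper's argument: the paper likewise twists the relative cotangent sequence by $L=(d-6)H-2E_X-E_Y$, identifies the cokernel term as $\omega_{E_X}(2)\oplus\omega_{E_Y}(1)$ (your $Q\otimes L$, which indeed restricts to $\mathcal{O}_{\proj^1}$ on the $E_X$-components and $\mathcal{O}_{\proj^1}(-1)$ on the $E_Y$-components, so $H^1$ vanishes), and pushes forward to reduce everything to $H^1(\proj^2,\Omega_{\proj^2}(d-6)\otimes I_X^2\otimes I_Y)=0$. The divergence is in Step 3. The paper twists the Euler sequence by $\omega_{\proj^2}$, using $\Omega_{\proj^2}\simeq T_{\proj^2}\otimes\omega_{\proj^2}$, so that $\Omega_{\proj^2}$ appears as a \emph{quotient} of $\mathcal{O}_{\proj^2}(-2)^{\oplus 3}$; the long exact sequence then traps the group in question between $H^1(\proj^2,\mathcal{O}(d-8)\otimes I_X^2\otimes I_Y)$, killed by Hirschowitz under exactly the hypothesis $3n+m<\lfloor (d-5)^2/4\rfloor$, and $H^2(\proj^2,\omega_{\proj^2}(d-6)\otimes I_X^2\otimes I_Y)$, which vanishes trivially for $d\geq 7$. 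You instead use the sub-presentation $0\to\Omega_{\proj^2}(d-6)\to\mathcal{O}(d-7)^{\oplus 3}\to\mathcal{O}(d-6)\to 0$, which obliges you to prove, besides $H^1(I(d-7))=0$, the surjectivity of the multiplication map $H^0(\mathcal{O}_{\proj^2}(1))\otimes H^0(I(d-7))\to H^0(I(d-6))$; you discharge this via Castelnuovo--Mumford regularity, whose $H^1$ input is again Hirschowitz at degree $d-8$ with the same sharp bound, and whose $H^2$ input is the same trivial skyscraper observation. So the two routes consume identical numerical hypotheses and bottom out at the same application of Theorem \ref{hirsch}, but the paper's choice of the $\omega$-twisted Euler sequence renders the multiplication-map surjectivity, and hence Mumford's lemma, unnecessary --- a genuine simplification, since quotient presentations only ever require vanishing of cohomology of the middle and left terms. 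Your version is complete as stated; your side remarks are also accurate, namely that the degree-$(d-7)$ Hirschowitz check is subsumed by the hypothesis because $\lfloor(d-5)^2/4\rfloor\leq\lfloor(d-4)^2/4\rfloor$, and that only $H^1(Q\otimes L)=0$, not the precise identification of $Q$, is actually used.
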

\begin{proof}
The relative cotangent sequence twisted by $ (d-6)H-2E_X-E_Y$ gives
$$0 \to \pi^* \Omega_{\proj^2}((d-6)H-2E_X-E_Y) \to \Omega_{S}((d-6)H-2E_X-E_Y) \to \omega_{E_X}(2) \oplus \omega_{E_Y}(1) \to 0 .$$ Thus it suffices to show
$$H^1(\proj^2, \Omega_{\proj^2}(d-6) \otimes I_X^2 \otimes I_Y )=H^1(S, \pi^* \Omega_{\proj^2}((d-6)H-2E_X-E_Y) )=0$$ 
where $I_X=I_{x_1} \otimes \ldots \otimes I_{x_n}$ and $I_Y=I_{y_1} \otimes \ldots \otimes I_{y_m}$.
Twisting the Euler sequence by $\omega_{\proj^2}$ gives a short exact sequence
$$0 \to \omega_{\proj^2} \to \mathcal{O}_{\proj^2}(-2)^{\oplus 3} \to \Omega_{\proj^2} \to 0 ,$$
where we have used the standard identification
$$\Omega_{\proj^2} \simeq T_{\proj^2} \otimes \omega_{\proj^2},$$
from \cite[Ex.\ II.5.16(b)]{har}. As $H^2(\proj^2, \omega_{\proj^2}(d-6) \otimes I_X^2 \otimes I_Y)=0$ for $d \geq 7$, it suffices to show $H^1(\proj^2,  \mathcal{O}_{\proj^2}(d-8)\otimes I_X^2 \otimes I_Y )=0$. This follows from Theorem \ref{hirsch} and the assumption $d \geq 8$, $3n+m < \left \lfloor \frac{(d-5)^2}{4} \right \rfloor$.
\end{proof}
\begin{lem} \label{gauss-lem2}
Let $D$ be as above and assume $3n+m < \left \lfloor \frac{(d-3)^2}{4} \right \rfloor$. Assume further that $m \geq 10$, so that $d \geq 10$. Then $$H^1(D,\mathcal{O}_D((d-6)H-2E_X-E_Y))=0.$$
\end{lem}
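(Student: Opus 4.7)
The natural approach is to spread the cohomology on $D$ out onto the surface $S$ via the ideal sheaf sequence. Concretely, my plan is to twist the short exact sequence $0 \to \mathcal{O}_S(-D) \to \mathcal{O}_S \to \mathcal{O}_D \to 0$ by $(d-6)H - 2E_X - E_Y$ to obtain
$$0 \to \mathcal{O}_S(-6H + 2E_Y) \to \mathcal{O}_S((d-6)H - 2E_X - E_Y) \to \mathcal{O}_D((d-6)H - 2E_X - E_Y) \to 0,$$
where I have used $D \sim dH - 2E_X - 3E_Y$ so that $((d-6)H - 2E_X - E_Y) - D = -6H + 2E_Y$. From the associated long exact sequence, the vanishing $H^1(D, \mathcal{O}_D((d-6)H - 2E_X - E_Y)) = 0$ will follow once I establish that $H^1(S, \mathcal{O}_S((d-6)H - 2E_X - E_Y)) = 0$ and $H^2(S, \mathcal{O}_S(-6H + 2E_Y)) = 0$.

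For the first vanishing, I will push forward to $\mathbb{P}^2$. Since the coefficients of $E_X$ and $E_Y$ are nonpositive, the relevant higher direct image vanishes (the restriction to each exceptional $\mathbb{P}^1$ has nonnegative degree, so $R^1\pi_* \mathcal{O}_S(-2E_X - E_Y) = 0$), giving
$$H^1(S, \mathcal{O}_S((d-6)H - 2E_X - E_Y)) = H^1(\mathbb{P}^2, \mathcal{O}_{\mathbb{P}^2}(d-6) \otimes I_X^2 \otimes I_Y).$$
To this I apply the Hirschowitz criterion (Theorem \ref{hirsch}) with $n$ points of multiplicity $2$ and $m$ points of multiplicity $1$; the required inequality becomes
$$n \cdot \tfrac{2 \cdot 3}{2} + m \cdot \tfrac{1 \cdot 2}{2} = 3n + m < \left\lfloor \tfrac{(d-3)^2}{4}\right\rfloor,$$
which is exactly the hypothesis of the lemma (note also that $m \geq 10$ combined with this inequality forces $d \geq 10$, matching the standing assumption).

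For the second vanishing I will use Serre duality on $S$. Since $K_S = -3H + E_X + E_Y$, one computes
$$H^2(S, \mathcal{O}_S(-6H + 2E_Y)) \simeq H^0(S, \mathcal{O}_S(3H + E_X - E_Y))^*.$$
The standard formulas for pushforward under blow-ups (with $\pi_*\mathcal{O}_S(E_i) = \mathcal{O}_{\mathbb{P}^2}$ for an exceptional $E_i$) yield $\pi_* \mathcal{O}_S(3H + E_X - E_Y) = \mathcal{O}_{\mathbb{P}^2}(3) \otimes I_Y$, so the space in question is $H^0(\mathbb{P}^2, \mathcal{O}(3) \otimes I_Y)$. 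Since $h^0(\mathcal{O}_{\mathbb{P}^2}(3)) = 10$ and the points $y_1, \ldots, y_m$ are generic with $m \geq 10$, they impose independent conditions on cubics and this space vanishes, completing the argument.

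I do not expect any serious obstacle here: the only nontrivial input is the Hirschowitz theorem, already quoted above, and the remaining steps are routine blow-up cohomology. The one point to watch is the bookkeeping of multiplicities in Hirschowitz (using $m_i(m_i+1)/2 = 3$ for the double points at the $x_i$ and $= 1$ for the simple basepoints at the $y_j$), since this is what matches the hypothesis $3n + m < \lfloor (d-3)^2/4 \rfloor$ precisely.
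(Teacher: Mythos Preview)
Your proposal is correct and follows essentially the same approach as the paper: twist the ideal sheaf sequence of $D$ in $S$, kill the middle $H^1$ by Hirschowitz, and kill the $H^2$ of the twist $\mathcal{O}_S(-6H+2E_Y)$ by Serre duality together with the fact that $m\ge 10$ general points impose independent conditions on plane cubics. The only cosmetic difference is that the paper handles the $E_X$ term in $H^0(S,\mathcal{O}_S(3H+E_X-E_Y))$ via the short exact sequence $0\to\mathcal{O}_S(3H-E_Y)\to\mathcal{O}_S(3H+E_X-E_Y)\to\mathcal{O}_{E_X}(-1)\to 0$, whereas you invoke the pushforward identity $\pi_*\mathcal{O}_S(3H+E_X-E_Y)=\mathcal{O}_{\proj^2}(3)\otimes I_Y$ directly; these are equivalent.
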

\begin{proof}
We have an exact sequence
$$ 0 \to \mathcal{O}_S(-6H+2E_Y) \to \mathcal{O}_S((d-6)H-2E_X-E_Y) \to \mathcal{O}_D((d-6)H-2E_X-E_Y) \to 0.  $$
By the Hirschowitz criterion, $H^1(S,\mathcal{O}_S((d-6)H-2E_X-E_Y))=0$, as we are assuming $d \geq 6$, $3n+m < \left \lfloor \frac{(d-3)^2}{4} \right \rfloor$. Thus it suffices to show $H^2(S,\mathcal{O}_S(-6H+2E_Y) )=0$. By Serre duality, $h^2(S,\mathcal{O}_S(-6H+2E_Y) )=h^0(S,\mathcal{O}_S(3H+E_X-E_Y))$.
We have 
$$ 0 \to \mathcal{O}_S(3H-E_Y) \to \mathcal{O}_S(3H+E_X-E_Y) \to \mathcal{O}_{E_X}(-1) \to 0$$
and so it suffices to show $H^0(S,\mathcal{O}_S(3H-E_Y))=0$. But $H^0(S,\mathcal{O}_S(3H-E_Y))=H^0(\proj^2, \mathcal{O}(3) \otimes I_Y)=0$, since $m \geq 10$, and any ten general points do not lie on any plane cubic (as the space of plane cubics has dimension nine).
\end{proof}
\begin{cor} \label{f-surj}
Let $x_1, \ldots, x_n, y_1, \ldots, y_m$ be distinct, general points of $\proj^2$ with $m \geq 10$ and let $d \geq 21$ be a positive integer satisfying
$$3n+6m < \left \lfloor \frac{(d-5)^2}{4} \right \rfloor.$$ Let  $C \seq \proj^2$ be an integral curve as in Lemma \ref{jjji}. Let $S \to \proj^2$ denote the blow-up of $\proj^2$ at $x_1, \ldots, x_n, y_1, \ldots, y_m$, and let $D \seq S$ denote the strict transform of $C$. Then the map $g$ from Diagram (\ref{gauss-diag}) is surjective.
\end{cor}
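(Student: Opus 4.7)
The plan is to factor $g$ as the composition of two natural maps and show that each is surjective using the two cohomological vanishing lemmas already established.

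Specifically, I would write $g = q \circ r$, where
\[
r \; : \; H^0(S, \Omega_S(M^2)) \to H^0(D, \Omega_S(M^2)_{|_D})
\]
is the restriction and
\[
q \; : \; H^0(D, \Omega_S(M^2)_{|_D}) \to H^0(D, \Omega_D(M^2))
\]
is the map induced on sections by the conormal sequence of $D \subseteq S$. Note that on $S$ we have the divisorial equality $D \sim dH-2E_X-3E_Y$, since $D$ is the strict transform of a plane curve of degree $d$ with nodes at the $x_i$ and ordinary triple points at the $y_j$. Consequently
\[
M^{\otimes 2}(-D) \simeq \mathcal{O}_S((d-6)H-2E_X-E_Y),
\]
which is precisely the line bundle appearing in Lemmas \ref{gauss-lem1} and \ref{gauss-lem2}. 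This will be the key numerical coincidence.

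For the surjectivity of $r$, I would tensor the short exact sequence $0 \to \mathcal{O}_S(-D) \to \mathcal{O}_S \to \mathcal{O}_D \to 0$ with $\Omega_S(M^2)$ to obtain
\[
0 \to \Omega_S(M^2(-D)) \to \Omega_S(M^2) \to \Omega_S(M^2)_{|_D} \to 0 .
\]
The obstruction to $r$ being surjective is $H^1(S, \Omega_S(M^2(-D))) = H^1(S, \Omega_S((d-6)H-2E_X-E_Y))$, which vanishes by Lemma \ref{gauss-lem1}, provided $d \geq 8$ and $3n+m < \lfloor (d-5)^2/4 \rfloor$. Under the hypotheses $d \geq 21$ and $3n+6m < \lfloor (d-5)^2/4 \rfloor$, this is immediate.

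For the surjectivity of $q$, I would twist the conormal sequence $0 \to \mathcal{O}_D(-D) \to \Omega_S{}_{|_D} \to \Omega_D \to 0$ by $M^2_{|_D}$ to obtain
\[
0 \to \mathcal{O}_D((d-6)H-2E_X-E_Y) \to \Omega_S(M^2)_{|_D} \to \Omega_D(M^2) \to 0 .
\]
The obstruction to surjectivity of $q$ is then $H^1(D, \mathcal{O}_D((d-6)H-2E_X-E_Y))$, which vanishes by Lemma \ref{gauss-lem2}, provided $m \geq 10$ and $3n+m < \lfloor (d-3)^2/4 \rfloor$. Again, our hypotheses imply both conditions (the second since $\lfloor (d-5)^2/4 \rfloor \leq \lfloor (d-3)^2/4 \rfloor$). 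Combining the two vanishings, $g = q \circ r$ is surjective, completing the proof. The only non-formal content is the identification $M^2(-D) \simeq (d-6)H-2E_X-E_Y$, which forces both obstruction spaces to coincide with ones already controlled, so there is no substantial obstacle beyond assembling these pieces.
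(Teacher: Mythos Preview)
Your proposal is correct and follows essentially the same approach as the paper: both factor $g$ through $H^0(D,\Omega_S(M^2)_{|_D})$ via the restriction sequence and the conormal sequence, identify the twist $M^2(-D)\simeq(d-6)H-2E_X-E_Y$, and then invoke Lemmas~\ref{gauss-lem1} and~\ref{gauss-lem2} for the two obstruction groups. The only addition in the paper is the remark that $\lfloor (d-5)^2/4\rfloor < \frac{d^2+6d-1}{4}-\lfloor d/2\rfloor$ for $d\geq 5$, ensuring that a curve $C$ as in Lemma~\ref{jjji} actually exists under the stated hypothesis.
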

\begin{proof}
Note that $ \left \lfloor \frac{(d-5)^2}{4} \right \rfloor < \frac{d^2+6d-1}{4}-\left \lfloor \frac{d}{2} \right \rfloor $ for $d \geq 5$ so that such a curve $C$ exists. Let $M$ be the line bundle defined above Diagram (\ref{gauss-diag}).
We have short exact sequences
$$ 0 \to \Omega_S((d-6)H-2E_X-E_Y) \to \Omega_S(M^2) \to \Omega_S(M^2)_{|_D} \to 0 $$ and
$$0 \to \mathcal{O}_D ((d-6)H-2E_X-E_Y) \to  \Omega_S(M^2)_{|_D} \to \Omega_D(M^2) \to 0 .$$ 
The map $f$ is the composition of the natural maps $H^0(S,\Omega_S(M^2)) \to H^0(S, \Omega_S(M^2)_{|_D})$ and 
$H^0(D,\Omega_S(M^2)_{|_D} ) \to H^0(D, \Omega_D(M^2) )$, so the claim follows from lemmas \ref{gauss-lem1} and \ref{gauss-lem2}.
\end{proof}

We now wish to show that the Gaussian $\Phi_M$ from Diagram \ref{gauss-diag} is surjective. We start by recalling one construction of Gaussian maps from \cite[\S 1]{wahl-curve}. Let $X$ be a smooth, projective variety, and $L \in Pic(X)$ a line bundle. Let $Y \to X \times X$ be the blow-up of the diagonal $\Delta$, and let $F$ denote the exceptional divisor. There is a short exact sequence of sheaves on $X \times X$
$$ 0 \to I_{\Delta}^2 \to I_{\Delta} \to \Delta_* \Omega_X \to 0.$$ Twisting the above sequence by $L \boxtimes L$ produces a short exact sequence
$$ 0 \to I_{\Delta}^2 (L \boxtimes L) \to I_{\Delta}(L \boxtimes L) \to \Delta_* (\Omega_X(L^2)) \to 0$$ and upon taking cohomology we get a map
$$ \widetilde{\Phi}_L : H^0(X \times X,I_{\Delta}(L \boxtimes L)) \to H^0(X, \Omega_X(L^2)).$$ Now $H^0(X \times X,I_{\Delta}(L \boxtimes L)) $ may be identified with the kernel $\mathcal{R}(L,L)$ of the
multiplication map $H^0(X,L) \otimes H^0(X,L) \to H^0(X,L^2)$, and we have $\bigwedge^2 H^0(X,L) \seq  \mathcal{R}(L,L)$ by sending $s \wedge t$ to $s \otimes t-t\otimes s$. Further, $\Phi_L$ is the restriction of $\widetilde{\Phi}_L$ to $\bigwedge^2 H^0(X,L)$, and it is easily verified that both $\Phi_L$ and $\widetilde{\Phi}_L$ have the same image in $H^0(X, \Omega_X(L^2))$. Thus, to verify the surjectivity of $\Phi_L$, it suffices to show 
$$H^1(X \times X,I_{\Delta}^2 (L \boxtimes L))=H^1(Y, L_1+L_2-2F)=0$$
where $L_1$ and $L_2$ denote the pull-backs of $L$ via the projections $pr_i: Y \to X \times X \to X$, for $i=1,2$.

Following \cite{cili-corank}, \cite{wahl-plane-nodal}, we now wish to use the Kawamata--Viehweg vanishing theorem to show $H^1(Y, L_1+L_2-2F)=0$.
\begin{prop}[\!\! \cite{cili-corank}] \label{kv-van}
Let $X$ be a smooth projective surface, which is not isomorphic to $\proj^2$. Assume $L \in Pic(X)$ is a line bundle such that there exist three very ample line bundles $M_1, M_2, M_3$ with $L-K_X \sim M_1+M_2+M_3$. Then the Gaussian $\Phi_L$ is surjective.
\end{prop}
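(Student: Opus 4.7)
The plan is to reduce the surjectivity of $\Phi_L$ to a cohomology vanishing on the blow-up $Y$ of $X \times X$ along the diagonal, and then apply Kawamata--Viehweg vanishing. From the paragraphs preceding the proposition, surjectivity of $\Phi_L$ follows from $H^1(Y, L_1 + L_2 - 2F) = 0$, where $\pi: Y \to X \times X$ has exceptional divisor $F$ and $L_i = \pi^* pr_i^* L$. So the whole task is to verify this vanishing.

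First I would compute the canonical bundle. Since $\Delta \subset X \times X$ is smooth of codimension $2$ in a smooth fourfold, the blow-up formula gives $K_Y = \pi^*(K_{X \times X}) + F = K_1 + K_2 + F$, writing $K_i := \pi^* pr_i^* K_X$. Hence
$$L_1 + L_2 - 2F - K_Y \;\sim\; (L-K_X)_1 + (L-K_X)_2 - 3F \;\sim\; \sum_{i=1}^3 N_i,$$
where $N_i := M_{i,1} + M_{i,2} - F$. The goal is therefore to show $\sum N_i$ is big and nef, and then apply Kawamata--Viehweg to obtain $H^1(Y, K_Y + \sum N_i) = H^1(Y, L_1 + L_2 - 2F) = 0$.

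For nefness of each $N_i$, I would observe that $\mathcal{O}_Y(N_i) \simeq \pi^*(M_i \boxtimes M_i \otimes I_\Delta)$, and argue that the sheaf $M_i \boxtimes M_i \otimes I_\Delta$ on $X \times X$ is globally generated when $M_i$ is very ample. Off the diagonal, if $(x,y) \notin \Delta$ then since $M_i$ separates points one chooses $s,t \in H^0(M_i)$ with $s(x) = 0 \neq t(x)$ and $s(y) \neq 0 = t(y)$, so that $s \otimes t - t \otimes s$ is a section of $M_i \boxtimes M_i \otimes I_\Delta$ nonvanishing at $(x,y)$. Along $\Delta$ one uses the identification $I_\Delta / I_\Delta^2 \simeq \Omega_X$ to see that generation reduces to global generation of $\Omega_X \otimes M_i^2$ by sections of the form $s\,dt - t\,ds$; this in turn is immediate from the fact that $\Omega_X \otimes M_i^2$ is the pullback of $\Omega_{\proj^N}(2)$ along the embedding $\phi_{M_i}: X \hookrightarrow \proj^N$, and the Euler sequence shows $\Omega_{\proj^N}(2)$ is generated by the sections $x_i\, dx_j - x_j\, dx_i$. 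Global generation implies $N_i$ is nef, so the sum $\sum N_i$ is nef.

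For bigness, since each $N_i$ is nef it suffices to show a single $N_i$ is big, that is, that the rational map $Y \dashrightarrow \proj(H^0(Y, N_i)^*)$ induced by $\bigwedge^2 H^0(M_i)$ is generically finite onto its image. Over $Y \setminus F$ this map factors through the embedding $\phi_{M_i} \times \phi_{M_i}: X \times X \hookrightarrow \proj^{N_i} \times \proj^{N_i}$ followed by the standard rational map that blows up the diagonal, which is generically injective, so its image has dimension $4 = \dim Y$. Hence $N_i$, and therefore $\sum_i N_i$, is big. With $\sum N_i$ big and nef, Kawamata--Viehweg gives the desired vanishing.

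The main obstacle is the bigness step: nefness is formal from global generation of an ideal twist, but controlling the generic finiteness of the associated morphism near the exceptional divisor $F$ is where the hypothesis $X \not\simeq \proj^2$ is needed — on $\proj^2$ the very ample linear systems are special enough that the relevant morphism can fail to be generically finite, and this is the only place in the argument where the excluded surface intervenes.
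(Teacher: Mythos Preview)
Your overall strategy matches the paper's exactly: reduce to $H^1(Y,L_1+L_2-2F)=0$, compute $K_Y$, write $(L-K_X)_1+(L-K_X)_2-3F=\sum_i N_i$ with $N_i=M_{i,1}+M_{i,2}-F$, and show each $N_i$ is big and nef so that Kawamata--Viehweg applies. Your nefness argument via global generation of $(M_i\boxtimes M_i)\otimes I_\Delta$ is essentially equivalent to the paper's Grassmannian construction (the paper observes that $\bigwedge^2 H^0(M_i)$ induces a \emph{morphism} $\psi_i:Y\to Gr(1,\proj(H^0(M_i)^*))$, $(x,y)\mapsto\overline{\phi_i(x)\phi_i(y)}$, globally defined since on $F$ it sends a tangent direction to the corresponding tangent line). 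One small slip: $\Omega_X\otimes M_i^2$ is a \emph{quotient} of $\phi_{M_i}^*\Omega_{\proj^N}(2)$, not the pullback itself; but this still gives the generation you need.

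The genuine gap is in bigness. You assert that the ``standard rational map'' $\proj^N\times\proj^N\dashrightarrow\proj(\bigwedge^2 H^0(M_i))$ is generically injective and conclude the image of $Y$ is $4$-dimensional. This is false on both counts: the map $(p,q)\mapsto[p\wedge q]$ is already $2$:$1$, and, more seriously, its restriction to $\phi_i(X)\times\phi_i(X)$ can collapse dimension. If $\phi_i(X)\subset\proj^N$ were a linear $\proj^2$, then every secant line lies in $\phi_i(X)$ and the image is the $2$-dimensional Grassmannian of lines in that plane; the fibres are $1$-dimensional. So generic finiteness is not automatic. The paper's argument is: if $\psi_i$ is not generically finite, then every secant line $\overline{\phi_i(x)\phi_i(y)}$ is contained in $\phi_i(X)$, forcing $\phi_i(X)$ to be a linear space, hence $X\simeq\proj^2$, contradicting the hypothesis. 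You do flag at the end that $X\not\simeq\proj^2$ should enter in the bigness step, but you never actually invoke it, and you mislocate the obstruction as being ``near the exceptional divisor $F$'' --- the failure of generic finiteness is a global phenomenon about coinciding secant lines on $X\times X\setminus\Delta$, not a local issue along $F$.
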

\begin{proof}
For a line bundle $A$ on $X$, we denote by $A_i \in Pic(Y)$ the pullback via the projection $pr_i: Y \to X \times X \to X$, for $i=1,2$.
By the above discussion, it suffices to show $H^1(Y, L_1+L_2-2F)=0$. As the diagonal $\Delta \seq X \times X$ has codimension two, we have $K_Y \simeq g^*K_X+F$, \cite[Exercise II.8.5]{har}. Thus we see $H^1(Y,L_1+L_2-2F)=H^1(Y,(L-K_X)_1+(L-K_X)_2-3F+K_Y)$, and so by the Kawamata--Viehweg vanishing theorem it suffices to show
$(L-K_X)_1+(L-K_X)_2-3F$ is big and nef. Since we have
\begin{align*}
(L-K_X)_1+(L-K_X)_2-3F &= (M_{1,1}+M_{1,2}-F) + (M_{2,1}+M_{2,2}-F) \\
& + (M_{3,1}+M_{3,2}-F), 
\end{align*}
it suffices to show that $M_{i,1}+M_{i,2}-F$ is big and nef for $1 \leq i \leq 3$. Now $H^0(Y,M_{i,1}+M_{i,2}-F)$ is the kernel $\mathcal{R}(M_i,M_i)$ of the
multiplication map $H^0(X,M_i) \otimes H^0(X,M_i) \to H^0(X,M_i^2)$, and we have an injective map $\bigwedge^2 H^0(X,M_i) \hookrightarrow \mathcal{R}(M_i,M_i)$ 
sending $s \wedge t$ to $s \otimes t-t\otimes s$. Thus $\bigwedge^2 H^0(X,M_i) $ induces a sublinear system of $|M_{i,1}+M_{i,2}-F|$ which induces a rational map
\begin{align*}
\psi_i \; : \; Y &\to Gr(1, \proj(H^0(M_i)^*)) \\
(x,y) &\mapsto \overline{\phi_i(x) \phi_i(y)} 
\end{align*}
where $\phi_i: X \hookrightarrow \proj(H^0(M_i)^*)$ is the embedding induced by $M_i$, and where $\overline{\phi_i(x) \phi_i(y)} $ denotes the line through $\phi_i(x) $ and $\phi_i(y)$. By viewing $(x,y) \in F$ as a pair $x \in X$, $y \in T_{X,x}$, one sees that the map $\psi_i $ is in fact globally defined, and hence $M_{i,1}+M_{i,2}-F$ is nef. To see that it is big, it suffices to show that $\psi_i$ is generically finite, i.e.\ we need to show that there exist points $x,y \in X$ such that $\phi_i(X)$ does not contain the line $\overline{\phi_i(x) \phi_i(y)}$. But if this were not the case $\phi_i(X)$ would be a linear space, contrary to the hypotheses.
\end{proof}
We now return to the situation of the blown-up plane. We start with the following:
\begin{thm}[\!\! \cite{hirsch-zeit}] \label{hirsch-almeida}
Let $p_1, \ldots, p_k$ be generic distinct points in the plane, and let $\pi: S \to \proj^2$ be the blow-up. Let $E \seq S$ be the exceptional divisor, and let $H$ be the pull-back of the hyperplane class on $\proj^2$. If we assume
$d \geq 5$ and $k+6 \leq \frac{(d+1)(d+2)}{2},$
then $dH-E$ is very ample on $S$.
\end{thm}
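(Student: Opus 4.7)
The plan is to reduce the very ampleness of $dH - E$ to the cohomological vanishing $H^{1}(S, \MAC{I}_{\xi} \otimes \sheafO_{S}(dH-E)) = 0$ for every length-two subscheme $\xi \seq S$, which is the standard criterion equivalent to the complete linear series $|dH - E|$ separating points and tangent directions on $S$. Pushing the ideal sheaf $\MAC{I}_{\xi}(dH - E)$ forward to $\proj^{2}$, this vanishing translates into the statement that degree-$d$ plane curves vanishing at $p_{1}, \ldots, p_{k}$ impose independent conditions on the additional infinitesimal datum encoded by $\xi$, i.e.\ into a special case of Theorem \ref{hirsch}.

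The case analysis splits according to where $\xi$ sits relative to the exceptional locus $E = E_{1} + \cdots + E_{k}$. If $\xi$ is a pair of distinct reduced points of $S \setminus E$, the vanishing is Theorem \ref{hirsch} applied with $k + 2$ simple base points. If $\xi$ is a tangent vector at a point of $S \setminus E$, it is Theorem \ref{hirsch} with $k$ simple points plus one double point (contributing $\binom{2+1}{2}=3$ to $\sum m_{i}(m_{i}+1)/2$). The delicate cases are those in which $\xi$ meets some $E_{i}$: here one uses the identification $\sheafO_{S}(dH - E)\vert_{E_{i}} \simeq \sheafO_{E_{i}}(1)$, so that separation of length-two subschemes of $E_{i}$ is equivalent to plane curves through $p_{1}, \ldots, p_{k}$ surjecting onto the tangent directions at $p_{i}$, which again reduces to Theorem \ref{hirsch} after raising the multiplicity at $p_{i}$.

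The hard part will be that the sharp bound $k + 6 \leq \binom{d+2}{2}$ is considerably tighter than what a direct application of Theorem \ref{hirsch} gives (the latter asymptotically allows only $k \lesssim (d+3)^{2}/4$, whereas the stated bound is of order $(d+1)(d+2)/2$). To close this gap I would run a degeneration argument in the spirit of Hirschowitz's own technique: specialize six of the $k$ generic points so that they lie on a smooth conic $C$, and observe that every plane curve of degree $d$ through those six points must contain $C$, leaving a residual curve of degree $d-2$. In this way the $6$ excess base conditions are absorbed into forcing one component to be the fixed conic, and the problem reduces to a system with fewer generic points and lower degree. Iterating the specialization and applying semicontinuity of $h^{1}$ along the degeneration allows one to set up an induction on $k$ (or on $d$); the assumption $d \geq 5$ is precisely what guarantees that after peeling off the conic the residual system $|\sheafO_{\proj^{2}}(d-2)|$ still has enough sections for the induction to close, yielding $H^{1}$-vanishing for every length-two $\xi$ and hence the desired very ampleness.
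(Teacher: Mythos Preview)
The paper does not supply a proof of this statement at all: Theorem \ref{hirsch-almeida} is quoted from Hirschowitz's paper \cite{hirsch-zeit} and used as a black box. So there is no ``paper's own proof'' to compare against, and your task here was really to reproduce (or replace) Hirschowitz's original argument.

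Your reduction to $H^{1}(S,\mathcal{I}_{\xi}(dH-E))=0$ for length-two subschemes $\xi$ and your case analysis pushing this down to $\proj^{2}$ are fine, and you correctly observe that a direct application of Theorem~\ref{hirsch} only yields the weaker bound $k \lesssim (d+3)^{2}/4$, not $k+6 \leq \binom{d+2}{2}$. The gap in your proposal is the step you invoke to close this discrepancy: specializing six of the $k$ points onto a smooth conic $C$ does \emph{not} force every degree-$d$ curve through them to contain $C$. A smooth curve of degree $d$ meets $C$ in $2d$ points by B\'ezout, so for $d\geq 3$ there are plenty of irreducible degree-$d$ curves passing through six prescribed points of $C$ without containing $C$ (concretely, six general points on a conic impose six independent conditions on $|\mathcal{O}_{\proj^{2}}(d)|$, and the locus of curves containing $C$ has strictly smaller dimension). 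Consequently the ``peel off a conic and drop the degree by two'' induction never gets started, and your argument does not establish the claimed bound.

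Hirschowitz's actual proof does use a specialization technique (the \emph{m\'ethode d'Horace}), but it works by specializing points onto a fixed line or curve and then using the residuation exact sequence $0\to \mathcal{O}_{\proj^{2}}(d-1)\otimes I' \to \mathcal{O}_{\proj^{2}}(d)\otimes I \to \mathcal{O}_{L}(d)\otimes I'' \to 0$ to split the $H^{1}$-computation, not by asserting that the specialized points force a fixed component. If you want to supply a proof here, you should either carry out that residuation argument carefully or simply cite \cite{hirsch-zeit}, as the paper does.
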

Putting everything together, we deduce:
\begin{prop} \label{mark-surj-main}
Let $x_1, \ldots, x_n, y_1, \ldots, y_m$ be distinct, generic points of $\proj^2$ with $m \geq 10$ and let $d \geq 24$ be a sufficiently large integer, so that both of the following two conditions are satisfied
\begin{enumerate}
\item $3n+6m < \left \lfloor \frac{(d-5)^2}{4} \right \rfloor$,
\item $n+m+6 \leq \frac{(d-6)(d-3)}{18}$.
\end{enumerate}
Let  $C \seq \proj^2$ be an integral curve of degree $d$ with nodes at $x_i$, ordinary singular points of multiplicity $3$ at $y_j$ for $1 \leq i \leq n$, $1 \leq j \leq m$ and no other singularities. Let $S \to \proj^2$ denote the blow-up of $\proj^2$, and let $D \seq S$ denote the strict transform of $C$. Then the marked Wahl map $W_{D,T}$ is surjective, where $T$ is the divisor over the nodes of $C$. Furthermore, $h^0(D,\mathcal{O}_D(T))=1$.
\end{prop}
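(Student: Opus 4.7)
The plan is to exploit the commutative square in Diagram~(\ref{gauss-diag}). Since surjectivity of the right vertical map $g$ has already been established in Corollary~\ref{f-surj} under hypothesis~(1), it suffices to prove that the Gaussian $\Phi_M$ on $S$ is surjective; a diagram chase then immediately yields surjectivity of $W_{D,T}$.

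To treat $\Phi_M$ I would invoke Proposition~\ref{kv-van}: the task reduces to writing $M - K_S$ as a sum of three very ample line bundles on $S$. A direct computation gives
\[
M - K_S = \bigl((d-3)H - 2E_X - 2E_Y\bigr) - \bigl(-3H + E_X + E_Y\bigr) = dH - 3E_X - 3E_Y,
\]
which begs to be split into three summands of the form $M_i := e_i H - E_X - E_Y$ with $e_1 + e_2 + e_3 = d$ and $e_i \in \{\lfloor d/3 \rfloor, \lceil d/3 \rceil\}$. By Theorem~\ref{hirsch-almeida}, $M_i$ is very ample whenever $e_i \geq 5$ and $n + m + 6 \leq (e_i+1)(e_i+2)/2$. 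Because $d \geq 24$ each $e_i \geq 8$, and the main technical check is that hypothesis~(2), namely $n+m+6 \leq (d-6)(d-3)/18$, is strong enough to imply the Hirschowitz--Almeida inequality for each of the three summands; this reduces to comparing two quadratics in $d$ in each residue class of $d \bmod 3$, which is elementary. Since $n + m \geq 10$, the blowup $S$ is not isomorphic to $\proj^2$, so Proposition~\ref{kv-van} applies and $\Phi_M$ is surjective.

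For the auxiliary assertion $h^0(D, \mathcal{O}_D(T)) = 1$, note that $\mathcal{O}_D(T) \simeq \mathcal{O}_S(E_X)|_D$, so tensoring $0 \to \mathcal{O}_S(-D) \to \mathcal{O}_S \to \mathcal{O}_D \to 0$ with $\mathcal{O}_S(E_X)$ yields
\[
0 \to \mathcal{O}_S(E_X - D) \to \mathcal{O}_S(E_X) \to \mathcal{O}_D(T) \to 0.
\]
The standard computation $\pi_{\ast} \mathcal{O}_S(E_X) = \mathcal{O}_{\proj^2}$ shows $h^0(\mathcal{O}_S(E_X)) = 1$, while Serre duality identifies $h^1(\mathcal{O}_S(E_X - D))$ with $h^1(S, M)$. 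The latter vanishes by Kawamata--Viehweg applied to the big and nef class $M - K_S = dH - 3E_X - 3E_Y$ constructed above. The step most likely to hide a subtlety, and hence the main obstacle, is the reconciliation of hypothesis~(2) with the three Hirschowitz--Almeida inequalities; once that numerical matching is verified, everything else follows from the already-assembled machinery.
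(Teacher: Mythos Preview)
Your proposal is correct and follows essentially the same route as the paper: surjectivity of $g$ via Corollary~\ref{f-surj}, then surjectivity of $\Phi_M$ via Proposition~\ref{kv-van} by decomposing $M-K_S$ into three very ample summands checked against Theorem~\ref{hirsch-almeida}, and finally $h^0(\mathcal{O}_D(T))=1$ from the restriction sequence together with Kawamata--Viehweg. One small difference: you (correctly) compute $M-K_S=dH-3E_X-3E_Y$ and split with $e_i\approx d/3$, whereas the paper records $(d-6)H-3E_X-3E_Y$ and splits with $\lfloor(d-6)/3\rfloor$; either way hypothesis~(2) is more than enough, so the numerical matching you flag as the main obstacle is in fact routine.
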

\begin{proof}
We will firstly show that $W_{D,T}$ is surjective. We have already seen in Corollary \ref{f-surj} that the map $f$ from Diagram \ref{gauss-diag} is surjective, thanks to the assumption $3n+6m < \left \lfloor \frac{(d-5)^2}{4} \right \rfloor$. Thus it suffices to show that $\Phi_M$ is surjective, where
$M=\mathcal{O}_S((d-3)H-2E_X-2E_Y)$. Now $M-K_S \sim (d-6)H-3E_X-3E_Y$ can be written as the sum
\begin{align*}
M-K_S &= (\lfloor \frac{d-6}{3}\rfloor H-E_X-E_Y)+ (\lfloor \frac{d-6}{3}\rfloor H-E_X-E_Y)\\
&+ ((d-6-2\lfloor \frac{d-6}{3}\rfloor)H-E_X-E_Y).
\end{align*}
Since we are assuming $d \geq 24$, $n+m+6 \leq \frac{(d-6)(d-3)}{18}$, Theorem \ref{hirsch-almeida} shows that $M-K_S$ may be written as a sum of three very ample line bundles
(use $\lfloor \frac{d-6}{3}\rfloor \geq \frac{d-9}{3}$). Thus Proposition \ref{kv-van} implies that the Gaussian $\Phi_M$ is surjective.

For the second statement, note that the short exact sequence
$$0 \to \mathcal{O}_S \to \mathcal{O}_S(E_X) \to \mathcal{O}_{E_X}(-1) \to 0 $$
gives that $h^0(S, \mathcal{O}_S(E_X) )=1$. From the sequence
$$ 0 \to \mathcal{O}_S(3E_X+3E_Y-dH) \to \mathcal{O}_S(E_X) \to \mathcal{O}_D(T) \to 0$$
it suffices to show $H^1(S,\mathcal{O}_S(3E_X+3E_Y-dH))=0$. But $dH-3E_X-3E_Y$ is a sum of three very ample line bundles, so it is big and nef, so that this follows from the Kawamata--Viehweg vanishing theorem and Serre duality.

\end{proof}
As an immediate consequence we can now show that, for any integer $l \in \mathbb{Z}$, there are infinitely many integers $h(l)$ such that the general marked curve $[(C,T)] \in \widetilde{\mathcal{M}}_{h(l),2l}$ has surjective marked Wahl map.
\begin{proof} [Proof of Theorem \ref{inf-many-wahl}]
Consider the curve $D \seq S$ from Proposition \ref{mark-surj-main}, applied to $n=l$ and $m=10$ (choose any $d$ satisfying the hypotheses of the proposition). Let $h(l)$ denote the genus of $D$. In an open subset about  $[(D,T)] \in \widetilde{\mathcal{M}}_{h(l),2l}$, we have $h^0(D,\mathcal{O}_D(T))=1$ and thus 
$h^0(D,K_D(-T))=\chi(K_D(-T))+1$ is locally constant. Further, the equality $h^0(D,K_D^3(-2T))=\chi(K_D^3(-2T))$ holds, since $\deg(K_D^3(-2T)) >2h(l)-2$. Thus the claim follows immediately from Proposition \ref{mark-surj-main} and semicontinuity.
\end{proof}
\begin{remark}
In our example $(D,T)$, we have that $l$ is of the order $\frac{h}{8}$, where $h$ is the genus of $D$. Indeed, if $n=l$ is large, we can take $d^2$ to be approximately
$18l$, so that $g(C)=\frac{1}{2}(d-1)(d-2) \sim 9l$ and $h \sim 8l$. Thus one would expect that the marked Wahl map of a general marked curve in $\widetilde{\mathcal{M}}_{h,2l}$ is surjective, so long as $l$ is at most of the order $\frac{h}{8}$.
\end{remark}

We can now proof our main result on the marked Wahl map for curves arising via the normalization of nodal curves on K3 surfaces. 
\begin{proof} [Proof of Theorem \ref{marked-wahl-k3}]
Assume $g-n \geq 13$ for $k=1$ or $g \geq 8$ for $k >1$, and let $ n \leq \frac{p(g,k)-2}{5}$. We wish to show that there is an irreducible component $I^0 \seq \mathcal{V}^n_{g,k}$ such that for a general $[(f: C \to X,L)] \in I^0$ the marked Wahl map $W_{C,T}$ is nonsurjective, where $T \seq C$ is the divisor over the nodes of $f(C)$.

Let $I \seq \mathcal{T}^n_{g,k}$ be the irreducible component from Theorem \ref{finiteness} (in the case $k=1$) or Theorem \ref{finiteness-nonprim} (in the case $k>1$) and set $I^0=I \cap \mathcal{V}^n_{g,k}$, which is nonempty from \cite[Thm.\ 2.8]{ded-sern}.
Let $\pi: Y \to X$ be the blow up of the K3 surface $X$ at the nodes of $f(C)$. There is a natural closed immersion $C \seq Y$. Let $E \seq Y$ denote the sum of the exceptional divisors, and let $M= \mathcal{O}_Y(C)$. We have $K_{C}=(M+E)_{|_{C}}$ by the adjunction formula.
Consider the following commutative diagram:
\[
\xymatrix{
\bigwedge^2 H^0(Y, M) \ar[r]^{\Phi_{M} \; \; \; \;}  \ar[d]^h & H^0(Y, \Omega_{Y}(M^2)) \ar[d]^g \\
\bigwedge^2 H^0(C, K_{C}(-T)) \ar[r]^{\; \; \; W_{C,T} \; \; \; \; }  &  H^0(C, K^3_{C}(-2T)).
}
\]
 where the top row is a Gaussian on $Y$. 
The map $h$ is surjective, as $H^1(Y, \mathcal{O}_Y)=0$.
Suppose for a contradiction that the marked Wahl map $W_{C,T}$ were surjective. Then $g$ would be surjective, and hence the natural map
$$ H^0(C, \Omega_{Y|_{C}}(M^2)) \to H^0(C, K^3_{C}(-2T))$$ would also be surjective.  Now consider the short exact sequence
$$0 \to M_{|_{C}} \to \Omega_{Y|_{C}}(M^2) \to K^3_{C}(-2T) \to 0 .$$ Since 
$$H^1(C,M)=H^1(C,K_{C}(-E))=H^0(C,E) \neq 0,$$
the surjectivity of $g$ would imply that $h^1(C,\Omega_{Y|_{C}}(M^2))=h^0(C,T_{Y|_{C}}(2E-K_{C})) \neq 0$.
However $$H^0(C,T_{Y|_{C}}(2E-K_{C})) \seq H^0(C, f^*(T_X)(2E-K_{C}))\seq H^0(C, f^*(T_X))=0$$
from Lemma \ref{coh-van}, and since $K_{C}-2E$ is effective for $n \leq \frac{p(g,k)-2}{5}$. So this is a contradiction and hence $W_{C,T}$ is nonsurjective.
\end{proof}

\begin{remark} \label{halic-remarks}
In the paper \cite{halic}, claims are made about the generic finiteness of $\eta \; : \; \mathcal{V}^n_{g,k}  \to \mathcal{M}_{p(g,k)-n}$ and the nonsurjectivity of the Wahl map for curves parametrized by the image of $\eta$. The proof of the first statement, \cite[Theorem 3.1]{halic}, seems flawed to us. Indeed, the statement in Step $1$ that $s=(s_0,0)$ is trivial, as $s$ defines the splitting.\footnote{This was pointed out to us by Stefan Schreieder.} The conclusion in Step 2 that family $(3.1)$ is trivial when restricted to a small analytic open subset of $\euf{S}$ seems likewise rather obvious, but in any case does not have the consequence claimed. The proof of Theorem $4.1$ also seems incorrect. Namely, the last row in diagram in $(A.1)$ should be twisted by $-2E$, but then Lemma $A.2$ fails.
\end{remark}

\section{Brill--Noether theory for nodal curves on K3 surfaces}
In this section we consider two related questions on the Brill--Noether theory of nodal curves on a K3 surface.  Let $D \seq X$ be a nodal curve on a K3 surface, and let $C:= \tilde{D}$ be the normalization of $D$. In the first part, we consider the Brill--Noether theory of the smooth curve $C$, whereas in the second part we consider the Brill--Noether theory for the nodal curve $D$.
\subsection{Brill--Noether theory for smooth curves with a nodal model on a K3 surface} \label{BNP-nodal}
In this section we will apply an argument from \cite{lazarsfeld-bnp} to the K3 surface $S_{p,h}$ as in Lemma \ref{onenodal-lem-a} in order to study the Brill--Noether theory for smooth curves with a primitive nodal model on a K3 surface.
\begin{lem}
Consider the K3 surface $S_{p,h}$ as in Lemma  \ref{onenodal-lem-a}. There is no expression $M=A_1+A_2$, where $A_1$ and $A_2$ are effective divisors with $h^0(Y,\mathcal{O}(A_1)) \geq 2$ and $h^0(Y,\mathcal{O}(A_2)) \geq 2$.
\end{lem}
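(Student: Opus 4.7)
The plan is to write each summand in the basis $\{M,R_1,R_2\}$ of $P_{p,h}$ and reduce quickly to an effective class lying in the sublattice generated by $R_1$ and $R_2$. Writing $A_i = x_i M + y_i R_1 + z_i R_2$ for $i=1,2$, the relation $M = A_1+A_2$ forces $x_1+x_2=1$, $y_1+y_2=0$ and $z_1+z_2=0$. Since each $A_i$ is effective, the final assertion of Lemma \ref{onenodal-lem-a} --- that no divisor of the form $-xM+yR_1+zR_2$ with $x>0$ is effective --- gives $x_i \geq 0$ for both $i$. After swapping $A_1$ and $A_2$ if necessary, I may assume $x_1=1$ and $x_2=0$, so that $A_2 = y_2 R_1 + z_2 R_2$ and $(A_2)^2 = -2(y_2^2+z_2^2) \leq 0$.

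Now, assuming that $h^0(\mathcal{O}(A_2)) \geq 2$, I would decompose the linear system $|A_2|$ into its fixed part $F$ and moving part $N$, so that $A_2 \sim F+N$ with both $F$ and $N$ effective, $|N|$ having no fixed component, and $h^0(\mathcal{O}(N)) = h^0(\mathcal{O}(A_2)) \geq 2$. The key point is that $(N)^2 \geq 0$: since $|N|$ has no fixed component, two general members share no common component and therefore intersect nonnegatively, and their intersection number is $(N)^2$.

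To close the argument, I plan to apply Lemma \ref{onenodal-lem-a} once more, this time separately to the effective classes $F$ and $N$: the $M$-coefficients of each must be nonnegative, and since the $M$-coefficient of $A_2 = F+N$ is zero, both of them vanish. Thus $N$ is an integer combination of $R_1$ and $R_2$ alone, so $(N)^2 = -2(b^2+c^2) \leq 0$. Combined with $(N)^2 \geq 0$ this forces $N = 0$, contradicting $h^0(\mathcal{O}(N)) \geq 2$.

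The main obstacle is not deep: beyond the lattice bookkeeping, the only input I need is the standard fact that the moving part of a complete linear system on a smooth surface has nonnegative self-intersection, which I expect to dispatch in a line using the observation above about two generic members of a base-component-free pencil. All other steps are direct applications of the non-effectivity statement already recorded in Lemma \ref{onenodal-lem-a}.
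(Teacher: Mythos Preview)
Your argument is correct, and it follows a slightly different path from the paper's. The paper first proves directly that any effective class of the form $aR_1+bR_2$ must have $a,b\geq 0$, by peeling off copies of $R_1$ or $R_2$ as components (using that an integral curve with negative intersection against the integral curve $R_i$ must equal $R_i$); it then observes that every integral component of such a class is linearly equivalent to $R_1$ or $R_2$, so the class is rigid and $h^0\leq 1$.

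Your route via the fixed/moving decomposition sidesteps the component analysis entirely: once $A_2$ lies in $\mathbb{Z}R_1\oplus\mathbb{Z}R_2$, the same non-effectivity statement from Lemma~\ref{onenodal-lem-a} pins the moving part $N$ into that sublattice too, and then the negative-definiteness of the form on $\langle R_1,R_2\rangle$ together with $(N)^2\geq 0$ kills $N$. This is arguably cleaner and more portable, since the only surface-theoretic input is the standard fact that a linear system without fixed components has nonnegative self-intersection. The paper's approach, on the other hand, yields the slightly stronger byproduct that every effective divisor in $\langle R_1,R_2\rangle$ is a nonnegative combination of $R_1$ and $R_2$ with exactly those curves as components --- information not needed for the lemma itself, but sometimes useful elsewhere.
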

\begin{proof}
We first claim that any effective divisor of the form $D=aR_1+bR_2$, for integers $a,b$, must have $a,b \geq 0$. Suppose for a contradiction that $a < 0$. Clearly we must have $b > 0$. Thus there is some integral component $D_1$ of $D$ with $(D_1 \cdot R_2) <0$, as $(D \cdot R_2)=-2b <0$. Thus $D_1 \sim R_2$. Repeating this argument on $D-R_2$, we see that $b R_2$ is a summand of $D$. But then $D-bR_2=aR_1$ is effective, which is a contradiction as $a<0$. Thus $a \geq 0$. Likewise $b \geq 0$. Furthermore, this argument also shows that all integral components of any effective divisor of the form $D=aR_1+bR_2$ are linearly equivalent to either $R_1$ or $R_2$. In particular, $D$ is rigid.

Suppose $M=A_1+A_2$ is an expression as above. Write $A_1=x_1M+\sum_{i=1}^2 y_{1,i} R_i$ and $A_2=x_2M+\sum_{i=1}^2 y_{2,i} R_i$ for integers $x_i,y_{i,j}$ for $i=1,2$, $1 \leq j \leq 2$. We have $x_1,x_2 \geq 0$ by Lemma  \ref{onenodal-lem-a} and $x_1+x_2=1$, and assume $x_1 \geq x_2$.  Thus we must have $x_2=0$, which gives  $h^0(Y,\mathcal{O}(A_2)) \leq 1$ (as the divisor $\sum_{i=1}^2 y_{2,i} \Gamma_i $ is rigid if $y_{2,i} \geq 0$ for all $i$ and not effective if there is some $j$ with $y_{2,j} <0$). 
\end{proof}

Let $C \seq X$ be a smooth curve on a K3 surface $X$. Let $M \in \text{Pic}(C)$ be a globally generated line bundle such that $\omega_C \otimes M^*$ is also globally generated. We denote by $F_{C,M}$ the vector bundle on $X$ defined as the kernel
of the evaluation map $H^0(C,M) \otimes_{\C} \mathcal{O}_X \twoheadrightarrow M$. Let $G_{C,M}$ be the dual bundle of $F_{C,M}$, this is globally generated from the exact sequence
$$ 0 \to H^0(M) \otimes \mathcal{O}_X \to G_{C,M} \to \omega_C \otimes M^* \to 0$$
(using $H^1(\mathcal{O}_X)=0$). The following generalization of \cite[Lemma 1.3]{lazarsfeld-bnp} is well-known, see \cite[Remark 3.1]{fkp-old}.
\begin{lem}
In the above situation, assume further that there is no expression $\mathcal{O}(C) \simeq L_1 \otimes L_2$, where $L_1$ and $L_2$ are effective line
bundles on $X$ with $h^0(L_i) \geq 1+s_i$ for $i=1,2$, where $s_i \geq 1$ are integers satisfying $s_1+s_2=h^0(C,M)$. Then $F_{C,M}$ is a simple vector bundle.
\end{lem}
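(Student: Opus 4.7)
Plan:

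I would adapt the strategy of Lazarsfeld [lazarsfeld-bnp, Lemma 1.3] to this more general setting. Suppose, for a contradiction, that $F := F_{C,M}$ is not simple. By duality, $G := G_{C,M}$ also admits a non-scalar endomorphism $\phi$. Since $\text{End}(G)$ is finite-dimensional over $\C$, the commutative subalgebra $\C[\phi] \subset \text{End}(G)$ decomposes as a product of local Artinian $\C$-algebras. Either at least two factors appear, yielding via a non-trivial idempotent a direct sum decomposition $G = G_1 \oplus G_2$ into subbundles of positive rank, or $\phi$ may be replaced by its nonzero nilpotent part, whose kernel is a proper saturated subsheaf. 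In either case one obtains an exact sequence
$$ 0 \to K \to G \to Q \to 0, $$
with $K$ locally free of rank $s_2 \geq 1$ and $Q$ torsion-free of rank $s_1 \geq 1$, where $s_1 + s_2 = r := h^0(C, M)$.

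Setting $L_2 := \det K$ and $L_1 := \det Q$, we have $L_1 \otimes L_2 \simeq \det G \simeq \mathcal{O}_X(C)$. It therefore suffices to establish the section bounds $h^0(L_i) \geq 1 + s_i$ for $i = 1, 2$, since these together with the determinant identity would exhibit the decomposition forbidden by the hypothesis. For this I would exploit the global generation of $G$, which follows from the hypotheses that both $M$ and $\omega_C \otimes M^*$ are globally generated, combined with the vanishing $H^1(X, \mathcal{O}_X) = 0$ applied to the defining sequence
$$ 0 \to H^0(M)^* \otimes \mathcal{O}_X \to G \to \omega_C \otimes M^* \to 0. $$
The quotient $Q$ then inherits global generation from $G$. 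A Butler-type inequality of the form $h^0(\det Q) \geq h^0(Q) - \text{rk}(Q) + 1$, combined with the explicit count $h^0(G) = h^0(M) + h^0(\omega_C \otimes M^*)$ and a careful analysis of which sections of $G$ survive into $Q$, would then give $h^0(L_1) \geq 1 + s_1$. The bound $h^0(L_2) \geq 1 + s_2$ is obtained by the symmetric argument, either via the dual sequence $0 \to Q^\vee \to F \to K^\vee \to 0$, or by running the whole construction starting from a non-scalar endomorphism of $F$ and interchanging the roles of $K$ and $Q$.

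The main technical obstacle is the precise section-counting required for the inequalities $h^0(L_i) \geq 1 + s_i$, and in particular the exclusion of the degenerate scenario $L_i \simeq \mathcal{O}_X$, which would correspond to $G$ admitting a trivial quotient of rank $s_i$ and would fail to produce a forbidden decomposition. This pathology is ruled out by the vanishing $h^0(F_{C,M}) = 0$, which follows immediately from the defining sequence $0 \to F_{C,M} \to H^0(M) \otimes \mathcal{O}_X \to M \to 0$ since the induced map on global sections is the identity on $H^0(C, M)$.
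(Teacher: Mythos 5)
Your overall strategy (non-simplicity of $G=G_{C,M}$ produces an exact sequence $0 \to K \to G \to Q \to 0$, whose determinants would give the forbidden decomposition of $\mathcal{O}(C)$) is the right one, but there is a genuine gap, and it originates in your choice of decomposition. You take $K$ to be the \emph{kernel} of (the nilpotent part of) the endomorphism, saturated so that $Q$ is torsion-free. The paper instead sets $K:=\mathrm{im}(\psi)$, and this choice is not cosmetic: an image is a \emph{quotient} of $G$, so with the paper's choice \emph{both} pieces $K$ and $G/K$ inherit global generation from $G$, and both duals embed into $F=G^{\vee}$, whence $h^0(K^{\vee})=h^0((G/K)^{\vee})=0$. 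With your choice, $K$ is merely a subsheaf of $G$: it need not be globally generated, no lower bound on $h^0(\det K)$ is in sight, and your mechanism for excluding the degenerate case breaks on this side --- the vanishing $h^0(F_{C,M})=0$ rules out \emph{trivial quotients} of $G$, not trivial subsheaves (indeed $h^0(G)>0$, so $G$ certainly contains copies of $\mathcal{O}_X$, and the kernel of a nilpotent endomorphism could well be trivial, making $L_2\simeq\mathcal{O}_X$ and producing no contradiction). Your proposed symmetric fixes do not repair this: in the dual sequence $0 \to Q^{\vee} \to F \to K^{\vee} \to 0$ the ambient bundle $F$ has \emph{no} global sections at all, so its quotient $K^{\vee}$ carries no section bound; and rerunning the construction from an endomorphism of $F$ only dualizes the determinants into anti-effective classes, again yielding nothing.

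There is a second, quantitative gap even on the good (quotient) side. The Butler-type inequality you invoke gives $h^0(\det Q) \geq h^0(Q)-s_1+1$, so to reach $h^0(L_1)\geq s_1+1$ you would need $h^0(Q)\geq 2s_1$, which your ``careful analysis of surviving sections'' does not supply and which is false for general globally generated sheaves (e.g.\ $\mathcal{O}^{\oplus(s-1)}\oplus N$ with $h^0(N)=2$). The bound that actually holds, and which the paper uses following \cite[Ch.\ 9]{huy-lec-k3}, is: a globally generated bundle $E$ on a curve with $h^0(E^{*})=0$ satisfies $h^0(\det E)\geq \mathrm{rank}(E)+1$ --- and it is precisely the vanishing $h^0(Q^{\vee})\subseteq h^0(F)=0$ (available because $Q$, like $K=\mathrm{im}(\psi)$, is a quotient of $G$) that makes this applicable. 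Concretely, the paper restricts to a sufficiently positive divisor $D\subseteq X$ to get $h^0(D,L_i|_D)\geq s_i+1$, absorbing the torsion $T$ of $G/K$ using the effectivity of $c_1(T)$, and then lifts sections to $X$ via the surjectivity of $H^0(X,L_i)\twoheadrightarrow H^0(D,L_i|_D)$, which follows from Kawamata--Viehweg vanishing since $D-L_i$ is big and nef for $D$ positive enough. Without these two ingredients --- image rather than kernel, and the $h^0(E^{*})=0$ determinant lemma on a very positive curve --- your sketch does not close.
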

\begin{proof}
We follow the proof of \cite[Ch.7, Prop.2.2]{huy-lec-k3}. The bundle $F_{C,M}$ is simple if and only if its dual $G_{C,M}$ is simple. Suppose $G_{C,M}$ were not simple. Then there would exist a non-trivial endomorphism $\psi: G \to G$ with nontrivial kernel. Set $K:= \text{im}(\psi)$, $L_1:=\det(K)$ and $L_2:=\det((G/K))$. Set $s_1:= \rank(K)$ and $s_2:=\rank((G/K)/T)$, where $T$ is the maximal torsion subsheaf of $G/K$. Clearly $s_i \geq 1$ for $i=1,2$ and $s_1+s_2=\rank(G)=h^0(C,M)$. So it suffices to prove $h^0(L_i) \geq 1+s_i$ for $i=1,2$. As is explained in \cite[Sec.7, Prop.2.2]{huy-lec-k3}, if we pick a sufficiently positive divisor $D$ on $X$ we have $h^0(D,L_i|_D) \geq s_i +1$ (as $c_1(T)$ is effective). On the other hand, if $D$ is sufficently positive  then $D-L_i$ is big and nef, so that $H^0(X,L_i) \twoheadrightarrow H^0(D,L_i|_D)$ and thus $h^0(L_i) \geq 1+s_i$ for $i=1,2$.
\end{proof}

\begin{cor} \label{BNP-cor}
Consider a K3 surface $S_{p,h}$ as in Lemma  \ref{onenodal-lem-a}. Let $C \in |M|$ be a general smooth curve. Then $C$ is Brill--Noether--Petri general.
\end{cor}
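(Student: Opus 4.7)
The plan is to deduce the Brill--Noether--Petri property of $C$ from the simplicity of the Lazarsfeld--Mukai bundle just established, via the standard argument of \cite{lazarsfeld-bnp}. Fix a general smooth curve $C \in |M|$. To prove $C$ is Petri-general it suffices, by the classical Petri criterion, to show that for every line bundle $N$ on $C$ with $h^0(C, N) \geq 2$ and $h^0(C, K_C \otimes N^{-1}) \geq 2$, the Petri map
$$\mu_0(N) : H^0(C, N) \otimes H^0(C, K_C \otimes N^{-1}) \to H^0(C, K_C)$$
is injective. After a standard reduction (cf.\ \cite{lazarsfeld-bnp}) I may assume both $N$ and $K_C \otimes N^{-1}$ are globally generated, so that the Lazarsfeld--Mukai construction applies and yields the vector bundle $F_{C,N}$ on $S_{p,h}$.

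Suppose for a contradiction that $\ker \mu_0(N) \neq 0$. By Lazarsfeld's observation, a non-zero element of $\ker \mu_0(N)$ produces a non-scalar endomorphism of $F_{C,N}$, so $F_{C,N}$ fails to be a simple vector bundle (see \cite[\S 1]{lazarsfeld-bnp} or the corresponding discussion in Ch.\ 5 of \cite{huy-lec-k3}). Invoking the contrapositive of the preceding (unnamed) simplicity lemma, there must then exist a decomposition $\mathcal{O}_{S_{p,h}}(C) = M \simeq L_1 \otimes L_2$ in $\mathrm{Pic}(S_{p,h})$ with $L_i$ effective and $h^0(L_i) \geq 1 + s_i$ for integers $s_i \geq 1$ satisfying $s_1 + s_2 = h^0(C, N)$. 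Since $s_i \geq 1$, this in particular exhibits $M$ as a sum of two effective divisors each having at least two global sections, contradicting the first lemma of this subsection.

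The main obstacle has essentially already been overcome in the previous two lemmas: the genuinely non-trivial input is the Picard-lattice calculation ruling out decompositions $M = A_1 + A_2$ with both $h^0(A_i) \geq 2$, and the translation of this into the simplicity of $F_{C,N}$. The corollary itself is then a formal application of Lazarsfeld's machinery; the only bookkeeping is to observe that the first lemma gives the no-decomposition hypothesis uniformly in $(s_1, s_2)$, since any decomposition with $h^0(L_i) \geq 1 + s_i \geq 2$ is of the type explicitly excluded there.
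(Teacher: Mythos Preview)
Your proposal is correct and takes essentially the same approach as the paper: the paper's proof is a one-line citation of \cite{lazarsfeld-bnp} together with the preceding simplicity lemma, and you have simply unpacked that citation. One small remark on attribution: the direct implication ``$\ker\mu_0(N)\neq 0$ yields a non-scalar endomorphism of $F_{C,N}$'' is closer in spirit to Pareschi's streamlined reformulation than to Lazarsfeld's original dimension-count argument, but this is a cosmetic difference and the logic is sound.
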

\begin{proof}
This follows immediately from the proof of the main theorem in \cite{lazarsfeld-bnp} and the above lemma.
\end{proof}
Putting all the pieces together, we can prove Proposition \ref{BNP-theorem}.
\begin{proof} [Proof of Proposition \ref{BNP-theorem}]
Assume $g-n \geq 8$. We must show that there exists a component $\mathcal{J} \seq \mathcal{V}^n_g$ such that for $[(f:C \to X,L)] \in \mathcal{J}$ general, $C$ is Brill--Noether--Petri general.

Set $h=g-n$, $p=g$. The case $n=0$ is \cite{lazarsfeld-bnp}, so we may assume $p>h$.  Let $l,m$ be the unique nonnegative integers such that $$p-h= \left \lfloor \frac{h+1}{2} \right \rfloor l+m $$
and $0 \leq m < \left \lfloor \frac{h+1}{2} \right \rfloor $. Set $\epsilon=1$ if $m=0$ or $m=\left \lfloor \frac{h+1}{2} \right \rfloor -1$ and $\epsilon=0$ otherwise. Then $(M+R_1+\epsilon R_2)^2=2g-2$, where $M$, $R_1$, $R_2$ are a basis of $P_{p,h}$ as in Lemma  \ref{onenodal-lem-a}. The claim then follows from the proof of Theorem \ref{finiteness}, by deforming to the curve $$R:=D \cup R_1 \cup \epsilon R_2 $$ on $S_{p,h}$, where $D \in |M|$ is general, marked at all nodes other than one point from $D \cap R_i$ for $i=1,2$. Note that the partial normalization of $R$ at the marked nodes is an unstable curve, and the stabilization is isomorphic to $D$, which is Brill--Noether--Petri general by Corollary \ref{BNP-cor}.
\end{proof}

\subsection{Brill--Noether theory for nodal rational curves on K3 surfaces} \label{rat}
In this section we will denote by $X$ a K3 surface with 
$\text{Pic}(X) \simeq \mathbb{Z}L$, $(L)^2=2g-2$ with $g \geq 2$, and $C \in |L|$ will denote a fixed \emph{rational} curve (not necessarily nodal). Let $\bar{J}^d(C)$ denote the compacified Jacobian of degree $d$, rank one torsion free sheaves and consider the \emph{generalized Brill--Noether loci}
$$\overline{W}^r_d(C) := \{ \text{$A \in \bar{J}^d(C)$ with $h^0(A) \geq r+1$} \}$$ which can be given a determinantal scheme structure, see \cite{bhosle-parawes}. There is an open subset $W^r_d(C) \seq \overline{W}^r_d(C) $ parametrizing line bundles. We will denote by $\rho(g,r,d)$ the Brill--Noether number $g-(r+1)(g-d+r)$.

The following comes from the proof of \cite[Remark 2.3(i)]{bhosle-parawes} (although it may have been known to experts earlier). The proof is essentially the same as in the smooth case.
\begin{thm} \label{bhosle-bn}
Each irreducible component of $\overline{W}^r_d(C)$ has dimension at least $\rho(g,r,d)$.
\end{thm}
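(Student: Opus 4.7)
The plan is to imitate the classical determinantal construction of $W^r_d$ in the smooth case, which in the present setting goes through essentially verbatim once one has a suitable universal sheaf on the compactified Jacobian. Since $\text{Pic}(X) \simeq \mathbb{Z}L$, the rational curve $C\in|L|$ is integral, so $\bar{J}^d(C)$ is irreducible of dimension equal to the arithmetic genus $p_a(C)=g$ (by Altman--Kleiman, D'Souza). Hence it suffices to bound the codimension of $\overline{W}^r_d(C)$ in $\bar{J}^d(C)$ by $(r+1)(g-d+r)$.

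First I would fix an effective divisor $D$ on the smooth locus of $C$ of degree $N \gg 0$, chosen large enough that for every rank-one torsion-free sheaf $A$ of degree $d$ on $C$ we have $H^1(C,A(D))=0$ and hence $h^0(A(D))=d+N-g+1$. Working \'etale-locally on $\bar{J}^d(C)$ (where a universal rank-one torsion-free sheaf $\mathcal{F}$ on $C\times \bar{J}^d(C)$ exists; this is harmless since the determinantal loci we construct do not depend on twists of $\mathcal{F}$ by a line bundle pulled back from the base), the short exact sequence
\[
0 \to \mathcal{F} \to \mathcal{F}(D) \to \mathcal{F}(D)|_{D\times \bar{J}^d(C)} \to 0
\]
pushed forward along the projection $\pi:C\times \bar{J}^d(C)\to \bar{J}^d(C)$ yields, by cohomology and base change, a morphism of vector bundles
\[
\alpha\; :\; \mathcal{E}_1 \; \to \; \mathcal{E}_2
\]
of ranks $d+N-g+1$ and $N$ respectively, whose kernel over a point $[A]\in \bar{J}^d(C)$ is canonically identified with $H^0(C,A)$.

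Consequently $\overline{W}^r_d(C)$ is set-theoretically the locus where $\text{rank}(\alpha)\leq d+N-g-r$; since this is the description used to define the scheme structure in \cite{bhosle-parawes}, $\overline{W}^r_d(C)$ is the vanishing locus of the $(d+N-g-r+1)\times (d+N-g-r+1)$ minors of $\alpha$. The Macaulay--Eagon--Northcott bound on the codimension of a determinantal variety then gives
\[
\text{codim}_{\bar{J}^d(C)}\, \overline{W}^r_d(C) \;\leq\; \bigl((d+N-g+1)-(d+N-g-r)\bigr)\cdot\bigl(N-(d+N-g-r)\bigr) \;=\; (r+1)(g-d+r),
\]
and hence every component has dimension at least $g-(r+1)(g-d+r)=\rho(g,r,d)$.

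The only non-routine point is the existence of the universal sheaf $\mathcal{F}$: for integral curves a global universal sheaf may fail to exist, but it exists \'etale-locally, and as the rank-drop locus of $\alpha$ is intrinsic (independent of the twist by a base line bundle) the determinantal description and its codimension estimate descend to $\bar{J}^d(C)$. Everything else is a direct transcription of the classical argument for smooth curves.
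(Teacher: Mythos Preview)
Your argument is correct and is precisely the determinantal construction the paper has in mind: the paper does not give its own proof but cites \cite[Remark 2.3(i)]{bhosle-parawes} with the comment that ``the proof is essentially the same as in the smooth case,'' and your write-up is exactly that classical argument transported to the compactified Jacobian. The only subtlety you flag---existence of a universal sheaf only \'etale-locally, with the determinantal locus independent of base twists---is the right point to make, and your choice of the auxiliary divisor $D$ on the smooth locus of $C$ ensures the short exact sequence and the base-change computations go through for torsion-free sheaves just as for line bundles.
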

In the case $\rho(g,r,d) \geq 0$, $\overline{W}^r_d(C)$ is nonempty, \cite[Thm.\ 3.1]{bhosle-parawes}. If $\rho(g,r,d) > 0$, then under our hypotheses $\overline{W}^r_d(C)$ is connected, \cite[Thm.\ 1]{gomez}.

Let $V_{d,r} \seq \overline{W}_d^r(C) $ be the open locus parametrizing sheaves $A$ which are globally generated and with $h^0(A)=r+1$. Assume $V_{d,r} \neq \emptyset$. We will begin by proving that $\dim V_{d,r} \leq \rho(g,r,d)$ (in particular $V_{d,r}= \emptyset$ if $\rho(g,r,d) < 0$). 

Fix a vector space $\mathbb{H}$ of dimension $r+1$ and let 
$P^r_d \to V_{d,r}$ parametrize pairs $(A,\lambda)$ where $A \in V_{d,r}$ and $\lambda$ is a surjection of $\mathcal{O}_X$ modules
$$ \lambda : \mathbb{H} \otimes \mathcal{O}_X \to A$$ inducing an isomorphism $\mathbb{H} \simeq H^0(A)$. Two such surjections are identified if they differ by multiplication by a nonzero scalar. Thus $P^r_d$ is a PGL(r+1) bundle over $V_{d,r}$.

Let $(A,\lambda) \in P^r_d$. Then $Ker \lambda$ is a vector bundle $F$ of rank $r+1$, \cite[\S3.2]{gomez}. We have $det(F) \simeq L^*$, $deg(c_2(F))=d$, $h^0(F)=h^1(F)=0$ and $h^2(F)=r+1+(g-d+r)$, cf. \cite[\S1]{lazarsfeld-bnp}. Note that $g-d+r =h^1(A) \geq 0$. Further, for any rank one, torsion-free sheaf $A$ on $C$ we may define an `adjoint' $A^{\dagger}$, which is a rank one torsion-free sheaf with $(A^{\dagger})^{\dagger}=A$. From the short exact sequence
$$0 \to F \to  \mathbb{H} \otimes \mathcal{O}_X \to A \to 0$$
we may form the dual sequence
$$0 \to \mathbb{H}^* \otimes \mathcal{O}_X \to F^* \to A^{\dagger} \to 0.$$

The following lemma is a slight generalization of \cite[Cor.\ 9.3.2]{huy-lec-k3}:
\begin{lem}
Assume $\text{Pic}(X) \simeq \mathbb{Z}L$ as above and let $(A,\lambda) \in P^r_d$. Then the vector bundle $F=Ker \lambda$ is stable.
\end{lem}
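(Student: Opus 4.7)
The plan is to run the proof of \cite[Cor.\ 9.3.2]{huy-lec-k3} essentially verbatim, since the argument there only uses the abstract properties $\det F \simeq \mathcal{O}_X(-L)$, $H^0(F) = 0$, $F$ locally free, and $F \seq \mathbb{H} \otimes \mathcal{O}_X$, all of which hold in our (possibly singular $C$, possibly non-invertible $A$) setting.

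Suppose for contradiction that $E \seq F$ is a proper saturated subsheaf of rank $s$, with $1 \leq s \leq r$, and $\mu(E) \geq \mu(F)$. Since $\mathrm{Pic}(X) \simeq \mathbb{Z} L$, I would write $\det E = \mathcal{O}_X(aL)$; composing $E \hookrightarrow F \seq \mathbb{H} \otimes \mathcal{O}_X$ and taking $s$-th exterior powers produces a nonzero map $\mathcal{O}_X(aL) \to \bigwedge^s \mathcal{O}_X^{r+1} \simeq \mathcal{O}_X^{\binom{r+1}{s}}$, which forces $a \leq 0$. The destabilization inequality $\mu(E) \geq \mu(F) = -(2g-2)/(r+1)$ translates to $a(r+1) \geq -s$, and together with $a \leq 0$ and $s \leq r$ this pins down $a = 0$, so $\det E \simeq \mathcal{O}_X$. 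Since $E$ is a saturated subsheaf of the locally free sheaf $F$ on the smooth surface $X$, it is reflexive and therefore locally free of rank $s$.

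The inclusion $E \hookrightarrow \mathcal{O}_X^{r+1}$ together with a trivialization of $\det E$ now produces a nonzero constant global section $\sigma \in H^0(X, \bigwedge^s \mathcal{O}_X^{r+1}) = \bigwedge^s \mathbb{H}$. At any point of the dense open locus where $E$ is a locally split subbundle of $\mathcal{O}_X^{r+1}$, the value of $\sigma$ is, up to scalar, the pure wedge of a basis of the fibre of $E$; since $\sigma$ is constant this forces decomposability, $\sigma = v_1 \wedge \cdots \wedge v_s$ for a unique $s$-plane $V := \mathrm{span}(v_1, \ldots, v_s) \seq \mathbb{H}$. A standard extension argument on the smooth surface $X$ then yields an inclusion $E \seq V \otimes \mathcal{O}_X \seq \mathcal{O}_X^{r+1}$; since both $E$ and $V \otimes \mathcal{O}_X$ are locally free of rank $s$ with trivial determinant, the quotient is a torsion sheaf with $c_1 = 0$, hence supported in codimension two, and a local Fitting-ideal argument then forces the quotient to vanish. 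Thus $E \simeq \mathcal{O}_X^s$, but then $H^0(E) \neq 0$, contradicting $E \seq F$ and $H^0(F) = 0$.

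The main obstacle is the decomposability/extension step in the final paragraph, which carries essentially all the geometric content of the proof; everything else is arithmetic manipulation with slopes and determinants. I do not anticipate any genuine new difficulty over the smooth case treated in \cite{huy-lec-k3}, since the argument never uses smoothness of $C$ nor local freeness of $A$, only the listed abstract properties of $F$.
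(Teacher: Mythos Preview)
Your argument is correct, and the overall strategy---reduce to $\det E\simeq\mathcal{O}_X$ and then contradict $H^0(F)=0$---matches the paper. The execution of the final step differs, however. You argue geometrically: the induced constant section of $\bigwedge^s\mathbb{H}$ is decomposable because it lies in the affine cone over the Grassmannian on a dense open, whence $E\subseteq V\otimes\mathcal{O}_X$ for the corresponding $s$-plane $V$, and then the determinant (equivalently, zeroth Fitting ideal) of the inclusion is a unit, forcing $E\simeq\mathcal{O}_X^s$. The paper instead proceeds purely algebraically: it first observes that any vector bundle $H\subseteq\mathcal{O}_X^{\oplus a}$ satisfies $h^0(\bigwedge^sH^*)\geq 1$ for every $s\geq 1$ (via $\mathcal{E}nd(\bigwedge^sH)\subseteq(\bigwedge^sH^*)^{\oplus b}$), and then applies this with $s=r'-1$ together with the canonical isomorphism $F'\simeq\bigwedge^{r'-1}(F')^*\otimes\det F'$ to obtain $h^0(F')\geq 1$ directly once $\det F'\simeq\mathcal{O}_X$. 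The paper's route is shorter and sidesteps the decomposability and extension steps you flagged as the main obstacle; your route is more explicit about what the destabilizing subsheaf actually is.
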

\begin{proof}
For any vector bundle $H \seq \mathcal{O}_X^{\oplus a}$ and any integer $s \geq 1$, we have $h^0(\bigwedge^s H^*) \geq 1$. Indeed, we have $\bigwedge^s H \seq \bigwedge^s \mathcal{O}_X^{\oplus a} \simeq  \mathcal{O}_X^{\oplus b}$ for some integer $b$ and then $\mathcal{E}nd_{\mathcal{O}_X}(\bigwedge^s H) \simeq \bigwedge^s H \otimes \bigwedge^s H^* \seq (\bigwedge^s H^*)^{\oplus b}$. Taking global sections gives $h^0(\bigwedge^s H^*) \geq 1$ (as $id \in H^0(\mathcal{E}nd_{\mathcal{O}_X}(\bigwedge^s H)))$.

Now let $F' \seq F \seq \mathbb{H} \otimes \mathcal{O}_X $ be a locally free subsheaf  of $F$ with $rk(F')=r'<r+1$. From the above, $h^0(det(F'^*)) \geq 1$ and $h^0(\bigwedge^{r'-1}F'^*) \geq 1$ (if $r' >1$). As $\text{Pic}(X) \simeq \mathbb{Z}L$, we have $det(F')=kL^*$ for some $k \geq 0$. We claim $k \neq 0$. If $k=0$ and $r'=1$, then $F' \simeq \mathcal{O}_X$ which contradicts that $h^0(F)=0$. If $k=0$, $r' >0$, then $F' \simeq \bigwedge^{r'-1}F'^* \otimes det(F')$ gives $h^0(F') \geq 1$ which is again a contradiction. So we have $det(F')=kL^*$ for $k >0$ and $det(F)=L^*$, which implies $\deg(F')/ rk(F') < \deg(F) / rk(F)$ as required.
\end{proof}

Let $M_v$ be the moduli space of stable sheaves on $X$ with Mukai vector $v=(r+1,L,g-d+r)$. We have a morphism 
\begin{align*}
\psi_C: P^r_d &\to M_v \\
(A, \lambda) &\mapsto (Ker \lambda)^*
\end{align*} where $(Ker \lambda)^*$ denotes the dual bundle to $Ker \lambda$.
Let $M_C$ be the closure of the image of $\psi_C$, with the induced reduced scheme structure. By the description of $F$, if $[F^*] \in Im(\psi_C)$, $$c_2(F^*) \sim d c_X$$ where $c_X$ is 
the rational equivalence class of a point lying on a rational curve as defined in \cite{bv-chow}.

There is a natural symplectic form $\alpha$ on $M_v$ defined in \cite{mukai-symplectic}.
\begin{prop}
Let $\alpha$ be the natural symplectic form on $M_v$ and $i: M^0_C \to M_v$ the inclusion, where $M^0_C$ is the smooth locus of $M_C$. Then $i^*\alpha=0$.
\end{prop}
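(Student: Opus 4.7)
The plan is to show that the Mukai pairing $\alpha(\xi,\eta)=\mathrm{tr}(\xi\cup\eta)\in H^2(X,\mathcal{O}_X)$ vanishes for all $\xi,\eta\in T_{[F^*]}M^0_C\subset\mathrm{Ext}^1_X(F^*,F^*)$. First I would analyze what tangent vectors to $M^0_C$ look like. Any such $\xi$ lifts to a tangent vector of $P^r_d$ at some pair $(A,\lambda)$ with $\psi_C(A,\lambda)=[F^*]$. The tangent space $T_{(A,\lambda)}P^r_d$ fits into an exact sequence with $\mathrm{Ext}^1_C(A,A)=T_A\bar J^d(C)$ (deformations of $A$ on the rational curve) and the affine tangent space $\mathrm{Hom}_X(\mathbb{H}\otimes\mathcal{O}_X,A)/\mathrm{Hom}(A,A)$ of deformations of $\lambda$ with $A$ fixed. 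Deforming $\lambda$ alone just conjugates the resolution by an element of $\mathrm{GL}(\mathbb{H})$, hence produces an automorphism of the kernel $F$ (equivalently of $F^*$) and so a trivial class in $\mathrm{Ext}^1_X(F^*,F^*)$. Consequently the Kodaira--Spencer map factors through a well-defined map $\phi:\mathrm{Ext}^1_C(A,A)\to\mathrm{Ext}^1_X(F^*,F^*)$ whose image is exactly $T_{[F^*]}M_C^0$.

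Next I would construct $\phi$ explicitly from the defining sequence $0\to F\to\mathbb{H}\otimes\mathcal{O}_X\to A\to 0$. Applying $\mathrm{Hom}_X(A,-)$ and using $H^1(\mathcal{O}_X)=0$ identifies $\mathrm{Ext}^1_X(A,\mathbb{H}\otimes\mathcal{O}_X)$ with a quotient of $\mathrm{Hom}(F,\mathbb{H}\otimes\mathcal{O}_X)$, and the boundary map $\mathrm{Ext}^1_X(A,A)\to\mathrm{Ext}^2_X(A,F)$ combined with the natural maps relating $\mathrm{Ext}^*_X(A,F)$ to $\mathrm{Ext}^*_X(F,F)$ assembles into $\phi$. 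Since $A$ is a coherent sheaf on a curve (and $C$ is rigid because $\mathrm{Pic}(X)\simeq\mathbb{Z}L$ and rational curves on a K3 are isolated in $|L|$), the identification $\mathrm{Ext}^i_X(A,A)=\mathrm{Ext}^i_C(A,A)$ holds and $\mathrm{Ext}^{\geq 2}_C(A,A)=0$.

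The crucial step is then a diagram chase showing that the composition
\[
\mathrm{Ext}^1_C(A,A)\otimes\mathrm{Ext}^1_C(A,A)\xrightarrow{\phi\otimes\phi}\mathrm{Ext}^1_X(F^*,F^*)^{\otimes 2}\xrightarrow{\cup}\mathrm{Ext}^2_X(F^*,F^*)\xrightarrow{\mathrm{tr}}H^2(X,\mathcal{O}_X)
\]
factors through the Yoneda product $\cup_C:\mathrm{Ext}^1_C(A,A)\otimes\mathrm{Ext}^1_C(A,A)\to\mathrm{Ext}^2_C(A,A)=0$. To prove this factorization one uses that the trace on $\mathrm{Ext}^2_X(F^*,F^*)$ equals the sum of the traces on the graded pieces of the filtration $\mathbb{H}^*\otimes\mathcal{O}_X\subset F^*$. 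The contribution from $\mathbb{H}^*\otimes\mathcal{O}_X$ lands in $\mathrm{Ext}^2_X(\mathcal{O}_X,\mathcal{O}_X)=H^2(\mathcal{O}_X)$ but is built from cup products of classes that vanish when pulled back to the trivial bundle (classes coming from $\phi$ have trivial component along the sub); the contribution from $A^\dagger$ lands in $\mathrm{Ext}^2_X(A^\dagger,A^\dagger)$, and since $A^\dagger$ is supported on the curve $C$ the natural trace $\mathrm{Ext}^2_X(A^\dagger,A^\dagger)\to H^2(X,\mathcal{O}_X)$ vanishes (for a sheaf with one-dimensional support the trace factors through $H^2$ of something supported on a curve, hence is zero). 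Combining these yields the desired factorization, and the vanishing of $\mathrm{Ext}^2_C(A,A)$ then forces $i^*\alpha(\xi,\eta)=0$.

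The main obstacle is the diagram chase in the third paragraph: one must verify that after pulling back via $\phi$ the Yoneda product on $X$ genuinely matches the Yoneda product on $C$, up to classes killed by the trace. This is sensitive to sign conventions and to how cup products interact with boundary maps of the defining extension. A clean way to handle it is to work in the derived category with $F^*$ resolved by $[\mathbb{H}^*\otimes\mathcal{O}_X\to F^*]$ quasi-isomorphic to $A^\dagger$ and compute the relevant Ext compositions explicitly, either by Čech cocycles or via the Atiyah class formalism, where the symplectic pairing becomes $\int_X\mathrm{tr}(\xi\wedge\eta)\wedge\sigma$ and the support/rank restrictions of the representing cocycles make the vanishing transparent.
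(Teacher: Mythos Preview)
Your approach is genuinely different from the paper's, but it contains two errors that break the argument as written.

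First, the assertion that ``$C$ is rigid $\ldots$ so the identification $\mathrm{Ext}^i_X(A,A)=\mathrm{Ext}^i_C(A,A)$ holds'' is false. The curve $C$ is \emph{not} rigid in $X$: it lies in the $g$-dimensional linear system $|L|$. What is rigid is the stable map from the normalization, not the image curve. Concretely, $\dim\mathrm{Ext}^1_C(A,A)=g$ (tangent space to $\bar J^d(C)$) whereas $\dim\mathrm{Ext}^1_X(A,A)=2g$, and by Serre duality on the K3 one has $\mathrm{Ext}^2_X(A,A)\cong\mathrm{Hom}_X(A,A)^*\neq 0$, while $\mathrm{Ext}^2_C(A,A)=0$.

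Second, and more fatally for the filtration argument, the claim that ``the natural trace $\mathrm{Ext}^2_X(A^\dagger,A^\dagger)\to H^2(X,\mathcal{O}_X)$ vanishes for a sheaf with one-dimensional support'' is false. The trace is Serre dual to the map $H^0(\mathcal{O}_X)\to\mathrm{Hom}_X(A^\dagger,A^\dagger)$ sending $1\mapsto\mathrm{id}$, which is injective; hence the trace is surjective, not zero. So the contribution from the $A^\dagger$-graded piece does not vanish, and the asserted factorization through $\mathrm{Ext}^2_C(A,A)$ collapses. There \emph{is} a true statement lurking here --- the Mukai form on the moduli space with Mukai vector $(0,L,*)$ restricts to zero on the fibre $\bar J^d(C)$ of the Lagrangian fibration over $|L|$ --- but extracting it requires a more careful identification of subspaces of $\mathrm{Ext}^1_X$, not the vanishing of the full trace.

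For comparison, the paper bypasses all of this infinitesimal analysis. It invokes O'Grady's correspondence $\widetilde{M}_v$ between $M_v$ and $X^{[\rho(g,r,d)]}$ under which the two symplectic forms agree up to scalar, observes that $c_2(F^*)\sim d\,c_X$ is a \emph{constant} rational equivalence class for every $[F^*]\in\mathrm{Im}(\psi_C)$, and then applies Mumford's theorem (a variety mapping to constant Chow class pulls back the holomorphic $2$-form to zero). This is a global argument via cycle classes rather than a local one via Ext pairings.
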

\begin{proof}
Since $g-d+r \geq 0$, \cite[Thm.\ 0.6(1)]{ogrady} applies and for any $[G] \in M_v$, there is an effective, degree $\rho(g,r,d)$ zero-cycle $Z$ with $c_2(G) \sim [Z]+ac_X$ for some $a \in \mathbb{Z}$.  Following \cite[Prop.\ 1.3]{ogrady} there is then a smooth quasi-projective variety $\widetilde{M}_v$ with morphisms $q: \widetilde{M}_v \to M_v$, $p:\widetilde{M}_v \to X^{[\rho(g,r,d)]}$  such that $q$ is surjective and generically finite, and with the property that if $x=[F^*] \in Im(\psi_C)$ and $y \in q^{-1}(x)$, then we have the rational equivalence 
\begin{align} \label{rateq}
p(y)+(d-\rho(g,r,d))c_X \sim c_2(F^*) \sim d c_X.
\end{align} Further if $\beta$ is the symplectic form on the Hilbert scheme of points $X^{[\rho(g,r,d)]}$, we have $q^* \alpha=k p^* \beta$ for some nonzero constant $k \in \C$. 
Let $\widetilde{M}_C \seq \widetilde{M}_v$ denote the closure of $q^{-1}(Im(\psi_C))$ with the induced reduced scheme structure, and let $p_C$ respectively $q_C$ be the restriction of $p$ respectively $q$ to the smooth locus of $\widetilde{M}_C$.  Then $p_C(s)$ is rationally equivalent to $p_C(t)$ for all $s, t$ in the smooth locus of $\widetilde{M}_C$, from (\ref{rateq}). Thus $p_C^*(\beta)=0$ by\cite{mumf-chow}. Hence $q_C^*(\alpha)=0$ and since $q$ is surjective, $i^*\alpha=0$.
\end{proof}

\begin{cor}
We have $\dim M_C \leq \rho(g,r,d)$.
\end{cor}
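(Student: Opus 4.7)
The plan is to exploit the previous proposition, which shows that the smooth locus $M^0_C \subseteq M_v$ is an isotropic subvariety with respect to the Mukai symplectic form $\alpha$. The general principle is that on a smooth holomorphic symplectic variety, any isotropic subvariety has dimension bounded by half the dimension of the ambient space; this will give the desired bound once we compute $\dim M_v$.

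First, I would recall the Mukai dimension formula: for the Mukai vector $v = (r+1, L, g-d+r)$, one has $\dim M_v = \langle v, v \rangle + 2$, where the Mukai pairing evaluates to
\[
\langle v, v \rangle = (L)^2 - 2(r+1)(g-d+r) = 2g - 2 - 2(r+1)(g-d+r).
\]
Thus $\dim M_v = 2g - 2(r+1)(g-d+r) = 2\rho(g,r,d)$. (Since $V_{d,r}$ is assumed nonempty and produces a stable bundle $F$ with the prescribed Mukai vector, $M_v$ is nonempty; by \cite[Cor.\ 10.3.5]{huy-lec-k3} or similar results of Mukai, $M_v$ is smooth of this dimension.)

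Next, by the previous proposition $i^* \alpha = 0$ on the smooth locus $M^0_C$. At any smooth point $x \in M^0_C$, the tangent space $T_x M^0_C$ is therefore an isotropic subspace of the symplectic vector space $(T_x M_v, \alpha_x)$, which forces
\[
\dim_x M^0_C \leq \tfrac{1}{2} \dim M_v = \rho(g,r,d).
\]
Since this holds at every smooth point, $\dim M^0_C \leq \rho(g,r,d)$, and because $M^0_C$ is dense in $M_C$ (by the reduced scheme structure), $\dim M_C = \dim M^0_C \leq \rho(g,r,d)$.

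The routine obstacle here is simply invoking the symplectic/isotropic dimension bound cleanly; there is nothing essential left to do beyond the Mukai dimension computation and the standard linear algebra fact that an isotropic subspace of a symplectic vector space has dimension at most half. No additional geometric input is required beyond the vanishing $i^* \alpha = 0$ already established.
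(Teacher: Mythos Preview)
Your proof is correct and follows exactly the same approach as the paper: compute $\dim M_v = 2\rho(g,r,d)$ and combine with the isotropy $i^*\alpha = 0$ from the preceding proposition to bound $\dim M_C$ by half the ambient dimension. The paper's own proof is a one-liner invoking these two facts, and you have simply spelled out the details.
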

\begin{proof}
Indeed $\dim M_v=2\rho(g,r,d)$ from \cite[Thm.\ 0.1]{mukai-symplectic} so this follows from the proposition above.
\end{proof}

\begin{cor} \label{tf-bn}
If  $V_{d,r}$ is nonempty then $\dim V_{d,r} = \rho(g,r,d)$.
\end{cor}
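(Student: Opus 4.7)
The plan is to combine the lower bound already furnished by Theorem \ref{bhosle-bn} with an upper bound that comes from bounding the dimension of $M_C$. Since $V_{d,r}$ is open in $\overline{W}^r_d(C)$, Theorem \ref{bhosle-bn} applied to the (nonempty) closure of $V_{d,r}$ yields $\dim V_{d,r} \geq \rho(g,r,d)$, so all the content is in establishing the reverse inequality $\dim V_{d,r} \leq \rho(g,r,d)$.

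First I would unwind the construction of $\psi_C : P^r_d \to M_v$. Given a point $(A,\lambda) \in P^r_d$, set $F = \ker \lambda$ and dualize to obtain the sequence
\[
0 \to \mathbb{H}^* \otimes \mathcal{O}_X \to F^* \to A^\dagger \to 0.
\]
Since $h^0(F) = h^1(F) = 0$, the induced map $\mathbb{H}^* \to H^0(F^*)$ is an isomorphism, so the sheaf $F^*$ canonically remembers $\mathbb{H}^*$ (as $H^0(F^*)$), the evaluation embedding $\mathbb{H}^* \otimes \mathcal{O}_X \hookrightarrow F^*$, and hence the quotient $A^\dagger$ and thus $A$. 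Consequently, two points $(A_1,[\lambda_1]),(A_2,[\lambda_2]) \in P^r_d$ with isomorphic images in $M_C$ differ only by the ambiguity of choosing an identification $\mathbb{H} \simeq H^0(A)$, i.e.\ by the natural $PGL(r+1)$-action on $P^r_d$ whose quotient is $V_{d,r}$.

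Next I would use this to count dimensions. Since $P^r_d \to V_{d,r}$ is a $PGL(r+1)$-bundle, we have $\dim P^r_d = \dim V_{d,r} + (r+1)^2 - 1$. By the previous paragraph, the nonempty fibers of $\psi_C : P^r_d \to M_C$ are precisely $PGL(r+1)$-orbits, so each fiber has dimension $(r+1)^2-1$. Therefore
\[
\dim V_{d,r} \;=\; \dim P^r_d - ((r+1)^2-1) \;=\; \dim \psi_C(P^r_d) \;\leq\; \dim M_C.
\]
Combining this with the bound $\dim M_C \leq \rho(g,r,d)$ established in the preceding corollary (which rested on $i^*\alpha = 0$ and $\dim M_v = 2\rho(g,r,d)$) gives $\dim V_{d,r} \leq \rho(g,r,d)$, and together with the Brill--Noether lower bound the equality follows.

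The only step that requires real care is verifying that the fibers of $\psi_C$ are exactly $PGL(r+1)$-orbits — that is, that no nontrivial collapsing occurs beyond the obvious change-of-basis ambiguity. This is where I would be most careful in the write-up, since one must check that the evaluation map on $F^*$ really does produce the sequence $0 \to \mathbb{H}^* \otimes \mathcal{O}_X \to F^* \to A^\dagger \to 0$ and not some degeneration of it; this uses crucially that $h^0(F) = h^1(F) = 0$ (already recorded in the excerpt) together with the fact that $A$ is globally generated (built into the definition of $V_{d,r}$), which guarantees that dualizing the defining sequence for $F$ gives back the evaluation sequence for $F^*$ up to the canonical identification $H^0(F^*) \simeq \mathbb{H}^*$.
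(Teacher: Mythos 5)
Your overall skeleton matches the paper's proof exactly: lower bound from Theorem \ref{bhosle-bn}, upper bound by pushing $\dim V_{d,r}$ through the $PGL(r+1)$-bundle $P^r_d$ and the map $\psi_C$ into $M_C$, then invoking $\dim M_C \leq \rho(g,r,d)$. But the one step you flagged as needing care --- that the fibres of $\psi_C$ are exactly $PGL(r+1)$-orbits --- is where your argument has a genuine gap. Your justification rests on the claim that $h^0(F)=h^1(F)=0$ forces $\mathbb{H}^* \to H^0(F^*)$ to be an isomorphism, so that $F^*$ ``canonically remembers'' $\mathbb{H}^*$. This is false except in the degenerate case $h^1(A)=0$. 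Dualizing $0 \to F \to \mathbb{H}\otimes\mathcal{O}_X \to A \to 0$ and taking cohomology (using $H^1(\mathcal{O}_X)=0$) gives
\[
0 \to \mathbb{H}^* \to H^0(F^*) \to H^0(A^{\dagger}) \to 0,
\]
so $h^0(F^*) = (r+1) + h^0(A^{\dagger}) = (r+1)+(g-d+r)$; indeed the paper records exactly this as $h^2(F)=r+1+(g-d+r)$. Since $g-d+r = h^1(A) > 0$ in every nontrivial Brill--Noether range (if $g-d+r=0$ then $\rho(g,r,d)=g$ and the corollary is vacuous), the subspace $\mathbb{H}^*$ is a \emph{proper} subspace of $H^0(F^*)$, the dualized sequence is not the full evaluation sequence of $F^*$, and the bundle $F^*$ does not determine $\mathbb{H}^*$. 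A fibre of $\psi_C$ over $[F^*]$ therefore consists of the $(r+1)$-dimensional subspaces $V \in Gr(r+1,H^0(F^*))$ for which the cokernel of $V \otimes \mathcal{O}_X \to F^*$ is a sheaf of the right type supported on the fixed curve $C$, and this Grassmannian is positive-dimensional precisely when your claimed isomorphism fails, so no dimension bound on the fibre follows from your argument.

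The paper closes this gap with a nontrivial external input: for each fixed $F \in M_v$, the degeneracy-locus map $Gr(r+1,H^0(F^*)) \to |L|$ is globally defined and \emph{finite} (cited from O'Grady, \S 2). Since all sheaves $A \in V_{d,r}$ with kernel $F$ are by definition supported on the fixed $C \in |L|$, the admissible subspaces $V$ lie in the fibre of this finite map over $[C]$, hence are finite in number; the fibre of $\psi_C$ is then a finite union of $PGL(r+1)$-orbits (not necessarily a single one, which is fine for the dimension count), and the rest of your computation goes through. So your proposal is recoverable, but only after replacing the false isomorphism $\mathbb{H}^* \simeq H^0(F^*)$ with this finiteness statement, which is the real content of the step.
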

\begin{proof}
If $V_{d,r}$ is nonempty then $\dim V_{d,r} \geq \rho(g,r,d)$ by Theorem \ref{bhosle-bn}, so it suffices to show $\dim V_{d,r} \leq \rho(g,r,d)$. It then suffices to show that $\psi_C: P^r_d \to M_v$ has fibres of dimension $\dim\text{PGL(r+1)}$. In other words, we need to show that for each fixed $F \in M_v$ there are only finitely many
$A \in V_{d,r}$ fitting into an exact sequence $0 \to F \to \mathbb{H} \otimes \mathcal{O}_X \to A \to 0$. But this follows immediately from the fact that in our circumstances the degeneracy locus map $Gr(r+1,H^0(F^*)) \to |L|$ is globally defined and finite, see \cite[\S2]{ogrady} (recall that all such $A$ are supported on a fixed $C$ by definition).
\end{proof}
\begin{rem}
Assume $V_{d,r}$ is nonempty. We have $\dim M_C = \dim V_{d,r}=\rho(g,r,d)$ from the above Corollary. Thus $M_C$ is a (possibly singular) Lagrangian subvariety of $M_v$. 
\end{rem}

\begin{cor} \label{bnempty}
Let $X$ be a K3 surface with $Pic(X) \simeq \mathbb{Z} L$ and $(L \cdot L)=2g-2$. Let $C \in |L|$ be rational and assume $\rho(g,r,d) <0$. Then
$$\overline{W}^r_d(C) := \{ \text{$A \in \bar{J}^d(C)$ with $h^0(A) \geq r+1$} \}$$ is empty.
\end{cor}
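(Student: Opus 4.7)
The plan is to reduce to Corollary \ref{tf-bn} by passing from an arbitrary $A \in \overline{W}^r_d(C)$ to a point of some $V_{d',s}$ with $d' \leq d$ and $s \geq r$, and then checking that the hypothesis $\rho(g,r,d) < 0$ forces $\rho(g,s,d') < 0$ as well. Since Corollary \ref{tf-bn} asserts $\dim V_{d',s} = \rho(g,s,d')$ whenever $V_{d',s}$ is nonempty, a negative expected dimension there would force $V_{d',s}$ to be empty, giving the required contradiction.

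First, given $A \in \overline{W}^r_d(C)$, I would take $A' \seq A$ to be the image of the evaluation morphism $H^0(C,A) \otimes \mathcal{O}_C \to A$. Since $h^0(A) \geq r+1 \geq 1$ and $A$ is rank-one and torsion-free, $A'$ is a nonzero torsion-free subsheaf of rank one, globally generated by construction. The evaluation map factors as $H^0(A) \otimes \mathcal{O}_C \twoheadrightarrow A' \hookrightarrow A$; taking global sections exhibits $H^0(A) \hookrightarrow H^0(A')$ with a retraction, so $h^0(A') = h^0(A)$. Setting $s := h^0(A')-1 \geq r$ and $d' := \deg A' \leq d$ (the latter since $A/A'$ is a torsion sheaf), one obtains $A' \in V_{d',s}$.

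Next I would verify $\rho(g,s,d') < 0$. The assumption $\rho(g,r,d) < 0$ reads $(r+1)(g-d+r) > g > 0$ and so in particular forces $g - d + r > 0$. Since $s \geq r$ and $d' \leq d$ we have $g - d' + s \geq g - d + r > 0$, and multiplying by $s+1 \geq r+1 > 0$ gives $(s+1)(g-d'+s) \geq (r+1)(g-d+r) > g$, i.e., $\rho(g,s,d') < 0$. Corollary \ref{tf-bn} then forces $V_{d',s}$ to be empty, a contradiction.

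I do not anticipate a serious obstacle here: all of the substantive content is already contained in Corollary \ref{tf-bn} (and hence ultimately in O'Grady's result used in its proof), and the argument above is purely a formal reduction to the globally generated case. The one step requiring a moment of care is the equality $h^0(A) = h^0(A')$ for the globally generated hull, but as noted this is immediate from the factorization of the evaluation map.
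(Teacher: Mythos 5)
Your proposal is correct and follows essentially the same route as the paper: pass to the globally generated image $A'$ of the evaluation map, observe $A' \in V_{d',s}$ with $d' \leq d$ and $s \geq r$, and conclude via Corollary \ref{tf-bn}. The only difference is that you spell out the monotonicity $\rho(g,s,d') \leq \rho(g,r,d)$ (correctly noting that it relies on $g-d+r>0$, which the hypothesis $\rho(g,r,d)<0$ forces), a step the paper simply asserts.
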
 
\begin{proof}
Assume for a contradiction that $A \in \overline{W}^r_d(C)$. Let $A'$ be the image of the evaluation morphism $H^0(A) \otimes \mathcal{O}_C \to A$. Then $A'$ is a globally generated, torsion free, rank one sheaf of degree $d' \leq d$ with $r'+1 \geq r+1$ sections, and thus $A' \in V_{d',r'}$. But $\rho(g,r',d') \leq \rho(g,r,d) <0$ for $d' \leq d$, $r' \geq r$ and thus $V_{d',r'}$ is empty by Corollary \ref{tf-bn}. This is a contradiction. 
\end{proof}

To proceed we need two technical lemmas.
\begin{lem} \label{techlem}
Let $C$ be an arbitrary integral \textbf{nodal} curve. Suppose $A'$ is a rank one torsion free sheaf on $C$ and let $k(p)$ be the length one skyscraper sheaf on $C$ supported at a node $p \in C$. Then if $\mathcal{Z} \seq \overline{W}_{d}^r(C)$ is an irreducible family of rank one torsion free sheaves such that we have an exact sequence
$$0 \to A' \to A \to k(p) \to 0 $$
for all $A \in \mathcal{Z}$, then $\dim \mathcal{Z} \leq 1$.
\end{lem}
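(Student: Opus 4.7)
The plan is to parametrize the sheaves $A \in \mathcal{Z}$ by a projective Ext space and then bound the dimension of that space by a local computation at the node $p$. First, I would recall that equivalence classes of extensions $0 \to A' \to A \to k(p) \to 0$ are classified by $\text{Ext}^1_{\mathcal{O}_C}(k(p), A')$, with the zero class corresponding to the split extension $A' \oplus k(p)$; since the split extension is not torsion-free, the nonzero classes yield exactly the torsion-free middle terms (any torsion element of $A$ must come from a splitting $k(p) \hookrightarrow A$). Since both $A'$ and $k(p)$ are rank-one simple sheaves on the integral curve $C$, their automorphism groups are both $k^{\times}$, and the resulting action of $\text{Aut}(A') \times \text{Aut}(k(p))$ on $\text{Ext}^1$ factors through scaling. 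Hence isomorphism classes of torsion-free $A$ inject into $\mathbb{P}(\text{Ext}^1_{\mathcal{O}_C}(k(p), A'))$, and because $\mathcal{Z}$ is a subvariety of $\overline{W}^r_d(C)$ (which parametrizes isomorphism classes of sheaves), it suffices to show $\dim \text{Ext}^1_{\mathcal{O}_C}(k(p), A') \leq 2$.

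Since $k(p)$ is supported at $p$, this becomes a local computation: $\text{Ext}^1_{\mathcal{O}_C}(k(p), A') \simeq \text{Ext}^1_R(k, A'_p)$, where $R = \widehat{\mathcal{O}}_{C,p} \simeq k[[x,y]]/(xy)$ and $A'_p$ is the completed stalk. The classification of rank-one torsion-free modules at a nodal singularity leaves only two possibilities for $A'_p$: the free module $R$, or the maximal ideal $\mathfrak{m} = (x,y)$. Applying $\text{Hom}_R(-, A'_p)$ to the short exact sequence $0 \to \mathfrak{m} \to R \to k \to 0$ and using $\text{Ext}^1_R(R, -) = 0$ gives
$$\text{Ext}^1_R(k, A'_p) \simeq \text{Hom}_R(\mathfrak{m}, A'_p)/A'_p.$$
A homomorphism $\phi: \mathfrak{m} \to A'_p$ is determined by $\phi(x), \phi(y)$ subject to $y\phi(x) = x\phi(y) = 0$, and in either case one identifies $\text{Hom}_R(\mathfrak{m}, A'_p)$ with a submodule isomorphic to $(x) \oplus (y)$.

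The lemma then follows from the case-by-case computation of the cokernel. When $A'_p \simeq R$, the image of $R$ (by multiplication) is the codimension-one submodule of $(x) \oplus (y)$ cut out by forcing the coefficient of $x$ in the first factor to equal the coefficient of $y$ in the second, giving $\text{Ext}^1 \simeq k$. When $A'_p \simeq \mathfrak{m}$, the image of $\mathfrak{m}$ lands in $(x^2) \oplus (y^2)$, because every element of $\mathfrak{m}$ has vanishing constant term, so the cokernel is $(x)/(x^2) \oplus (y)/(y^2) \simeq k^2$. Either way $\dim \text{Ext}^1 \leq 2$, and thus $\dim \mathcal{Z} \leq \dim \mathbb{P}(\text{Ext}^1) \leq 1$. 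The only delicate step is the second case, where the drop from the naive image $(x) \oplus (y)$ down to $(x^2) \oplus (y^2)$---a direct consequence of the vanishing constant term in $\mathfrak{m}$---is exactly what raises $\dim \text{Ext}^1$ from $1$ (as in the locally free case) to $2$, and this extra dimension is the reason the bound is $1$ rather than $0$.
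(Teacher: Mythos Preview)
Your proof is correct and follows the same strategy as the paper: reduce to showing $\dim_{\C}\operatorname{Ext}^1_{\mathcal{O}_C}(k(p),A')\le 2$, then split into the two cases according to whether $A'$ is locally free at $p$. The only difference is in execution: the paper handles the locally free case via Serre duality (identifying $\operatorname{Ext}^1(k(p),\omega_C)\simeq H^0(k(p))^*$) and the singular case by citing a result on $\operatorname{Ext}^1(k(p),\pi_*\mathcal{O}_{C'})$, whereas you compute both Ext groups directly over $R=k[[x,y]]/(xy)$ via $\operatorname{Hom}_R(\mathfrak m,-)/(-)$. Your route is slightly more elementary and self-contained; the paper's is shorter but relies on an external reference for the singular case. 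One small wording issue: you say isomorphism classes of torsion-free $A$ \emph{inject} into $\mathbb P(\operatorname{Ext}^1)$, but what you actually need (and what holds) is the opposite direction, namely that $\mathbb P(\operatorname{Ext}^1)$ \emph{surjects} onto the relevant locus in moduli via the universal extension, bounding $\dim\mathcal Z$ from above.
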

\begin{proof}
It suffices to show $\dim_{\C} Ext^1_{\mathcal{O}_C}(k(p), A') \leq 2$.  We have
\begin{align*}
Ext^1_{\mathcal{O}_C}(k(p), A') &\simeq Ext^1_{\mathcal{O}_C}(k(p), A'(n)) \text{\; for any $n\in \mathbb{Z}$} \\
&\simeq H^0(C,\mathcal{E}xt^1_{\mathcal{O}_C}(k(p), A'(n))) \text{\; for $n\gg 0$} \\
&\simeq H^0(C,\mathcal{E}xt^1_{\mathcal{O}_C}(k(p), A'))
\end{align*}
where the second line follows from \cite[Prop III.6.9]{har}. The sheaf $\mathcal{E}xt^1_{\mathcal{O}_C}(k(p), A')$ is a skyscraper sheaf supported at $p$. If $A'$ is nonsingular at $p$ then
$$\dim_{\C} Ext^1_{\mathcal{O}_C}(k(p), A') = \dim_{\C} Ext^1_{\mathcal{O}_C}(k(p), \omega_C)=1,$$ by Serre duality. 

Suppose now $A'$ is singular at $p$ and let $\pi: C' \to C$ be the normalization of $C$. Then
$\dim_{\C} Ext^1_{\mathcal{O}_C}(k(p), A') = \dim_{\C} Ext^1_{\mathcal{O}_C}(k(p), \pi_*(\mathcal{O}_{C'}))$ since $A'_p \simeq m_p \simeq \pi_*(\mathcal{O}_{C'})_p$ where $m_p$ is the maximal ideal of $p$, by \cite[III.1]{cpt-jac-2}\footnote{Note that $m_p$ is a degree $-1$, rank one t.f.\ sheaf, and $\pi_*(\mathcal{O}_{C'})$ is a t.f.\ sheaf of strictly positive degree, so although these sheaves are locally isomorphic, they are \emph{not} globally isomorphic.}. But
$\dim_{\C} Ext^1_{\mathcal{O}_C}(k(p),\pi_*(\mathcal{O}_{C'}))=2$ as required, by \cite[Prop.\ 2.3]{av-lang}. 
\end{proof}

\begin{lem} \label{techlem2}
Let $C$ be an arbitrary integral nodal curve. Suppose $A'$ is a rank one torsion free sheaf and let $Q$ be a sheaf with zero-dimensional support such that $supp(Q) \seq C_{\text{sing}}$, where $C_{\text{sing}}$ is the singular locus of $C$. Then if $\mathcal{Z} \seq \overline{W}_{d}^r(C)$ is an irreducible family of rank one torsion free sheaves such that we have an exact sequence
$$0 \to A' \to A \to Q \to 0 $$
for all $A \in \mathcal{Z}$, then $\dim \mathcal{Z} \leq l(Q)$, where $l(Q)$ denotes the length of $Q$.
\end{lem}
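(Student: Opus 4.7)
My plan is to argue by induction on $n := l(Q)$, using the previous lemma (Lemma \ref{techlem}) as the base case and filtering $Q$ by subsheaves of length differing by one.

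\textbf{Base case.} If $n = 1$, then $Q$ is necessarily a length-one skyscraper supported at some node $p \in C_{\mathrm{sing}}$, so $Q \simeq k(p)$, and Lemma \ref{techlem} gives $\dim \mathcal{Z} \leq 1 = l(Q)$.

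\textbf{Inductive step.} Assume the statement for all sheaves of length $< n$, and let $l(Q) = n \geq 2$. Pick any point $p \in \mathrm{supp}(Q)$ together with a surjection $Q \twoheadrightarrow k(p)$ (such a surjection exists, since $k(p)$ is the residue sheaf at $p$). Set $Q_1 := \ker(Q \twoheadrightarrow k(p))$, a subsheaf of $Q$ with $l(Q_1) = n-1$ and $\mathrm{supp}(Q_1) \subseteq C_{\mathrm{sing}}$. For each $A \in \mathcal{Z}$, the composition $A \twoheadrightarrow Q \twoheadrightarrow k(p)$ yields a rank one torsion-free subsheaf $A_1(A) \subset A$ fitting into two short exact sequences
\begin{align*}
0 &\to A' \to A_1(A) \to Q_1 \to 0, \\
0 &\to A_1(A) \to A \to k(p) \to 0.
\end{align*}
This assignment defines a morphism $\phi: \mathcal{Z} \to \overline{W}^{r'}_{d-1}(C)$ for some $r'$, whose image is contained in the locus of rank one torsion-free sheaves extending $A'$ by $Q_1$.

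\textbf{Combining the bounds.} The closure $\overline{\phi(\mathcal{Z})}$ is an irreducible family of sheaves extending $A'$ by $Q_1$, so the inductive hypothesis (applied to $Q_1$) gives $\dim \overline{\phi(\mathcal{Z})} \leq l(Q_1) = n - 1$. For any fixed $A_1$ in the image, the fiber $\phi^{-1}(A_1)$ parametrizes rank one torsion-free sheaves $A$ with $0 \to A_1 \to A \to k(p) \to 0$, so each of its irreducible components has dimension at most $1$ by Lemma \ref{techlem}. The fiber dimension theorem then yields
\[
\dim \mathcal{Z} \;\leq\; \dim \overline{\phi(\mathcal{Z})} + \max_{A_1} \dim \phi^{-1}(A_1) \;\leq\; (n-1) + 1 \;=\; l(Q),
\]
completing the induction.

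\textbf{Expected obstacle.} The only nontrivial point is verifying that $\phi$ is a genuine morphism of schemes (rather than just a map of sets) so that the fiber dimension theorem applies; this requires the construction $A \mapsto A_1(A)$ to go through in families, which follows because kernel and fiber product constructions are flat-local and $Q_1 \subset Q$ is a fixed inclusion. Once this is granted, the argument is essentially formal.
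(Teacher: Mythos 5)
Your proof is correct and follows essentially the same inductive strategy as the paper's: peel a length-one skyscraper off $Q$, bound that contribution by Lemma \ref{techlem}, bound the rest by the inductive hypothesis, and add the two via the fibre-dimension theorem --- the paper merely places the length-one piece at the bottom of the filtration (surjecting $Q$ onto a colength-one quotient $Q'$ and taking $A''=\mathrm{Ker}(A\to Q')$) rather than at the top, so the roles of Lemma \ref{techlem} and the inductive hypothesis are swapped between image and fibres. The one point the paper treats more carefully is exactly the obstacle you flag: since the presentation $0 \to A' \to A \to Q \to 0$ need not be unique for a given $A$ nor vary algebraically over $\mathcal{Z}$, the paper replaces $\mathcal{Z}$ by a surjective family $\pi:\mathcal{T}\to\mathcal{Z}$ parametrizing the extensions themselves (constructed following Bridgeland) and runs the same dimension count on $\mathcal{T}$.
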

\begin{proof}
We will prove the result by induction on $l(Q)$. When $l(Q)=1$ the result holds from
Lemma \ref{techlem}. Suppose $Q$ has length $r$ and choose a sheaf $Q'$ with zero-dimensional support and length $r-1$ such that we have a surjection $\phi : Q \twoheadrightarrow Q'$. For any $$0 \to A' \to A \to Q \to 0, $$ $\phi$ then induces
a short exact sequence
$$0 \to A'' \to A \to Q' \to 0$$
where $A''$ fits into the exact sequence
$$0 \to A' \to A'' \to Ker(\phi) \to 0.$$ Now let $\pi: \mathcal{T} \to \mathcal{Z}$ be the moduli space with fibre over $A \in \mathcal{Z}$ parametrising all extensions
$0 \to A' \to A \to Q \to 0$; this can be constructed from \cite[$\S  4.1$]{bridgeland}. After replacing $\mathcal{T}$ with an open set we have a morphism $\psi : \mathcal{T} \to \overline{W}_{d'}^{r'}(C)$ for some $d'$, $r'$, which sends a point representing the exact sequence $0 \to A' \to A \to Q \to 0$ to $A'':=Ker(A \to Q')$. By Lemma \ref{techlem} the image of $\psi$ is at most one dimensional as $l(Ker(\phi))=1$. Further, $\pi(\psi^{-1}(A''))$ is at most $l(Q')=l(Q)-1$ dimensional for any $A'' \in Im(\psi)$ by the induction hypothesis. It then follows than $\dim \mathcal{Z} \leq l(Q)$ as required.
\end{proof}

\begin{lem} \label{zero-case}
Let $C$ be an integral, nodal curve. Then $$\overline{W}^0_d(C) := \{ \text{$A \in \bar{J}^d(C)$ with $h^0(A) \geq 1$} \}$$ is irreducible of dimension $\rho(g,0,d)=d$.
\end{lem}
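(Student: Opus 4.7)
The strategy is to realize $\overline{W}^0_d(C)$ as the image of an Abel--Jacobi morphism from the Hilbert scheme of points on $C$, mirroring the classical construction for smooth curves. Since $C$ is an integral nodal (hence Gorenstein) curve, the Hilbert scheme $C^{[d]}:=\mathrm{Hilb}^d(C)$ of length-$d$ closed subschemes of $C$ is an irreducible projective scheme of dimension $d$: the dense open subset parametrizing subschemes supported at smooth points is isomorphic to the $d$-th symmetric product of the smooth locus, which is irreducible of dimension $d$.

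Using that $C$ is Gorenstein, I would define the Abel map
\[
\alpha: C^{[d]} \longrightarrow \bar J^d(C), \qquad Z \longmapsto \mathcal{H}om_{\mathcal O_C}(I_Z, \mathcal O_C),
\]
sending a length-$d$ subscheme $Z$ to the rank-one torsion-free sheaf $\mathcal{H}om(I_Z, \mathcal O_C)$, which has degree $d$. Dualizing the inclusion $I_Z \hookrightarrow \mathcal O_C$ yields a canonical section $\mathcal O_C \hookrightarrow \mathcal{H}om(I_Z, \mathcal O_C)$, so $\alpha$ factors through $\overline{W}^0_d(C)$. Surjectivity onto $\overline{W}^0_d(C)$ is the core point: given $A \in \overline{W}^0_d(C)$ with a nonzero (automatically injective) section $s:\mathcal O_C \to A$, applying $\mathcal{H}om(-,\mathcal O_C)$ and invoking reflexivity of rank-one torsion-free sheaves on Gorenstein curves produces an injection $\mathcal{H}om(A,\mathcal O_C) \hookrightarrow \mathcal O_C$ whose image is the ideal sheaf of a length-$d$ subscheme $Z$ with $\alpha(Z) \simeq A$. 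As the continuous image of an irreducible scheme, $\overline{W}^0_d(C)$ is therefore irreducible.

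For the dimension, the fibre $\alpha^{-1}(A)$ is canonically identified with $\mathbb{P}(H^0(A))$: each section up to scalar determines a distinct subscheme. In the range $d \leq g$, a general $A \in \overline{W}^0_d(C)$ satisfies $h^0(A)=1$ (this can be verified using the upper bounds of Lemmas \ref{techlem}, \ref{techlem2} applied inductively, or by a direct Clifford-type argument on the partial normalization), so $\alpha$ is birational onto $\overline{W}^0_d(C)$ and
\[
\dim \overline{W}^0_d(C) = \dim C^{[d]} = d = \rho(g,0,d).
\]
The main obstacle is the dualization step in proving surjectivity: careful treatment at the nodes is required to ensure that the identifications $A \simeq \mathcal{H}om(\mathcal{H}om(A,\mathcal O_C),\mathcal O_C)$ hold globally and that $\mathcal{H}om(A,\mathcal O_C)$ embeds as an honest ideal sheaf of length $d$. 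This relies on the structure of rank-one torsion-free sheaves at nodes described, e.g., in \cite{cpt-jac-2} and \cite{av-lang}.
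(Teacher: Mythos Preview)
Your approach is correct and genuinely different from the paper's. The paper splits $\overline{W}^0_d(C)$ into the open locus $U_d$ of line bundles and its complement: it shows $U_d$ is the image of the space $Div_d(C)$ of Cartier divisors, which is birational to $\tilde{C}^{(d)}$ via pullback to the normalization and hence irreducible of dimension $d$; then it bounds the dimension of each component of the complement strictly below $d$ by identifying every non--locally--free $A$ with a pushforward $\mu_*B$ from a partial normalization with $\deg B<d$. Together with the lower bound $\dim\geq d$ from Theorem~\ref{bhosle-bn}, this forces $U_d$ to be dense. Your argument instead treats everything at once via the full Abel map $\alpha\colon C^{[d]}\to \bar J^d(C)$ and reflexivity on a Gorenstein curve; this is cleaner conceptually and avoids the case split, but it imports a nontrivial input you should make explicit: the irreducibility of $\mathrm{Hilb}^d(C)$ for curves with planar singularities (e.g.\ Altman--Iarrobino--Kleiman or Brian\c{c}on), not merely the observation that the smooth-locus stratum is irreducible of dimension $d$. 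Also, your dimension step is more elaborate than necessary: once you have surjectivity of $\alpha$ and irreducibility of $C^{[d]}$, you get $\dim\overline{W}^0_d(C)\leq d$, and Theorem~\ref{bhosle-bn} already gives $\geq d$, so you need not argue that the generic fibre is a point or invoke Lemmas~\ref{techlem}, \ref{techlem2}.
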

\begin{proof}
Let $U_d \seq \overline{W}^0_d(C)$ be the open subset consisting of line bundles. Let $V_d:=Div_d(C)$ denote the 
scheme parametrizing zero-dimensional schemes $Z \seq C$ such that the ideal sheaf $I_Z$ is invertible of degree $d$; i.e\ $V_d$ is the scheme of effective Cartier divisors. Let $\tilde{C} \to C$ denote the normalization. From \cite[Thm.\ 2.4]{kleiman-special}, pullback induces a birational morphism $V_d \to Div_d(\tilde{C})$, and thus $\dim V_d=d$. We have a morphism $V_{d} \to \bar{J}^d(C)$ with image $U_d$, which sends a scheme $Z$ to the effective line bundle $I_Z^*$. Thus $U_d$ of dimension at most $d$. Since each component of $\overline{W}^0_d(C)$ has dimension at least $d$ by Theorem \ref{bhosle-bn}, we see $\dim(U_d) =d$.

Let $I$ be an irreducible component of $\overline{W}^0_d(C) \setminus U_d$; we need to prove $\dim(I) < d$. There is a nonempty open set $I^0$ of $I$, an integer $d' < d$ and a partial normalization $\mu: C' \to C$ such that for each $A \in I^0$ there exists a unique effective line bundle $B \in Pic^{d'}(C')$ with $\mu_*(B) \simeq A$ by \cite[Prop.\ 3.4]{gagne}. Since the dimension of the moduli space of effective line bundles of degree $d'$ on $C'$ has dimension $d'$ by the above, we see that $\dim(I) \leq d' < d$.
\end{proof}

We now prove the main result of this section, which states that if $X$ is a K3 surface with $Pic(X) \simeq \mathbb{Z} L$ and $(L \cdot L)=2g-2$, and if $D \in |L|$ is a rational nodal curve, then
$$\overline{W}^r_d(D) := \{ \text{$A \in \bar{J}^d(D)$ with $h^0(A) \geq r+1$} \}$$ is either empty or is equidimensional of the expected dimension $\rho(g,r,d)$.
\begin{proof} [Proof of Theorem \ref{bn-rat-thm}]
By Corollary \ref{bnempty} the theorem holds whenever $\rho(g,r,d) <0$. Thus it suffices to prove the theorem for $\rho(g,r,d)  \geq -1$. We will proceed by induction on $\rho(g,r,d)$ starting from the case $\rho(g,r,d)  = -1$.

Choose nonnegative integers $r,d$ and suppose the claim holds for all $r',d'$ such that $\rho(g,r',d') < \rho(g,r,d)$. We know the claim holds for $r=0$ by Lemma \ref{zero-case}, so we may suppose $r >0$.   Let $I$ be an irreducible component of $\overline{W}^r_d(C) \setminus V_{d,r}$; from Theorem \ref{bhosle-bn} it suffices to show $\dim(I) < \rho(g,r,d)$. For all $A \in I$, we denote by $A'$ the globally generated part of $A$, i.e.\ the image of the evaluation morphism $H^0(A) \otimes \mathcal{O}_C \to A$. There is an open dense subset $I^0 \seq I$ such that $deg(A')=d'$, $h^0(A')=r'$  is constant for all $A \in I^0$. Replacing $I^0$ by a smaller open set if necessary, we have a morphism 
\begin{align*}
f \;: \; I^0_{\text{red}} &\to \overline{W}^{r'}_{d'}(C) \\
A & \mapsto A'.
\end{align*}
Indeed, let $S$ be an integral, locally Noetherian scheme over $\C$, let $\pi: C \times S  \to S$ be the projection, and let $\mathcal{A} $ be an $S$ flat family of rank one torsion free sheaves $\mathcal{A}_s$ on $C$, with $\deg(\mathcal{A}_s)=d$, $h^0(\mathcal{A}_s)=r'+1$ constant. Replacing $S$ with an open subset, we may assume that $\pi_* \mathcal{A}$ is a trivial vector bundle of rank $r'+1$, and that the image $\mathcal{A}'$ of the evaluation morphism $$H^0(\mathcal{A}) \otimes \mathcal{O}_{C \times S} \to \mathcal{A} $$ is flat over $S$. Replacing $S$ with another open subset, we may further assume $\pi_* \mathcal{A}'$ is a trivial vector bundle of rank $r'+1$. We claim that $\mathcal{A}'_s$ is the base-point free part of $\mathcal{A}_s$.  Let $B_s$ denote the base point free part of $\mathcal{A}_s$. The surjection
$H^0(\mathcal{A}_s) \otimes \mathcal{O}_{C} \to \mathcal{A}'_s$ shows $\mathcal{A}'_s \seq B_s \seq A$. Then the exact sequence
$$ 0 \to \mathcal{A}'_s \to B_s \to F \to 0,$$
where $F$ has zero-dimensional support, and the equality $h^0(\mathcal{A}'_s)=h^0(B_s)=r+1$ implies $F$ is the zero sheaf (since $B_s$ is base point free). Thus if $d':= \deg(\mathcal{A}'_s)$, $\mathcal{A}'$ is a flat family of rank one, torsion free sheaves on $C$ of degree $d'$ with $r'+1$ sections, so the universal property of  $\overline{W}^{r'}_{d'}(C)$ induces a morphism $S \to \overline{W}^{r'}_{d'}(C)$.

We  next claim that $f$ has fibres of dimension at most $d-d'$. This will then imply the result as $\rho(g,r',d') \leq \rho(g,r,d')= \rho(g,r,d)- (r+1)(d-d')$ so that $\dim(I^0) < \rho(g,r,d)$ for $r \neq 0$. For any $A \in I^0$ we have a sequence
$$0 \to f(A) \to A \to Q_A \to 0$$
where $Q_A$ has zero-dimensional support. We have a canonical decomposition
$Q_A = Q_{A,\text{sm}} \oplus Q_{A,\text{sing}}$ with $\text{Supp}(Q_{A,\text{sm}}) \seq C_{\text{sm}}$ and $\text{Supp}(Q_{A,\text{sing}}) \seq C_{\text{sing}}$,
where $C_{\text{sm}}$ is the smooth locus of $C$ and $C_{\text{sing}}=C-C_{\text{sm}}$. Replacing $I^0$ with a dense open set we may assume $Q':=Q_{A,\text{sing}}$ is independent of $A \in I^0$. Let $e:=l(Q')$. For any $A \in I^0$, there is a unique effective line bundle $M$ of degree $d-d'-e$ such that we have a short exact sequence
$$0 \to f(A)(M) \to A \to Q' \to 0 .$$
We have a morphism
\begin{align*}
g \;: \; I^0 &\to \overline{W}^{r'}_{d-e}(C) \\
A & \mapsto f(A)(M).
\end{align*}
By Lemma \ref{techlem2}, $g$ has fibres of dimension at most $e$. For any $A'$ in the image of $f$ consider
$$ g|_{f^{-1}(A')} \;: \; f^{-1}(A') \to \overline{W}^{r'}_{d-e}(C) .$$ The image of
$ g|_{f^{-1}(A')} $ is a subset of the space of tuples $A' \otimes M$ for $M \in Pic^{d-d'-e}(C)$ effective. The moduli space of effective line bundles in $Pic^{d-d'-e}(C)$ may be identified with the image of the natural map $C^{[d-d'-e]}_{\text{sm}} \to Pic^{d-d'-e}(C)$, where $C_{\text{sm}}$ is the smooth locus of $C$, and thus has dimension at most $d-d'-e$. Thus $\dim f^{-1}(A')\leq d-d'$ as required.

\end{proof}
\begin{rem}
It is clear from the proof that the theorem would hold for any constant cycle curve $C \in |L|$ such that $C$ is integral and nodal, see \cite{huy-const-cyc}.
\end{rem}

\bibliography{biblio}
\end{document}